\def\ball{\mathbb{B}}
\newcommand{\real}{\ensuremath{\mathbb{R}}}
\newcommand{\order}[1]{\ensuremath{\mathcal{O}\parenth{#1}}}
\newcommand{\thetabar}{\ensuremath{\bar{\theta}}}
\newcommand{\Xspace}{\ensuremath{\mathbb{X}}}
\newcommand{\Yspace}{\ensuremath{\mathbb{Y}}}
\newcommand{\parenth}[1]{\left( #1 \right)}
\newcommand{\abss}[1]{\left| #1 \right |}
\newcommand{\law}{\ensuremath{\mathcal{L}}}
\newcommand{\mydefn}{\ensuremath{:=}}
\newcommand{\defn}{:=}
\newcommand{\vecnorm}[2]{\| #1\|_{#2}}
\newcommand{\inprod}[2]{\ensuremath{\langle #1 , \, #2 \rangle}}
\newcommand{\kull}[2]{\ensuremath{D_{\text{KL}}\left(#1\; \| \; #2 \right)}}
\newcommand{\Exs}{\ensuremath{{\mathbb{E}}}}
\newcommand{\Prob}{\ensuremath{{\mathbb{P}}}}
\DeclareMathOperator{\var}{var}
\DeclareMathOperator{\cov}{cov}
\DeclareMathOperator{\trace}{trace}
\newtheoremstyle{named}{}{}{\itshape}{}{\bfseries}{.}{.5em}{\thmnote{#3's }#1}
\theoremstyle{named}
\theoremstyle{plain}
\newtheorem{theorem}{Theorem}
\newtheorem{proposition}{Proposition}
\newtheorem{lemma}{Lemma}
\newtheorem{corollary}{Corollary}
\newlength{\widebarargwidth}
\newlength{\widebarargheight}
\newlength{\widebarargdepth}
\DeclareRobustCommand{\widebar}[1]{%
  \settowidth{\widebarargwidth}{\ensuremath{#1}}%
  \settoheight{\widebarargheight}{\ensuremath{#1}}%
  \settodepth{\widebarargdepth}{\ensuremath{#1}}%
  \addtolength{\widebarargwidth}{-0.3\widebarargheight}%
  \addtolength{\widebarargwidth}{-0.3\widebarargdepth}%
  \makebox[0pt][l]{\hspace{0.3\widebarargheight}%
    \hspace{0.3\widebarargdepth}%
    \addtolength{\widebarargheight}{0.3ex}%
    \rule[\widebarargheight]{0.95\widebarargwidth}{0.1ex}}%
  {#1}}
\long\def\@makecaption#1#2{
        \vskip 0.8ex
        \setbox\@tempboxa\hbox{\small {\bf #1:} #2}
        \parindent 1.5em  
        \dimen0=\hsize
        \advance\dimen0 by -3em
        \ifdim \wd\@tempboxa >\dimen0
                \hbox to \hsize{
                        \parindent 0em
                        \hfil
                        \parbox{\dimen0}{\def\baselinestretch{0.96}\small
                                {\bf #1.} #2
                                }
                        \hfil}
        \else \hbox to \hsize{\hfil \box\@tempboxa \hfil}
        \fi
        }
\long\def\comment#1{}
\definecolor{battleshipgrey}{rgb}{0.52, 0.52, 0.51}
\definecolor{darkgray}{rgb}{0.66, 0.66, 0.66}
\definecolor{darkgreen}{rgb}{0.0, 0.2, 0.13}
\definecolor{darkspringgreen}{rgb}{0.09, 0.45, 0.27}
\definecolor{dukeblue}{rgb}{0.0, 0.0, 0.61}
\definecolor{olivedrab7}{rgb}{0.24, 0.2, 0.12}
\definecolor{darkblue}{rgb}{0.0, 0.0, 0.55}
\definecolor{darkscarlet}{rgb}{0.34, 0.01, 0.1}
\definecolor{candyapplered}{rgb}{1.0, 0.03, 0.0}
\definecolor{ao(english)}{rgb}{0.0, 0.5, 0.0}
\definecolor{applegreen}{rgb}{0.55, 0.71, 0.0}
\renewcommand*{\backrefalt}[4]{%
    \ifcase #1 \footnotesize{(Not cited.)}%
    \or        \footnotesize{(Cited on page~#2.)}%
    \else      \footnotesize{(Cited on pages~#2.)}%
    \fi}
\newcommand{\diameter}{\mathrm{diam}}
\newcommand{\ctwofour}{M_{2 \rightarrow 4}}
\newcommand{\subgaussian}{\nu}
\newcommand{\datablock}{\mathcal{B}}
\newcommand{\coordinate}{e}
\newcommand{\Wass}{\mathcal{W}}
\newcommand{\actionspace}{\mathbb{A}}
\newcommand{\figwidth}{0.45}
\newcommand{\varbound}{\bar{\sigma}}
\newcommand{\fatshatter}[1]{\mathrm{fat}_{#1}}
\newcommand{\VCdim}{\mathrm{VC}}
\newcommand{\weightedinprod}[2]{\inprod{#1}{#2}_\omega}
\newcommand{\dudley}{\mathcal{J}}
\newcommand{\indicator}{\mathbb{I}}
\newcommand{\statespace}{\mathcal{S}}
\newcommand{\goodendex}{\ensuremath{\clubsuit}}
\newcommand{\lammin}{\lambda_{\min}}
\newcommand{\minimaxRisk}{\mathscr{M}_\numobs}
\newcommand{\smallballcon}{\alpha_1}
\newcommand{\smallballprob}{\alpha_2}
\newcommand{\tauhat}{\widehat{\avgtreat}}
\newcommand{\Ltwospace}{\mathbb{L}^2}
\newcommand{\poissonDistr}{\mathrm{Poi}}
\newcommand{\bernDistr}{\mathrm{Ber}}
\newtheorem{example}{Example}
\newcommand{\metric}{\rho}
\newcommand{\support}{\mathrm{supp}}
\newtheorem{assumption}{Assumption}
\newcommand{\numobs}{\ensuremath{n}}
\newcommand{\usedim}{\ensuremath{d}}
\newcommand{\totalvariation}{d_{\mathrm{TV}}}
\newcommand{\radeComplexity}{\mathcal{R}}
\newcommand{\convSet}{\Omega}
\newcommand{\probx}{\xi}
\newcommand{\conv}{\mathrm{conv}}
\newcommand{\avgtreat}{\tau}
\newcommand{\rade}{\ensuremath{\varepsilon}}
\newcommand{\fstar}{f^*}
\newcommand{\class}{\mathcal{C}}
\newcommand{\talchain}[1]{\gamma_{#1}}
\newcommand{\weightednorm}[1]{\vecnorm{#1}{\omega}}
\newcommand{\Event}{\mathscr{E}}
\newcommand{\Term}{T}
\newcommand{\chisqdiv}[2]{\chi^2 \left( #1 ~||~ #2 \right)}
\long\def\comment#1{}
\long\def\comment#1{}
\newcommand{\funcClass}{\mathcal{F}}
\newcommand{\state}{s}
\newcommand{\probInstance}{\mathcal{I}}
\newcommand{\mubar}{\widebar{\mu}}
\newcommand{\weightfunc}{g}
\newcommand{\propscore}{\pi}
\newcommand{\treateff}{\plaintreateff^*}
\newcommand{\plaintreateff}{\mu}
\newcommand{\treatsig}{\sigma }
\newcommand{\outcome}{Y}
\newcommand{\actbasemsr}{\basemsr}
\newcommand{\actinprod}[2]{\inprod{#1}{#2}_\actbasemsr}
\newcommand{\failprob}{\varepsilon}
\newcommand{\neighborhood}{\mathscr{N}}
\newcommand{\basemsr}{\lambda}
\newcommand{\taustar}{\tau^*}
\newcommand{\lammax}{\ensuremath{\lambda_{\mbox{\tiny{max}}}}}
\newcommand{\Deltil}{\widetilde{\Delta}}
\newcommand{\funcClassTemp}{\mathcal{H}}
\newcommand{\Delhat}{\widehat{\Delta}}
\newcommand{\muhat}{\widehat{\mu}}
\newcommand{\fhat}{\widehat{f}}
\newcommand{\maux}{m}
\newcommand{\vstar}{v_*}
\renewcommand{\state}{\ensuremath{x}}
\newcommand{\action}{\ensuremath{a}}
\newcommand{\State}{\ensuremath{X}}
\newcommand{\Action}{\ensuremath{A}}
\newcommand{\StateSpace}{\Xspace}
\renewcommand{\statespace}{\StateSpace}
\newcommand{\ActionSpace}{\ensuremath{\mathbb{A}}}
\newcommand{\Outcome}{\ensuremath{Y}}
\newcommand{\plainmu}{\ensuremath{\mu}}
\newcommand{\radone}{\ensuremath{s}}
\newcommand{\radtwo}{\ensuremath{r}}
\newcommand{\dradone}{\ensuremath{d}}
\newcommand{\Hclass}{\ensuremath{\mathcal{H}}}
\newcommand{\Fclass}{\ensuremath{\mathcal{F}}}
\newcommand{\SquaredRade}{\ensuremath{\mathcal{S}}}
\newcommand{\SquaredDiffRade}{\ensuremath{\mathcal{D}}}
\newcommand{\smallsuper}[2]{#1^{\scaleto{#2}{4pt}}}
\newcommand{\medsuper}[2]{#1^{\scaleto{#2}{6pt}}}
\newcommand{\DudleyBracket}{\smallsub{\dudley}{\mbox{bra}}}
\newcommand{\CoverBracket}{\smallsub{N}{\mbox{bra}}}
\newcommand{\tauhatipw}[1]{\smallsuper{\tauhat}{IPW}_{#1}}
\newcommand{\tauhatgen}[2]{\medsuper{\tauhat}{#2}_{#1}}
\newcommand{\DelF}{\ensuremath{\partial \Fclass}}
\newcommand{\cmax}{c_{\max}}
\newcommand{\ltwolfour}[1]{\ensuremath{\|#1\|_{2 \rightarrow 4}}}
\newcommand{\Zone}{\ensuremath{Z}}
\newcommand{\Ztwo}{\ensuremath{Z'}}
\newcommand{\treateffzero}{\treateff}
\newcommand{\plainprobx}{\ensuremath{\xi}}
\newcommand{\probxstar}{{\ensuremath{\plainprobx^{*}}}}
\newcommand{\probInstanceStar}{\ensuremath{{{\probInstance^{*}}}}}
\newcommand{\htil}{\ensuremath{\tilde{h}}}
\newcommand{\fatdim}{\ensuremath{D}}
\newcommand{\bigoh}{\ensuremath{\mathcal{O}}}
\newcommand{\Nat}{\ensuremath{\mathbb{N}}}
\newcommand{\pdim}{\ensuremath{p}}
\newcommand{\Qprob}{\ensuremath{\mathbb{Q}}}
\newcommand{\linkfun}{\ensuremath{\varphi}}
\newcommand{\linklower}{\ensuremath{\ell_\varphi}}
\newcommand{\Zprime}{\ensuremath{Z'}}
\newcommand{\utmp}{\ensuremath{u}}
\newcommand{\myassumption}[3]{
  \begin{enumerate}[label={\bf{(#1)}}]
  \item \label{#2} {#3}
  \end{enumerate}
 }
\newcommand{\SymVar}{\ensuremath{Z'}}
\newcommand{\OutNoise}{\ensuremath{W}}
\newcommand{\MomFour}{\ensuremath{M_4}}
\newenvironment{carlist}
 {\begin{list}{$\bullet$}
 {\setlength{\topsep}{0in} \setlength{\partopsep}{0in}
  \setlength{\parsep}{0in} \setlength{\itemsep}{\parskip}
  \setlength{\leftmargin}{0.15in} \setlength{\rightmargin}{0.08in}
  \setlength{\listparindent}{0in} \setlength{\labelwidth}{0.08in}
  \setlength{\labelsep}{0.1in} \setlength{\itemindent}{0in}}}
 {\end{list}}
\newcommand{\bcar}{\begin{carlist}}
\newcommand{\ecar}{\end{carlist}}
\newcommand{\tweak}{\ensuremath{s}}
\newcommand{\probxtweak}{\ensuremath{\probx_\tweak}}
\newcommand{\Nval}[1]{\smallsuper{\neighborhood}{val}_{#1}}
\newcommand{\Nprob}{\smallsuper{\neighborhood}{prob}}
\newcommand{\smallsub}[2]{#1_{\scaleto{#2}{4pt}}}
\newcommand{\htrunc}{\smallsub{h}{tr}}
\newcommand{\Normal}{\ensuremath{\mathcal{N}}}
\newcommand{\RadTwo}{\ensuremath{R_2}}
\newcommand{\RadOne}{\ensuremath{R_1}}
\newcommand{\Qnew}{\ensuremath{\mathbb{Q}'}}
\newcommand{\Sobs}{\ensuremath{\smallsub{S}{obs}}}
\long\def\@makecaption#1#2{
        \vskip 0.8ex
        \setbox\@tempboxa\hbox{\small {\bf #1:} #2}
        \parindent 1.5em  
        \dimen0=\hsize
        \advance\dimen0 by -3em
        \ifdim \wd\@tempboxa >\dimen0
                \hbox to \hsize{
                        \parindent 0em
                        \hfil 
                        \parbox{\dimen0}{\def\baselinestretch{0.96}\small
                                {\bf #1.} #2
                                } 
                        \hfil}
        \else \hbox to \hsize{\hfil \box\@tempboxa \hfil}
        \fi
        }
\begin{document}

\begin{center}
  {\bf{\Large{Off-policy estimation of linear functionals: \\
        Non-asymptotic theory for semi-parametric efficiency} }}

\vspace*{.2in}
{\large{
 \begin{tabular}{ccc}
  Wenlong Mou$^{ \diamond}$ & Martin J. Wainwright$^{\diamond,
    \dagger, \star}$ &Peter L.  Bartlett$^{\diamond, \dagger, \ddagger}$ \\
  \texttt{wmou@berkeley.edu} & \texttt{wainwrigwork@gmail.com} &
  \texttt{peter@berkeley.edu}
 \end{tabular}
}

\vspace*{.2in}

 \begin{tabular}{c}
 Department of Electrical Engineering and Computer
 Sciences$^\diamond$\\ Department of Statistics$^\dagger$ \\ UC
 Berkeley \\
 \end{tabular}
 
\medskip
 
 \begin{tabular}{c}
    Department of Electrical Engineering and Computer
    Sciences$^\star$ \\
Department of Mathematics$^\star$ \\
    Massachusetts Institute of Technology
 \end{tabular}
}

 \medskip
  \begin{tabular}{c}
Google Research Australia$^\ddagger$
 \end{tabular}

\medskip

\begin{abstract}
The problem of estimating a linear functional based on observational
data is canonical in both the causal inference and bandit
literatures. We analyze a broad class of two-stage procedures that
first estimate the treatment effect function, and then use this
quantity to estimate the linear functional.  We prove non-asymptotic
upper bounds on the mean-squared error of such procedures: these
bounds reveal that in order to obtain non-asymptotically optimal
procedures, the error in estimating the treatment effect should be
minimized in a certain \mbox{weighted $L^2$-norm.}  We analyze a
two-stage procedure based on constrained regression in this weighted
norm, and establish its instance-dependent optimality in finite
samples via matching non-asymptotic local minimax lower bounds.  These
results show that the optimal non-asymptotic risk, in addition to
depending on the asymptotically efficient variance, depends on the
weighted norm distance between the true outcome function and its
approximation by the richest function class supported by the sample
size.
\end{abstract}

\end{center}


\section{Introduction}
\label{SecIntro}

A central challenge in both the casual inference and bandit
literatures is how to estimate a linear functional associated with the
treatment (or reward) function, along with inferential issues
associated with such estimators.  Of particular interest in causal
inference are average treatment effects (ATE) and weighted variants
thereof, whereas with bandits and reinforcement learning, one is
interested in various linear functionals of the reward function
(including elements of the value function for a given policy). In many
applications, the statistician has access to only observational data,
and lacks the ability to sample the treatment or the actions according
to the desired probability distribution.  By now, there is a rich body
of work on this problem
(e.g.,~\cite{robins1995analysis,robins1995semiparametric,chernozhukov2018double,armstrong2021finite,wang2017optimal,ma2022minimax}),
including various types of estimators that are equipped with both
asymptotic and non-asymptotic guarantees.  We overview this and other
past work in the related work section to follow.

In this paper, we study how to estimate an arbitrary linear functional
based on observational data.  When formulated in the language of
contextual bandits, each such problem involves a state space
$\Xspace$, an action space $\actionspace$, and an output space
$\Yspace \subseteq \real$. Given a base measure $\basemsr$ on the
action space $\actionspace$---typically, the counting measure for
discrete action spaces, or Lebesgue measure for continuous action
spaces---we equip each $\state \in \Xspace$ with a probability density
function $\propscore(\state, \cdot)$ with respect to $\basemsr$.  This
combination defines a probability distribution over $\actionspace$,
known either as the \emph{propensity score} (in causal inference) or
the \emph{behavioral policy} (in the bandit literature).  The
conditional mean of any outcome $\Outcome \in \Yspace$ is specified as
$\Exs[\Outcome \mid \state, \action] = \treateff(\state, \action)$,
where the function $\treateff$ is known as the \emph{treatment effect}
or the \emph{reward function}, again in the causal inference and
bandit literatures, respectively.

Given some probability distribution $\probxstar$ over the state space
$\StateSpace$, suppose that we observe $\numobs$ i.i.d. triples
$(\State_i, \Action_i, \Outcome_i)$ in which $\State_i \sim
\probxstar$, and
\begin{align}
\Action_i \mid \State_i \sim \propscore(\State_i, \cdot),
\quad \mbox{and} \quad \Exs \big[ \Outcome_i \mid \State_i, \Action_i
  \big] = \treateff(\State_i, \Action_i), \qquad \mbox{for $i = 1, 2,
  \ldots, \numobs$.}
\end{align}
We also make use of the conditional variance function
\begin{align}
 \label{EqnDefnCondVariance}
\sigma^2(\state, \action) \mydefn \Exs \Big[ \big(\outcome -
  \treateff(\State, \Action)\big)^2 \mid \State = \state, \Action =
  \action \Big],
\end{align}
which is assumed to exist for any $\state \in \Xspace$ and $a
\in \actionspace$.

For a pre-specified weight function $\weightfunc: \Xspace
\times \actionspace \rightarrow \real$, our goal is to estimate the
linear functional
\begin{align}
\label{EqnDefnFunctional}  
\taustar \equiv \tau(\probInstanceStar) \mydefn \int_\actionspace
\Exs_{\probxstar} \Big[ \weightfunc(\State, \action) \cdot
  \treateff(\State, \action) \Big] d \actbasemsr(\action),
\end{align}
With this set-up, the pair $\probInstanceStar \mydefn(\probxstar, \treateff)$ defines a particular \emph{problem instance}.  Throughout
the paper, we focus on the case where both the propensity score
$\propscore$ and the weight function $\weightfunc$ are known to the
statistician. \\

\noindent Among the interesting instantiations of this general
framework are the following: \\

\bcar
\item \textbf{Average treatment effect:} The ATE problem corresponds
  to estimating the linear functional
  \begin{align*}
 \taustar = \Exs_\probxstar \Big[ \treateff(\State, 1) -
   \treateff(\State, 0) \Big].
  \end{align*}
It is a special case of equation~\eqref{EqnDefnFunctional}, obtained
by taking the binary action space $\actionspace = \{0, 1\}$ with
$\actbasemsr$ being the counting measure, along with the weight
function $\weightfunc(\state, \action) \defn 2 \action - 1$. \\

\item \textbf{Weighted average treatment effect:} Again with binary
  actions, suppose that we adopt the weight function
  $\weightfunc(\state, \action) \defn (2 \action - 1) \cdot
  w(\state)$, for some given function $w: \Xspace \rightarrow
  \real_+$. With the choice $w(\state) \defn \propscore(\state, 1)$,
  this corresponds to average treatment effect on the treated
  (ATET). \\
  
\item \textbf{Off-policy evaluation for contextual bandits}: For a
  general finite action space $\actionspace$, a \emph{target policy}
  is a mapping $\state \mapsto \smallsuper{\propscore}{tar}(\state,
  \cdot)$, corresponding to a probability distribution over the action
  space. If we take the weight function $\weightfunc(\state, \action)
  \mydefn \smallsuper{\propscore}{tar}(\state, \action)$ and interpret
  $\treateff$ as a reward function, then the linear
  functional~\eqref{EqnDefnFunctional} corresponds to the value of the
  target policy $\smallsuper{\propscore}{tar}$.  Since the observed
  actions are sampled according to $\propscore$---which can be
  different than the target policy
  $\smallsuper{\propscore}{tar}$---this problem is known as
  \emph{off-policy} evaluation in the bandit and reinforcement
  learning literature.
\ecar

\bigskip

\noindent When the propensity score is known, it is a standard fact
that one can estimate $\tau(\probInstance)$ at a $\sqrt{\numobs}$-rate
via an importance-reweighted plug-in estimator.  In particular, under
mild conditions, the \emph{inverse propensity weighting} (IPW)
estimator, given by
\begin{align}
\label{eq:ipw-estimator-simple}
  \tauhatipw{\numobs} \mydefn \frac{1}{\numobs} \sum_{i = 1}^\numobs
  \frac{\weightfunc(\State_i, \Action_i) }{\propscore(\State_i,
    \Action_i)} \outcome_i,
\end{align}
is $\sqrt{\numobs}$-consistent, in the sense that $\tauhatipw{\numobs}
- \taustar = \bigoh_p(1/\sqrt{\numobs})$.

\medskip

However, the problem is more subtle than might appear at might first:
the IPW estimator $\tauhatipw{\numobs}$ fails to be asymptotically
efficient, meaning that its asymptotic variance is larger than the
optimal one.  This deficiency arises even when the state space
$\Xspace$ and action space $\actionspace$ are both binary; for
instance, see \S 3 in Hirano et al.~\cite{hirano2003efficient}.
Estimators that are asymptotically efficient can be obtained by first
estimating the treatment effect $\treateff$, and then using this
quantity to form an estimate of $\tau(\probInstance)$. Such a
combination leads to a semi-parametric method, in which $\treateff$
plays the role of a nuisance function.  For example, in application to
the ATE problem, Chernozhukov et al.~\cite{chernozhukov2018double}
showed that any consistent estimator of $\treateff$ yields an
asymptotically efficient estimate of
$\avgtreat_{\weightfunc}(\probInstance)$; see \S 5.1 in their paper.
In the sequel, so as to motivate the procedures analyzed in this
paper, we discuss a broad range of semi-parametric methods that are
asymptotically efficient for estimating the linear functional
$\tau(\probInstance)$.

While such semi-parametric procedures have attractive asymptotic
guarantees, they are necessarily applied in finite samples, in
which context a number of questions remain open: \\
\bcar
\item As noted above, we now have a wide array of estimators that are
  known to be asymptotically efficient, and are thus ``equivalent''
  from the asymptotic perspective.  It is not clear, however, which
  estimator(s) should be used when working with a finite collection of
  samples, as one always does in practice. Can we develop theory that
  provides more refined guidance on the choice of estimators in this
  regime? \\

\item As opposed to a purely parametric estimator (such as the IPW
  estimate), semi-parametric procedures involve estimating the
  treatment effect function $\treateff$.  Such non-parametric
  estimation requires sample sizes that scale non-trivially with the
  problem dimension, and induce trade-offs between the estimation and
  approximation error.  In what norm should we measure the
  approximation/estimation trade-offs associated with estimating the
  treatment effect?  Can we relate this trade-off to non-asymptotic
  and instance-dependent lower bounds on the difficulty of estimating
  the linear functional $\tau$? \\
\ecar
  
The main goal of this paper is to give some precise answers to these
questions. On the lower bound side, we establish instance-dependent
minimax lower bounds on the difficulty of estimating $\tau$.  These
lower bounds show an interesting elbow effect, in that if the sample
size is overly small relative to the complexity of a function class
associated with the treatment effect, then there is a penalty in
addition to the classical efficient variance.  On the upper bound
side, we propose a class of weighted constrained least-square
estimators that achieve optimal non-asymptotic risk, even in the
high-order terms. Both the upper and lower bounds are general, with
more concrete consequences for the specific instantiations introduced
previously.


\paragraph{Related work:}
Let us provide a more detailed overview of related work in the areas of
semi-parametric estimation and more specifically, the literatures on
the treatment effect problem as well as related bandit problems.

In this paper, we make use of the notion of local minimax lower bounds
which, in its asymptotic instantiation, dates back to seminal work of
Le Cam~\cite{lecam1960locally} and H\'{a}jek~\cite{hajek1972local}.
These information-based methods were extended to semiparametric
settings by Stein~\cite{stein1956efficient} and
Levit~\cite{levit1975conditional,levit1978infinite}, among other
authors. Under appropriate regularity assumptions, the optimal
efficiency is determined by the worst-case Fisher information of
regular parametric sub-models in the tangent space; see the
monograph~\cite{bickel1993efficient} for a comprehensive review.

Early studies of treatment effect estimation were primarily
empirical~\cite{ashenfelter1978estimating}. The unconfoundedness
assumption was first formalized by Rosenbaum and
Rubin~\cite{rosenbaum1983central}, thereby leading to the problem
setup described in Section~\ref{SecIntro}.  A series of seminal papers
by Robins and
Rotnitzky~\cite{robins1995analysis,robins1995semiparametric} made
connections with the semi-parametric literature; the first
semi-parametric efficiency bound, using the tangent-based techniques
described in the monograph~\cite{bickel1993efficient}, was formally
derived by Hahn~\cite{hahn1998role}.

There is now a rich body of work focused on constructing valid
inference procedures under various settings, achieving such
semiparametric lower bounds.  A range of methods have been studied,
among them matching
procedures~\cite{rubin1992characterizing,abadie2016matching}, inverse
propensity
weighting~\cite{hahn1998role,hirano2003efficient,hitomi2008puzzling,wang2020debiased},
outcome
regression~\cite{chen2004semiparametric,hirshberg2021augmented}, and
doubly robust
methods~\cite{robins1995semiparametric,chernozhukov2018double,mackey2018orthogonal,foster2019orthogonal}. The
two-stage procedure analyzed in the current paper belongs to the broad
category of doubly robust methods.

In their classic paper, Robins and Ritov~\cite{robins1997toward}
showed that if no smoothness assumptions are imposed on the outcome
model, then the asymptotic variance of the IPW estimator cannot be
beaten.  This finding can be understood as a worst-case asymptotic
statement; in contrast, this paper takes an instance-dependent
perspective, so that any additional structure can be leveraged to
obtain superior procedures.  Robins et
al.~\cite{robins2009semiparametric} derived optimal rates for
treatment effect estimation under various smoothness conditions for
the outcome function and propensity score function. More recent work
has extended this general approach to analyze estimators for other
variants of treatment effect
(e.g.,~\cite{kennedy2022minimax,armstrong2021finite}).  There are some
connections between our proof techniques and the analysis in this line
of work, but our focus is on finite-sample and instance-dependent
results, as opposed to global minimax results.

Portions of our work apply to high-dimensional settings, of which
sparse linear models are one instantiation. For this class of
problems, the recent
papers~\cite{bradic2019minimax,bradic2019sparsity,wang2020debiased}
study the relation between sample size, dimension and sparsity level
for which $\sqrt{\numobs}$-consistency can be obtained.  This body of
work applies to the case of unknown propensity scores, which is
complementary to our studies with known behavioral policies.  To be
clear, obtaining $\sqrt{\numobs}$-consistency is always possible under
our set-up via the IPW estimator; thus, our focus is on the more refined
question of non-asymptotic sample size needed to obtain optimal
instance-dependent bounds.

Our work is also related to the notion of second-order efficiency in
classical asymptotics.  Some past
work~\cite{dalalyan2006penalized,dalalyan2006penalized,castillo2007semi}
has studied some canonical semi-parametric problems, including
estimating the shift or period of one-dimensional regression
functions, and established second-order efficiency asymptotic upper
and lower bounds in the exact asymptotics framework.  Our
instance-dependent lower bounds do not lead to sharp constant factors,
but do hold in finite samples.  We view it as an important direction
for future work to combine exact asymptotic theory with our
finite-sample approach so as to obtain second-order efficiency lower
bounds with exact first-order asymptotics.

There is also an independent and parallel line of research on the
equivalent problem of off-policy evaluation (OPE) in bandits and
reinforcement learning.  For multi-arm bandits, the
paper~\cite{li2015toward} established the global minimax optimality of
certain OPE estimators given a sufficiently large sample size.  Wang
et al.~\cite{wang2017optimal} proposed the ``switch'' estimator, which
switches between importance sampling and regression estimators; this
type of procedure, with a particular switching rule, was later shown
to be globally minimax optimal for any sample
size~\cite{ma2022minimax}. Despite desirable properties in a
worst-case sense, these estimators are known to be asymptotically
inefficient, and the sub-optimality is present even ignoring constant
factors (see Section 3 of the paper~\cite{hirano2003efficient} for
some relevant discussion).  In the more general setting of
reinforcement learning, various efficient off-policy evaluation
procedures have been proposed and
studied~\cite{jiang2016doubly,yin2020asymptotically,ZanWaiBru_neurips21,
  kallus2022efficiently}.  Other
researchers~\cite{zhou2022offline,zhan2021policy,athey2021policy,ZanWaiBru_neurips21}
have studied procedures that are applicable to adaptively collected
data.  It is an interesting open question to see how the perspective
of this paper can be extended to dynamic settings of this type.

\paragraph{Notation:}
Here we collect some notation used throughout the paper. Given a pair
of functions $h_1, h_2: \actionspace \rightarrow \real$ such that
$|h_1 h_2| \in \mathbb{L}^1 (\actbasemsr)$, we define the inner
product
\begin{align*}
\actinprod{h_1}{h_2} \mydefn \int h_1(\action) h_2(\action) ~ d\actbasemsr(\action)
\end{align*}
Given a set $A$ in a normed vector space with norm
$\vecnorm{\cdot}{c}$, we denote the diameter $\diameter_c(A) \mydefn
\sup_{x, y \in A} \vecnorm{x - y}{c}$.  For any $\alpha > 0$, the
\emph{Orlicz norm} of a scalar random variable $X$ is given by
\begin{align*}
  \vecnorm{X}{\psi_\alpha} \mydefn \sup \left\{ u > 0 \, \mid \, \Exs
  \big[ e^{( |X| / u)^\alpha} \big] \leq 1 \right\}.
\end{align*}
The choices $\alpha = 2$ and $\alpha = 1$ correspond, respectively,
to the cases of sub-Gaussian and sub-exponential tails, respectively.

Given a metric space $(\mathbb{T}, \metric)$ and a set $\Omega
\subseteq \mathbb{T}$, we use $N(\Omega, \metric; s)$ to denote the
cardinality of a minimal $s$-covering of set $\Omega$ under the metric
$\metric$. For any scalar $q \geq 1$ and closed interval $[\delta, D]$
we define the Dudley entropy integral
\begin{align*}
 \dudley_q (\Omega, \metric; [\delta, D]) \mydefn \int_\delta^D \big[\log
   N(\Omega, \metric; s) \big]^{1/q} \; ds.
\end{align*}
Given a domain $\statespace$, a bracket $[\ell, u]$ is a pair of
real-valued functions on $\statespace$ such that $\ell (x) \leq u (x)$
for any $x \in \statespace$, and a function $f$ is said to lie in the
bracket $[\ell, u]$ if $f (x) \in [\ell (x), u (x)]$ for any $x
\in \statespace$. Given a probability measure $\mathbb{Q}$ over
$\statespace$, the size of the bracket $[\ell, u]$ is defined as
$\vecnorm{u - \ell}{\Ltwospace (\mathbb{Q})}$. For a function class
$\funcClass$ over $\statespace$, the bracketing number
$\CoverBracket\big( \funcClass, \Ltwospace (\mathbb{Q}); s \big)$
denotes the cardinality of a minimal bracket covering of the set
$\funcClass$, with each bracket of size \mbox{smaller than $s$.} Given
a closed interval $[\delta, D]$, the bracketed chaining integral is
given by
\begin{align*}
 \DudleyBracket (\funcClass, \Ltwospace (\mathbb{Q}); [\delta, D]) \mydefn
 \int_\delta^D \sqrt{\log \CoverBracket (\funcClass, \Ltwospace
   (\mathbb{Q}); s) } \; ds.
\end{align*}


\section{Non-asymptotic and instance-dependent upper bounds}
\label{SecUpper}

We begin with a non-asymptotic analysis of a general class of
two-stage estimators of the functional $\tau(\probInstance)$.  Our
upper bounds involve a certain weighted $L^2$-norm---see
equation~\eqref{EqnWeighted}---which, as shown by our lower bounds in
the sequel, plays a fundamental role.

\subsection{Non-asymptotic risk bounds on two-stage procedures}

We first provide some intuition for the class of two-stage estimators
that we analyze, before turning to a precise description.

\subsubsection{Some elementary intuition}

We consider two-stage estimators obtained from simple perturbations of
the IPW estimator~\eqref{eq:ipw-estimator-simple}.  Given an auxiliary
function $f:
\Xspace \times \actionspace \rightarrow \real$ and the data set
$\{(\State_i, \Action_i, \Outcome_i) \}_{i=1}^\numobs$, consider the
estimate
\begin{align}
\label{eq:general-form-of-unbiased-estimator}  
\tauhatgen{\numobs}{f} = \frac{1}{\numobs} \sum_{i = 1}^\numobs \Big\{
\frac{\weightfunc(\State_i, \Action_i)}{\propscore(\State_i,
  \Action_i)} \outcome_i - f(\State_i, \Action_i) + \actinprod{f
 (\State_i, \cdot)}{\propscore(\State_i, \cdot)} \Big\}.
\end{align}
By construction, for any choice of $f \in \Ltwospace(\probxstar \times
\propscore)$, the quantity $\tauhatgen{\numobs}{f}$ is an unbiased
estimate of $\tau$, so that it is natural to choose $f$ so as to
minimize the variance $\var(\tauhatgen{\numobs}{f})$ of the induced
estimator.  As shown
in~\Cref{app:sec:proof-basic-exp-and-var-of-general-form}, the minimum
of this variational problem is achieved by the function
\begin{subequations}
\begin{align}
\label{eq:defn-fstar-function}  
 \fstar(\state, \action) \mydefn \frac{\weightfunc(\state, \action)
   \treateff(\state, \action)}{\propscore(\state, \action)} -
 \actinprod{\weightfunc(\state, \cdot)}{\treateff(\state, \cdot)},
\end{align}
where in performing the minimization, we enforced the constraints
$\actinprod{f(\state, \cdot)}{\propscore(\state, \cdot)} = 0$ for any
$\state \in \Xspace$.  We note that this same function $\fstar$ also
arises naturally via consideration of Neyman orthogonality.

The key property of the optimizing function $\fstar$ is that it
induces an estimator $\tauhatgen{\numobs}{\fstar}$ with asymptotically optimal variance---viz.
\begin{align}
\label{eq:defn-optimal-variance}  
\vstar^2 \mydefn \var \Big( \actinprod{\weightfunc(\State
  \cdot)}{\treateff(\State, \cdot)} \Big) + \int_{\actionspace}
\Exs_{\probxstar} \Big[ \frac{\weightfunc^2(\State, \action)
  }{\propscore(\State, \action)} \treatsig^2(\State, \action) \Big]
d \basemsr(\action),
\end{align}
\end{subequations}
where $\sigma^2(\state, \action) \mydefn \var( \outcome \mid \state,
\action)$ is the conditional variance~\eqref{EqnDefnCondVariance} of
the outcome.
See~\Cref{app:sec:proof-basic-exp-and-var-of-general-form} for details
of this derivation.


\subsubsection{A class of two-stage procedures}

The preceding set-up naturally leads to a broad class of two-stage
procedures, which we define and analyze here.  Since the treatment
effect $\treateff$ is unknown, the optimal function $\fstar$ from
equation~\eqref{eq:defn-fstar-function} is also unknown to us.  A
natural approach, then, is the two-stage one: (a) compute an estimate
$\muhat$ using part of the data; and then (b) substitute this estimate
in equation~\eqref{eq:defn-fstar-function} so as to construct an
approximation to the ideal estimator $\tauhatgen{\numobs}{\fstar}$.  A
standard cross-fitting approach (e.g., ~\cite{chernozhukov2018double})
allows one to make full use of data while avoiding the
self-correlation bias. \\

\medskip

In more detail, we first split the data into two disjoint subsets
$\datablock_1 \mydefn(\State_i, \Action_i, \outcome_i)_{i =
  1}^{\numobs / 2}$ and $\datablock_2 \mydefn(\State_i, \Action_i,
\outcome_i)_{i = \numobs / 2 + 1}^{\numobs}$. We then perform the
following two steps:
\begin{subequations}
\label{eq:estimator-framework}
\paragraph{Step I:} For $j \in \{1,2\}$, compute an estimate
$\muhat_{\numobs / 2}^{(j)}$ of $\treateff$ using the data subset
$\datablock_j$, and compute
\begin{align}
\label{eq:defn-fhat}    
\fhat_{\numobs/2}^{(j)}(\state, \action) \mydefn
\frac{\weightfunc(\state, \action) \muhat_{\numobs/2}^{(j)}(\state,
  \action)}{\propscore(\state, \action)} - \actinprod{\weightfunc
(\state, \cdot)}{\muhat_{\numobs / 2}^{(j)}(\state, \cdot)}.
\end{align}
\paragraph{Step II:} Use the auxiliary functions
$\fhat_{\numobs/2}^{(1)}$ and $\fhat_{\numobs / 2}^{(2)}$ to construct
the estimate
\begin{align}
\label{eq:split-plugin-stage}     
\tauhat_{\numobs} \mydefn \frac{1}{\numobs} \sum_{i = 1}^{\numobs/2}
\Big\{ \frac{\weightfunc(\State_i, \Action_i)}{\propscore(\State_i,
  \Action_i)} \outcome_i - \fhat_{\numobs / 2}^{(2)}(\State_i,
\Action_i) \Big\} + \frac{1}{\numobs} \sum_{i = \numobs / 2 +
  1}^{\numobs} \Big\{ \frac{\weightfunc(\State_i,
  \Action_i)}{\propscore(\State_i, \Action_i)} \outcome_i -
\fhat_{\numobs / 2}^{(1)}(\State_i, \Action_i) \Big\}.
\end{align}
\end{subequations}
As described, these two steps should be understood as defining a
meta-procedure, since the choice of auxiliary estimator
$\muhat_{\numobs / 2}^{(j)}$ can be arbitrary.\\

The main result of this section is a non-asymptotic upper bound on the
MSE of any such two-stage estimator.  It involves the \emph{weighted
$L^2$-norm} $\weightednorm{\cdot}$ given by
\begin{subequations}
\begin{align}
\label{EqnWeighted}  
  \weightednorm{h}^2 \mydefn \int_{\actionspace} \Exs_{\probxstar} \Big[
    \frac{\weightfunc^2(\State, \action)}{\propscore(\State, \action)}
    h^2(\State, \action) \Big] d \basemsr(\action),
\end{align}
which plays a fundamental role in both upper and lower bounds for the
problem.  With this notation, we have:
\begin{theorem}
\label{thm:upper-sample-split-general}
For any estimator $\muhat_{\numobs/2}$ of the treatment effect, the
two-stage estimator~\eqref{eq:estimator-framework} has MSE bounded as
\begin{align}
\label{EqnSimpleBound}  
\Exs \Big[ \abss{\tauhat_\numobs - \taustar}^2 \Big] \leq
\frac{1}{\numobs} \Big \{ \vstar^2 + 2 \Exs \big[
  \weightednorm{\muhat_{\numobs / 2} - \treateff}^2 \big] \Big \}.
\end{align}
\end{theorem}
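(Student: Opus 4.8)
The plan is to exploit the sample-splitting structure so that, conditionally on the data block used to form each $\muhat$, the second-stage estimator is an i.i.d.\ average of mean-zero terms whose variance we can control. Fix attention on the first sum in~\eqref{eq:split-plugin-stage}, which uses the samples in $\datablock_1$ together with the function $\fhat_{\numobs/2}^{(2)}$ fitted on $\datablock_2$. Condition on $\datablock_2$, so that $\fhat \defn \fhat_{\numobs/2}^{(2)}$ is a fixed (deterministic given $\datablock_2$) function. The key algebraic observation is that, with $\fstar$ defined by~\eqref{eq:defn-fstar-function}, each summand decomposes as
\begin{align*}
\frac{\weightfunc(\State_i,\Action_i)}{\propscore(\State_i,\Action_i)}\outcome_i - \fhat(\State_i,\Action_i)
= \underbrace{\Big(\frac{\weightfunc(\State_i,\Action_i)}{\propscore(\State_i,\Action_i)}\outcome_i - \fstar(\State_i,\Action_i) + \actinprod{\fstar(\State_i,\cdot)}{\propscore(\State_i,\cdot)}\Big)}_{=:~\xi_i} + \big(\fstar(\State_i,\Action_i) - \fhat(\State_i,\Action_i)\big),
\end{align*}
where I have used that $\actinprod{\fstar(\state,\cdot)}{\propscore(\state,\cdot)}=0$ for every $\state$ by construction of $\fstar$. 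The first piece $\xi_i$ is exactly the summand of the oracle estimator $\tauhatgen{\numobs}{\fstar}$; from the variance computation cited in the excerpt (see~\Cref{app:sec:proof-basic-exp-and-var-of-general-form}), $\Exs[\xi_i]=\taustar$ and $\var(\xi_i)=\vstar^2$. For the second piece, write $g \defn \fhat - \fstar$ and note that, because both $\fstar$ and (by the definition~\eqref{eq:defn-fhat}) $\fhat$ have the form $\weightfunc\cdot(\text{function})/\propscore$ minus its $\propscore$-average, we have $g(\state,\action) = \tfrac{\weightfunc(\state,\action)}{\propscore(\state,\action)}(\muhat-\treateff)(\state,\action) - \actinprod{\weightfunc(\state,\cdot)}{(\muhat-\treateff)(\state,\cdot)}$, so in particular $\actinprod{g(\state,\cdot)}{\propscore(\state,\cdot)}=0$ for every $\state$, which gives $\Exs[g(\State_i,\Action_i)\mid\datablock_2]=\Exs_{\probxstar}[\actinprod{g(\State,\cdot)}{\propscore(\State,\cdot)}]=0$.

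Thus, conditionally on $\datablock_2$, the first sum in~\eqref{eq:split-plugin-stage} is $\tfrac1\numobs\sum_{i=1}^{\numobs/2}(\xi_i + g(\State_i,\Action_i))$ with i.i.d.\ mean-zero increments $\xi_i - \taustar + g(\State_i,\Action_i)$; an identical statement holds for the second sum with the roles of $\datablock_1,\datablock_2$ swapped, and — crucially — the two half-sums are independent once we further condition on both blocks appropriately, or more simply one notes that $\tauhat_\numobs - \taustar$ is the average over all $\numobs$ indices of terms that, conditioned on the complementary block, are independent mean-zero. Therefore
\begin{align*}
\Exs\big[|\tauhat_\numobs-\taustar|^2\big] = \Exs\Big[\var\big(\tauhat_\numobs \,\big|\, \datablock_1,\datablock_2 \text{ as appropriate}\big)\Big] \le \frac{1}{\numobs^2}\sum_{i=1}^{\numobs}\Exs\big[\var(\xi_i - \taustar + g(\State_i,\Action_i))\big],
\end{align*}
where the cross-block independence makes the variance of the sum equal to the sum of the per-term conditional variances. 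Bounding $\var(a+b)\le 2\var(a)+2\var(b)$ termwise gives per-term contribution at most $2\vstar^2 + 2\Exs[g(\State_i,\Action_i)^2]$, and $\Exs[g(\State_i,\Action_i)^2\mid\datablock] \le \Exs_{\probxstar}\int \tfrac{\weightfunc^2}{\propscore}(\muhat-\treateff)^2\,d\basemsr = \weightednorm{\muhat-\treateff}^2$ by Jensen applied to the $\propscore$-average term (or simply by dropping the nonnegative-reducing centering). Summing over the $\numobs$ indices and dividing by $\numobs^2$ yields $\tfrac1\numobs(2\vstar^2 + 2\Exs[\weightednorm{\muhat-\treateff}^2])$; tightening the oracle contribution from $2\vstar^2$ to $\vstar^2$ requires only slightly more care, namely keeping the cross term $2\,\Exs[\cov(\xi_i, g(\State_i,\Action_i))]$ and absorbing it via Young's inequality into the $\vstar^2$ and $\weightednorm{\cdot}^2$ budgets with the stated constants, which produces exactly~\eqref{EqnSimpleBound}.

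The main obstacle I anticipate is the bookkeeping around the cross-fitting: one must set up the conditioning so that the $\numobs/2$ increments in the first sum are independent mean-zero \emph{given} $\datablock_2$ and, simultaneously, that the two half-sums combine without an extra cross-variance between them — this is what lets the variance of the full length-$\numobs$ average be written as a clean sum of $\numobs$ per-term variances divided by $\numobs^2$. The unbiasedness and the $\vstar^2$ variance identity for the oracle increments are quoted from~\Cref{app:sec:proof-basic-exp-and-var-of-general-form}, so the only genuinely new work is the decomposition into $\xi_i + g$, the verification that $g$ integrates to zero against $\propscore$ pointwise in $\state$, and the bound $\Exs[g^2]\le\weightednorm{\muhat-\treateff}^2$; the constant $2$ in front of the weighted-norm term is exactly the slack from the termwise $\var(a+b)\le 2\var(a)+2\var(b)$ (equivalently, from Cauchy–Schwarz on the cross term), and I would present the short Young's-inequality argument to confirm no better universal constant is forced by this route.
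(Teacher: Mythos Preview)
Your decomposition into oracle piece $\xi_i$ plus correction $g(\State_i,\Action_i)$ is exactly the paper's route (written there as $T_* - T_1 - T_2$), but the step where you ``absorb the cross term via Young's inequality'' does not work: Young gives $\var(\xi_i+g_i)\le(1+\epsilon)\vstar^2+(1+\epsilon^{-1})\Exs[g_i^2]$ for any $\epsilon>0$, and no choice of $\epsilon$ yields coefficient~$1$ on $\vstar^2$ together with coefficient~$2$ on the weighted-norm term. The missing observation is that this cross term is \emph{exactly zero}. You already noted $\Exs[g(\State_i,\Action_i)\mid\State_i,\datablock_2]=0$; now observe that, after integrating out the outcome noise (which $g$ does not see), $\xi_i-\taustar$ reduces to $\actinprod{\weightfunc(\State_i,\cdot)}{\treateff(\State_i,\cdot)}-\taustar$, a function of $\State_i$ alone. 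The tower property then gives $\Exs[(\xi_i-\taustar)\,g_i\mid\datablock_2]=0$, so the oracle and correction pieces are genuinely orthogonal and $\vstar^2$ appears with coefficient~$1$.

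A second looseness is the cross-block bookkeeping: the two half-sums are \emph{not} independent, since the correction in block~$1$ uses $\muhat^{(2)}$ built from block~$2$ and vice versa, so one cannot simply add the two conditional variances to get $n$ independent per-term contributions. The paper handles this by bounding the remaining cross term $\Exs[T_1 T_2]$ with Cauchy--Schwarz, which contributes an additional $\tfrac{1}{n}\Exs\weightednorm{\muhat_{\numobs/2}-\treateff}^2$. That --- not the inequality $\var(a+b)\le 2\var(a)+2\var(b)$ --- is where the factor~$2$ in front of the weighted-norm term actually comes from.
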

\noindent See~\Cref{subsec:proof-upper-sample-split-general} for the
proof of this claim.  \\

Note that the upper bound~\eqref{EqnSimpleBound} consists of two
terms, both of which have natural intepretations.  The first term
$\vstar^2$ corresponds to the asymptotically efficient
variance~\eqref{eq:defn-optimal-variance}; in terms of the weighted
norm~\eqref{EqnWeighted}, it has the equivalent expression
\begin{align}
\vstar^2 & = \var \Big( \actinprod{\weightfunc(\State
  \cdot)}{\treateff(\State, \cdot)} \Big) +
\weightednorm{\treatsig}^2.
\end{align}
\end{subequations}
The second term corresponds to twice the average estimation error
$\Exs \big[ \weightednorm{\muhat_{\numobs / 2} - \treateff}^2 \big]$,
again measured in the weighted squared norm~\eqref{EqnWeighted}.
Whenever the treatment effect can be estimated consistently---so that
this second term is vanishing in $\numobs$---we see that the
estimator $\tauhat_{\numobs}$ is asymptotically efficient, as is known
from past work~\cite{chernozhukov2018double}.  Of primary interest to
us is the guidance provided by the bound~\eqref{EqnSimpleBound} in the
finite sample regime: in particular, in order to minimize this upper
bound, one should construct estimators $\muhat$ of the treatment
effect that are optimal in the weighted norm~\eqref{EqnWeighted}.

\subsection{Some non-asymptotic analysis}

With this general result in hand, we now propose some explicit
two-stage procedures that can be shown to be finite-sample optimal.
We begin by introducing the classical idea of an oracle inequality,
and making note of its consequences when combined
with~\Cref{thm:upper-sample-split-general}. We then analyze a class of
non-parametric weighted least-squares estimators, and prove that they
satisfy an oracle inequality of the desired type.

\subsubsection{Oracle inequalities and finite-sample bounds}

At a high level,~\Cref{thm:upper-sample-split-general} reduces our
problem to an instance of non-parametric regression, albeit one
involving the weighted norm $\weightednorm{\cdot}$ from
equation~\eqref{EqnWeighted}.  In non-parametric regression, there are
many methods known to satisfy an attractive ``oracle'' property (e.g.,
see the books~\cite{tsybakov2008introduction,wainwright2019high}).  In
particular, suppose that we construct an estimate $\muhat$ that takes
values in some function class $\funcClass$.  It is said to satisfy an
\emph{oracle inequality} for estimating $\treateff$ in the norm
$\weightednorm{\cdot}$ if
\begin{align}
\label{EqnOracle}  
\Exs \big[\weightednorm{\muhat - \treateff}^2 \big] & \leq c
\inf_{\plainmu \in \funcClass} \Big \{ \weightednorm{\plainmu -
  \treateff}^2 + \delta^2_\numobs(\plainmu; \funcClass) \Big \}
\end{align}
for some universal constant $c \geq 1$.  Here the functional $\plainmu
\mapsto \delta^2_\numobs(\plainmu; \funcClass)$ quantifies the
$\weightednorm{\cdot}^2$-error associated with estimating some
function $\plainmu \in \funcClass$, whereas the quantity
$\weightednorm{\plainmu - \treateff}^2$ is the squared approximation
error, since the true function $\treateff$ need not belong to the
class.  We note that the oracle inequality stated here is somewhat
more refined than the standard one, since we have allowed the
estimation error to be instance-dependent (via its dependence on the
choice of $\plainmu$).

Given an estimator $\muhat$ that satisfies such an oracle inequality,
an immediate consequence of~\Cref{thm:upper-sample-split-general} is
that the associated two-stage estimator of $\taustar \equiv
\tau(\probInstance)$ has MSE upper bounded as
\begin{align}
\Exs \big[ \abss{\tauhat_\numobs - \taustar}^2 \big] \leq
\frac{1}{\numobs} \Big ( \vstar^2 + 2 c \inf_{\plainmu \in \funcClass}
\Big \{ \weightednorm{\plainmu - \treateff}^2 +
\delta^2_\numobs(\plainmu; \funcClass) \Big \} \Big ).
\end{align}
This upper bound is explicit, and given some assumptions on the
approximability of the unknown $\treateff$, we can use to it choose
the ``complexity'' of the function class $\funcClass$ in a
data-dependent manner.  See~\Cref{SecExamples} for discussion and
illustration of such choices for different function classes.

\subsubsection{Oracle inequalities for non-parametric weighted least-squares}

Based on the preceding discussion, we now turn to the task of
proposing a suitable estimator of $\treateff$, and proving that it
satisfies the requisite oracle inequality~\eqref{EqnOracle}.  Let
$\funcClass$ be a given function class used to approximate the
treatment effect $\treateff$.  Given our goal of establishing bounds
in the weighted norm~\eqref{EqnWeighted}, it is natural to analyze the
\emph{non-parametric weighted least-squares} estimate
\begin{align}
\label{eq:least-square-estimator-in-stage-1}  
\muhat_\maux \mydefn \arg\min_{\plainmu \in \funcClass} \Big \{
\frac{1}{\maux} \sum_{i = 1}^\maux \frac{\weightfunc^2(\State_i,
  \Action_i)}{\propscore^2(\State_i, \Action_i)} \big \{
\plainmu(\State_i, \Action_i) - \outcome_i \big \}^2 \Big \},
\end{align}
where $\{(\State_i, \Action_i, \outcome_i) \}_{i=1}^\maux$ constitute
an observed collection of state-action-outcome triples.

Since the pairs $(\State, \Action)$ are drawn from the distribution
$\probxstar(\state) \propscore(\state, \action)$, our choice of
weights ensures that
\begin{align*}
\Exs \Big[\frac{\weightfunc^2(\State, \Action)}{\propscore^2(\State,
    \Action)} \big \{ \plainmu(\State, \Action) - \outcome \big \}^2
  \Big] & = \weightednorm{\plainmu - \treateff}^2 + \Exs \Big[
  \frac{\weightfunc^2(\State, \Action)}{\propscore^2(\State, \Action)}
  \sigma^2(\State, \Action) \Big].
\end{align*}
so that (up to a constant offset), we are minimizing an unbiased
estimate of $\weightednorm{\plainmu - \treateff}^2$.

\noindent In our analysis, we impose some natural conditions on the
function class:
\myassumption{CC}{assume:convset}{ The function class $\funcClass$ is
  a \emph{convex and compact} subset of the Hilbert space
  $\Ltwospace_\omega$.}
\noindent We also require some tail conditions on functions $h$ that
belong to the difference set
\begin{align*}
\DelF \defn \{f_1 - f_2 \mid f_1, f_2 \in \Fclass \}.
\end{align*}
There are various results in the non-parametric literature that rely
on functions being uniformly bounded, or satisfying other sub-Gaussian
or sub-exponential tail conditions
(e.g.,~\cite{wainwright2019high}). Here we instead leverage the less
restrictive learning-without-concentration framework of
Mendelson~\cite{mendelson2015learning}, and require that the following
\emph{small probability condition} holds:
\myassumption{SB}{assume:small-ball}{There exists a pair
  $(\smallballcon, \smallballprob)$ of positive scalars such that
  \begin{align}
\label{EqnSmallBall}    
  \Prob \Big[ \abss{\tfrac{\weightfunc(\State,
        \Action)}{\propscore(\State, \Action)} h(\State, \Action)}
    \geq \smallballcon \weightednorm{h} \Big] & \geq \smallballprob
  \qquad \mbox{for all $h \in \DelF$.}
 \end{align}
}
If we introduce the shorthand $\htil = \tfrac{\weightfunc}{\propscore}
h$, then condition~\eqref{EqnSmallBall} can be written equivalently as
$\Prob \big[ |\htil(\State, \Action)| \geq \smallballcon \|\htil\|_2
  \big] \geq \smallballprob$, so that it is a standard small-ball
condition on the function $\htil$; see the
papers~\cite{koltchinskii2015bounding,mendelson2015learning} for more
background. \\

\medskip

As with existing theory on non-parametric estimation, our risk bounds
are determined by the suprema of empirical processes, with
``localization'' so as to obtain optimal rates. Given a function class
$\Hclass$ and a positive integer $\maux$, we define the
\emph{Rademacher complexities}
\begin{subequations}
  \begin{align}
\label{eq:defn-rade-effective-noise}     
\SquaredRade^2_\maux(\Hclass) & \mydefn \Exs \Big[ \sup_{f \in
    \Hclass} \Big\{ \frac{1}{\maux} \sum_{i = 1}^\maux \frac{\rade_i
    \weightfunc^2(\State_i, \Action_i)}{\propscore^2(\State_i,
    \Action_i)} \big( \outcome_i - \treateff(\State_i, \Action_i)
  \big) f(\State_i, \Action_i) \Big\}^2 \Big], \quad \mbox{and} \\
\label{eq:defn-rade-intrinsic}
\radeComplexity_\maux(\Hclass) & \mydefn \Exs \Big[ \sup_{f \in
    \Hclass} \frac{1}{\maux} \sum_{i = 1}^\maux \frac{\rade_i
    \weightfunc(\State_i, \Action_i)}{\propscore(\State_i, \Action_i)}
  f(\State_i, \Action_i) \Big],
\end{align}
\end{subequations}
where $(\rade_i)_{i = 1}^\numobs$ are $\mathrm{i.i.d.}$ Rademacher
random variables independent of the data.

With this set-up, we are now ready to state some oracle inequalities
satisfied by the weighted least-squares
estimator~\eqref{eq:least-square-estimator-in-stage-1}.  As in our
earlier statement~\eqref{EqnOracle}, these bounds are indexed by some
$\plainmu \in \funcClass$, and our risk bound involves the solutions
\begin{subequations}
\begin{align}
\label{eq:defn-critical-radius-squared}     
\tfrac{1}{\radone} \; \SquaredRade_\maux \big((\funcClass - \plainmu)
\cap \ball_\omega(\radone) \big) & \leq \radone \, , \qquad \mbox{and}
\\
\label{eq:defn-critical-radius-plain}
\tfrac{1}{\radtwo} \; \radeComplexity_\maux \big((\funcClass -
\plainmu) \cap \ball_\omega(\radtwo) \big) & \leq \tfrac{\smallballcon
  \smallballprob}{32}.
\end{align}
\end{subequations}
Let $\radone_\maux(\plainmu)$ and $\radtwo_\maux(\plainmu)$,
respectively, be the smallest non-negative solutions to these
inequalities; see~\Cref{prop:existence-critical-radii}
in~\Cref{app:subsec-proof-existence-critical-radii} for their
guaranteed existence.
\begin{theorem}
\label{cor:least-sqr-split-estimator}
Under the convexity/compactness condition~\ref{assume:convset} and
small-ball condition~\ref{assume:small-ball}, the two-stage
estimate~\eqref{eq:estimator-framework} based on the non-parametric
least-squares estimate~\eqref{eq:least-square-estimator-in-stage-1}
satisfies the oracle inequality
\begin{subequations}
\begin{align}
\label{eq:cor-least-sqr-split-estimator-risk}  
\Exs \big[ \big(\tauhat_\numobs - \taustar \big)^2 \big] & \leq
\frac{1}{\numobs} \Big \{ \vstar^2 + c \inf_{\plainmu \in \funcClass}
\Big( \weightednorm{\plainmu - \treateff}^2 +
\delta^2_\numobs(\plainmu; \funcClass) \Big) \Big \}
\end{align}
where the instance-dependent estimation error is given by
\begin{align}
\delta^2_\numobs(\plainmu; \funcClass) & \mydefn \radone_{\numobs /
  2}^2(\plainmu) + \radtwo_{\numobs / 2}^2(\plainmu) + e^{- c'
  \numobs} \; \diameter^2_\omega(\funcClass \cup \{\treateff\}),
\end{align}
\end{subequations}
for a pair $(c, c')$ of constants depending only on the small-ball
parameters $(\smallballcon, \smallballprob)$.
\end{theorem}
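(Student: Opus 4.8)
The plan is to combine Theorem~\ref{thm:upper-sample-split-general} with a localized oracle inequality for the weighted least-squares estimator~\eqref{eq:least-square-estimator-in-stage-1}. Applying the bound~\eqref{EqnSimpleBound} with $\maux = \numobs/2$, it suffices to prove that $\muhat_\maux$ satisfies
\begin{align*}
\Exs\bigl[\weightednorm{\muhat_\maux - \treateff}^2\bigr] \;\le\; c\inf_{\plainmu\in\funcClass}\Bigl\{\weightednorm{\plainmu - \treateff}^2 + \radone_\maux^2(\plainmu) + \radtwo_\maux^2(\plainmu) + e^{-c'\maux}\diameter_\omega^2\bigl(\funcClass\cup\{\treateff\}\bigr)\Bigr\}.
\end{align*}
Since the left-hand side does not involve $\plainmu$, I would fix an arbitrary $\plainmu\in\funcClass$, prove the displayed bound for that $\plainmu$, and take the infimum at the end. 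Write $\vecnorm{h}{\maux}^2 := \tfrac{1}{\maux}\sum_{i=1}^\maux \tfrac{\weightfunc^2(\State_i,\Action_i)}{\propscore^2(\State_i,\Action_i)}h^2(\State_i,\Action_i)$ for the empirical weighted norm, $w_i := \outcome_i - \treateff(\State_i,\Action_i)$ for the centered outcome, and $\Delta := \muhat_\maux - \plainmu \in \funcClass - \plainmu$. Because $\funcClass$ is convex (condition~\ref{assume:convset}) and $\muhat_\maux$ minimizes the quadratic empirical objective over it, the obtuse-angle (Pythagorean) inequality for projections onto convex sets, applied in the empirical weighted geometry and then expanded around $\treateff$, gives the deterministic \emph{basic inequality}
\begin{align*}
\vecnorm{\muhat_\maux - \treateff}{\maux}^2 + \vecnorm{\Delta}{\maux}^2 \;\le\; \vecnorm{\plainmu - \treateff}{\maux}^2 + \frac{2}{\maux}\sum_{i=1}^\maux \frac{\weightfunc^2(\State_i,\Action_i)}{\propscore^2(\State_i,\Action_i)}\,w_i\,\Delta(\State_i,\Action_i).
\end{align*}

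The heart of the argument is to control, uniformly over $\Delta\in\funcClass-\plainmu$ and with localization, (i) the multiplier term and (ii) the discrepancy between $\vecnorm{\Delta}{\maux}$ and $\weightednorm{\Delta}$. For (i), I would symmetrize to bring in the Rademacher variables of~\eqref{eq:defn-rade-effective-noise} and run a peeling argument over the dyadic shells $\{2^{j}\radone_\maux(\plainmu)\le\weightednorm{\Delta}\le 2^{j+1}\radone_\maux(\plainmu)\}$ --- peeling is forced on us because $\Delta$ depends on the data --- using the fixed-point inequality~\eqref{eq:defn-critical-radius-squared} defining $\radone_\maux(\plainmu)$ on each shell, concluding that with probability at least $1 - e^{-c'\maux}$, $\abss{\tfrac{1}{\maux}\sum_i \tfrac{\weightfunc^2(\State_i,\Action_i)}{\propscore^2(\State_i,\Action_i)}w_i\Delta(\State_i,\Action_i)} \lesssim \radone_\maux(\plainmu)\weightednorm{\Delta} + \radone_\maux^2(\plainmu)$ for all $\Delta$. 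Since the summands are neither bounded nor assumed light-tailed, this step relies on Mendelson's learning-without-concentration machinery~\cite{mendelson2015learning} rather than classical concentration. For (ii), condition~\ref{assume:small-ball} yields the pointwise bound $\vecnorm{\Delta}{\maux}^2 \ge \smallballcon^2\weightednorm{\Delta}^2\cdot\tfrac{1}{\maux}\sum_i \mathbbm{1}\{\abss{\tfrac{\weightfunc(\State_i,\Action_i)}{\propscore(\State_i,\Action_i)}\Delta(\State_i,\Action_i)}\ge\smallballcon\weightednorm{\Delta}\}$; replacing the indicator by a $1$-Lipschitz surrogate, applying the Ledoux--Talagrand contraction inequality and symmetrization, invoking the fixed-point inequality~\eqref{eq:defn-critical-radius-plain} for $\radtwo_\maux(\plainmu)$ through the Rademacher complexity~\eqref{eq:defn-rade-intrinsic}, and a bounded-differences concentration step, I would deduce that with probability at least $1 - e^{-c'\maux}$,
\begin{align*}
\vecnorm{\Delta}{\maux}^2 \;\ge\; \tfrac{\smallballcon^2\smallballprob}{4}\,\weightednorm{\Delta}^2 \qquad\text{for all } \Delta\in\funcClass-\plainmu \text{ with } \weightednorm{\Delta}\ge\radtwo_\maux(\plainmu).
\end{align*}

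To finish, I would work on the intersection of these two events (complement of probability at most $2e^{-c'\maux}$). If $\weightednorm{\Delta}\le\radtwo_\maux(\plainmu)$, the triangle inequality already gives $\weightednorm{\muhat_\maux-\treateff}^2 \lesssim \weightednorm{\plainmu-\treateff}^2 + \radtwo_\maux^2(\plainmu)$. Otherwise, dropping the nonnegative term $\vecnorm{\muhat_\maux-\treateff}{\maux}^2$ in the basic inequality, substituting the lower-isometry bound on its left side and the multiplier bound on its right side, and controlling the single fixed-function term $\vecnorm{\plainmu-\treateff}{\maux}^2$ by a constant multiple of $\weightednorm{\plainmu-\treateff}^2$ via a standard concentration step, yields a quadratic inequality in $\weightednorm{\Delta}$ whose solution, combined with the triangle inequality, gives $\weightednorm{\muhat_\maux-\treateff}^2 \lesssim \weightednorm{\plainmu-\treateff}^2 + \radone_\maux^2(\plainmu) + \radtwo_\maux^2(\plainmu)$. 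On the complementary event I would use the crude deterministic bound $\weightednorm{\muhat_\maux-\treateff}^2 \le \diameter_\omega^2(\funcClass\cup\{\treateff\})$; taking expectations produces the extra term $e^{-c'\maux}\diameter_\omega^2(\funcClass\cup\{\treateff\})$, which --- after absorbing $\maux = \numobs/2$ into the constant --- matches the stated estimation error. Plugging the resulting oracle inequality for $\muhat_\maux$ into~\eqref{EqnSimpleBound} and taking the infimum over $\plainmu\in\funcClass$ yields~\eqref{eq:cor-least-sqr-split-estimator-risk}.

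The step I expect to be the main obstacle is the simultaneous localized control of the two empirical processes using only the small-ball hypothesis: the multiplier process in step~(i) must be bounded without any boundedness or sub-Gaussianity of the noise, which forces the learning-without-concentration approach of~\cite{mendelson2015learning,koltchinskii2015bounding}, while the quadratic (lower-isometry) process in step~(ii) has to be handled through the indicator-to-Lipschitz reduction and tracked precisely enough that the constants emerge as exactly the thresholds $1$ and $\tfrac{\smallballcon\smallballprob}{32}$ appearing in~\eqref{eq:defn-critical-radius-squared}--\eqref{eq:defn-critical-radius-plain}; reconciling the data-dependence of $\muhat_\maux$ with these localized estimates via peeling is the other delicate ingredient.
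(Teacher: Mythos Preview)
Your overall strategy matches the paper closely: reduce via Theorem~\ref{thm:upper-sample-split-general}, derive a basic inequality from convexity, establish lower isometry from the small-ball condition (your step~(ii) is essentially the paper's Lemma~\ref{lemma:relate-emp-norm-to-weighted-norm}, including the Lipschitz surrogate for the indicator and Ledoux--Talagrand contraction), and localize the multiplier process through $\radone_\maux$. The gap is in step~(i). You assert that the multiplier bound holds \emph{with probability at least $1 - e^{-c'\maux}$}, but the theorem imposes no tail or moment conditions on the outcome noise $w_i$ beyond what is implicit in the \emph{second-moment} Rademacher complexity $\SquaredRade_\maux$ of~\eqref{eq:defn-rade-effective-noise}. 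The fixed point $\radone_\maux$ controls only $\Exs[\sup(\cdot)^2]$; symmetrization and peeling propagate expectation bounds across shells, but upgrading to exponential concentration would require a Talagrand-type inequality and hence boundedness or light tails --- exactly the assumptions the small-ball framework is designed to dispense with. The same issue afflicts your ``standard concentration step'' for the fixed term $\vecnorm{\plainmu - \treateff}{\maux}^2$: without tail control on $\weightfunc^2/\propscore^2$ this empirical average need not concentrate exponentially either.

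The paper avoids this by handling the multiplier \emph{entirely in expectation}. Lemma~\ref{lemma:relate-empirical-process-to-rade-complexity} uses convexity (to pass from $\Delhat_\maux$ to the localized class), Cauchy--Schwarz, and second-moment symmetrization to show $\Exs\bigl[\bm{1}_{\{\weightednorm{\Delhat_\maux}\ge\radone_\maux\}}\cdot\text{(multiplier)}\bigr] \le c\,\radone_\maux\sqrt{\Exs[\weightednorm{\Delhat_\maux}^2]}$, while the approximation term is handled by the exact identity $\Exs[\vecnorm{\Deltil}{\maux}^2] = \weightednorm{\Deltil}^2$ rather than concentration. Decomposing $\Exs[\weightednorm{\Delhat_\maux}^2]$ over the events $\Event_\omega\cap\{\weightednorm{\Delhat_\maux}\ge\radone_\maux\vee\radtwo_\maux\}$, $\{\weightednorm{\Delhat_\maux}<\radone_\maux\vee\radtwo_\maux\}$, and $\Event_\omega^c$ then yields a \emph{self-bounding} relation of the form $\Exs[\weightednorm{\Delhat_\maux}^2] \lesssim \radone_\maux\sqrt{\Exs[\weightednorm{\Delhat_\maux}^2]} + \weightednorm{\Deltil}^2 + \radone_\maux^2 + \radtwo_\maux^2 + e^{-c\maux}\diameter_\omega^2(\funcClass\cup\{\treateff\})$, which one solves directly for the oracle inequality. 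Only the lower-isometry event $\Event_\omega^c$ contributes the exponentially small term; the multiplier never needs a tail bound.
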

\noindent See~\Cref{subsec:proof-least-sqr-split-estimator} for the
proof of this theorem.

\medskip

A few remarks are in order.  The
bound~\eqref{eq:cor-least-sqr-split-estimator-risk} arises by
combining the general bound from~\Cref{thm:upper-sample-split-general}
with an oracle inequality that we establish for the weighted
least-squares
estimator~\eqref{eq:least-square-estimator-in-stage-1}. Compared to
the efficient variance $\vstar^2$, this bound includes three
additional terms: (i) the critical radii
$\radone_{\numobs/2}(\plainmu)$ and $\radtwo_{\numobs/2}(\plainmu)$
that solve the fixed point equations; (ii) the approximation error
under $\weightednorm{\cdot}$ norm; and (iii) an exponentially decaying
term. For any fixed function class $\Fclass$, if we take limits as the
sample size $\numobs$ tends to infinity, we see that the asymptotic
variance of $\tauhat_\numobs$ takes the form
\begin{align*}
\vstar^2 + c \, \inf_{\plainmu \in \Fclass} \weightednorm{\plainmu -
  \treateff}^2.
\end{align*}
Consequently, the estimator may suffer from an efficiency loss
depending on how well the unknown treatment effect $\treateff$ can be
approximated (in the weighted norm) by a member of $\Fclass$.  When
the outcome noise $\outcome_i - \treateff (\State_i, \Action_i)$ is of
constant order, inspection of
equations~\eqref{eq:defn-critical-radius-squared}
and~\eqref{eq:defn-critical-radius-plain} reveals that---as $\numobs$
tends to infinity---the critical radius
$\radone_{\numobs/2}(\plainmu)$ decays at a faster rate than
$\radtwo_{\numobs/2}(\plainmu)$.  Therefore, the non-asymptotic excess
risk---that is, any contribution to the MSE \emph{in addition to} the
efficient variance $\vstar^2$---primarily depends on two quantities:
(a) the approximation error associated with approximating $\treateff$
using a given function class $\funcClass$, and (b) the (localized)
metric entropy of this function class.  Interestingly, both of these
quantities turn out to be information-theoretically optimal in an
instance-dependent sense. More precisely, in~\Cref{SecLower}, we show
that an efficiency loss depending on precisely the same approximation
error is unavoidable; we further show that a sample size depending on
a local notion of metric entropy is also needed for such a bound to be
valid.


\subsection{A simulation study}
\label{subsec:simulation}

We now describe a simulation study that helps to illustrate the elbow
effect predicted by our theory, along with the utility of using
reweighted estimators of the treatment effect.  We can model a missing
data problem by using $\Action \in \actionspace \defn \{0,1 \}$ as a
binary indicator variable for ``missingness''---that is, the outcome
$\outcome$ is observed if and only if $\Action = 1$.  Taking $\probx$
as the uniform distribution on the state space $\Xspace \defn [0, 1]$,
we take the weight function $\weightfunc(\state, \action) = \action$,
so that our goal is to estimate the quantity
$\Exs_\probx[\treateff(\State, 1)]$.  Within this subsection, we abuse
notation slightly by using $\plainmu$ to denote the function
$\plainmu(\cdot, 1)$, and similarly $\treateff$ for $\treateff(\cdot,
1)$.

We allow the treatment effect to range over the first-order Sobolev
smoothness class
\begin{align*}
\funcClass \mydefn \Big\{ f: [0, 1] \rightarrow \real \; \mid \; f(0)
= 0, ~ \vecnorm{f}{\mathbb{H}^1}^2 \mydefn \int_0^1 \big(f'(x) \big)^2
dx \leq 1 \Big\},
\end{align*}
corresponding (roughly) to functions that have a first-order
derivative $f'$ with bounded $L^2$-norm.  The function class
$\funcClass$ is a particular type of reproducing kernel Hilbert space
(cf. Example 12.19 in the book~\cite{wainwright2019high}), so it is
natural to consider various forms of kernel ridge regression.

\paragraph{Three possible estimators:}

So as to streamline notation, we let $\Sobs \subseteq \{1, \ldots,
\maux \}$ denote the subset of indices associated with observed
outcomes---that is, $\action_i = 1$ if and only if $i \in \Sobs$.  Our
first estimator follows the protocol suggested by our theory: more
precisely, we estimate the function $\treateff$ using a
\emph{reweighted} form of kernel ridge regression (KRR)
\begin{subequations}
  \begin{align}
\label{EqnWeightedKRR}    
    \muhat_{\maux, \omega} \mydefn \arg\min_{\plainmu \in \Ltwospace
      ([0, 1])} \left \{ \sum_{i \in \Sobs}
    \frac{\weightfunc^2(\State_i, 1)}{\propscore^2(\State_i, 1)} \big
    \{ \outcome_i - \plainmu (\State_i) \big \} ^2 + \lambda_\maux
    \vecnorm{\plainmu}{\mathbb{H}^1}^2 \right \},
  \end{align}
  where $\lambda_\maux \geq 0$ is a regularization parameter (to be
  chosen by cross-validation).  Let $\tauhat_{\numobs, \omega}$ be the
  output of the two-stage procedure~\eqref{eq:estimator-framework}
  when the reweighted KRR estimate is used in the first stage.

So as to isolate the effect of reweighting, we also implement the
standard (unweighted) KRR estimate, given by
\begin{align}
\label{EqnStandardKRR}    
    \muhat_{\maux, \Ltwospace} \mydefn \arg \min_{\plainmu
      \in \Ltwospace ([0, 1])} \Big\{ \sum_{i \in \Sobs} \big \{
    \outcome_i - \plainmu (\State_i) \big \}^2 + \lambda_\maux
    \vecnorm{\plainmu}{\mathbb{H}^1}^2 \Big\}.
\end{align}
Similarly, we let $\tauhat_{\numobs, \Ltwospace}$ denote the estimate
obtained by using the unweighted KRR estimate as a first-stage
quantity.

Finally, so as to provide an (unbeatable) baseline for comparison, we
compute the \emph{oracle estimate}
\begin{align}
    \tauhat_{\numobs, \mathrm{oracle}} \mydefn \frac{1}{\numobs}
    \sum_{i = 1}^\numobs \Big\{ \frac{ (\outcome_i - \treateff
      (\State_i)) \Action_i}{\propscore (\State_i, 1)} + \treateff
    (\State_i) \Big\}.
\end{align}
\end{subequations}
Here the term ``oracle'' refers to the fact that it provides an answer
given the unrealistic assumption that the true treatment effect
$\treateff$ is known.  Thus, this estimate cannot be computed based
purely on observed quantities, but instead serves as a lower bound for
calibrating.  For each of these three estimators, we compute its
$\numobs$-rescaled mean-squared error
\begin{align}
\label{eq:normalized-mse-in-simulation}  
\numobs \cdot \Exs \big[ \abss{\tauhat_{\numobs, \diamond} -
    \taustar}^2 \big] \quad \mbox{with $\diamond \in \big\{
  \omega, \Ltwospace, \mathrm{oracle} \big\}$}.
\end{align}

\paragraph{Variance functions:}
Let us now describe an interesting family of variance functions
$\sigma^2$ and propensity scores $\propscore$.  We begin by observing
that if the standard deviation function $\sigma$ takes values of the
same order as the treatment effect $\treateff$, then the simple IPW
estimator has a variance of the same order as the asymptotic efficient
limit $\vstar^2$. Thus, in order to make the problem non-trivial and
illustrate the advantage of semiparametric methods, we consider
variance functions of the following type: for a given propensity score
$\propscore$ and exponent $\gamma \in [0, 1]$, define
\begin{align}
\label{eq:sigma-choice-in-simulation}  
\sigma^2(\state, 1) & \defn \sigma_0^2 \; \big[\propscore (\state, 1)
  \big]^\gamma,
\end{align}
where $\sigma_0 > 0$ is a constant pre-factor.  Since the optimal
asymptotic variance $\vstar$ contains the term $\Exs \big[
  \tfrac{\sigma^2(\State, 1)}{\propscore (\State, 1)} \big]$, this
family leads to a term of the form
\begin{align*}
\sigma_0^2 \; \Exs \left[\frac{1}{\{ \propscore(\State, 1)\}^{1 -
      \gamma}} \right],
\end{align*}
showing that (in rough terms) the exponent $\gamma$ controls the
influence of small values of the propensity score $\propscore(\State,
1)$.  At one extreme, for $\gamma = 1$, there is no dependence on
these small values, whereas the other extreme $\gamma=0$, it will be
maximally sensitive to small values of the propensity score.

\paragraph{Propensity and treatment effect:}

We consider the following two choices of propensity scores
\begin{subequations}
  \begin{align}
\propscore_1(\state, 1) & \defn \tfrac{1}{2} - \big( \tfrac{1}{2} -
\propscore_{\min}\big) \sin( \pi \state), \quad \mbox{and} \\
\propscore_2(\state, 1) & \defn \tfrac{1}{2} - \{ \tfrac{1}{2}
-\propscore_{\min}) \sin ( \pi \state / 2 ) \},
  \end{align}
\end{subequations}
where $\propscore_{\min} \defn 0.005$.  At the same time, we take the
treatment effect to be the ``tent'' function
\begin{align}
  \label{EqnDefnTent}
\treateff(\state) & = \frac{1}{2} - \abss{\state - \tfrac{1}{2}}
\qquad \mbox{for $\state \in [0,1]$.}
\end{align}

Let us provide the rationale for these choices.  Both propensity score
functions take values in the interval $[\propscore_{\min}, 0.5]$, but
achieve the minimal value $\propscore_{\min}$ at different points
within this interval: $x = 1/2$ for $\propscore_1$ and at $x= 1$ for
$\propscore_2$.  Now observe that for the missing data problem, the
risk of the na\"{i}ve IPW estimator~\eqref{eq:ipw-estimator-simple}
contains a term of the form $\Exs
[\tfrac{\treateff(\State)^2}{\propscore(\State, 1)}]$.  Since our
chosen treatment effect function~\eqref{EqnDefnTent} is maximized at
$\state = 1/2$, this term is much larger when we set $\propscore =
\propscore_1$, which is minimized at $\state = 1/2$.  Thus, the
propensity score $\propscore_1$ serves as a ``hard'' example.  On the
other hand, the treatment effect is minimized at $\state = 1$, where
$\propscore_2$ achieves its minimum, so that this represents an
``easy'' example.
\begin{figure}[!h]
\centering
    \begin{subfigure}{\figwidth\linewidth}
    \centering
    \includegraphics[width=\linewidth]{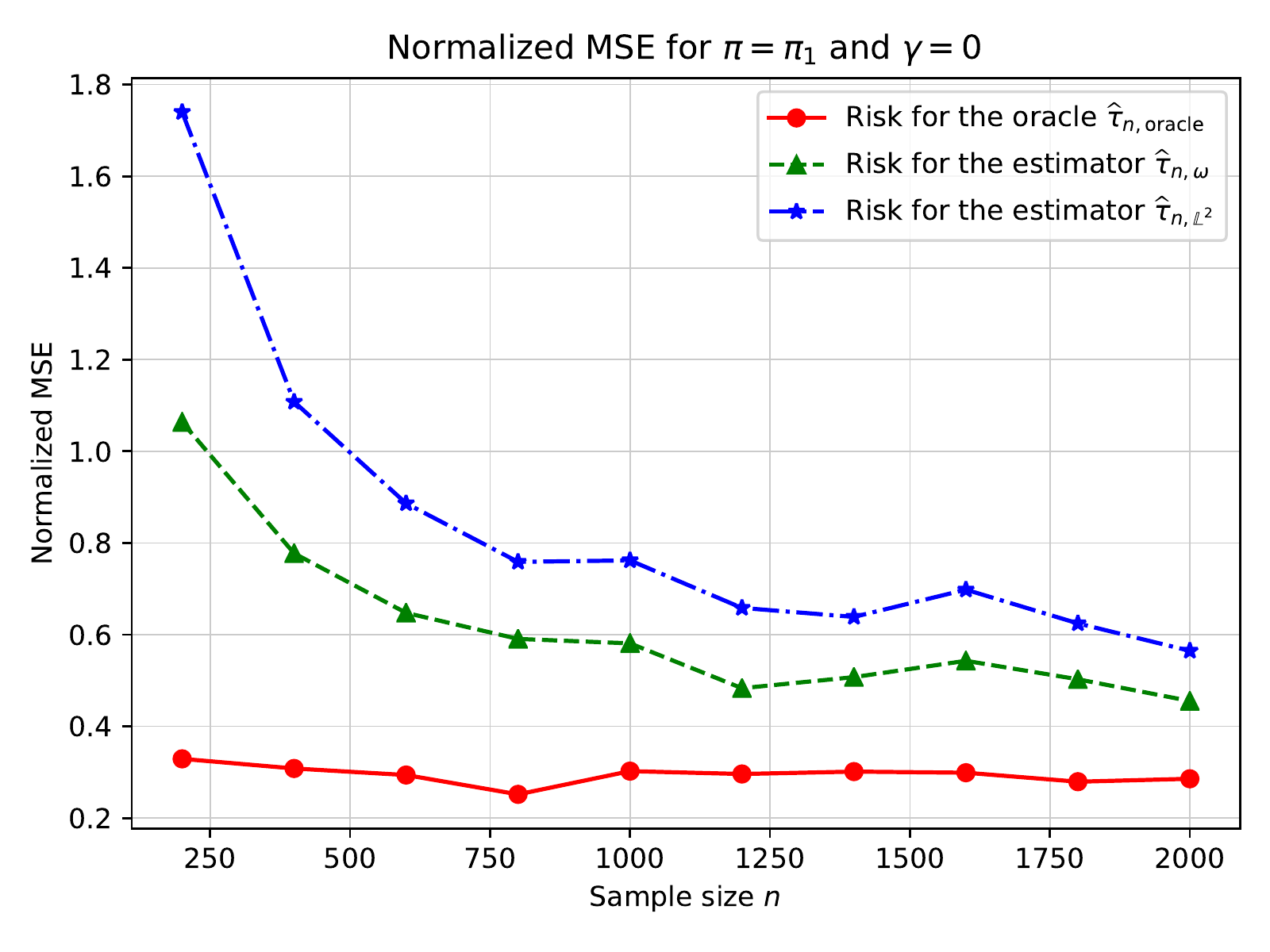}
    \caption{$\gamma = 0, ~ \propscore = \propscore_1$}
    \end{subfigure}
     \begin{subfigure}{\figwidth\linewidth}
    \centering
    \includegraphics[width=\linewidth]{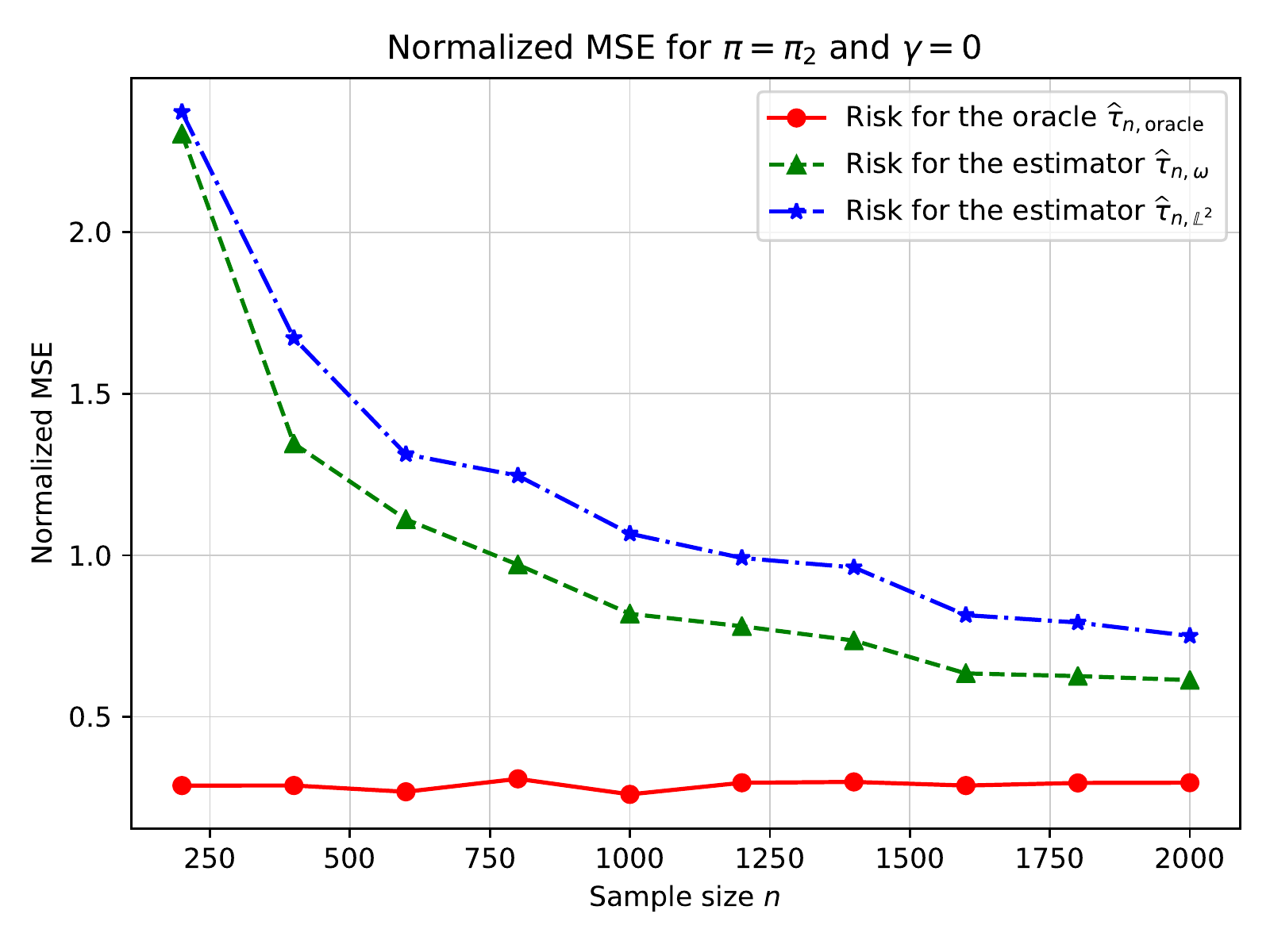}
    \caption{$\gamma = 0, ~ \propscore = \propscore_2$}
    \end{subfigure}
      \begin{subfigure}{\figwidth\linewidth}
    \centering
    \includegraphics[width=\linewidth]{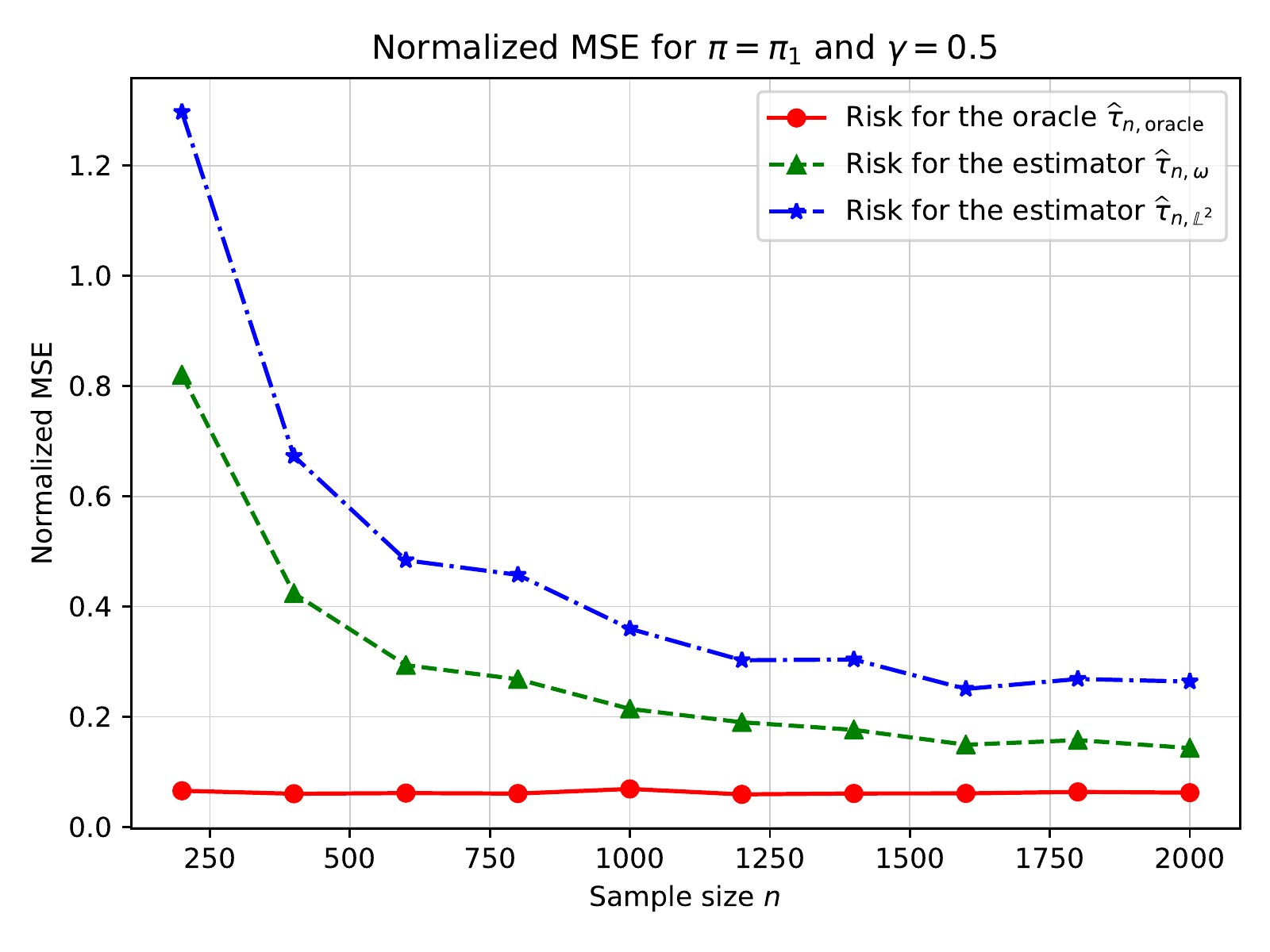}
    \caption{$\gamma = 0.5, ~ \propscore = \propscore_1$}
    \end{subfigure}
     \begin{subfigure}{\figwidth\linewidth}
    \centering
    \includegraphics[width=\linewidth]{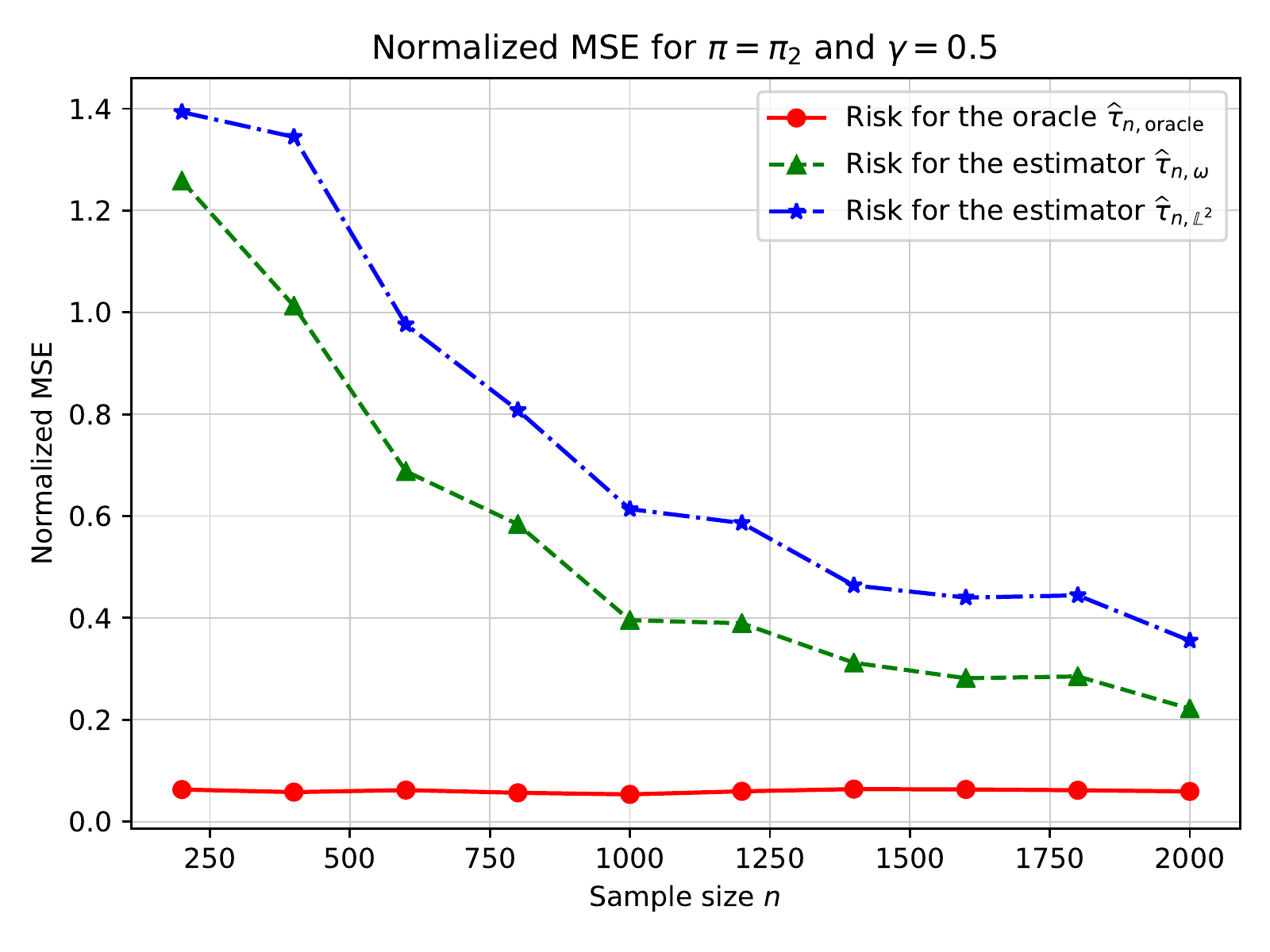}
    \caption{$\gamma = 0.5, ~ \propscore = \propscore_2$}
    \end{subfigure}
      \begin{subfigure}{\figwidth\linewidth}
    \centering
    \includegraphics[width=\linewidth]{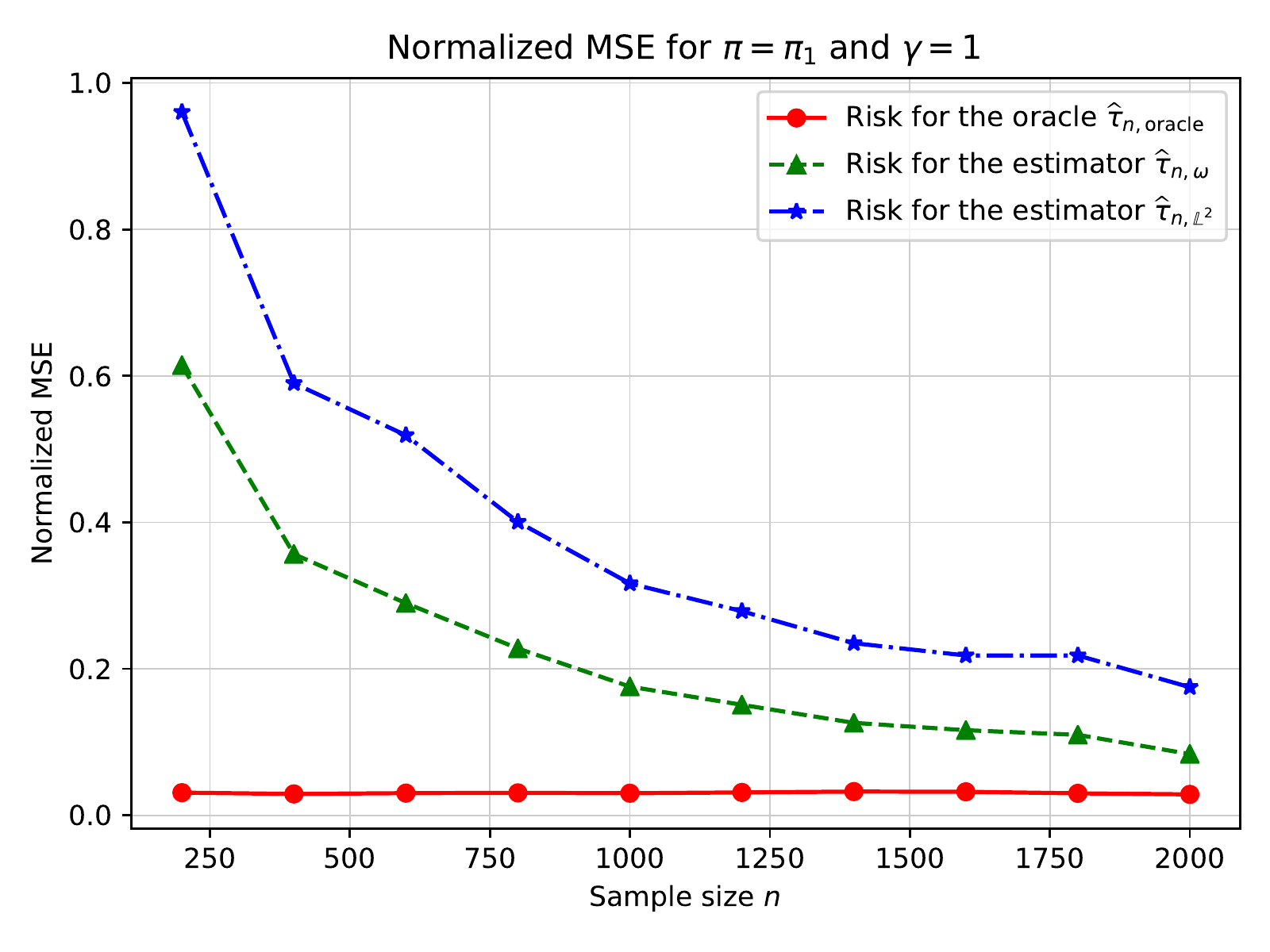}
    \caption{$\gamma = 1, ~ \propscore = \propscore_1$}
    \end{subfigure}
     \begin{subfigure}{\figwidth\linewidth}
    \centering
    \includegraphics[width=\linewidth]{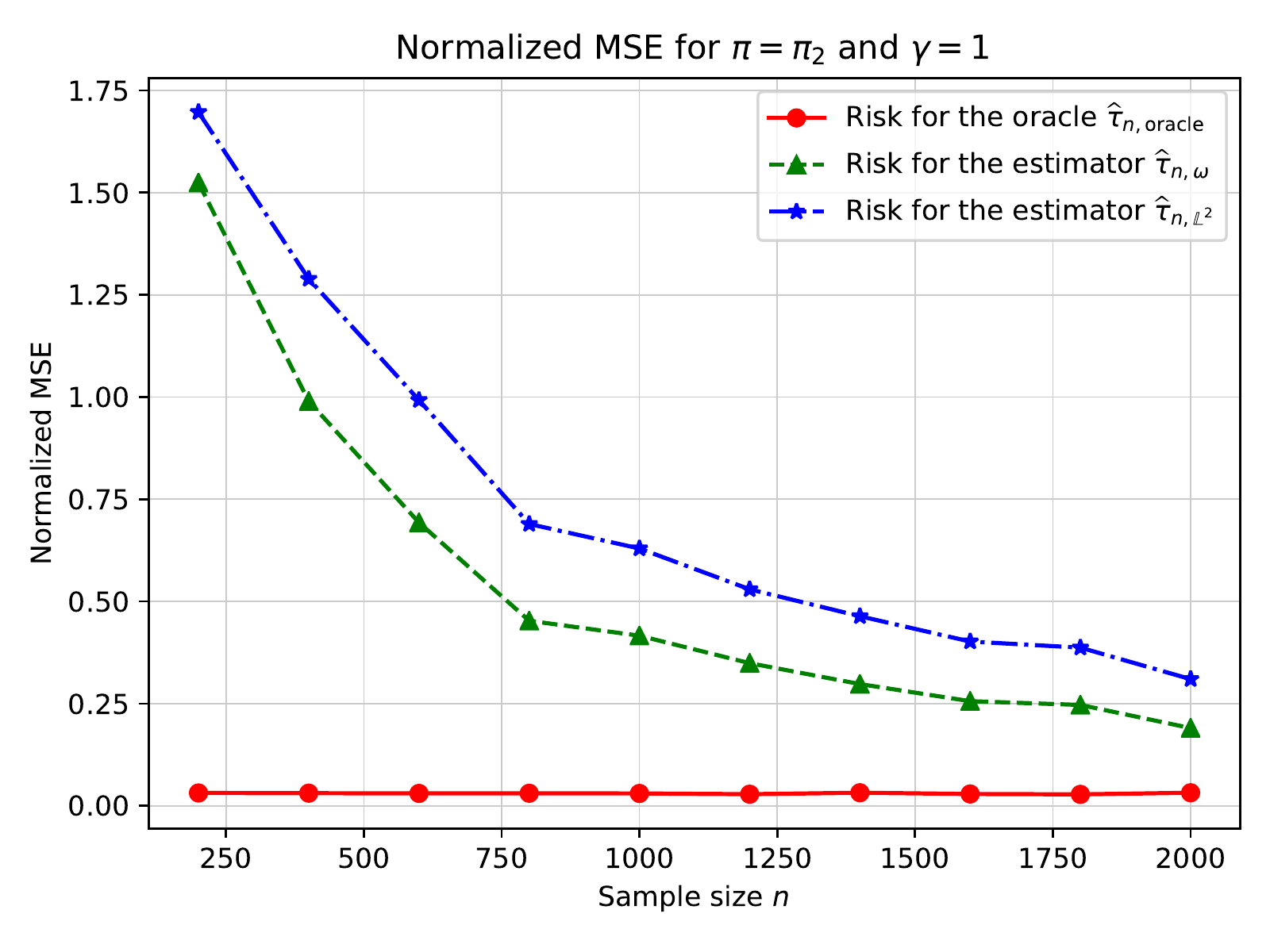}
    \caption{$\gamma = 1, ~ \propscore = \propscore_2$}
    \end{subfigure}
    \caption{Plots of the normalized MSE $\numobs \cdot \Exs
      [\abss{\tauhat_{\numobs, \diamond} - \taustar}]$ for $\diamond
      \in \{\omega, \Ltwospace, \mathrm{oracle}\}$ versus the sample
      size.  Each marker corresponds to a Monte Carlo estimate based
      on the empirical average of $1000$ independent runs.  As
      indicated in the figure titles, panels (a--f) show the
      normalized MSE of estimators for combinations of parameters:
      exponent $\gamma \in \{0, 0.5, 1 \}$ in the top, middle and
      bottom rows respectively, and propensity scores $\propscore_1$
      and $\propscore_2$ in the left and right columns,
      respectively. For each run, we used $5$-fold cross validation to
      choose the value of regularization parameter $\lambda_\numobs
      \in [10^{-1}, 10^2]$.}
    \label{fig:simulation}
\end{figure}

\paragraph{Simulation set-up and results:}

For each choice of exponent $\gamma \in \{0, 0.5, 1 \}$ and each
choice of propensity score $\propscore \in \{\propscore_1,
\propscore_2 \}$, we implemented the reweighted estimator
$\tauhat_{\numobs, \omega}$, the standard estimator
$\tauhat_{\numobs, \Ltwospace}$ and the oracle estimator
$\tauhat_{\numobs, \mathrm{oracle}}$.  For each simulation, we varied
the sample size over the range $\numobs \in \{2000, 4000, \ldots,
18000, 20000\}$.  For each run, we use $5$-fold cross validation to
choose the value of regularization parameter $\lambda_\numobs \in
[10^{-1}, 10^2]$.  For each estimator and choice of simulation
parameters, we performed a total of $1000$ independent runs, and used
them to form a Monte Carlo estimate of the true MSE.

\Cref{fig:simulation} provides plots of the $\numobs$-rescaled
mean-squared error~\eqref{eq:normalized-mse-in-simulation} versus the
sample size $\numobs$ for each of the three estimators in each of the
six set-ups (three choices of $\gamma$, crossed with two choices of
propensity score).  In order to interpret the results, first note that
consistent with the classical theory, the $\numobs$-rescaled MSE of
the oracle estimator stays at a constant level for different sample
sizes.  (There are small fluctuations, to be expected, since the
quantity $\tauhat_{\numobs, \mathrm{oracle}}$ itself is an empirical
average over $\numobs$ samples.) Due to the design of our problem
instances, the na\"{i}ve IPW estimator~\eqref{eq:ipw-estimator-simple}
has much larger mean-squared error; in fact, it is so large that we do
not include it in the plot, since doing so would change the scaling of
the vertical axis. On the other hand, both the reweighted KRR
two-stage estimate $\tauhat_{\numobs, \omega}$ and the standard KRR
two-stage estimate $\tauhat_{\numobs, \Ltwospace}$ exhibit the elbow
effect suggested by our theory: when the sample size is relatively
small, the high-order terms in the risk dominate, yielding a large
normalized MSE.  However, as the sample size increases, these
high-order terms decay at a faster rate, so that the renormalized MSE
eventually converges to the asymptotically optimal limit (i.e., the
risk of the oracle estimator $\tauhat_{\numobs, \mathrm{oracle}}$). In
all our simulation instances, the weighted estimator
$\tauhat_{\numobs, \omega}$, which uses a \emph{reweighted}
non-parametric least-squares estimate in the first stage, outperforms
the standard two-stage estimator $\tauhat_{\numobs, \Ltwospace}$ that
does not reweight the objective.  Again, this behavior is to be
expected from our theory: in our bounds, the excess MSE due to errors
in estimating the treatment effect is measured using the weighted
norm.


\subsection{Implications for particular models}
\label{SecExamples}

We now return to our theoretical thread, and illustrate the
consequences of our general theory for some concrete classes of
outcome models.

\subsubsection{Standard linear functions}
\label{sec:consequence:linear}
We begin with the simplest case, namely that of linear outcome
functions. For each $j = 1, \ldots, \usedim$, let $\phi_j: \Xspace
\times \actionspace \rightarrow \real$ be a basis function, and
consider functions that are linear in this
representation---viz. $f_\theta(\state, \action) = \sum_{j=1}^\usedim
\theta_j \phi_j(\state, \action)$ for some parameter vector $\theta
\in \real^\usedim$.  For a radius\footnote{We introduce this radius
only to ensure compactness; in our final bound, the dependence on
$\RadTwo$ is exponentially decaying, so that it is of little
consequence.}  $\RadTwo > 0$, we define the function class
\begin{align*}
\funcClass & \mydefn \Big \{ f_\theta \mid \: \|\theta\|_2 \leq
\RadTwo \Big \}.
\end{align*}
Our result assumes the existence of the following moment matrices:
\begin{align*}
    \Sigma \mydefn \Exs \left[ \frac{\weightfunc^2(\State,
        \Action)}{\propscore^2(\State, \Action)} \phi(\State, \Action)
      \phi (\State, \Action)^\top \right], \quad \mbox{and} \quad
    \Gamma_\sigma \mydefn \Exs \left[ \frac{\weightfunc^4(\State,
        \Action)}{\propscore^4(\State, \Action)} \sigma^2(\State,
      \Action) \phi(\State, \Action) \phi(\State, \Action)^\top
      \right].
\end{align*}
With this set-up, we have:
\begin{corollary}
\label{cor:example-linear}
Under the small-ball condition~\ref{assume:small-ball}, given a sample
size satisfying the lower bound \mbox{$\numobs \geq c_0 \big\{ \usedim
  + \log (\RadTwo \lammax (\Sigma)) \big\}$,} the estimate
$\tauhat_\numobs$ satisfies the bound
\begin{align}
  \Exs \Big[ \abss{\tauhat_\numobs - \taustar}^2 \Big] & \leq
  \frac{1}{\numobs} \left \{ \vstar^2 + c \inf_{\plainmu \in
    \funcClass} \weightednorm{\plainmu - \treateff}^2 \right \} +
  \frac{c}{\numobs^2} \trace \Big( \Sigma^{-1} \Gamma_\sigma \Big),\label{eq:cor-example-linear}
\end{align}
where the constants $(c_0, c)$ depend only on the small-ball
parameters $(\smallballcon, \smallballprob)$.
\end{corollary}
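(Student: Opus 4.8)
The plan is to specialize~\Cref{cor:least-sqr-split-estimator} to the linear class $\funcClass = \{f_\theta : \|\theta\|_2 \leq \RadTwo\}$. First I would verify its hypotheses: $\funcClass$ is convex, and being a closed bounded subset of the finite-dimensional span of $\{\phi_1,\ldots,\phi_\usedim\}$ it is compact in $\Ltwospace_\omega$ (existence of $\Sigma$ forces $\phi_j \in \Ltwospace_\omega$), so~\Cref{assume:convset} holds, while~\Cref{assume:small-ball} is assumed in the statement. The corollary then reduces to evaluating, for this class, the two critical radii $\radone_{\numobs/2}(\plainmu)$, $\radtwo_{\numobs/2}(\plainmu)$ and the diameter $\diameter_\omega(\funcClass \cup \{\treateff\})$ entering the error term $\delta^2_\numobs(\plainmu;\funcClass)$, after which the bound follows by bookkeeping.

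\textbf{Reduction to an ellipsoid.} Fix $\plainmu = f_{\theta_0} \in \funcClass$. Every $h \in (\funcClass - \plainmu) \cap \ball_\omega(r)$ equals $f_\delta$ for some $\delta$ with $\weightednorm{f_\delta}^2 = \delta^\top \Sigma \delta \leq r^2$; dropping the extra constraint $\|\theta_0 + \delta\|_2 \leq \RadTwo$ (which only shrinks the index set) and writing $f_\delta(\State_i,\Action_i) = \phi(\State_i,\Action_i)^\top \delta$, both Rademacher functionals become linear in $\delta$ over the ellipsoid $\{\delta : \delta^\top \Sigma \delta \leq r^2\}$. Dualizing the supremum of a linear functional over an ellipsoid,
\begin{align*}
\radeComplexity_\maux\big((\funcClass - \plainmu) \cap \ball_\omega(r)\big) \;\leq\; \Exs\Big[\sup_{\delta^\top \Sigma \delta \leq r^2} W^\top \delta\Big] \;=\; r\,\Exs\big[\sqrt{W^\top \Sigma^{-1} W}\,\big], \qquad W \mydefn \frac{1}{\maux}\sum_{i=1}^\maux \frac{\rade_i\,\weightfunc(\State_i,\Action_i)}{\propscore(\State_i,\Action_i)}\,\phi(\State_i,\Action_i),
\end{align*}
and similarly $\SquaredRade_\maux\big((\funcClass - \plainmu) \cap \ball_\omega(r)\big)^2 \leq r^2\,\Exs[V^\top \Sigma^{-1} V]$, where $V \mydefn \tfrac{1}{\maux}\sum_{i=1}^\maux \tfrac{\rade_i\,\weightfunc^2(\State_i,\Action_i)}{\propscore^2(\State_i,\Action_i)}\big(\outcome_i - \treateff(\State_i,\Action_i)\big)\phi(\State_i,\Action_i)$. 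Since the Rademacher signs annihilate all $i\neq j$ cross terms, $\Exs[WW^\top] = \tfrac{1}{\maux}\Sigma$, and using $\Exs[(\outcome_i - \treateff(\State_i,\Action_i))^2 \mid \State_i,\Action_i] = \sigma^2(\State_i,\Action_i)$ also $\Exs[VV^\top] = \tfrac{1}{\maux}\Gamma_\sigma$; together with Jensen's inequality this yields
\begin{align*}
\radeComplexity_\maux\big((\funcClass - \plainmu) \cap \ball_\omega(r)\big) \;\leq\; r\sqrt{\usedim/\maux}, \qquad \SquaredRade_\maux\big((\funcClass - \plainmu) \cap \ball_\omega(r)\big) \;\leq\; r\sqrt{\trace(\Sigma^{-1}\Gamma_\sigma)/\maux}.
\end{align*}

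\textbf{Fixed points, diameter, and assembly.} Both bounds are linear in $r$, so inequalities~\eqref{eq:defn-critical-radius-squared}--\eqref{eq:defn-critical-radius-plain} become independent of $r$. For~\eqref{eq:defn-critical-radius-plain}, $\tfrac1r\radeComplexity_\maux(\cdots) \leq \sqrt{\usedim/\maux}$, which is at most $\tfrac{\smallballcon\smallballprob}{32}$ once $\maux \geq 1024\,\usedim/(\smallballcon^2\smallballprob^2)$; in that regime every $r \geq 0$ solves the inequality, so $\radtwo_\maux(\plainmu) = 0$. Taking $\maux = \numobs/2$, the hypothesis $\numobs \geq c_0\usedim$ with $c_0$ large (depending on $(\smallballcon,\smallballprob)$ only) forces $\radtwo_{\numobs/2}(\plainmu) = 0$. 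For~\eqref{eq:defn-critical-radius-squared}, $\tfrac1r\SquaredRade_\maux(\cdots) \leq \sqrt{\trace(\Sigma^{-1}\Gamma_\sigma)/\maux}$, so the smallest solution satisfies $\radone_\maux(\plainmu)^2 \leq \trace(\Sigma^{-1}\Gamma_\sigma)/\maux$, which for $\maux = \numobs/2$ reads $\radone_{\numobs/2}(\plainmu)^2 \leq 2\trace(\Sigma^{-1}\Gamma_\sigma)/\numobs$, uniformly in $\plainmu$. For the diameter, $\weightednorm{f_\theta - f_{\theta'}}^2 = (\theta - \theta')^\top \Sigma(\theta - \theta') \leq \lammax(\Sigma)\|\theta - \theta'\|_2^2$ gives $\diameter_\omega(\funcClass) \leq 2\RadTwo\sqrt{\lammax(\Sigma)}$, and the triangle inequality then yields $\diameter_\omega(\funcClass \cup \{\treateff\}) \leq 2\RadTwo\sqrt{\lammax(\Sigma)} + \inf_{\plainmu \in \funcClass}\weightednorm{\plainmu - \treateff}$. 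Substituting into~\eqref{eq:cor-least-sqr-split-estimator-risk}: the $\radone_{\numobs/2}^2$ contribution equals $\tfrac{c}{\numobs}\cdot\tfrac{2\trace(\Sigma^{-1}\Gamma_\sigma)}{\numobs}$, matching the claimed $\tfrac{c}{\numobs^2}\trace(\Sigma^{-1}\Gamma_\sigma)$; the $e^{-c'\numobs}\diameter_\omega^2(\funcClass \cup \{\treateff\})$ term splits into a multiple of $\inf_\plainmu\weightednorm{\plainmu - \treateff}^2$ (absorbed into the approximation term by enlarging $c$) and a multiple of $e^{-c'\numobs}\RadTwo^2\lammax(\Sigma)$, which is $\bigoh(\numobs^{-2})$ under the sample-size bound $\numobs \geq c_0\log(\RadTwo\lammax(\Sigma))$; and $\radtwo_{\numobs/2} = 0$. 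Taking the infimum over $\plainmu$ then yields the claimed bound~\eqref{eq:cor-example-linear}.

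\textbf{Anticipated obstacle.} There is no conceptual difficulty; the one delicate point is choosing the universal constant $c_0$ large enough, in terms of $(\smallballcon,\smallballprob)$ alone, so that it simultaneously forces $\radtwo_{\numobs/2}(\plainmu) = 0$ (through $\numobs/2 \gtrsim \usedim/(\smallballcon^2\smallballprob^2)$) and drives $e^{-c'\numobs}\RadTwo^2\lammax(\Sigma)$ below the other terms (through $\numobs \gtrsim \log(\RadTwo\lammax(\Sigma))$), while all remaining constants stay dependent only on the small-ball parameters. Everything else is the routine ellipsoidal dualization together with the two second-moment identities $\Exs[WW^\top] = \Sigma/\maux$ and $\Exs[VV^\top] = \Gamma_\sigma/\maux$.
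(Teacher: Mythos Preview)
Your proposal is correct and follows essentially the same approach as the paper's proof: both contain the localized class in the ellipsoid $\{f_\theta : \theta^\top \Sigma \theta \leq r^2\}$, dualize the Rademacher suprema to obtain $\radeComplexity_\maux \leq r\sqrt{\usedim/\maux}$ and $\SquaredRade_\maux^2 \leq r^2\trace(\Sigma^{-1}\Gamma_\sigma)/\maux$ via the second-moment identities, read off the critical radii, and substitute into \Cref{cor:least-sqr-split-estimator}. Your treatment of the exponentially decaying diameter term and the role of the $\log(\RadTwo\lammax(\Sigma))$ sample-size requirement is in fact more explicit than the paper's, which simply states ``Combining with Theorem~\ref{cor:least-sqr-split-estimator} completes the proof.''
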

\noindent See Appendix~\ref{app:subsec-proof-cor-example-linear} for
the proof of this corollary.

A few remarks are in order. First, Corollary~\ref{cor:example-linear}
is valid in the regime $\numobs \gtrsim \usedim$, and the higher order
term scales as $\order{\usedim / \numobs^2}$ in the worst
case. Consequently, the optimal efficiency $\vstar^2 +
\inf_{\plaintreateff \in \funcClass} \weightednorm{\plaintreateff -
  \treateff}^2$ is achieved when the sample size $\numobs$ exceeds the
dimension $\usedim$ for linear models.\footnote{We note in passing
that the constant pre-factor $c$ in front of the term $\numobs^{-1}
\inf_{\plainmu \in \funcClass} \weightednorm{\plainmu - \treateff}^2$
can be reduced to $1$ using the arguments in
Corollary~\ref{thm:normal-approx-least-sqr}.}

It is worth noting, however, that in the well-specified case with
$\treateff \in \funcClass$, the high-order term $\tfrac{c}{\numobs^2}
\trace (\Sigma^{-1} \Gamma_\sigma)$ in
equation~\eqref{eq:cor-example-linear} does not necessarily correspond
to the optimal risk for estimating the function $\treateff$ under the
weighted norm $\weightednorm{\cdot}$. Indeed, in order to estimate the
function $\treateff$ with the optimal semi-parametric efficiency under
a linear model, an estimator that reweights samples with the function
$\tfrac{1}{\sigma^2(\State_i, \Action_i)}$ is the optimal choice,
leading to a higher order term of the form $\frac{c}{\numobs^2}\trace
(\widebar{\Sigma}^{-1} \Sigma)$, where $\widebar{\Sigma} \mydefn \Exs
\left[ \frac{1}{\sigma^2(\State, \Action)} \phi(\State, \Action)
  \phi(\State, \Action)^\top \right]$.\footnote{Note that
$\begin{bmatrix} \bar{\Sigma} & \Sigma \\ \Sigma&
  \Gamma_\sigma \end{bmatrix} = \cov \Big( \begin{bmatrix}
  \sigma(\State, \Action)^{-1} \phi (\State, \Action)
  \\ \tfrac{\weightfunc (\State, \Action)}{\propscore (\State,
    \Action)} \sigma (\State, \Action) \phi (\State,
  \Action) \end{bmatrix} \Big) \succeq 0$. Taking the Schur complement
we obtain that $\Gamma_\sigma \succeq \Sigma \widebar{\Sigma}^{-1}
\Sigma$, which implies that $\trace(\Sigma^{-1} \Gamma_\sigma) \geq
\trace (\widebar{\Sigma}^{-1} \Sigma)$.}  In general, the question of
achieving optimality with respect to both the approximation error and
high-order terms (under the $\weightednorm{\cdot}$-norm) is currently
open.


\subsubsection{Sparse linear models}

Now we turn to sparse linear models for the outcome function. Recall
the basis function set-up from~\Cref{sec:consequence:linear}, and
the linear functions $f_\theta = \sum_{j=1}^\usedim \theta_j
\phi_j(\state, \action)$.  Given a radius $\RadOne > 0$, consider the
class of linear functions induced by parameters with bounded
$\ell_1$-norm---viz.
\begin{align*}
  \funcClass \mydefn \Big\{ f_\theta \; \mid \; \vecnorm{\theta}{1} \leq
  \RadOne \Big\}.
\end{align*}
Sparse linear models of this type arise in many applications, and have
been the subject of intensive study (e.g., see the
books~\cite{HasTibWai15, wainwright2019high} and references therein).

We assume that the basis functions and outcome noise $\outcome -
\treateff(\State, \Action)$ satisfy the moment bounds
\begin{subequations}
\label{eq:subgaussian-in-sparse-linear-regression}
\begin{align}
\Exs \Big[ \abss{\frac{\weightfunc (\State, \Action)}{\propscore
      (\State, \Action)} \big(\outcome - \treateff (\State, \Action)
    \big) }^\ell \Big] & \leq (\varbound \sqrt{\ell})^\ell, \quad
\mbox{for any $\ell = 1, 2, \ldots$, and}\\
\max_{j = 1, \ldots, \usedim} \Exs \Big[ \abss{\frac{\weightfunc
      (\State, \Action)}{\propscore (\State, \Action)}\phi_j(\State,
    \Action)}^\ell \Big] & \leq (\subgaussian \sqrt{\ell})^\ell, \quad
\mbox{for any $\ell = 1, 2, \ldots$.}
\end{align}
\end{subequations}
Under these conditions, we have the following guarantee:
\begin{corollary}
\label{cor:example-sparse}
Under the small-ball condition~\ref{assume:small-ball} and the moment
bounds~\eqref{eq:subgaussian-in-sparse-linear-regression}, for any
sparsity level $k = 1, \ldots, \usedim$ and sample size $\numobs$ such
that \mbox{$\numobs \geq c_0 \Big\{ \tfrac{ \subgaussian^2 k
    \log(\usedim) }{\lammin (\Sigma)} + \log^2(\usedim) + \log
  (\RadOne \cdot \lammax(\Sigma)) \Big\}$,} we have
\begin{align*}
\Exs \Big[ \abss{\tauhat_\numobs - \taustar}^2 \Big] \leq
\frac{\vstar^2}{\numobs} + \frac{c}{\numobs}~ \inf_{\substack{
    \vecnorm{\thetabar}{1} = \RadOne \\ \vecnorm{\thetabar}{0} \leq
    k}} \left\{ \weightednorm{\treateff -
  \inprod{\thetabar}{\phi(\cdot, \: \cdot)}}^2 + \frac{
  \varbound^2\vecnorm{\thetabar}{0} \log (\usedim)}{\numobs} \cdot
\frac{\subgaussian^2}{\lammin (\Sigma)} \right\},
\end{align*}
where the constants $(c_0, c)$ depend only on the small ball
parameters $(\smallballcon, \smallballprob)$.
\end{corollary}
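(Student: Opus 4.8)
The plan is to instantiate~\Cref{cor:least-sqr-split-estimator} with the function class $\funcClass = \{f_\theta \mid \vecnorm{\theta}{1} \leq \RadOne\}$; this reduces the corollary to (i) verifying the convexity/compactness hypothesis~\ref{assume:convset}, and (ii) bounding the critical radii $\radone_{\numobs/2}(\plainmu)$ and $\radtwo_{\numobs/2}(\plainmu)$ defined by~\eqref{eq:defn-critical-radius-squared}--\eqref{eq:defn-critical-radius-plain} for $k$-sparse functions $\plainmu = f_{\thetabar}$ lying on the $\ell_1$-sphere, and then restricting the infimum in~\eqref{eq:cor-least-sqr-split-estimator-risk} to such $\plainmu$. (The small-ball hypothesis~\ref{assume:small-ball} is part of the corollary's assumptions.) Step~(i) is routine: the $\ell_1$-ball is convex and compact in $\real^\usedim$, and the moment bound~\eqref{eq:subgaussian-in-sparse-linear-regression} gives $\weightednorm{\phi_j} < \infty$ for each coordinate, so $\theta \mapsto f_\theta$ is a bounded linear map and its image $\funcClass$ is a convex compact subset of $\Ltwospace_\omega$.

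Step~(ii) is the heart of the argument. Fix $\plainmu = f_{\thetabar}$ with $\vecnorm{\thetabar}{1} = \RadOne$ and set $S \defn \support(\thetabar)$, so that $|S| = \vecnorm{\thetabar}{0} \leq k$. For any $f_\theta \in \funcClass$, write $\Delta = \theta - \thetabar$; the constraint $\vecnorm{\theta}{1} \leq \RadOne = \vecnorm{\thetabar}{1}$ forces the familiar cone inclusion $\vecnorm{\Delta_{S^c}}{1} \leq \vecnorm{\Delta_S}{1}$, hence $\vecnorm{\Delta}{1} \leq 2 \sqrt{|S|}\, \vecnorm{\Delta}{2}$; combining this with $\weightednorm{f_\Delta}^2 = \Delta^\top \Sigma \Delta \geq \lammin(\Sigma)\, \vecnorm{\Delta}{2}^2$ shows that inside the localized set $(\funcClass - \plainmu) \cap \ball_\omega(r)$ the coefficient vector satisfies $\vecnorm{\Delta}{1} \leq 2 \sqrt{\vecnorm{\thetabar}{0}}\; r / \sqrt{\lammin(\Sigma)}$. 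The two Rademacher complexities~\eqref{eq:defn-rade-effective-noise}--\eqref{eq:defn-rade-intrinsic} of this linear class are at most $\big(\sup \vecnorm{\Delta}{1}\big)$ times the $\ell_\infty$-norm of a Rademacher-weighted coordinate average, namely of $\tfrac{\weightfunc(\State_i, \Action_i)}{\propscore(\State_i, \Action_i)} \phi(\State_i, \Action_i)$ for $\radeComplexity_\maux$, and of $\tfrac{\weightfunc^2(\State_i, \Action_i)}{\propscore^2(\State_i, \Action_i)} \big(\outcome_i - \treateff(\State_i, \Action_i)\big) \phi(\State_i, \Action_i)$ for $\SquaredRade_\maux$; the moment bounds~\eqref{eq:subgaussian-in-sparse-linear-regression} make the former coordinates sub-Gaussian with parameter $\lesssim \subgaussian$ and, by Cauchy--Schwarz applied to their moments, the latter sub-exponential with parameter $\lesssim \varbound \subgaussian$. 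A maximal inequality then yields
\begin{align*}
\tfrac{1}{r}\, \radeComplexity_\maux\big((\funcClass - \plainmu) \cap \ball_\omega(r)\big) \;\lesssim\; \subgaussian \sqrt{\tfrac{\vecnorm{\thetabar}{0} \log \usedim}{\maux\, \lammin(\Sigma)}}, \qquad \tfrac{1}{r}\, \SquaredRade_\maux\big((\funcClass - \plainmu) \cap \ball_\omega(r)\big) \;\lesssim\; \varbound \subgaussian \sqrt{\tfrac{\vecnorm{\thetabar}{0} \log \usedim}{\maux\, \lammin(\Sigma)}},
\end{align*}
with both right-hand sides independent of $r$, where the bound on $\SquaredRade_\maux$ additionally invokes $\maux \gtrsim \log^2 \usedim$ to control the sub-exponential coordinate averages uniformly at the Gaussian rate. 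Since $\vecnorm{\thetabar}{0} \leq k$, the stated sample-size lower bound makes the first right-hand side at most $\smallballcon \smallballprob / 32$, so that $\radtwo_{\numobs/2}(\plainmu) = 0$; the second then gives $\radone_{\numobs/2}^2(\plainmu) \lesssim \varbound^2 \subgaussian^2\, \vecnorm{\thetabar}{0} \log \usedim / \big(\numobs\, \lammin(\Sigma)\big)$.

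Finally I assemble the bound: substituting $\delta^2_\numobs(\plainmu; \funcClass) = \radone_{\numobs/2}^2(\plainmu) + \radtwo_{\numobs/2}^2(\plainmu) + e^{-c' \numobs} \diameter_\omega^2(\funcClass \cup \{\treateff\})$ into~\eqref{eq:cor-least-sqr-split-estimator-risk} and restricting the infimum to $f_{\thetabar}$ with $\vecnorm{\thetabar}{1} = \RadOne$ and $\vecnorm{\thetabar}{0} \leq k$ reproduces the claimed inequality, once the exponential term is dispatched: the crude bound $\diameter_\omega(\funcClass) \leq 2 \RadOne \sqrt{\lammax(\Sigma)}$ makes $e^{-c' \numobs} \diameter_\omega^2(\funcClass)$ negligible as soon as $\numobs \gtrsim \log(\RadOne \lammax(\Sigma))$, while the part of $\diameter_\omega(\funcClass \cup \{\treateff\})$ contributed by $\treateff$ is dominated by the approximation error $\weightednorm{\treateff - \inprod{\thetabar}{\phi}}^2$ already present in the bound. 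The step I expect to be the main obstacle is~(ii): obtaining the \emph{sparse} rate $\tfrac{k \log \usedim}{\numobs}$ rather than an $\RadOne^2$-type rate hinges on combining the localization to $\ball_\omega(r)$ with the cone inclusion to bound $\vecnorm{\Delta}{1}$ by $\sqrt{k}\, r / \sqrt{\lammin(\Sigma)}$, and the fact that the effective noise $\tfrac{\weightfunc^2}{\propscore^2}(\outcome - \treateff) \phi_j$ driving $\SquaredRade_\maux$ is only sub-exponential—so that its moments must be controlled via Cauchy--Schwarz from the sub-Gaussian factors and its uniform control over the $\ell_1$-ball is correspondingly more delicate—is precisely what forces the $\log^2 \usedim$ term in the sample-size requirement.
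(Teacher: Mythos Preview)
Your proposal is correct and follows essentially the same route as the paper: fix a $k$-sparse $\thetabar$ on the $\ell_1$-sphere, use the cone inclusion $\vecnorm{\Delta_{S^c}}{1} \leq \vecnorm{\Delta_S}{1}$ together with $\weightednorm{f_\Delta}^2 = \Delta^\top \Sigma \Delta$ to bound $\vecnorm{\Delta}{1}$ on the localized set, pass to the $\ell_\infty$ maximum of coordinate-wise Rademacher averages (sub-Gaussian for $\radeComplexity_\maux$, sub-exponential for $\SquaredRade_\maux$, the latter requiring $\maux \gtrsim \log^2 \usedim$), and substitute the resulting critical radii into~\Cref{cor:least-sqr-split-estimator}. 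Your treatment of the exponentially decaying diameter term is in fact more explicit than the paper's.
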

\noindent See Appendix~\ref{app:subsec-proof-cor-example-sparse} for
the proof of this corollary.

A few remarks are in order. First, the additional risk term compared
to the semiparametric efficient limit $\vstar^2 / \numobs$ is similar
to existing oracle inequalities for sparse linear regression (e.g., \S
7.3 in the book~\cite{wainwright2019high}).  Notably, it adapts to the
sparsity level of the approximating vector $\thetabar$. The complexity
of the auxiliary estimation task is characterized by the sparsity
level $\vecnorm{\thetabar}{0}$ of the target function, which appears
in both the high-order term of the risk bound and the sample size
requirement. On the other hand, note that the
$\weightednorm{\cdot}$-norm projection of the function $\treateff$ to
the set $\funcClass$ may not be sparse. Instead of depending on the
(potentially large) local complexity of such projection, the bound in
Corollary~\ref{cor:example-sparse} is adaptive to the trade-off
between the sparsity level $\vecnorm{\thetabar}{0}$ and the
approximation error $\weightednorm{\treateff - \inprod{
    \thetabar}{\phi(\cdot, \: \cdot)}}$.


\subsubsection{H\"{o}lder smoothness classes}

Let us now consider a non-parametric class of outcome functions.  With
state space $\Xspace = [0, 1]^{d_x}$ and action space $\actionspace =
[0, 1]^{d_a}$, define the total dimension $\pdim \defn d_x +
d_a$. Given an integer order of smoothness $k > 0$, consider the class
\begin{align*}
\funcClass_k \mydefn\Big\{ \plainmu: [0, 1]^\pdim \rightarrow \real
\; \mid \; \sup_{(\state, \action) \in [0, 1]^\pdim} |\partial^\alpha
\plainmu (\state, \action)| \leq 1 \quad \mbox{for any multi-index
  $\alpha$ satisfying $\vecnorm{\alpha}{1} \leq k$} \Big\}.
\end{align*}
Here for a multi-index $\alpha \in \Nat^p$, the quantity
$\partial^\alpha f$ denotes the mixed partial derivative
\begin{align*}
\partial^\alpha f(\state, \action) & \defn \Big(\prod_{j=1}^p
\frac{\partial^{\alpha_j}}{\partial x_j^{\alpha_j}} \Big) f(\state,
\action).
\end{align*}
We impose the following assumptions on the likelihood ratio and random
noise
\begin{subequations}
\label{eq:subgaussian-in-nonparametric}
\begin{align}
\Exs \Big[ \abss{\frac{\weightfunc (\State, \Action)}{\propscore
      (\State, \Action)} \big(\outcome - \treateff (\State, \Action)
    \big) }^\ell \Big] &\leq (\varbound \sqrt{\ell})^\ell \quad
\mbox{and}\\
\Exs \Big[ \abss{\frac{\weightfunc (\State, \Action)}{\propscore
      (\State, \Action)} }^\ell \Big] & \leq (\subgaussian
\sqrt{\ell})^\ell, \quad \mbox{for any $\ell \in \mathbb{N}_+$}.
\end{align}
Additionally, we impose the $L_2-L_4$ hypercontractivity condition
\begin{align}
\label{eq:hypercontractivity-in-nonparam}  
  \sqrt{\Exs \Big[\big( \tfrac{\weightfunc (\State,
        \Action)}{\propscore (\State, \Action)} f (\State, \Action)
      \big)^4 \Big]} \leq \ctwofour \; \Exs \Big[\big( \tfrac{\weightfunc
      (\State, \Action)}{\propscore (\State, \Action)} f (\State,
    \Action) \big)^2 \Big] \quad \mbox{for any $f \in \funcClass_k$,}
\end{align}
\end{subequations}
which is slightly stronger than the small-ball
condition~\ref{assume:small-ball}.

Our result involves the sequences $\widebar{\radtwo}_\numobs \defn
c_{\subgaussian, \pdim/k} \; \numobs^{- k / \pdim} \log \numobs$, and
\begin{align*}
  \widebar{\radone}_\numobs & \defn c_{\subgaussian, \pdim/k}
  \varbound \cdot \begin{cases} \numobs^{- \frac{k}{2 k + \pdim}} &
    \mbox{if $\pdim < 2k$} \\
    \numobs^{-1/4} \; \sqrt{\log \numobs} & \mbox{if $\pdim = 2k$,} \\
    \numobs^{- \frac{k}{2 \pdim}} & \mbox{if $\pdim > 2k$,}
  \end{cases}
\end{align*}
where the constant $c_{\subgaussian, \pdim/k}$ depends on the tuple
$(\subgaussian, \pdim/k, \ctwofour)$.

With this notation, when the outcome function is approximated by the
class $\funcClass_k$, we have the following guarantee for treatment
effect estimation:
\begin{corollary}
\label{cor:example-holder}
Under the small-ball condition~\ref{assume:small-ball} and the moment
bounds~\eqref{eq:subgaussian-in-nonparametric}(a)--(c), we have
\begin{align}
\label{EqnHolderBound}  
  \Exs \Big[ \abss{\tauhat_\numobs - \taustar}^2 \Big] \leq
  \frac{1}{\numobs} \left \{ \vstar^2 + c \inf_{\plainmu \in
    \funcClass_k} \weightednorm{\treateff - \plainmu}^2 \right \} +
  \frac{c}{\numobs} \Big \{ \widebar{\radone}_\numobs^2 +
  \widebar{\radtwo}_\numobs^2 \Big\},
\end{align}
where the constant $c$ depends only on the small-ball parameters
$(\smallballcon, \smallballprob)$.
\end{corollary}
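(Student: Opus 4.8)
The plan is to apply \Cref{cor:least-sqr-split-estimator} with function class $\funcClass = \funcClass_k$, so that the only real work is (i) verifying the structural hypotheses of that theorem and (ii) bounding the critical radii $\radone_{\numobs/2}(\plainmu)$ and $\radtwo_{\numobs/2}(\plainmu)$ \emph{uniformly} over $\plainmu \in \funcClass_k$, in such a way that the instance-dependent error $\delta^2_\numobs(\plainmu;\funcClass_k)$ appearing there is dominated by $\widebar{\radone}_\numobs^2 + \widebar{\radtwo}_\numobs^2$ for every $\plainmu$; the bound~\eqref{EqnHolderBound} then follows by pulling this uniform bound out of the infimum over $\plainmu$. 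First I would check Assumption~\ref{assume:convset}: a H\"older ball is convex, and is compact in $\Ltwospace_\omega$ by Arzel\`a--Ascoli combined with the moment bound~\eqref{eq:subgaussian-in-nonparametric}(b), which gives $\weightednorm{h} \le c\,\subgaussian\,\|h\|_\infty$. The small-ball condition~\ref{assume:small-ball} follows from the hypercontractivity assumption~\eqref{eq:hypercontractivity-in-nonparam} via the Paley--Zygmund inequality (with $\smallballcon = 1/\sqrt2$ and $\smallballprob \asymp \ctwofour^{-2}$). Since the multi-index $\alpha = 0$ is admissible, every $\plainmu \in \funcClass_k$ has $\|\plainmu\|_\infty \le 1$, so $\diameter_\omega(\funcClass_k \cup \{\treateff\})$ is finite whenever the right-hand side of~\eqref{EqnHolderBound} is finite (otherwise the claim is vacuous); consequently the term $e^{-c'\numobs}\diameter_\omega^2$ inside $\delta^2_\numobs$ is of smaller order than $\widebar{\radtwo}_\numobs^2$ after adjusting constants, and may be absorbed.

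The key analytic input is the classical metric entropy estimate $\log N(\funcClass_k, \|\cdot\|_\infty; s) \asymp s^{-\pdim/k}$ for $s \in (0,1]$. Using $\weightednorm{h} \le c\,\subgaussian\,\|h\|_\infty$, any $\|\cdot\|_\infty$-cover transfers to a $\weightednorm{\cdot}$-cover, so (using that $\funcClass_k - \plainmu$ is star-shaped about the origin) $\log N\big((\funcClass_k - \plainmu)\cap\ball_\omega(r), \weightednorm{\cdot}; s\big) \le c\,(\subgaussian/s)^{\pdim/k}$ uniformly in $\plainmu$. Moreover, hypercontractivity~\eqref{eq:hypercontractivity-in-nonparam} ensures that the empirical $L^2$-geometry of the reweighted differences $\{\tfrac{\weightfunc}{\propscore}h : h \in \funcClass_k - \plainmu\}$ is comparable to the population geometry measured by $\weightednorm{\cdot}$, which is what justifies the localization step in the chaining arguments below.

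For $\radtwo_{\numobs/2}(\plainmu)$, I would apply Dudley's entropy integral to the reweighted localized class, giving $\radeComplexity_{\numobs/2}\big((\funcClass_k - \plainmu)\cap\ball_\omega(r)\big) \lesssim \tfrac{1}{\sqrt{\numobs}}\,\dudley_2\big(\funcClass_k, \weightednorm{\cdot}; [\delta,r]\big) + \delta$ for a suitable truncation $\delta$; substituting $\log N \asymp (\subgaussian/s)^{\pdim/k}$ and solving the fixed-point inequality~\eqref{eq:defn-critical-radius-plain} yields $\radtwo_{\numobs/2}(\plainmu) \lesssim \subgaussian\,\numobs^{-k/\pdim}$, and (up to a $\log\numobs$ factor in the borderline and $\pdim > 2k$ cases) the same rate in all regimes, matching $\widebar{\radtwo}_\numobs = c_{\subgaussian, \pdim/k}\,\numobs^{-k/\pdim}\log\numobs$. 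For $\radone_{\numobs/2}(\plainmu)$, which governs the fixed point~\eqref{eq:defn-critical-radius-squared} for the squared multiplier process $\SquaredRade_{\numobs/2}^2$, I would write the $i$-th summand as the product of the effective-noise multiplier $\tfrac{\weightfunc_i}{\propscore_i}\big(\outcome_i - \treateff(\State_i,\Action_i)\big)$---which has sub-Gaussian moment growth $(\varbound\sqrt{\ell})^\ell$ by~\eqref{eq:subgaussian-in-nonparametric}(a)---with the reweighted function $\tfrac{\weightfunc_i}{\propscore_i}h(\State_i,\Action_i)$ of $\weightednorm{\cdot}$-radius at most $r$. Using Cauchy--Schwarz together with~\eqref{eq:subgaussian-in-nonparametric}(a) (at $\ell=4$) and hypercontractivity one gets $\Exs\big[(\tfrac{\weightfunc}{\propscore}(\outcome - \treateff))^2(\tfrac{\weightfunc}{\propscore}h)^2\big] \le 4\,\varbound^2\ctwofour\,\weightednorm{h}^2$, so a chaining bound for multiplier processes gives $\SquaredRade_{\numobs/2}^2\big((\funcClass_k - \plainmu)\cap\ball_\omega(r)\big) \lesssim \tfrac{\varbound^2\ctwofour}{\numobs}\,\dudley_2^2\big(\funcClass_k, \weightednorm{\cdot}; [\delta,r]\big)$ plus lower-order terms coming from the truncation scale and the sub-exponential tail of the product. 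Solving $\tfrac{1}{r}\SquaredRade_{\numobs/2} \le r$ then produces the three-regime rate $\numobs^{-k/(2k+\pdim)}$ (if $\pdim < 2k$), $\numobs^{-1/4}\sqrt{\log\numobs}$ (if $\pdim = 2k$), and $\numobs^{-k/(2\pdim)}$ (if $\pdim > 2k$), i.e., $\widebar{\radone}_\numobs$ up to the stated constants.

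I expect the main obstacle to be this last step: controlling the squared \emph{multiplier} process $\SquaredRade$ in the non-Donsker regime $\pdim > 2k$, where Dudley's entropy integral diverges at the origin and one must carefully balance the truncation scale in the chaining bound against a direct tail estimate for the supremum of the localized process, all while tracking the sub-exponential (rather than sub-Gaussian) behavior created by multiplying the noise term against functions in the class. Once the critical-radius bounds are in hand, the remaining steps---combining them via \Cref{cor:least-sqr-split-estimator}, passing from $\numobs/2$ to $\numobs$ in the polynomial rates, and discharging the exponentially small diameter term---are routine.
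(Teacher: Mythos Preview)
Your overall strategy is the same as the paper's: apply \Cref{cor:least-sqr-split-estimator} with $\funcClass = \funcClass_k$, and bound the two critical radii via chaining using the classical H\"older entropy $\log N(\funcClass_k,\|\cdot\|_\infty;s)\asymp s^{-\pdim/k}$. The gap is in the chaining step for $\radeComplexity_\maux$. You propose pure Dudley $\dudley_2$-chaining under $\weightednorm{\cdot}$, appealing to hypercontractivity to ``justify the localization.'' But the increments $\tfrac{1}{\maux}\sum_i \rade_i \tfrac{\weightfunc}{\propscore}(h-h')(\State_i,\Action_i)$ are \emph{not} sub-Gaussian with respect to $\weightednorm{h-h'}$: each summand has variance $\weightednorm{h-h'}^2$ but, since $\weightfunc/\propscore$ is only sub-Gaussian (not bounded), its $\psi_1$-norm is of order $\subgaussian\|h-h'\|_\infty$, yielding a Bernstein-type mixed tail rather than a Gaussian one. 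The hypercontractivity assumption~\eqref{eq:hypercontractivity-in-nonparam} controls the population $L^4/L^2$ ratio, not the uniform comparability of empirical and population $L^2$ metrics over the class, so it does not rescue sub-Gaussian Dudley. You correctly flag this sub-exponential issue for $\SquaredRade_\maux$ at the end---but the \emph{same} obstruction already arises for $\radeComplexity_\maux$, and your $\radtwo$-argument does not address it.

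What the paper does instead is use Adamczak's concentration inequality (Proposition~\ref{prop:concentration-adamczak}) to establish the mixed-tail increment bound, and then apply Dirksen's mixed-tail generic chaining (Proposition~\ref{prop:dirksen} and equation~\eqref{eq:mixed-tail-chaining-without-donsker}). The resulting bound on both $\radeComplexity_\maux$ and $\SquaredRade_\maux$ carries \emph{two} entropy integrals---a $\tfrac{1}{\sqrt{\maux}}\dudley_2(\cdot,\weightednorm{\cdot};[\delta,r])$ term and a $\tfrac{\subgaussian\log\maux}{\maux}\dudley_1(\cdot,\|\cdot\|_\infty;[\delta,2])$ term---plus a truncation $\delta$ chosen as $(\subgaussian/\maux)^{k/\pdim}$. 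It is this additional $\dudley_1$ contribution (together with the $\log\maux$ prefactor from Adamczak's inequality) that accounts for the $\log\numobs$ in $\widebar{\radtwo}_\numobs$. Once you replace your pure-$\dudley_2$ chaining by this mixed-tail version, the remaining steps---solving the fixed points in the three regimes, absorbing the exponential diameter term---go through as you describe.
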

\noindent See Appendix~\ref{app:subsec-proof-cor-example-holder} for
the proof of this corollary.\\

It is worth making a few comments about this result.  First, in the
high-noise regime where $\varbound \gtrsim 1$, the term
$\widebar{\radone}_{\numobs}$ is dominant.  This particular rate is
optimal in the Donsker regime ($\pdim < 2k$), but is sub-optimal when
$\pdim > 2 k$.  However, this sub-optimality only appears in
high-order terms, and is of lower order for a sample size\footnote{In
fact, this lower bound cannot be avoided, as shown by our analysis in
Section~\ref{subsubsec:lower-bound-examples}.}  $\numobs$ such that
$\log \numobs \gg (\pdim / k)$.  Indeed, even if the least-square
estimators is sub-optimal for nonparametric estimation in non-Donsker
classes, the reweighted least-square
estimator~\eqref{eq:least-square-estimator-in-stage-1} may still be
desirable, as it is able to approximate the projection of the function
$\treateff$ onto the class $\funcClass_k$, under the weighted norm
$\weightednorm{\cdot}$.


\subsubsection{Monotone functions}

We now consider a nonparametric problem that involves shape
constraints---namely, that of monotonic functions. Let $\phi: \Xspace
\times \actionspace \rightarrow [0, 1]$ be a one-dimensional feature
mapping. We consider the class of outcome functions that are monotonic
with respect to this feature---namely, the function class
\begin{align*}
\funcClass \mydefn \Big\{ (\state, \action) \rightarrow f \big( \phi
(\state, \action) \big) \; \mid \: f: [0,1] \rightarrow [0, 1] \mbox{
  is non-decreasing} \Big\}.
\end{align*}
We assume the outcome and likelihood ratio are uniformly
bounded---specifically, that
\begin{align}
\label{eq:boundedness-in-monotone-example}  
  \abss{\outcome_i} \leq 1 \quad \mbox{and} \quad
  \abss{\frac{\weightfunc (\State_i, \Action_i)}{\propscore (\State_i,
      \Action_i)}} \leq b \quad \mbox{almost surely for $i = 1, 2,
    \ldots, \numobs$.}
\end{align}   
Under these conditions, we have the following result:
\begin{corollary}
  \label{cor:example-monotonic}
Under the small-ball condition~\ref{assume:small-ball} and boundedness
condition~\eqref{eq:boundedness-in-monotone-example}, we have
\begin{align}
  \Exs \Big[ \abss{\tauhat_\numobs - \taustar}^2 \Big] & \leq
  \frac{1}{\numobs} \left \{ \vstar^2 + c \inf_{\plainmu \in
    \funcClass} \weightednorm{\plainmu - \treateff}^2 \right\} +
  \frac{c}{\numobs} \Big( \frac{b^2}{\numobs} \Big)^{2/3},
\end{align}
where the constants $(c_0, c)$ depend only on the small-ball
parameters $(\smallballcon, \smallballprob)$.
\end{corollary}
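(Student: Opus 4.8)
The plan is to derive \Cref{cor:example-monotonic} as a special case of \Cref{cor:least-sqr-split-estimator}, applied with the monotone class $\funcClass$ playing the role of the first-stage function class in the weighted least-squares estimator~\eqref{eq:least-square-estimator-in-stage-1}. Two of the three ingredients are immediate. The convexity/compactness assumption~\ref{assume:convset} holds because a convex combination of non-decreasing functions $[0,1]\to[0,1]$ is again non-decreasing with values in $[0,1]$ and composition with $\phi$ is linear, while compactness of $\funcClass$ in $\Ltwospace_\omega$ follows from Helly's selection principle together with the uniform bound $\|f\circ\phi\|_\infty\le 1$ and dominated convergence; and the small-ball condition~\ref{assume:small-ball} is assumed in the hypotheses. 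Thus the whole argument reduces to controlling, uniformly over $\plainmu\in\funcClass$, the instance-dependent estimation error $\delta^2_\numobs(\plainmu;\funcClass)=\radone_{\numobs/2}^2(\plainmu)+\radtwo_{\numobs/2}^2(\plainmu)+e^{-c'\numobs}\diameter^2_\omega(\funcClass\cup\{\treateff\})$ appearing in \Cref{cor:least-sqr-split-estimator}.

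The single nontrivial input is the classical metric-entropy estimate for uniformly bounded monotone functions: for any probability measure $\nu$ on $[0,1]$ one has $\log N\big(\{f:[0,1]\to[0,1]\text{ non-decreasing}\},\,\Ltwospace(\nu);\,s\big)\le C/s$ with $C$ universal. Since $\|f\circ\phi-g\circ\phi\|_{\Ltwospace(\probxstar\times\propscore)}=\|f-g\|_{\Ltwospace(\phi_{\#}(\probxstar\times\propscore))}$, the same $1/s$-bound holds for $\funcClass$ in the data-induced $L^2$-metric, hence also for the difference class $\DelF$ up to a factor of two; using the boundedness condition~\eqref{eq:boundedness-in-monotone-example}, which controls $|\weightfunc/\propscore|\le b$ and $|\outcome|\le 1$ (and hence $|\treateff|\le 1$, $|\outcome-\treateff|\le 2$), one transfers this estimate to the weighted norm and to the localized classes $(\funcClass-\plainmu)\cap\ball_\omega(r)$ appearing in the definitions~\eqref{eq:defn-rade-effective-noise}--\eqref{eq:defn-rade-intrinsic}. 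Because the effective noise multiplier in $\SquaredRade_\maux$ is bounded, both $\SquaredRade_\maux$ and $\radeComplexity_\maux$ are controlled by Dudley entropy integrals against a $1/s$-type entropy; evaluating these integrals and solving the fixed-point inequalities~\eqref{eq:defn-critical-radius-squared}--\eqref{eq:defn-critical-radius-plain} yields $\radtwo_{\numobs/2}^2(\plainmu)=\order{b^2/\numobs^2}$ and $\radone_{\numobs/2}^2(\plainmu)=\order{\big(b^2/\numobs\big)^{2/3}}$, uniformly in $\plainmu$, so the $\radone$-term dominates. Finally, $\diameter_\omega(\funcClass\cup\{\treateff\})\le 2b$ (from $\weightednorm{h}\le b\|h\|_\infty$ and $\|h\|_\infty\le 2$ on $\funcClass\cup\{\treateff\}$), so the residual term $e^{-c'\numobs}\diameter^2_\omega(\funcClass\cup\{\treateff\})=\order{b^2 e^{-c'\numobs}}$ is negligible compared with $(b^2/\numobs)^{2/3}$.

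Collecting these estimates gives $\sup_{\plainmu\in\funcClass}\delta^2_\numobs(\plainmu;\funcClass)\lesssim(b^2/\numobs)^{2/3}$, so bounding $\inf_{\plainmu\in\funcClass}\big\{\weightednorm{\plainmu-\treateff}^2+\delta^2_\numobs(\plainmu;\funcClass)\big\}\le\inf_{\plainmu\in\funcClass}\weightednorm{\plainmu-\treateff}^2+\sup_{\plainmu\in\funcClass}\delta^2_\numobs(\plainmu;\funcClass)$ in~\eqref{eq:cor-least-sqr-split-estimator-risk} produces the claimed inequality, with constants depending only on $(\smallballcon,\smallballprob)$.

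The main obstacle is the localized Rademacher-complexity computation in the second step. The delicate point is that localization in the weighted norm $\weightednorm{\cdot}$ does not induce localization in the plain $L^2$-norm (there is no lower bound on $\weightfunc/\propscore$), so the chaining must be carried out directly with respect to the weighted metric; one must then track carefully how the bound $b$ enters both the effective noise level and the weighted metric entropy of the monotone class, and check that the resulting fixed-point equations reproduce the $\numobs^{-2/3}$ rate and the stated dependence on $b$. A secondary technicality is the passage between empirical and population versions of the weighted norm inside the chaining bound, which is handled by a one-sided uniform-deviation argument that again relies on the boundedness condition~\eqref{eq:boundedness-in-monotone-example}.
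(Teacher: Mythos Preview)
Your high-level plan matches the paper's: apply \Cref{cor:least-sqr-split-estimator} and bound $\radone_{\numobs/2},\radtwo_{\numobs/2}$ via the $1/s$-entropy of bounded monotone functions. The difference is in the technique used for the localized Rademacher complexities. The paper does \emph{not} chain against covering numbers. Instead it passes to the transported class $\funcClassTemp=\{(\weightfunc/\propscore)\,f : f\in(\funcClass-\mubar)\cap\ball_\omega(r)\}$, which is uniformly bounded by $b$ and has plain-$L^2$ second moment at most $r^2$ (since $\|\tfrac{\weightfunc}{\propscore}f\|_{L^2}=\weightednorm{f}\le r$). It then builds explicit \emph{brackets} for $\funcClassTemp$ from brackets for monotone $[0,1]\to[0,1]$ functions---selecting the upper or lower envelope according to the sign of $\weightfunc$---and applies the van~der~Vaart--Wellner bracketing maximal inequality, which gives $\radeComplexity_\maux\lesssim \tfrac{1}{\sqrt{\maux}}\,J(r)\big(1+\tfrac{b\,J(r)}{r^2\sqrt{\maux}}\big)$ with bracketing integral $J(r)\asymp\sqrt{br}$. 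The virtue of bracketing here is that the localization is built in: the integral runs only up to the variance radius $r$, so no empirical-to-population comparison is needed. For $\SquaredRade_\maux$ the paper constructs analogous brackets on the class $\{(\weightfunc/\propscore)^2(y-\treateff)\,f\}$ and supplements with a functional Bernstein inequality to control the second moment.

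Your covering-number route can be made to work, but the step you call ``a secondary technicality''---relating empirical and population weighted norms inside the chaining---is exactly what bracketing sidesteps; without resolving it, the Dudley integral has no reason to terminate at $r$, and you get a non-localized bound. A minor discrepancy: the paper's bracketing computation (because of the Bernstein correction term $b\,J(r)/(r^2\sqrt{\maux})$) yields $\radtwo_\maux\lesssim b/\sqrt{\maux}$, not the $\radtwo_\maux^2=\order{b^2/\numobs^2}$ you state; this is immaterial to the final bound since $\radone_\maux^2\asymp(b^2/\maux)^{2/3}$ dominates in either case.
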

\noindent See Appendix~\ref{app:subsec-proof-cor-example-monotonic}
for the proof of this corollary.

Note that compared to
Corollaries~\ref{cor:example-linear}--~\ref{cor:example-holder},
Corollary~\ref{cor:example-monotonic} requires a stronger uniform
bound on the likelihood ratio $\weightfunc / \propscore$: it is
referred to as the \emph{strict overlap condition} in the causal
inference literature.  In our analysis, this condition is required to
make use of existing bracketing-based localized entropy
control. Corollary~\ref{cor:example-monotonic} holds for any sample
size $\numobs \geq 1$, and we establish a matching lower bound as a
consequence of~\Cref{thm:worst-case-shattering-dim} to be stated in
the sequel.  It should be noted that the likelihood ratio bound $b$ might
be large, in which case the high-order term
in~\Cref{cor:example-monotonic} could be dominant (at least for small
sample sizes).  As with previous examples, optimal estimation of the
scalar $\taustar$ requires optimal estimation of the function
$\treateff$ under $\weightednorm{\cdot}$-norm.  How to do so optimally
for isotonic classes appears to be an open question.


\subsection{Non-asymptotic normal approximation}

Note that the oracle inequality
in~\Cref{cor:least-sqr-split-estimator} involves an approximation
factor depending on the small-ball condition in
Assumption~\ref{assume:small-ball}, as well as other universal
constants. Even with sample size $\numobs$ tending to infinity, the
result of~\Cref{cor:least-sqr-split-estimator} does not ensure that
the auxiliary estimator $\muhat_{\numobs / 2}$ converges to a limiting
point. This issue, while less relevant for the mean-squared error
bound in~\Cref{cor:least-sqr-split-estimator}, assumes importance in
the inferential setting.  In this case, we do need the auxiliary
estimator to converge so as to be able to characterize the
approximation error.

In order to address this issue, we first define the orthogonal
projection within the class
\begin{align}
\mubar \mydefn \arg \min_{\plainmu \in \funcClass}
\weightednorm{\plainmu -\treateff}.
\end{align}
Our analysis also involves an additional squared Rademacher
complexity, one which involves the \emph{difference} $\treateff -
\mubar$.  It is given by
\begin{subequations}
\begin{align}
\SquaredDiffRade^2_\maux(\Hclass) \mydefn \Exs \Big[ \sup_{f \in
    \Hclass} \Big\{ \frac{1}{\maux} \sum_{i = 1}^\maux \rade_i
  \frac{\weightfunc^2(\State_i, \Action_i)}{\propscore^2(\State_i,
    \Action_i)} \big[ \treateff(\State_i, \Action_i) -
    \mubar(\State_i, \Action_i) \big] \: f(\State_i, \Action_i)
  \Big\}^2 \Big].
\end{align}
We let $\dradone_\maux > 0$ be the unique solution to the fixed
point equation
\begin{align}
\tfrac{1}{\dradone} \, \SquaredDiffRade_\maux \big((\funcClass -
\mubar) \cap \ball_\omega(\dradone) \big) & = \dradone.
\end{align}
\end{subequations}
The existence and uniqueness is guaranteed by an argument analogous to
that used in the proof of~\Cref{prop:existence-critical-radii}.

In order to derive a non-asymptotic CLT, we need a finite fourth
moment
\begin{align*}
\MomFour & \mydefn \Exs \Big[ \Big \{ \frac{\weightfunc(\State,
    \Action)}{\propscore(\State, \Action)} \big( \outcome - \mubar
  (\State, \Action) \big) + \actinprod{\weightfunc(\State,
    \cdot)}{\mubar(\State, \cdot)} \Big \}^4 \Big].
\end{align*}
The statement also involves the excess variance
\begin{align*}
v^2(\mubar) & \mydefn \Exs \Big[ \var \Big( \frac{\weightfunc(\State,
    \Action)}{\propscore(\State, \Action)} \cdot \big \{
  \treateff(\State, \Action) - \mubar(\State, \Action) \big \} \mid
  \State \Big) \Big].
\end{align*}

\medskip

\noindent With these definitions, we have the following guarantee:
\begin{corollary}
\label{thm:normal-approx-least-sqr}
Under Assumptions~\ref{assume:convset} and~\ref{assume:small-ball},
the two-stage estimator~\eqref{eq:estimator-framework} satisfies the
Wasserstein distance bound
\begin{align}
\label{eq:normal-approx-least-sqr}  
\Wass_1 \big( \sqrt{\numobs} \tauhat_\numobs, Z \big) \leq \tfrac{4
  \sqrt{\MomFour}}{[\vstar + v(\mubar) ]} \; \frac{1}{\sqrt{\numobs}}
+ c \big \{ \radtwo_{\numobs/2} + \radone_{\numobs/2} +
\dradone_{\numobs} \big \} + \diameter_\omega(\funcClass \cup
\{\treateff\}) \cdot e^{- c' \numobs },
\end{align}
\mbox{where $Z \sim \mathcal{N} \big(0, \vstar^2 + v^2(\mubar)
  \big)$,} and the pair $(c, c')$ of constants depend only on the
small-ball parameters $(\smallballcon, \smallballprob)$.
\end{corollary}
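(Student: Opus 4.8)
The plan is to split $\sqrt{\numobs}\,(\tauhat_\numobs - \taustar)$ into a leading normalized sum of i.i.d.\ terms, which I handle with a quantitative Wasserstein central limit theorem, plus a second‑order remainder whose contribution I bound by the weighted estimation error of $\muhat_{\numobs/2}$ measured \emph{against the projection} $\mubar$ (which exists and is unique by the convexity/compactness of Assumption~\ref{assume:convset}). First I would introduce the auxiliary function $\bar f(\state,\action) \defn \tfrac{\weightfunc(\state,\action)\mubar(\state,\action)}{\propscore(\state,\action)} - \actinprod{\weightfunc(\state,\cdot)}{\mubar(\state,\cdot)}$ obtained by substituting $\mubar$ into \eqref{eq:defn-fstar-function}; since $\actinprod{\bar f(\state,\cdot)}{\propscore(\state,\cdot)} = 0$ (so the last term of \eqref{eq:general-form-of-unbiased-estimator} drops out), comparing the cross‑fitted estimator \eqref{eq:split-plugin-stage} with $\tauhatgen{\numobs}{\bar f}$ yields the exact decomposition $\sqrt{\numobs}\,(\tauhat_\numobs - \taustar) = S_\numobs + \sqrt{\numobs}\,R_\numobs$, where $S_\numobs \defn \numobs^{-1/2}\sum_{i=1}^{\numobs} X_i$ with $X_i \defn \tfrac{\weightfunc(\State_i,\Action_i)}{\propscore(\State_i,\Action_i)}\big(\outcome_i - \mubar(\State_i,\Action_i)\big) + \actinprod{\weightfunc(\State_i,\cdot)}{\mubar(\State_i,\cdot)} - \taustar$ i.i.d.\ and centered, and $R_\numobs \defn \tfrac1\numobs\sum_{i\le\numobs/2}(\bar f - \fhat^{(2)}_{\numobs/2})(\State_i,\Action_i) + \tfrac1\numobs\sum_{i>\numobs/2}(\bar f - \fhat^{(1)}_{\numobs/2})(\State_i,\Action_i)$. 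A law‑of‑total‑variance computation conditioning on $\State_i$ gives $\var(X_i) = \vstar^2 + v^2(\mubar)$, and expanding the fourth power gives $\Exs[X_i^4] \le 16\,\MomFour$.

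For the leading term I would invoke a quantitative Berry--Esseen bound in the $\Wass_1$ metric for the normalized sum, and then bound $\Exs|X_i|^3 \le \sqrt{\var(X_i)\,\Exs[X_i^4]}$ by Cauchy--Schwarz and use $\var(X_i) \ge \tfrac14(\vstar + v(\mubar))^2$ to arrive at $\Wass_1(S_\numobs, Z) \le \tfrac{4\sqrt{\MomFour}}{[\vstar + v(\mubar)]\,\sqrt{\numobs}}$ with $Z \sim \Normal(0,\vstar^2 + v^2(\mubar))$. For the remainder, the coupling characterization of $\Wass_1$ and the triangle inequality reduce matters to bounding $\sqrt{\numobs}\,\Exs|R_\numobs|$. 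Cross‑fitting makes the summands of each block of $R_\numobs$ conditionally i.i.d.\ and conditionally mean‑zero, since $\Exs_{\Action}[(\bar f - \fhat^{(2)}_{\numobs/2})(\State,\Action)\mid \State,\datablock_2] = 0$ (the inner‑product subtraction inside $\fhat$ makes it conditionally centered), and the law of total variance over $\Action\mid\State$ gives $\Exs[(\bar f - \fhat^{(2)}_{\numobs/2})^2] \le \weightednorm{\mubar - \muhat^{(2)}_{\numobs/2}}^2$; hence $\numobs\,\Exs[R_\numobs^2] \le 2\,\Exs[\weightednorm{\mubar - \muhat_{\numobs/2}}^2]$ and $\sqrt{\numobs}\,\Exs|R_\numobs| \le \sqrt{2\,\Exs[\weightednorm{\mubar - \muhat_{\numobs/2}}^2]}$.

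The remaining task, which is the substantive one, is a refined oracle inequality for the weighted least‑squares estimator \eqref{eq:least-square-estimator-in-stage-1} measured against $\mubar$ itself: $\Exs[\weightednorm{\mubar - \muhat_\maux}^2] \le c\,(\radone_\maux^2 + \radtwo_\maux^2 + \dradone_\maux^2) + \diameter_\omega^2(\funcClass\cup\{\treateff\})\,e^{-c'\maux}$, after which evaluating the critical radii at $\maux = \numobs/2$ (these being comparable to those at $\numobs$ up to absolute constants) completes the argument. Unlike \Cref{cor:least-sqr-split-estimator}, which measures error against $\treateff$ and so inherits the approximation term $\weightednorm{\mubar - \treateff}^2$, here that term must be \emph{absent}. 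I would start from the least‑squares basic inequality, write $\outcome_i - \mubar(\State_i,\Action_i) = (\treateff - \mubar)(\State_i,\Action_i) + \xi_i$ with $\xi_i \defn \outcome_i - \treateff(\State_i,\Action_i)$, and split the resulting cross term into (i) a noise‑multiplier empirical process weighted by $\xi_i$, localized through the $\SquaredRade$ fixed point $\radone_\maux$ of \eqref{eq:defn-critical-radius-squared}, and (ii) a bias‑multiplier empirical process weighted by $(\treateff - \mubar)(\State_i,\Action_i)$, localized through the new $\SquaredDiffRade$ fixed point $\dradone_\maux$; the convex‑projection optimality $\weightedinprod{\treateff - \mubar}{f - \mubar} \le 0$ for $f \in \funcClass$ is precisely what guarantees that only the fluctuations of process (ii), not its nonpositive mean, enter. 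Finally, the empirical quadratic form $\tfrac1\maux\sum_i \tfrac{\weightfunc_i^2}{\propscore_i^2}(\muhat_\maux - \mubar)^2$ is lower bounded by a constant multiple of $\weightednorm{\muhat_\maux - \mubar}^2$ on a high‑probability event via the small‑ball condition~\ref{assume:small-ball} and Mendelson's learning‑without‑concentration argument (governed by the $\radeComplexity$ fixed point $\radtwo_\maux$ of \eqref{eq:defn-critical-radius-plain}), with the complementary event absorbed into the exponential term using compactness of $\funcClass$.

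The hard part is exactly this last oracle inequality at $\mubar$ rather than at $\treateff$: it forces one to introduce and control the bias‑multiplier process $\SquaredDiffRade$ together with its critical radius $\dradone$, and to run the non‑concentration estimate for the empirical $\weightednorm{\cdot}$‑quadratic form in a regime where $\treateff - \mubar$ is only moment‑controlled, not uniformly bounded. Everything else --- the variance and fourth‑moment identities for $X_i$, the coupling bound for $\Wass_1$, and the cross‑fitting decoupling of $R_\numobs$ --- is routine and rests directly on \Cref{thm:upper-sample-split-general}, \Cref{cor:least-sqr-split-estimator}, and \Cref{prop:existence-critical-radii}.
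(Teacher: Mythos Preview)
Your proposal is correct and follows essentially the same route as the paper. The paper likewise decomposes $\sqrt{\numobs}(\tauhat_\numobs-\taustar)$ via the auxiliary $\bar f$, applies a quantitative Stein-method $\Wass_1$ bound (Ross's Theorem~3.2) to the i.i.d.\ sum with the same Cauchy--Schwarz step on $\Exs|X_i|^3$, controls the remainder by $\Exs[\weightednorm{\muhat_{\numobs/2}-\mubar}^2]$ exactly through the cross-fitting/conditional-centering argument you describe, and then proves the refined oracle inequality at $\mubar$ by adding the projection inequality $\weightedinprod{\Delhat_\maux}{\Deltil}\le 0$ to the basic inequality so that only the centered bias-multiplier process $Z'_\maux$ (handled via the $\SquaredDiffRade$ fixed point $\dradone_\maux$) survives alongside the noise process and the small-ball event.
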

\noindent See~\Cref{subsec:proof-thm-normal-approx} for the proof of
this corollary. \\

\medskip

A few remarks are in order.  First, in the limit $\numobs \rightarrow
+ \infty$, ~\Cref{thm:normal-approx-least-sqr} guarantees asymptotic
normality of the estimate $\tauhat_{\numobs}$, with asymptotic
variance $\vstar^2 + v^2(\mubar)$. In contrast, the non-asymptotic
result given here makes valid inference possible at a finite-sample
level, by taking into account the estimation error for auxiliary
functions. Compared to the risk bound
in~\Cref{cor:least-sqr-split-estimator}, the right-hand-side of
equation~\eqref{eq:normal-approx-least-sqr} contains two terms: the
first term $\tfrac{4 \MomFour^2}{(\vstar + v(\mubar) ) \sqrt{\numobs}
}$ is the Berry--Esseen error, and an additional critical radius
$\dradone_{\numobs/ 2}$ depending on the localized multiplier
Rademacher complexity. When the approximation error $\treateff -
\mubar$ is of order $o(1)$, the multiplier Rademacher complexity
$\SquaredDiffRade_{\numobs / 2}$ becomes (asymptotically) smaller than
the Rademacher complexity $\SquaredRade_{\numobs / 2}$, resulting in a
critical radius $\dradone_{\numobs / 2}(\mubar)$ smaller than
$\radone_{\numobs / 2}(\mubar)$. On the other hand, the efficiency
loss in~\Cref{thm:normal-approx-least-sqr} is the exact variance
$v^2(\mubar)$ with unity pre-factor, which exhibits a smaller
efficiency loss compared to~\Cref{cor:least-sqr-split-estimator}.

\paragraph{Excess variance compared to approximation error:}

It should be noted that the excess variance term $v^2(\mubar)$
in~\Cref{thm:normal-approx-least-sqr} is smaller than the best
approximation error $\inf_{\treateff \in \funcClass}
\weightednorm{\mubar - \treateff}^2$. Indeed, the difference
$\Delta \defn \weightednorm{\mubar - \treateff}^2 - v^2(\mubar)$
can be written as
\begin{align}
\Delta & = \Exs \Big[ \Big(\frac{\weightfunc(\State,
    \Action)}{\propscore(\State, \Action)} \cdot \big( \treateff -
  \mubar \big)(\State, \Action) \Big)^2 \Big] - \Exs \Big[ \var \Big(
  \frac{\weightfunc(\State, \Action)}{\propscore(\State, \Action)}
  \cdot \big( \treateff - \mubar \big)(\State, \Action) \mid \State
  \Big) \Big] \nonumber \\
\label{eq:weightnorm-vs-vmubar}
& = \Exs_{\probxstar} \big[ \actinprod{\weightfunc(\State,
    \cdot)}{(\treateff - \mubar)(\State, \cdot)}^2 \big].
\end{align}
When considering minimax risk over a local neighborhood around the
function $\treateff$, the difference term computed above is dominated
by the supremum of the asymptotic efficient variance $\vstar^2$
evaluated within this neighborhood. Consequently, the upper bound
induced by~\Cref{thm:normal-approx-least-sqr} does not contradict the
local minimax lower bound in~\Cref{thm:minimax-local}; and since the
difference $\weightednorm{\mubar - \treateff}^2 - v^2(\mubar)$ does
not involve the importance weight ratio $\weightfunc / \propscore$,
this term is usually much smaller than the weighted norm term
$\weightednorm{\mubar - \treateff}^2$.

On the other hand, ~\Cref{thm:normal-approx-least-sqr} and
equation~\eqref{eq:weightnorm-vs-vmubar} provide guidance on the way
of achieving the optimal pointwise exact asymptotic variance. In
particular, when we choose a function class $\funcClass$ such that
$\actinprod{h(\state, \cdot)}{\weightfunc(\state, \cdot)} = 0$ for any
$h \in \funcClass$ and $\state \in \Xspace$, the
expression~\eqref{eq:weightnorm-vs-vmubar} becomes a constant
independent of the choice of $\mubar$. For such a function class, a
function $\mubar$ that minimizes the approximation error
$\weightednorm{\mu - \treateff}^2$ will also minimize the variance
$v^2(\mu)$. Such a class can be easily constructed from any function
class $\Hclass$ by taking a function $h \in \Hclass$ and replacing
it with $f(\state, \action) \defn h(\state, \action) -
\actinprod{h(\state, \cdot)}{\weightfunc(\state, \cdot)}$. And the
optimal variance can still be written in the form of approximation
error:
\begin{align*}
v(\mubar) = \weightednorm{\mubar - \widetilde{\mu}^*}, \quad
\mbox{where $\widetilde{\mu}^*(\state, \action) \mydefn
  \treateff(\state, \action) - \actinprod{\treateff(\state,
    \cdot)}{\weightfunc(\state, \cdot)}$.}
\end{align*}
Indeed, the functional $v$ can be seen as the induced norm of
$\weightednorm{\cdot}$ in the quotient space generated by
$\Ltwospace_\omega$ modulo the subspace $\Ltwospace(\probxstar)$ that
contains functions depending only on the state but not action.


\section{Minimax lower bounds}
\label{SecLower}

Thus far, we have derived upper bounds for particular estimators of
the linear functional $\avgtreat(\probInstance)$, ones that involve
the weighted norm~\eqref{EqnWeighted}.  In this section, we turn to
the complementary question of deriving local minimax lower bounds for
the problem.  Recall that any given problem instance is characterized
by a quadruple of the form $(\probxstar, \propscore, \treateff,
\weightfunc)$.  In this section, we state some lower bounds that hold
uniformly over all estimators that are permitted to know both the
policy $\propscore$ and the weight function $\weightfunc$.  With
$(\propscore, \weightfunc)$ known, the instance is parameterized by
the pair $(\probxstar, \treateff)$, and we derive two types of lower
bounds: \\
\bcar
\item In~\Cref{thm:minimax-local}, we study local minimax bounds in
  which the unknown probability distribution $\probxstar$ and
  potential outcome function are allowed to range over suitably
  defined neighborhoods of a given target pair $(\probxstar,
  \treateff)$, respectively, but without structural conditions on the
  function classes.
\item In~\Cref{thm:worst-case-shattering-dim}, we impose structural
  conditions on the function class $\funcClass$ used to model
  $\treateff$, and prove a lower bound that involves the complexity of
  $\funcClass$---in particular, via its fat shattering dimension.
  This lower bound shows that if the sample size is smaller than the
  function complexity, then any estimator has a mean-squared error
  larger than the efficient variance.
\ecar

\subsection{Instance-dependent bounds under mis-specification}

Given a problem instance $\probInstanceStar =( \probxstar, \treateff)$
and an error function $\delta: \Xspace \times \actionspace \rightarrow
\real$, we consider the local neighborhoods
\begin{subequations}
\begin{align}
\Nval{\delta}(\treateff) & \mydefn \Big\{ \plainmu \; \mid \;\; \abss{
  \plainmu(\state, \action) - \treateff(\state, \action)} \leq
\delta(\state, \action) \quad \mbox{for $(\state, \action) \in \Xspace
  \times \ActionSpace$} \Big\}, \\
\Nprob(\probxstar) & \mydefn \Big\{ \probx \; \mid \; \;
\kull{\probx}{\probxstar} \leq \tfrac{1}{\numobs} \Big\}.
\end{align}
\end{subequations}
Our goal is to lower bound the \emph{local minimax risk}
\begin{align}
\label{EqnLocalMinimax}  
\minimaxRisk \big(\class_\delta(\probInstanceStar) \big) \mydefn
\inf_{\tauhat_\numobs} \sup_{\probInstance \in
  \class_{\delta}(\probInstanceStar) } \Exs\abss{\avgtreat -
  \tauhat_\numobs}^2 \: \; \mbox{where
  $\class_{\delta}(\probInstanceStar) \mydefn \big \{(  \probx, \plainmu) \in  \Nprob(\probxstar)
  \big\}\times \Nval{\delta}(\treateffzero)$.}
\end{align} 

\noindent Let us now specify the assumptions that underlie our lower bounds.
\paragraph{Assumptions for lower bound:}
First, we require some tail control on certain random variables,
stated in terms of the \emph{$(2,4)$-moment-ratio} $\ltwolfour{Y}
\defn \tfrac{\sqrt{\Exs[Y^4]}}{\Exs[Y^2]}$.
\myassumption{MR}{AssEll2Ell4}{ The random variables
  \begin{align}
\label{EqnDefnZone}
\Zone(\State, \Action) \defn \frac{\delta(\State, \Action)
  \weightfunc(\State, \Action)}{\propscore(\State, \Action)}, \quad
\mbox{and} \quad \Ztwo(\State, \Action) \defn
\actinprod{\treateff(\State, \cdot)}{\weightfunc(\State, \cdot)} -
\avgtreat (\probInstanceStar)
\end{align}
have finite $(2,4)$-moment ratios $\ctwofour \mydefn
\ltwolfour{\Zone}$ and $\ctwofour' \mydefn \ltwolfour{\Ztwo}$.}

\noindent Second, we require the existence of a constant $\cmax > 0$
such that the distribution $\probxstar$ satisfies the following
\emph{compatibility condition}.
\myassumption{COM}{AssCompatibility}{ For a finite state space
  $\Xspace$, we require $\probxstar(\state) \leq c_{\max} / |\Xspace|$
  for all $\state \in \Xspace$.  If $\Xspace$ is infinite, we require
  that $\probxstar$ is non-atomic (i.e., $\probxstar(\{\state\}) = 0$
  for all $\state \in \Xspace$), and set $\cmax = 1$ for concreteness.
}
\noindent Finally, we impose a lower bound on the \emph{local
neighborhood size}: \myassumption{LN}{AssLocalNeigh}{ The neighborhood
  function $\delta$ satisfies the lower bound
  \begin{align}
\label{eq:size-of-neighborhood}    
\sqrt{\numobs} \; \delta(\state, \action) & \geq
\frac{\weightfunc(\state, \action) \sigma^2(\state,
  \action)}{\propscore(\state, \action) \weightednorm{\sigma}} \quad
\mbox{for any $(\state, \action) \in \StateSpace
  \times \ActionSpace$.}
    \end{align}
}

\medskip

\noindent In the following statement, we use $c$ and $c'$ to denote
universal constants.
\begin{theorem}
\label{thm:minimax-local}
Under Assumptions~\ref{AssEll2Ell4},~\ref{AssCompatibility}
and~\ref{AssLocalNeigh}, given a sample size lower bounded as $\numobs
\geq c' \max \{ (\ctwofour')^2, \ctwofour^2 \}$, the local minimax
risk over the class $\class_{\delta}(\probInstanceStar)$ is lower
bounded as
\begin{align}
\label{EqnMinimaxLocal}  
\minimaxRisk \big( \class_{\delta}(\probInstanceStar) \big) & \geq
\frac{c}{\numobs} \begin{cases} \vstar^2 & \mbox{if $\numobs \geq
    \frac{|\Xspace|}{\cmax}$} \\
\vstar^2 + \weightednorm{\delta}^2 & \mbox{otherwise.}
\end{cases}
\end{align}
\end{theorem}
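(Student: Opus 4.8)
The plan is to lower bound the local minimax risk by the Bayes risk of a carefully chosen prior supported on $\class_{\delta}(\probInstanceStar)$; since $\avgtreat$ is a location-type functional, this Bayes risk is the expected posterior variance of $\avgtreat$, and every bound below is obtained by exhibiting a prior and lower bounding that quantity, using tensorization of the Kullback--Leibler divergence across the $\numobs$ i.i.d.\ observations. Assumption~\ref{AssEll2Ell4} enters only through the remainder in the per-sample expansion of the log-likelihood ratio: the $(2,4)$-moment-ratio bounds on $\Zone$ and $\Ztwo$ ensure that the perturbations introduced below are small relative to the base densities once $\numobs \gtrsim \max\{\ctwofour^2,(\ctwofour')^2\}$, so that the $\chi^2$-type second-order approximations hold with universal constants.

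\textbf{Step 1: the efficient variance $\vstar^2/\numobs$.} I would perturb the base instance along a one-dimensional submodel whose score is (a discretization of) the efficient influence function $\state,\action,\outcome \mapsto \tfrac{\weightfunc(\state,\action)}{\propscore(\state,\action)}(\outcome - \treateff(\state,\action)) + \actinprod{\weightfunc(\state,\cdot)}{\treateff(\state,\cdot)} - \taustar$, whose variance is exactly $\vstar^2$. This has two mutually orthogonal components. The \emph{outcome} component replaces $\treateff$ by $\treateff \pm \epsilon\,\tfrac{\weightfunc\,\sigma^2}{\propscore}$; matching the scale $\epsilon \asymp (\numobs\,\weightednorm{\sigma}^2)^{-1/2}$ needed to keep the $\numobs$-sample KL of order one, the requirement that this lies in $\Nval{\delta}(\treateff)$ is exactly Assumption~\ref{AssLocalNeigh}, and the perturbation moves $\avgtreat$ by an amount of order $\weightednorm{\sigma}/\sqrt{\numobs}$. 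The \emph{state} component tilts $\probxstar$ in the direction of the centered function $\state \mapsto \actinprod{\weightfunc(\state,\cdot)}{\treateff(\state,\cdot)} - \taustar$; for an appropriate scale it stays in the KL-ball $\Nprob(\probxstar)$ (this is where the finite fourth moment of $\Ztwo$ in Assumption~\ref{AssEll2Ell4} is used to keep the tilted density nonnegative) and moves $\avgtreat$ by an amount of order $\sqrt{\var(\actinprod{\weightfunc(\State,\cdot)}{\treateff(\State,\cdot)})/\numobs}$. Running both perturbations jointly with consistent signs, the two shifts add and the KL budgets add, so a two-point (Le Cam) comparison---equivalently a one-dimensional van Trees / Cram\'er--Rao inequality with Fisher information $\vstar^2$---gives $\minimaxRisk \gtrsim \vstar^2/\numobs$ for every $\numobs \gtrsim \max\{\ctwofour^2,(\ctwofour')^2\}$, establishing the first branch of~\eqref{EqnMinimaxLocal}.

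\textbf{Step 2: the extra term $\weightednorm{\delta}^2/\numobs$ when $\numobs < |\Xspace|/\cmax$.} Here I would superimpose an Assouad-type hypercube of perturbations of $\treateff$. The key observation is that under Assumption~\ref{AssCompatibility} and in this regime one has $\numobs\,\probxstar(\state)\,\propscore(\state,\action)\actbasemsr(\{\action\}) \le \numobs\,\probxstar(\state) < 1$ for every state-action cell, so each cell is sampled $O(1)$ times; hence the posterior variance of a $\{\pm1\}$-valued sign attached to any cell (or block of cells) is bounded below by a universal constant, because with constant probability the block is never sampled. I would then partition $\StateSpace\times\ActionSpace$ into \emph{homogeneous} blocks of probability mass of order $1/\numobs$ (a dyadic/peeling partition according to the value of $|\weightfunc|\delta/\propscore$, costing only logarithmic factors absorbed in the constant), attach one independent sign $\xi_B\in\{\pm1\}$ per block, and perturb $\treateff$ on block $B$ by $\xi_B\,\delta$ up to a bounded scale. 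Since $\avgtreat$ is affine in $\xi$ with per-block sensitivity of order $\sum_{(\state,\action)\in B}\probxstar(\state)\actbasemsr(\{\action\})|\weightfunc(\state,\action)|\delta(\state,\action)$, Assouad's lemma lower bounds the Bayes risk by a constant times the sum of squared sensitivities; block homogeneity makes the Cauchy--Schwarz step essentially tight, and the telescoping identity $\sum_B (\text{block mass})\cdot\!\!\sum_B \probxstar\,\actbasemsr\,\weightfunc^2\delta^2/\propscore \asymp \tfrac1\numobs\weightednorm{\delta}^2$ recovers the claimed term. Taking this hypercube orthogonal (within $\Nval{\delta}$) to the smooth direction of Step 1, the two constructions run simultaneously---KL divergences add, squared functional separations add---and yield $\minimaxRisk \gtrsim (\vstar^2 + \weightednorm{\delta}^2)/\numobs$. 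When $\Xspace$ is infinite and non-atomic such a partition into arbitrarily fine low-mass blocks is always available, consistent with the second branch applying for all $\numobs$.

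\textbf{Main obstacle.} The delicate point is Step 2's aggregation. A naive cell-wise hypercube yields only $\sum_{\state,\action}\probxstar(\state)^2\actbasemsr(\{\action\})^2\weightfunc^2\delta^2$, which is short of $\weightednorm{\delta}^2/\numobs$ by a factor equal to the (sub-unit) expected per-cell count; one genuinely must coarsen to blocks of mass $\asymp 1/\numobs$ while simultaneously preserving undetectability (each block sampled $O(1)$ times) and homogeneity (so the per-block Cauchy--Schwarz bound is tight up to constants), and carrying this out uniformly over the geometry of $(\probxstar,\propscore,\weightfunc,\delta)$ is the main technical work. A secondary point is verifying that the superposed Step-1/Step-2 perturbation lies in both $\Nval{\delta}(\treateff)$ and $\Nprob(\probxstar)$ with combined $\numobs$-sample KL of order one---which forces a fixed split of the budget between the two steps and a check that the cross terms vanish by orthogonality---and the uniform control, over all $2^{m}$ vertices of the $m$-block hypercube, of the third-order remainder in the log-likelihood-ratio expansion, which is precisely where Assumption~\ref{AssEll2Ell4} and the requirement $\numobs \gtrsim \max\{\ctwofour^2,(\ctwofour')^2\}$ are invoked.
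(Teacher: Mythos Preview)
Your Step~1 is essentially what the paper does: two separate Le~Cam two-point arguments, one tilting $\probxstar$ in the direction of $\actinprod{\weightfunc(\state,\cdot)}{\treateff(\state,\cdot)}-\taustar$ (with truncation, using the $\ctwofour'$ assumption) and one perturbing $\treateff$ by $\pm s\,\weightfunc\sigma^2/\propscore$ (using~\ref{AssLocalNeigh}). The paper does not run them jointly, but simply proves the three lower bounds $\var(\actinprod{\weightfunc}{\treateff})/\numobs$, $\weightednorm{\sigma}^2/\numobs$, and $\weightednorm{\delta}^2/\numobs$ separately and averages; this is simpler and avoids your ``cross terms vanish by orthogonality'' check.

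Your Step~2 takes a genuinely different route, and the paper's approach sidesteps exactly the obstacle you identify. Instead of building homogeneous blocks of mass $\asymp 1/\numobs$ and running an Assouad/Bayesian argument, the paper uses Le~Cam's \emph{mixture-versus-mixture} method with \emph{biased} cell-level signs: for $z\in\{-1,1\}$ let $\zeta(\state,\action)\sim\bernDistr\big(\tfrac{1+z\tweak\rho(\state,\action)}{2}\big)$ independently across cells, where $\rho(\state,\action)$ is (a truncation of) $\weightfunc(\state,\action)\delta(\state,\action)\big/\!\big(\propscore(\state,\action)\weightednorm{\delta}\big)$, and set $\plainmu_\zeta=\treateff+\zeta\cdot\delta$. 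The bias automatically weights each cell by its contribution to $\weightednorm{\delta}$, so the expected gap $\Exs_{\Qprob_z^{\tweak}}[\tau]-\taustar$ is exactly $\pm\tfrac{\tweak}{4}\weightednorm{\delta}$ without any Cauchy--Schwarz loss; no block construction is needed. The TV bound between the two mixtures is handled by Poissonization (so counts at different cells are independent), tensorization of KL, and data processing: the per-cell KL is at most $\Prob(M(\state,\action)>0)\cdot\kull{\bernDistr(\tfrac{1+\tweak\rho}{2})}{\bernDistr(\tfrac{1-\tweak\rho}{2})}\lesssim \numobs\,\probxstar(\state)\propscore(\state,\action)\,\tweak^2\rho^2(\state,\action)$, which sums to $O(\numobs\tweak^2)$. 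The condition $\numobs\lesssim|\Xspace|/\cmax$ enters not to guarantee enough blocks, but to ensure that the \emph{random} functional $\tau(\probxstar,\plainmu_\zeta)$ concentrates around its mean under $\Qprob_z^{\tweak}$ (via Hoeffding over the independent signs and the bound $\probxstar(\state)\leq \cmax/|\Xspace|$), so that after conditioning, the two mixtures have well-separated supports.

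Two smaller remarks on your write-up. First, what you call ``Assouad'' for a scalar functional is not the standard lemma (which needs a Hamming-type separation that $\tau$ does not satisfy); the argument you sketch is really a posterior-variance computation, and it is correct once you use conditional independence of the block signs given the data. Second, your homogeneous-block construction, even if it can be carried out, must also handle the sign of $\weightfunc$ (not just the level of $|\weightfunc|\delta/\propscore$), since $\Delta_j=\int_{B_j}\weightfunc\delta\,d\probxstar d\basemsr$ can cancel; the paper's biased-sign device handles both magnitude and sign at once through~$\rho$.
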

\noindent We prove this claim
in~\Cref{subsec:proof-thm-minimax-local}. \\

It is worth understanding the reasons for each of the assumptions
required for this lower bound to hold.  The compatibility
condition~\ref{AssCompatibility} is needed to ensure that no single
state can take a significant proportion of probability mass under
$\probxstar$.  If this condition is violated, then it could be
possible to construct a low MSE estimate of the outcome function via
an empirical average, which would then break our lower bound.  The
neighborhood condition~\ref{AssLocalNeigh} ensures that the set of
problems considered by the adversary is large enough to be able to
capture the term $\weightednorm{\sigma}^2$ in the optimal variance
$\vstar^2$. Without this assumption, the ``local-neighborhood''
restriction on certain states-action pairs could be more informative
than the data itself. \\

Now let us understand some consequences of~\Cref{thm:minimax-local}.
First, it establishes the information-theoretic optimality
of~\Cref{cor:least-sqr-split-estimator}
and~\Cref{thm:normal-approx-least-sqr} in an instance-dependent
sense. Consider a function class $\funcClass$ that approximately
contains the true outcome function $\treateff$; more formally,
consider the $\delta$-approximate version of $\funcClass$ given by
\begin{align*}
\funcClass_\delta \mydefn \Big \{ \widetilde{\plainmu}
\in \Ltwospace_\omega \mid \; \exists \plainmu \in \funcClass \mbox{
  such that } \abss{\plainmu(\state, \action) -
  \widetilde{\plainmu}(\state, \action)} \leq \delta(\state, \action)
\quad \mbox{for all $(\state, \action) \in \Xspace
  \times \actionspace$} \Big \},
\end{align*}
and let us suppose that $\treateff \in \funcClass_\delta$.  With this
notation, ~\Cref{thm:minimax-local} implies a lower bound of the form
\begin{align}
\inf_{\tauhat_\numobs} \sup_{\substack{\plaintreateff \in
    \funcClass_\delta \\ \probx \in \Nprob(\probxstar)}} \Exs \big[
  \abss{\avgtreat - \tauhat_\numobs}^2 \big] \geq \frac{c}{\numobs}
\Big\{ \sup_{\plaintreateff \in \funcClass} \var \big(
\actinprod{\weightfunc(\State, \cdot)}{\plaintreateff(\State, \cdot)}
+ \weightednorm{\sigma}^2 + \weightednorm{\delta}^2 \Big\}.
\end{align}
Thus, we see that the efficiency loss due to errors in estimating the
outcome function is unavoidable; moreover, this loss is measured in
the weighted norm $\weightednorm{\cdot}$ that also appeared centrally
in our upper bounds.

It is also worth noting that for a finite cardinality state space
$\Xspace$, ~\Cref{thm:minimax-local} exhibits a ``phase transition''
in the following sense: for a sample size $\numobs \gg |\Xspace|$, the
lower bound is simply a non-asymptotic version of the semi-parametric
efficiency lower bound (up to the pre-factor\footnote{Using slightly
more involved argument, this pre-factor can actually be made
arbitrarily close to unity.} $c > 1$). On the other hand, when
$\numobs < |\Xspace|$, then the term $\weightednorm{\delta}^2 /
\numobs$ starts to play a significant role.  For an infinite state
space $\Xspace$ without atoms, the lower bound~\eqref{EqnMinimaxLocal}
holds for any sample size $\numobs$.

By taking $\treateffzero = 0$ and $\delta(\state, \action) = 1$ for
all $(\state, \action)$, equation~\eqref{EqnMinimaxLocal} implies the
global minimax lower bound
\begin{align}
\label{eq:global-minimax-instantiation}  
\inf_{\tauhat_\numobs} \sup_{\vecnorm{\plainmu}{\infty} \leq 1,
  ~\probx = \probxstar} \Exs \big[ \abss{\avgtreat -
    \tauhat_\numobs}^2 \big] \geq \frac{c}{\numobs}
\int_{\actionspace} \Exs_{\probxstar} \Big[
  \frac{\weightfunc^2(\State, \action)}{\propscore(\State,
    \action)}\Big] d \actbasemsr(\action),
\end{align}
valid whenever $\numobs \leq
|\Xspace|$.

The $\chi^2$-type term on the right-hand side of this bound is related
to---but distinct from---results from past work on off-policy
evaluation in bandits~\cite{wang2017optimal,ma2022minimax}.  In this
past work, a term of this type arose due to noisiness of the
observations.  In contrast, our lower
bound~\eqref{eq:global-minimax-instantiation} is valid even if the
observed outcome is noiseless, and the additional risk depending on
the weighted norm arises instead from the impossibility of estimating
$\treateff$ itself.


\subsection{Lower bounds for structured function classes}

As we have remarked, in the special case of a finite state space
($|\StateSpace| < \infty$), ~\Cref{thm:minimax-local} exhibits an
interesting transition at the boundary $\numobs \asymp |\StateSpace|$.
On the other hand, for an infinite state space, the stronger lower
bound in~\Cref{thm:minimax-local}---namely, that involving
$\weightednorm{\delta}^2$---is always in force.  It should be noted,
however, that this strong lower bound depends critically on the fact
that~\Cref{thm:minimax-local} imposes \emph{no conditions} on the
function class $\funcClass$ of possible treatment effects, so that the
error necessarily involves the local perturbation $\delta$.

In this section, we undertake a more refined investigation of this
issue.  In particular, when some complexity control is imposed upon
$\funcClass$, then the lower bounds again exhibit a transition: any
procedure pays a price only when the sample size is sufficiently small
relative to the complexity of $\funcClass$.  In doing so, we assess
the complexity of $\funcClass$ using the \emph{fat-shattering
dimension}, a scale-sensitive version of the VC
dimension~\cite{KEARNS1994464,alon1997scale}.
\myassumption{FS}{DefnFatShatter}{ A collection of data points
  $(\state_i)_{i = 1}^N$ is \emph{shattered at scale $\delta$} by a
  function class $\funcClassTemp: \Xspace \rightarrow \real$ means
  that for any subset $S \subseteq \{1, \ldots, N\}$, there exists a
  function $f \in \funcClassTemp$ and a vector $t \in \real^N$ such
  that
\begin{align}
\label{eq:defn-fat-shattering}  
  f(\state_i) \geq t_i + \delta \quad \mbox{for all $i \in S$, and}
  \quad f(\state_i) \leq t_i - \delta \quad \mbox{for all $i \notin
    S$.}
\end{align}
The fat-shattering dimension $\fatshatter{\delta}(\funcClassTemp)$ is
the largest integer $N$ for which there exists some sequence
$(\state_i)_{i = 1}^N$ shattered by $\funcClassTemp$ at scale
$\delta$.  }

In order to illustrate a transition depending on the fat shattering
dimension, we consider the minimax risk
\begin{align*}
\minimaxRisk(\funcClass) & \mydefn \inf_{\tauhat_\numobs}
\sup_{\substack{ \plainmu \in \funcClass \\ \probx \in
    \mathcal{P}(\Xspace) }} \Exs \big[ \abss{\tauhat_\numobs -
    \avgtreat (\probx, \plainmu)}^2 \big],
\end{align*}
specializing to the case of a finite action space $\actionspace$
equipped with the counting measure $\basemsr$.  We further assume that
the class $\funcClass$ is a product of classes associated to each
action, i.e., $\funcClass = \bigotimes_{\action \in \actionspace}
\funcClass_\action$, with $\funcClass_\action$ being a \emph{convex
subset} of real-valued functions on the state space $\Xspace$.  We
also assume the existence\footnote{Thus, per force, we have $\fatdim
\leq \fatshatter{\delta_\action}{(\funcClass_\action)}$ for each
$\action \in \actionspace$.}  of a sequence $\{\state_j\}_{j =
  1}^\fatdim$ that, for each action $\action \in
\actionspace$, is shattered by $\funcClass_\action$ at scale
$\delta_\action$. Analogous to the moment ratio assumption~\ref{AssEll2Ell4}, we need an additional assumption that
\begin{align}
    \ctwofour \mydefn \ltwolfour{\frac{\weightfunc (\State, \Action)}{\propscore (\State, \Action)} \delta_\Action} < + \infty, \quad \mbox{for } \State \sim \mathcal{U} (\{x_j\}_{j = 1}^\fatdim) ~ \mbox{and} ~ \Action \sim \propscore (\State, \cdot).
\end{align}
\begin{proposition}
\label{thm:worst-case-shattering-dim}
With the set-up given above, there are universal constants $(c, c')$
such that for any sample size satisfying $\numobs \geq \ctwofour^2$ and $\numobs \leq c' \fatdim$, we have the
lower bound
\begin{align}
  \label{eq:minimax-bound-funcclass-small-sample}
  \minimaxRisk(\funcClass) \geq \frac{c}{\numobs} \; \Big \{
  \frac{1}{\fatdim} \sum_{j = 1}^\fatdim \sum_{\action
    \in \actionspace} \frac{\weightfunc^2(\state_j,
    \action)}{\propscore(\state_j, \action)} \delta_\action^2 \Big \}.
    \end{align}
\end{proposition}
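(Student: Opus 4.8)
The plan is to adapt the Assouad-type argument behind \Cref{thm:minimax-local} to the structured class $\funcClass$, treating the shattered sequence $\{x_j\}_{j=1}^{\fatdim}$ as an ``effective state space''. Concretely, I would fix the state marginal $\probx$ to be supported on $\{x_j\}_{j=1}^{\fatdim}$ (the uniform distribution, possibly with the mild reweighting used in the proof of \Cref{thm:minimax-local}), and construct a finite sub-family of problem instances $\{(\probx,\mu_\epsilon)\}$ indexed by sign patterns $\epsilon \in \{\pm 1\}^{\fatdim \times \actionspace}$, where the bit $\epsilon_{j,\action}$ flips a perturbation of the outcome function localized at the pair $(x_j,\action)$ and of magnitude of order $\delta_\action$. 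The product structure $\funcClass = \bigotimes_{\action}\funcClass_\action$ is exactly what lets the perturbations for distinct actions be chosen independently, and the shattering hypothesis is what lets $\funcClass_\action$ realize an arbitrary $\pm$ pattern over $\{x_j\}_{j=1}^{\fatdim}$. With this family in hand, the hypothesis $\numobs \le c'\fatdim$ plays the role that $\numobs < |\Xspace|/\cmax$ plays in \Cref{thm:minimax-local}: there are enough shattered coordinates that, under the chosen design, a typical pair $(x_j,\action)$ is observed only $O(1)$ times among the $\numobs$ samples, so flipping $\epsilon_{j,\action}$ is statistically (nearly) undetectable. Likewise, the moment-ratio $\ctwofour$ here plays the role of the $(2,4)$-moment ratio of Assumption~\ref{AssEll2Ell4}.

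The one genuinely new ingredient --- and the main obstacle --- is that Assumption~\ref{DefnFatShatter} provides only \emph{one-sided} control: for each subset $S \subseteq \{1,\dots,\fatdim\}$ there exists \emph{some} $f \in \funcClass_\action$ lying above the thresholds on $S$ and below them on $S^{c}$, but the functions realizing two patterns that differ in a single coordinate need not agree at the remaining shattered points. The Assouad machinery, however, needs the law of a single sample to factorize over the coordinates, i.e.\ flipping $\epsilon_{j,\action}$ should change $\mu_\epsilon$ \emph{only} at $(x_j,\action)$; otherwise uncontrolled ``cross-talk'' at the other shattered points spoils both the additivity of the functional and the tensorization of the divergence. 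I would remove this obstruction using convexity of $\funcClass_\action$: the trace set $\{(f(x_1),\dots,f(x_{\fatdim})) : f \in \funcClass_\action\}$ is a convex subset of $\real^{\fatdim}$ that contains the $2^{\fatdim}$ ``corner'' vectors produced by the shattering, and a short convex-combination argument shows that it therefore contains an axis-aligned combinatorial cube with side lengths of order $\delta_\action$ about the threshold vector; selecting, for each sign pattern, a function mapping onto the corresponding cube vertex (which stays in $\funcClass_\action$ precisely because that set is assumed convex) yields a family $\{\mu_\epsilon\} \subseteq \funcClass$ in which each coordinate flip moves $\mu_\epsilon$ by $\Theta(\delta_\action)$ at exactly one pair $(x_j,\action)$ and nowhere else. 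Verifying this cube-extraction step is the technical heart of the argument.

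Given such a family, the remainder is routine. Since $\tau(\probx,\mu_\epsilon) = \sum_{\action}\Exs_{\probx}[\weightfunc(\State,\action)\mu_\epsilon(\State,\action)]$ is an additive function of the independent coordinate perturbations, flipping $\epsilon_{j,\action}$ changes it by a fixed amount of order $\probx(x_j)\,|\weightfunc(x_j,\action)|\,\delta_\action$ --- the sign of $\weightfunc$ being harmless because we range over the full product of sign bits. For the change of measure, because $\mu_{\epsilon}$ and its single-coordinate flip differ only at $(x_j,\action)$, the KL divergence between the corresponding $\numobs$-fold sample laws is controlled by (expected number of observations of $(x_j,\action)$) $\times$ $\delta_\action^2$ (up to the fixed noise level), which is $O(1)$ under the design once $\numobs \le c'\fatdim$; the $(2,4)$-moment-ratio hypothesis together with $\numobs \ge \ctwofour^2$ enters exactly here, ensuring --- just as in the proof of \Cref{thm:minimax-local} --- that the importance-weighted perturbation is not too heavy-tailed for this Le Cam / $\chi^2$ comparison to go through. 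Feeding the per-coordinate functional gap and divergence bound into Assouad's lemma in its scalar-functional form, summing over all $(j,\action) \in \{1,\dots,\fatdim\}\times\actionspace$, and optimizing the choice of design $\probx$ then gives $\minimaxRisk(\funcClass) \ge \tfrac{c}{\numobs}\cdot\tfrac{1}{\fatdim}\sum_{j=1}^{\fatdim}\sum_{\action\in\actionspace}\tfrac{\weightfunc^2(x_j,\action)}{\propscore(x_j,\action)}\delta_\action^2$, with a universal constant $c$, as claimed.
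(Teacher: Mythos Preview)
Your proposal is correct and follows essentially the same route as the paper: fix $\probx$ to be uniform on the shattered sequence $\{x_j\}_{j=1}^{\fatdim}$, use convexity of each $\funcClass_\action$ to upgrade the one-sided fat-shattering witnesses to a full axis-aligned cube $\prod_{j,\action}[t_{j,\action}-\delta_\action,\,t_{j,\action}+\delta_\action]$ inside the trace set (exactly the ``cube-extraction'' you flag as the technical heart), and then run the same information-theoretic lower bound as in \Cref{thm:minimax-local}. The paper's write-up is slightly more economical than yours in one respect: rather than rebuilding the Assouad/mixture machinery, it simply observes that once the cube is embedded in $\funcClass$, the entire local neighborhood $\Nval{\delta}(t)$ lies inside $\funcClass$, so one may invoke the already-proven bound~\eqref{eq:minimax-lower-bound-local-delta} verbatim with $|\Xspace|=\fatdim$ and $c_{\max}=1$.
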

\noindent See~\Cref{subsec:proof-thm-minimax-shattering-dim} for the
proof of this claim. \\
  
A few remarks are in order. First, if we take $\probx$ to be the
uniform distribution over the sequence $\{x_j\}_{j = 1}^\fatdim$, the
right-hand-side of the
bound~\eqref{eq:minimax-bound-funcclass-small-sample} is equal to
$\frac{c}{\numobs} \weightednorm{\delta}^2$.  Thus,
\Cref{thm:worst-case-shattering-dim} is the analogue of our earlier
lower bound~\eqref{EqnLocalMinimax} under the additional restriction
that the treatment effect function $\treateff$ belong the given
function class $\funcClass$.  This lower bound holds as long as
$\numobs \leq c' \fatdim$, so that the fat shattering dimension
$\fatdim$ as opposed to the state space cardinality $|\Xspace|$ (for a
discrete state space) demarcates the transition between different
regimes.

An important take-away of~\Cref{thm:worst-case-shattering-dim} is that
the sample size must exceed the ``complexity'' of the function class
$\funcClass$ in order for the asymptotically efficient variance
$\vstar^2$ to be dominant.  More precisely, suppose that---for some
scale $\delta > 0$---the sample size is smaller than the
fat-shattering dimension $\fatshatter{\delta}(\funcClass)$.  In this
regime, the na\"{i}ve IPW estimator~\eqref{eq:ipw-estimator-simple} is
actually instance-optimal, even when there is no noise.  Observe that
its risk contains a term of the form $\sum_{\action \in \actionspace}
\Exs \big[ \tfrac{\weightfunc^2 (\State, \action)}{\propscore (\State,
    \action)} \big]$, which is \emph{not present} in the
asymptotically efficient variance $\vstar^2$.

By contrast, suppose instead that the sample size exceeds the
fat-shattering dimension.  In this regime, it is possible to obtain
non-trivial estimates of the treatment effect, so that superior
estimates of $\taustar$ are possible.  From the point of view of our
theory, one can use the fat shattering dimension
$\fatshatter{\delta}(\funcClass)$ to control the $\delta$-covering
number~\cite{mendelson2002entropy}, and hence the Rademacher
complexities that arise in our theory.  Doing so leads to non-trivial
radii $(\radone_{\numobs/ 2}, \radtwo_{\numobs/ 2})$
in~\Cref{cor:least-sqr-split-estimator}, and consequently, the
asymptotically efficient variance will become the dominant term.  We
illustrate this line of reasoning via various examples in
Section~\ref{SecExamples}.

It should be noted that a sample size scaling with the fat-shattering
dimension is also known to be necessary and sufficient to learn the
function $\treateff$ with $o(1)$
error~\cite{KEARNS1994464,bartlett1994fat,alon1997scale}. These classical results,
in combination with our
Proposition~\ref{thm:worst-case-shattering-dim} and
Theorem~\ref{cor:least-sqr-split-estimator}, exhibit that necessary
conditions on the sample size for consistent estimation of the
function $\treateff$ are equivalent to those requiring for achieving
the asymptotically efficient variance in estimating the scalar
$\taustar$.

\paragraph{Worst-case interpretation:}
It is worthwhile interpreting the
bound~\eqref{eq:minimax-bound-funcclass-small-sample} in a worst-case
setting.  Consider a problem with binary action space $\actionspace =
\{0, 1\}$ and $\weightfunc (\state, \action) = 2 \action - 1$. Suppose
that we use a given function class $\funcClassTemp$ (consisting of
functions from the state space $\Xspace$ to the interval $[0, 1]$) as
a model\footnote{We write $\treateff \in \funcClassTemp$ as a
shorthand for this set-up.}  of both of the functions $\treateff
(\cdot, 0)$ and $\treateff (\cdot, 1)$. Given a scalar
$\propscore_{\min} \in (0, 1/2)$, let $\Pi (\propscore_{\min})$ be the
set of propensity score functions such that $\propscore (\state, 1)
\in [\propscore_{\min}, 1 - \propscore_{\min}]$ for any $\state
\in \Xspace$. By taking the worst-case over this class, we find that
there are universal constants $c, c' > 0$ such that
\begin{align}
\label{eq:worst-case-instantiation-of-funcclass-lower}  
\sup_{\propscore \in \Pi (\propscore_{\min}) } \inf_{\tauhat_\numobs}
\sup_{\treateff \in \funcClassTemp} \Exs \big[ \abss{\tauhat_\numobs -
    \tau }^2 \big] & \geq c \; \begin{dcases} \frac{1}{\numobs} +
  \frac{ \widebar{\delta}^2}{\numobs \propscore_{\min} } & \mbox{for
    $\numobs \leq c' \fatshatter{\bar{\delta}} (\funcClass)$,} \\
  \frac{1}{\numobs}  & \mbox{otherwise,}
\end{dcases}
\end{align}
for any $\widebar{\delta} \in (0, 1)$.  The validity of this lower
bound does not depend on noise in the outcome observations (and
therefore applies to noiseless settings).  Since $\propscore_{\min} \in
(0, 1)$, any scalar $\widebar{\delta} \gg \sqrt{\propscore_{\min}}$
yields a non-trivial risk lower bound for sample sizes $\numobs$ below
the threshold $ \fatshatter{\bar{\delta}} (\funcClass)$.

\paragraph{Relaxing the convexity requirement:}

\Cref{thm:worst-case-shattering-dim} is based on the assumption each
function class $\funcClass_\action$ is convex.  This requirement can
be relaxed if we require instead that the sequence $\{x_i \}_{i =
  1}^\fatdim$ be shattered with the
inequalities~\eqref{eq:defn-fat-shattering} all holding with
equality---that is, for any subset $S$, there exists a function $f \in
\funcClassTemp$ and a vector $t \in \real^{\fatdim}$ such that
\begin{align}
\label{EqnStrongShatter}
  f(\state_i) = t_i + \delta \quad \mbox{for all $i \in S$, and} \quad
  f(\state_i) = t_i - \delta \quad \mbox{for all $i \notin S$.}
\end{align}
For example, any class of functions mapping $\Xspace$ to the binary
set $\{0, 1\}$ satisfies this condition with $\fatdim =
\VCdim(\funcClass)$ and $\delta = 1/2$. In the following, we provide
additional examples of non-convex function classes that satisfy
equation~\eqref{EqnStrongShatter}.


\subsubsection{Examples of fat-shattering lower bounds}
\label{subsubsec:lower-bound-examples}

We discuss examples of the fat-shattering lower
bound~\eqref{eq:minimax-bound-funcclass-small-sample} in this
section. We first describe some implications for convex classes.  We
then treat some non-convex classes using the strengthened shattering
condition~\eqref{EqnStrongShatter}.

\begin{example}[Smoothness class in high dimensions]
\upshape
We begin with a standard H\"{o}lder class on the domain $\Xspace =
[-1, 1]^p$. For some index $k = 1, 2, \ldots$, we consider functions
that are $k$-order smooth in the following sense
\begin{align}
  \funcClass_{k}^{\mathrm{(Lip)} } & \mydefn \Big\{ f:[-1, 1]^p
  \rightarrow \real \; \mid \; \sup_{x \in \Xspace} ~\max_{\alpha \in
    \Nat^p, ~ \vecnorm{\alpha}{1} \leq k} \abss{\partial^\alpha
    f(\state)} \leq 1 \Big\}.
\end{align}
By inspection, the class $\funcClass$ is convex. We can lower bound
its fat shattering dimension by combining classical results on
$L^2$-covering number of smooth
functions~\cite{kolmogorov1959varepsilon} with the relation between
fat shattering dimension and covering
number~\cite{mendelson2002entropy}, we conclude that
\begin{align}
\fatshatter{t} \big( \funcClass_{k}^{\mathrm{(Lip)} } \big) \geq
2^{\pdim / k}, \quad \mbox{for a sufficiently small scale $t > 0$.}
\end{align}
Consequently, for a function class with a constant order of smoothness
(i.e., not scaling with the dimension $\pdim$), the sample size
required to approach the asymptototically optimal efficiency scales
exponentially in $\pdim$.  \hfill \goodendex
\end{example}

\begin{example}[Single index models]\upshape
Next we consider a class of single index models with domain $\Xspace =
[-1, 1]^\pdim$. Since our main goal is to understand scaling issues,
we may assume that $\pdim$ is an integer power of $2$ without loss of
generality. Given a differentiable function $\linkfun: \real
\rightarrow \real$ such that $\linkfun(0) = 0$ and $\linkfun'(\state)
\geq \linklower > 0$ for all $x \in \real$, we consider ridge
functions of the form $g_\beta(\state) \defn
\linkfun\big(\inprod{\beta}{x} \big)$.  For a radius $R > 0$, we
define the class
\begin{align}
\smallsuper{\funcClass}{GLM}_R & \mydefn \Big\{ g_\beta \; \mid \;
\|\beta\|_2 \leq R \Big\}.
\end{align}

Let us verify the strengthened shattering
condition~\eqref{EqnStrongShatter}.  Suppose that the vectors $\{ x_j
\}_{j=1}^\pdim$ define the Hadamard basis in $\pdim$ dimensions, and
so are orthonormal.  Taking $t_j = 0$ for $j = 1, \ldots, \pdim$,
given any binary vector $\zeta \in \{ -1, 1 \}^\pdim$, we define the
$\pdim$-dimensional vector
\begin{align*}
\beta(\zeta) = \frac{1}{\pdim} \sum_{j = 1}^\pdim \linkfun^{-1} \big(
\zeta_j a R \big)  \: x_j,
\end{align*}
Given the orthonormality of the vectors $\{x_j\}_{j=1}^\pdim$, we have
\begin{align*}
\inprod{\beta(\zeta)}{x_\ell} & = \linkfun^{-1} \big( \zeta_\ell a R
\big) \qquad \mbox{for each $\ell = 1, \ldots, \pdim$,}
\end{align*}
and thus $g_{\beta(\zeta)}(\state_\ell) = \zeta_\ell a R$ for each
$\ell = 1, 2, \ldots, \pdim$. Consequently, the function class
$\smallsuper{\funcClass}{GLM}_R$ satisfies the strengthened shattering
condition~\eqref{EqnStrongShatter} with fat shattering dimension
$\fatdim = \pdim$ and scale $\delta = a R$. So when the outcome
follows a generalized linear model, a sample size must be at least of
the order $\pdim$ in order to match the optimal asymptotic efficiency.
\hfill\goodendex
\end{example}

\begin{example}[Sparse linear models]\upshape
  \label{ExaSparseLinear}
Once again take the domain $[-1, 1]^\pdim$, and consider linear
functions of the form $f_\beta(\state) = \inprod{\beta}{\state}$ for
some parameter vector $\beta \in \real^\pdim$. Given a positive
integer $s \in \{1, \ldots, \pdim \}$, known as the \emph{sparsity
index}, we consider the set of $s$-sparse linear functions
\begin{align}
\smallsuper{\funcClass}{sparse}_s \mydefn \Big\{ f_\beta \; \mid \;
\abss{\mathrm{supp}(\beta)} \leq s, \; \; \mbox{and} \; \;
\vecnorm{\beta}{\infty} \leq 1 \Big\}.
\end{align}
As noted previously, sparse linear models of this type have a wide
range of applications (e.g., see the book~\cite{HasTibWai15}).

In~\Cref{subsec:strong-shatter-sparse}, we prove that the strong
shattering condition~\eqref{EqnStrongShatter} holds with fat
shattering dimension $\fatdim \asymp s \log \big(\tfrac{e \pdim}{s}
\big)$.  Consequently, if the outcome functions $\treateff$ follow a
sparse linear model, at least $\Omega \Big( s \log \big( \tfrac{e
  \pdim}{s} \big) \Big)$ samples are needed to make use of this
fact. \hfill \goodendex
\end{example}


\section{Proofs of upper bounds}

In this section, we prove the upper bounds on the estimation error
(\Cref{thm:upper-sample-split-general}
and~\Cref{cor:least-sqr-split-estimator}), along with corollaries for
specific models.


\subsection{Proof of~\Cref{thm:upper-sample-split-general}}
\label{subsec:proof-upper-sample-split-general}

The error can be decomposed into three terms as $\tauhat_\numobs -
\taustar = \Term_* - \Term_1 - \Term_2$, where
\begin{align*}
\Term_* \mydefn \frac{1}{\numobs} \sum_{i = 1}^{\numobs} \Big\{
\tfrac{\weightfunc(\State_i, \Action_i)}{\propscore(\State_i,
  \Action_i)} \outcome_i - \taustar - \fstar(\State_i, \Action_i)
\Big\}, & \\
\Term_1 \mydefn \frac{1}{\numobs} \sum_{i = 1}^{\numobs / 2} \big(
\fhat_{\numobs / 2}^{(2)}(\State_i, \Action_i) - \fstar(\State_i,
\Action_i) \big), \quad & \mbox{and} \quad \Term_2 \mydefn
\frac{1}{\numobs} \sum_{i = \numobs / 2 + 1}^{\numobs} \big(
\fhat_{\numobs / 2}^{(1)}(\State_i, \Action_i) - \fstar(\State_i,
\Action_i) \big).
\end{align*}
Since the terms in the summand defining $\Term_*$ are
$\mathrm{i.i.d.}$, a straightforward computation yields
\begin{align*}
\Exs[\Term_*^2] = \frac{1}{\numobs} \Exs \Big[ \Big(
  \tfrac{\weightfunc(\State_i, \Action_i)}{\propscore(\State_i,
    \Action_i)} \outcome_i - \taustar - \fstar(\State_i, \Action_i)
  \Big)^2 \Big] = \frac{\vstar^2}{\numobs},
\end{align*}
corresponding to the optimal asymptotic variance.  For the cross term
$\Exs[T_1 T_2]$, applying the Cauchy-Schwarz inequality yields
\begin{align*}
 \abss{\Exs[T_1 T_2]} \leq \sqrt{\Exs[T_1^2]} \cdot \sqrt{\Exs
   [T_2^2]} \leq \tfrac{1}{2 \numobs} \Exs \big[
   \weightednorm{\muhat_{\numobs/2} - \treateff}^2 \big].
\end{align*}
Consequently, in order to complete the proof, it suffices to show that
\begin{subequations}
  \begin{align}
\label{EqnBoundOne}    
\Exs[\Term_1^2] \; = \; \Exs[\Term_2^2] & = \tfrac{1}{2 \numobs} \Exs
\big[ \weightednorm{\muhat_{\numobs/2} - \treateff}^2 \big], \quad
\mbox{and} \\
\label{EqnBoundTwo}
\Exs[\Term_1 \Term_*] & = \Exs[\Term_2 \Term_*] = 0.
  \end{align}
\end{subequations}

\paragraph{Proof of equation~\eqref{EqnBoundOne}:}
We begin by observing that $\Exs \big[ \Term_1^2 \mid \datablock_2
  \big] = \frac{1}{2 \numobs} \vecnorm{\fhat_{\numobs / 2}^{(2)} -
  \fstar}{\probx \times \propscore}^2$.  Now recall
equations~\eqref{eq:defn-fstar-function} and~\eqref{eq:defn-fhat} that
define $\fstar$ and $\fhat_{\numobs / 2}^{(2)}$ respectively.  From
these definitions, we have
\begin{align*}
 \vecnorm{\fhat_{\numobs / 2}^{(2)} - \fstar}{\probx \times
   \propscore}^2 &= \Exs_{X \sim \probx} \Big[ \var_{\Action \sim
     \propscore(\State, \cdot)} \Big( \frac{\weightfunc(\State,
     \Action)}{\propscore(\State, \Action)} \big( \muhat_{\numobs /
     2}^{(2)}(\State, \Action) - \treateff(\State, \Action) \big) \mid
   X \Big) \mid \datablock_2 \Big]\\ &\leq \Exs_{(\State, \Action)
   \sim \probx \times \propscore} \Big[ \frac{\weightfunc^2(\State,
     \Action) }{\propscore^2(\State, \Action) } \big( \muhat_{\numobs
     / 2}^{(2)}(\State, \Action) - \treateff(\State, \Action) \big)^2
   \mid \datablock_2 \Big] = \weightednorm{ \muhat_{\numobs / 2}^{(2)}
   - \treateff}^2.
\end{align*}
Putting together the pieces yields $\Exs[\Term_1^2] \leq \tfrac{1}{2
  \numobs} \Exs[\weightednorm{ \muhat_{\numobs / 2}^{(2)} -
    \treateff}^2]$ as claimed.  A similar argument yields the same
bound for $\Exs[\Term_2^2]$.

\paragraph{Proof of equation~\eqref{EqnBoundTwo}:}
We first decompose the term $T_*$ into two parts:
\begin{align*}
 T_{*, j} \mydefn \frac{1}{\numobs} \sum_{i = \numobs(j - 1)/ 2 + 1}^{
   \numobs j / 2 } \Big\{ \frac{\weightfunc(\State_i,
   \Action_i)}{\propscore(\State_i, \Action_i)} \outcome_i - \taustar
 - \fstar(\State_i, \Action_i) \Big\}, \quad \mbox{for $j \in \{1,
   2\}$}.
\end{align*}
Since for any $x \in \Xspace$, the functions $\fstar(\state, \cdot)$
and $\fhat_{\numobs / 2}^{(2)}(\state, \cdot)$ are both zero-mean
under $\propscore(\state, \cdot)$, we have the following identity.
\begin{align*}
\Exs \big[ T_{*, 2} T_1 \mid \datablock_2 \big] = \frac{1}{\numobs}
\sum_{i = 1}^{\numobs / 2} \Exs \Big[ T_{*, 2} \cdot \Exs \big[
    \fhat_{\numobs / 2}^{(2)}(\State_i, \Action_i) - \fstar(\State_i,
    \Action_i) \mid \State_i \big] \mid \datablock_2 \Big] = 0.
\end{align*}
Similarly, we have $\Exs \big[T_{*, 1} T_2 \big] = 0$. It remains to
study the terms $\Exs \big[T_{*, j} T_j \big]$ for $j \in \{1,
2\}$. We start with the following expansion:
\begin{align*}
T_{*, 1} \cdot T_1 & = \frac{1}{\numobs^2} \sum_{ i = 1}^{\numobs / 2}
\Big\{ \frac{\weightfunc(\State_i, \Action_i)}{\propscore(\State_i,
  \Action_i)} \outcome_i - \taustar - \fstar(\State_i, \Action_i)
\Big\} \cdot \big( \fhat_{\numobs / 2}^{(2)}(\State_i, \Action_i) -
\fstar(\State_i, \Action_i) \big) \\
& \qquad + \frac{1}{\numobs^2} \sum_{1 \leq i \neq \ell \leq \numobs /
  2} \Big\{ \frac{\weightfunc(\State_i, \Action_i)}{\propscore
  (\State_i, \Action_i)} \outcome_i - \taustar - \fstar(\State_i,
\Action_i) \Big\} \cdot \big( \fhat_{\numobs / 2}^{(2)}(\State_\ell,
\Action_\ell) - \fstar(\State_\ell, \Action_\ell) \big).
\end{align*}
For $i \neq \ell$, by the unbiasedness of $T_*$, we note that:
\begin{align*}
 \Exs \Big[\Big\{ \frac{\weightfunc(\State_i,
     \Action_i)}{\propscore(\State_i, \Action_i)} \outcome_i -
   \taustar - \fstar(\State_i, \Action_i) \Big\} \cdot \big(
   \fhat_{\numobs / 2}^{(2)}(\State_\ell, \Action_\ell) - \fstar
   (\State_\ell, \Action_\ell) \big) \mid \datablock_2, \State_\ell
   \Big] = 0.
\end{align*}
So we have that:
\begin{align*}
\Exs \big[ T_{*, 1} T_1 \big] &= \frac{1}{2 \numobs} \Exs \Big[ \Big\{
  \frac{\weightfunc(\State, \Action)}{\propscore(\State, \Action)}
  \treateff(\State, \Action) - \taustar - \fstar(\State, \Action)
  \Big\} \cdot \big( \fhat_{\numobs / 2}^{(2)}(\State, \Action) -
  \fstar(\State, \Action) \big) \Big]\\ &= \frac{1}{2 \numobs} \Exs
\Big[ \Big( \inprod{\weightfunc(\State, \cdot)}{\treateff(\State,
    \cdot)} - \taustar \Big) \cdot \big( \fhat_{\numobs /
    2}^{(2)}(\State, \Action) - \fstar(\State, \Action) \big) \Big]
\\ &= \frac{1}{2 \numobs} \Exs \Big[ \Big( \inprod{\weightfunc(\State,
    \cdot)}{\treateff(\State, \cdot)} - \taustar \Big) \cdot \Exs
  \big[ \fhat_{\numobs / 2}^{(2)}(\State, \Action) - \fstar(\State,
    \Action) \mid \State, \datablock_2 \big] \Big] = 0.
\end{align*}


\subsection{Proof of~\Cref{cor:least-sqr-split-estimator}}
\label{subsec:proof-least-sqr-split-estimator}

Based on~\Cref{thm:upper-sample-split-general} and the discussion
thereafter, it suffices to prove an oracle inequality on the squared
error $\Exs \big[ \weightednorm{\muhat_{\numobs} - \treateff}^2
  \big]$.  So as to ease the notation, for any pair of functions $f,
g: \Xspace \times \actionspace \rightarrow \real$, we define the
empirical inner product
\begin{align*}
\inprod{f}{g}_\maux \mydefn \frac{1}{\maux} \sum_{i = 1}^\maux
\frac{\weightfunc^2(\State_i, \Action_i)}{\propscore^2(\State_i,
  \Action_i)} f(\State_i, \Action_i) g(\State_i, \Action_i), \quad
\mbox{and the induced norm $\vecnorm{f}{\maux} \mydefn
  \sqrt{\inprod{f}{f}_\maux}$.}
\end{align*}
With this notation, observe that our weighted least-squares estimator
is based on minimizing the objective $\vecnorm{\outcome -
  \plainmu}{\maux}^2 = \frac{1}{\maux} \sum_{i = 1}^\maux
\frac{\weightfunc^2(\State_i, \Action_i)}{\propscore^2(\State_i,
  \Action_i)} \big( \outcome_i - \plainmu(\State_i, \Action_i)
\big)^2$, where we have slightly overloaded our notation on $Y$---
viewing it as a function such that $\outcome(\State_i, \Action_i) =
\outcome_i$ for each $i$.

By the convexity of $\convSet$ and the optimality condition that
defines $\muhat_\maux$, for any function $\plainmu \in \funcClass$ and
scalar $\beta \in (0, 1)$, we have $\vecnorm{\outcome -
  \plainmu}{\maux}^2 \leq \vecnorm{\outcome_i - \big( t \plainmu +(1 -
  t) \muhat_\maux \big)}{\maux}^2$.  Taking the limit $t \rightarrow
0^+$ yields the basic inequality
\begin{align}
\label{eq:basic-ineq-for-constrained-least-sqr-proof}  
\vecnorm{\Delhat_\maux}{\maux}^2 & \leq \inprod{\treateff -
  \outcome}{\Delhat_\maux}_\maux +
\inprod{\Delhat_\maux}{\Deltil}_\maux,
\end{align}
where define the estimation error $\Delhat_\maux \mydefn \muhat_\maux
- \plainmu$, and the approximation error $\Deltil \mydefn \treateff -
\plainmu$.  By applying the Cauchy--Schwarz inequality to the last
term in
equation~\eqref{eq:basic-ineq-for-constrained-least-sqr-proof}, we
find that
\begin{align*}
\inprod{\Delhat_\maux}{\Deltil}_\maux \leq
\vecnorm{\Delhat_\maux}{\maux} \cdot \vecnorm{\Deltil}{\maux} \leq
\frac{1}{2} \vecnorm{\Delhat_\maux}{\maux}^2 + \frac{1}{2}
\vecnorm{\Deltil}{\maux}^2.
\end{align*}
Combining with
inequality~\eqref{eq:basic-ineq-for-constrained-least-sqr-proof}
yields the bound
\begin{align}
\label{eq:basic-ineq-modified-form-non-sharp}  
\vecnorm{\Delhat_\maux}{\maux}^2 \leq \frac{2}{\maux} \sum_{i =
  1}^\maux \OutNoise_i \frac{\weightfunc^2(\State_i, \Action_i)}{
  \propscore^2(\State_i, \Action_i)} \Delhat_\maux(\State_i,
\Action_i) + \vecnorm{\Deltil}{\maux}^2,
\end{align}
where $\OutNoise_i \defn \treateff(\State_i, \Action_i) - \outcome_i$
is the \emph{outcome noise} associated with observation $i$.

The remainder of our analysis involves controlling different terms in
the bound~\eqref{eq:basic-ineq-modified-form-non-sharp}.  There are
two key ingredients in the argument:
\begin{itemize}
\item First, we need to relate the empirical $\Ltwospace$-norm
  $\vecnorm{\cdot}{\maux}$ with its population counterpart
  $\weightednorm{\cdot}$.
  \Cref{lemma:relate-emp-norm-to-weighted-norm} stated below provides
  this control.
\item Second, using the Rademacher complexity $\SquaredRade_{\maux}$
  from equation~\eqref{eq:defn-rade-effective-noise}, we upper bound
  the weighted empirical average term associated with the outcome
  noise $\OutNoise_i = \treateff(\State_i, \Action_i) - \outcome_i$ on
  the right-hand-side of
  equation~\eqref{eq:basic-ineq-modified-form-non-sharp}.  This bound
  is given
  in~\Cref{lemma:relate-empirical-process-to-rade-complexity}.
\end{itemize}

Define the event
\begin{align}
  \label{EqnDefnEventOmega}
\Event_\omega & \defn \Big \{ \vecnorm{f}{\maux}^2 \geq
\frac{\smallballprob \smallballcon^2}{16} \weightednorm{f}^2 \quad
\mbox{for all $f \in \funcClass^* \setminus
  \ball_\omega(\radtwo_\maux)$} \Big \}.
\end{align}
The following result provides tail control on the complement of this
event.
\begin{lemma}
\label{lemma:relate-emp-norm-to-weighted-norm}
There exists a universal constant $c' > 0$ such that
\begin{align}
  \Prob(\Event_\omega^c) \leq \exp \big( -
  \tfrac{\smallballprob^2}{c'} \maux \big).
\end{align}
\end{lemma}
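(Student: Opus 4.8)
This is the "small-ball method" argument of Mendelson and collaborators. The plan is to show that the normalized empirical process $f\mapsto \vecnorm{f}{\maux}^2/\weightednorm{f}^2$ stays bounded below uniformly over the relevant scaled set, via a two-step reduction: first to a counting (indicator) statement that exploits only the small-ball condition~\ref{assume:small-ball}, and then to a bounded-differences / Rademacher symmetrization bound on the fluctuations of the indicator empirical process.

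First I would reduce to functions on the sphere. By homogeneity of both $\vecnorm{\cdot}{\maux}$ and $\weightednorm{\cdot}$, it suffices to prove a lower bound on $\vecnorm{f}{\maux}$ for all $f$ in the set $\{f\in\funcClass^*\setminus\ball_\omega(\radtwo_\maux)\}$ rescaled to have $\weightednorm{f}=1$ (more precisely, by convexity of $\funcClass^*$ one rescales toward the origin, staying in $\funcClass^*$, as long as one lands outside the ball $\ball_\omega(\radtwo_\maux)$, which is exactly what the event is phrased to allow). Write $\htil_i \defn \tfrac{\weightfunc(\State_i,\Action_i)}{\propscore(\State_i,\Action_i)} f(\State_i,\Action_i)$. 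The small-ball condition says $\Prob[|\htil_i|\ge \smallballcon\weightednorm{f}] \ge \smallballprob$. The key observation is the pointwise lower bound
\begin{align*}
\vecnorm{f}{\maux}^2 = \frac{1}{\maux}\sum_{i=1}^\maux \htil_i^2 \;\ge\; \smallballcon^2\weightednorm{f}^2\cdot \frac{1}{\maux}\sum_{i=1}^\maux \indicator\big[|\htil_i|\ge \smallballcon\weightednorm{f}\big],
\end{align*}
so it is enough to show that, uniformly over the rescaled set, the empirical average of these indicators is at least $\smallballprob/16$ with the stated probability; since each indicator has mean at least $\smallballprob$, this is a uniform lower-deviation statement for a $[0,1]$-valued empirical process.

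Next I would control that uniform deviation by standard empirical-process tools. Let $\psi_u(t)\defn\indicator[|t|\ge u]$; replace it by a $1$-Lipschitz (in a suitable rescaled sense) surrogate $\varphi_u$ sandwiched between $\indicator[|t|\ge u]$ and $\indicator[|t|\ge u/2]$ so that Talagrand's contraction principle applies after symmetrization. Bounding $\sup_{f}\big|\tfrac1\maux\sum_i \varphi_{\smallballcon\weightednorm{f}}(\htil_i) - \Exs[\cdots]\big|$ by its symmetrized Rademacher version, contraction removes $\varphi$ at the cost of a constant, leaving a multiple of the (normalized) Rademacher complexity $\radeComplexity_\maux$ from equation~\eqref{eq:defn-rade-intrinsic} evaluated on $(\funcClass-\cdot)$ intersected with a ball. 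The definition~\eqref{eq:defn-critical-radius-plain} of the critical radius $\radtwo_\maux$ is precisely calibrated so that, restricted to the set $\funcClass^*\setminus\ball_\omega(\radtwo_\maux)$ (equivalently, after rescaling, on the sphere one is at "radius $1$" where $\radeComplexity_\maux$ is sub-linear), this Rademacher term is at most $\smallballcon\smallballprob/32$, hence the expected sup-deviation is at most $\smallballcon\smallballprob/16$ or so in the relevant units; I would track the numerical constants so the surviving lower bound on the indicator average is $\ge\smallballprob/2 - \smallballprob/8 \ge \smallballprob/4$, comfortably larger than $\smallballprob/16$ after accounting for the $\smallballcon^2$ factor, matching the constant $\smallballprob\smallballcon^2/16$ in the definition of $\Event_\omega$. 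Finally, a concentration step upgrades "in expectation" to "with high probability": the indicator empirical process has bounded differences $1/\maux$, so the bounded-differences (McDiarmid) inequality gives that the sup-deviation exceeds its mean by $t$ with probability at most $\exp(-2\maux t^2)$; choosing $t\asymp \smallballprob$ yields the bound $\exp(-\smallballprob^2\maux/c')$ claimed.

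The main obstacle, and the step requiring the most care, is the rescaling/peeling argument that lets one pass from "all $f\in\funcClass^*\setminus\ball_\omega(\radtwo_\maux)$" to "$f$ on a sphere of a single fixed radius where the fixed-point inequality~\eqref{eq:defn-critical-radius-plain} is active." One must use convexity of $\funcClass$ (Assumption~\ref{assume:convset}) — so that the chord from $f$ toward the origin lies in $\funcClass^*$ — together with the star-shapedness of the localized complexity $r\mapsto \radeComplexity_\maux((\funcClass-\plainmu)\cap\ball_\omega(r))/r$ being non-increasing, which guarantees the fixed-point $\radtwo_\maux$ behaves monotonically and that the bound obtained at radius $\radtwo_\maux$ propagates to all larger radii. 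Getting the chain of universal constants to close (the $16$, the $32$, the contraction factor $2$, the McDiarmid slack) is routine but must be done honestly; everything else is a direct application of the symmetrization–contraction–concentration pipeline.
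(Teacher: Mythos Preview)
Your proposal is correct and follows essentially the same route as the paper: reduce to the unit $\weightednorm{\cdot}$-sphere via convexity, replace the indicator $\indicator[|\htil|\ge u]$ by a bounded Lipschitz surrogate, lower-bound its expectation via the small-ball condition~\ref{assume:small-ball}, control the uniform deviation by symmetrization and Ledoux--Talagrand contraction, and then invoke the fixed-point inequality~\eqref{eq:defn-critical-radius-plain} at radius $\radtwo_\maux$ to make the Rademacher term at most $\smallballcon\smallballprob/32$. The only cosmetic difference is the concentration step: the paper cites Adamczak's inequality for unbounded empirical processes, whereas you use McDiarmid; since the surrogate is $[0,1]$-valued this is equivalent (and arguably cleaner), and either way yields the $\exp(-c\,\smallballprob^2\maux)$ tail.
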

\noindent
See~\Cref{subsubsec:proof-lemma-relate-emp-norm-to-weighted-norm} for
the proof.

\medskip

For any (non-random) scalar $r > 0$, we also define the event
\begin{align*}
\Event(r) \mydefn \big\{ \weightednorm{\Delhat_\maux} \geq r \big\}.
\end{align*}
On the event $\Event_\omega \cap \Event(\radtwo_\maux)$, our original
bound~\eqref{eq:basic-ineq-modified-form-non-sharp} implies that
\begin{align}
\label{eq:basic-ineq-after-small-ball-argument}  
\weightednorm{\Delhat_\maux}^2 \leq \frac{32}{\smallballprob
  \smallballcon^2 \maux} \sum_{i = 1}^\maux \OutNoise_i
\frac{\weightfunc^2(\State_i, \Action_i)}{ \propscore^2(\State_i,
  \Action_i)} \Delhat_\maux(\State_i, \Action_i) +
\frac{16}{\smallballprob \smallballcon^2}\vecnorm{\Deltil}{\maux}^2.
\end{align}

In order to bound the right-hand-side of
equation~\eqref{eq:basic-ineq-after-small-ball-argument}, we need a
second lemma that controls the empirical process in terms of the
critical radius $\radone_\maux$ defined by the fixed point
relation~\eqref{eq:defn-critical-radius-squared}.
\begin{lemma}
\label{lemma:relate-empirical-process-to-rade-complexity}
We have
\begin{align}
\Exs \Big[ \bm{1}_{\Event(\radone_\maux)} \cdot \frac{2}{\maux}
  \sum_{i = 1}^\maux \OutNoise_i \tfrac{\weightfunc^2(\State_i,
    \Action_i)}{ \propscore^2(\State_i, \Action_i)}
  \Delhat_\maux(\State_i, \Action_i) \Big] \leq \radone_\maux
\sqrt{\Exs \big[ \weightednorm{\Delhat_\maux}^2 \big]}.
  \end{align}
\end{lemma}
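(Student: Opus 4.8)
The plan is the standard ``peel to the critical ball, then symmetrize'' argument. Fix the reference function $\plainmu \in \funcClass$ (so that $\radone_\maux = \radone_\maux(\plainmu)$). Since $\muhat_\maux \in \funcClass$, the error $\Delhat_\maux = \muhat_\maux - \plainmu$ lies in $\funcClass - \plainmu$, which by the convexity/compactness condition~\ref{assume:convset} is convex and contains $0$, hence is star-shaped about the origin. Consequently, on the event $\Event(\radone_\maux) = \{\weightednorm{\Delhat_\maux} \ge \radone_\maux\}$ the rescaled function $g := (\radone_\maux/\weightednorm{\Delhat_\maux})\,\Delhat_\maux$ still lies in $\funcClass - \plainmu$ and has $\weightednorm{g} = \radone_\maux$, so $g \in (\funcClass - \plainmu)\cap\ball_\omega(\radone_\maux)$. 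Writing $\Delhat_\maux = (\weightednorm{\Delhat_\maux}/\radone_\maux)\,g$ and bounding the resulting empirical average by its supremum over this localized set, I obtain, on $\Event(\radone_\maux)$,
\begin{align*}
\frac{2}{\maux}\sum_{i=1}^\maux \OutNoise_i \frac{\weightfunc^2(\State_i,\Action_i)}{\propscore^2(\State_i,\Action_i)}\Delhat_\maux(\State_i,\Action_i)
\;\le\; \frac{2\,\weightednorm{\Delhat_\maux}}{\radone_\maux}\,\Psi_\maux,
\qquad
\Psi_\maux := \sup_{f \in (\funcClass-\plainmu)\cap\ball_\omega(\radone_\maux)} \frac{1}{\maux}\sum_{i=1}^\maux \OutNoise_i \frac{\weightfunc^2(\State_i,\Action_i)}{\propscore^2(\State_i,\Action_i)} f(\State_i,\Action_i),
\end{align*}
and $\Psi_\maux \ge 0$ because $0$ is in the index set.

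Next, since $\Psi_\maux$ and $\weightednorm{\Delhat_\maux}$ are nonnegative, dropping $\bm{1}_{\Event(\radone_\maux)}$ and applying Cauchy--Schwarz gives
\begin{align*}
\Exs\Big[\bm{1}_{\Event(\radone_\maux)}\cdot \frac{2}{\maux}\sum_{i=1}^\maux \OutNoise_i \frac{\weightfunc^2(\State_i,\Action_i)}{\propscore^2(\State_i,\Action_i)}\Delhat_\maux(\State_i,\Action_i)\Big]
\;\le\; \frac{2}{\radone_\maux}\,\sqrt{\Exs\big[\weightednorm{\Delhat_\maux}^2\big]}\;\sqrt{\Exs\big[\Psi_\maux^2\big]}.
\end{align*}
It therefore remains to control $\Exs[\Psi_\maux^2]$. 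Because $\Exs[\OutNoise_i \mid \State_i, \Action_i] = 0$, the terms defining $\Psi_\maux$ form a sum of i.i.d.\ mean-zero random elements indexed by $f$, so a symmetrization inequality for the second moment of a supremum introduces i.i.d.\ Rademacher signs $\rade_i$; using the identity $\rade_i\OutNoise_i = -\rade_i\big(\outcome_i - \treateff(\State_i,\Action_i)\big)$ and the symmetry of the $\rade_i$, the symmetrized quantity equals $\SquaredRade^2_\maux\big((\funcClass-\plainmu)\cap\ball_\omega(\radone_\maux)\big)$, which by the fixed-point inequality~\eqref{eq:defn-critical-radius-squared} defining $\radone_\maux$ is at most $\radone_\maux^{4}$. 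Chaining these bounds controls the left-hand side by a universal multiple of $\radone_\maux \sqrt{\Exs[\weightednorm{\Delhat_\maux}^2]}$.

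The one delicate point is the constant: the second-moment symmetrization of a supremum costs a universal factor, so to land exactly on the stated bound one either calibrates the fixed-point equation~\eqref{eq:defn-critical-radius-squared} to absorb it, or replaces the crude Cauchy--Schwarz split $\Exs[\weightednorm{\Delhat_\maux}\,\Psi_\maux] \le \sqrt{\Exs[\weightednorm{\Delhat_\maux}^2]}\,\sqrt{\Exs[\Psi_\maux^2]}$ by a conditioning argument that first averages over the outcome noise given the covariates $(\State_i,\Action_i)_{i=1}^\maux$. Apart from this bookkeeping, the proof uses only the star-shapedness of $\funcClass - \plainmu$ (convexity of $\funcClass$) together with the guaranteed existence of $\radone_\maux$ from~\Cref{prop:existence-critical-radii}; the rest is elementary.
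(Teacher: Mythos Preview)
Your proposal is correct and follows essentially the same argument as the paper: rescale $\Delhat_\maux$ into the localized ball $(\funcClass-\plainmu)\cap\ball_\omega(\radone_\maux)$ using convexity, bound by $\weightednorm{\Delhat_\maux}/\radone_\maux$ times the supremum $\Psi_\maux$ (the paper calls it $Z_\maux(\radone_\maux)$), apply Cauchy--Schwarz, and control $\Exs[\Psi_\maux^2]$ via symmetrization together with the fixed-point relation $\SquaredRade_\maux = \radone_\maux^2$. The paper uses tail-probability symmetrization $\Prob[Z_\maux > t] \le c'\,\Prob[\SymVar_\maux > ct]$ and integrates, which is equivalent to your second-moment symmetrization; the universal constant you flag as ``delicate'' is present in the paper's own derivation as well and is simply absorbed into the downstream oracle inequality.
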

\noindent
See~\Cref{subsubsec:proof-lemma-relate-empirical-process-to-rade-complexity}
for the proof. \\

\medskip

With these two auxiliary lemmas in hand, we can now complete the proof
of the theorem itself.  In order to exploit the basic
inequality~\eqref{eq:basic-ineq-after-small-ball-argument}, we begin
by decomposing the MSE as $\Exs \big[ \weightednorm{\Delhat_\maux}^2
  \big] \leq \sum_{j=1}^3 \Term_j$, where
\begin{align*}
\Term_1 \defn \Exs \big[ \weightednorm{\Delhat_\maux}^2
  \bm{1}_{\Event_\omega \cap \Event(\radtwo_\maux) \cap
    \Event(\radone_\maux)} \big], \quad \Term_2 \defn \Exs \big[
  \weightednorm{\Delhat_\maux}^2 \bm{1}_{[\Event(\radtwo_\maux) \cap
      \Event(\radone_\maux)]^c} \big], \quad \mbox{and} \quad \Term_3
\defn \Exs \big[\weightednorm{\Delhat_\maux}^2
  \bm{1}_{\Event_\omega^c} \big].
\end{align*}
We analyze each of these terms in turn.
\paragraph{Analysis of $\Term_1$:}
Combining the bound~\eqref{eq:basic-ineq-after-small-ball-argument}
with~\Cref{lemma:relate-empirical-process-to-rade-complexity} yields
\begin{subequations}
\begin{align}
\Term_1 & \leq \tfrac{32}{\smallballprob \smallballcon^2 \maux} \Exs
\Big[ \bm{1}_{\Event(\radtwo_\maux)} \cdot \sum_{i = 1}^\maux
  \OutNoise_i \frac{\weightfunc^2(\State_i, \Action_i)}{
    \propscore^2(\State_i, \Action_i)} \Delhat_\maux(\State_i,
  \Action_i) \Big] + \tfrac{16}{\smallballprob \smallballcon^2} \Exs
\big[ \vecnorm{\Deltil}{\maux}^2 \big] \nonumber \\
& \leq \tfrac{32}{\smallballprob \smallballcon^2} \radone_\maux
\sqrt{\Exs \big[ \weightednorm{\Delhat_\maux}^2 \big]} +
\tfrac{16}{\smallballprob \smallballcon^2} \Exs \big[
  \vecnorm{\Deltil}{\maux}^2 \big] \nonumber \\
\label{eq:first-term-in-mse-decomp}
& = \tfrac{32}{\smallballprob \smallballcon^2} \radone_\maux
\sqrt{\Exs \big[ \weightednorm{\Delhat_\maux}^2 \big]} +
\tfrac{16}{\smallballprob \smallballcon^2} \weightednorm{\Deltil}^2,
\end{align}
where the final equality follows since $\Exs \big[
  \vecnorm{\Deltil}{\maux}^2 \big] = \weightednorm{\Deltil}^2$, using
the definition of the empirical $\Ltwospace$-norm, and the fact that
the approximation error $\Deltil$ is a deterministic function.

\paragraph{Bounding $\Term_2$:}
On the event $[\Event(\radtwo_\maux) \cap \Event(\radone_\maux)]^c =
\Event^c(\radtwo_\maux) \cup \Event^c(\radone_\maux)$, we are
guaranteed to have $\weightednorm{\Delhat_\maux}^2 \leq
\radone_\maux^2 + \radtwo_\maux^2$, and hence
\begin{align}
\label{eq:second-term-in-mse-decomp}  
\Term_2 & \leq \radone_\maux^2 + \radtwo_\maux^2.
\end{align}

\paragraph{Analysis of $\Term_3$:}
Since the function class $\funcClass$ is bounded, we have
\begin{align}
\label{eq:third-term-in-mse-decomp}  
\Term_3 & \leq \diameter^2_\omega(\funcClass \cup \{\treateff\}) \cdot
\Prob \big( \Event_\omega^c \big) \leq \diameter^2_\omega(\funcClass
\cup \{\treateff\}) \cdot e^{- c \smallballprob^2 \maux}
\end{align}
\end{subequations}
for a universal constant $c > 0$.\\

\medskip

Finally, substituting the
bounds~\eqref{eq:first-term-in-mse-decomp},~\eqref{eq:second-term-in-mse-decomp}
and~\eqref{eq:third-term-in-mse-decomp} into our previous inequality
\mbox{$\Exs \big[ \weightednorm{\Delhat_\maux}^2 \big] \leq
  \sum_{j=1}^3 \Term_j$} yields
\begin{align*}
  \Exs \big[ \weightednorm{\Delhat_\maux^2} \big] \leq
  \frac{32}{\smallballprob \smallballcon^2 } \radone_\maux \sqrt{\Exs
    \big[ \weightednorm{\Delhat_\maux}^2 \big]} +
  \frac{16}{\smallballprob \smallballcon^2} \weightednorm{\Deltil}^2 +
  (\radone_\maux^2 + \radtwo_\maux^2) +\diameter^2_\omega(\funcClass
  \cup \{\treateff\}) \cdot e^{ - c \smallballprob^2 \maux}.
\end{align*}
Note that this is a self-bounding relation for the quantity $\Exs
\big[ \weightednorm{\Delhat_\maux^2} \big]$.  With the choice $\maux =
\numobs / 2$, it implies the the MSE bound
\begin{align*}
\Exs \big[ \weightednorm{\muhat_{\numobs / 2} - \treateff}^2 \big] &
\leq 2 \Exs \big[ \weightednorm{\muhat_{\numobs / 2} - \plainmu}^2
  \big] + 2 \Exs \big[ \weightednorm{ \Delhat_{\numobs / 2}}^2 \big]
\\
& \leq \big(2 + \tfrac{2 c'}{\smallballcon \smallballprob^2} \big)
\weightednorm{\Deltil}^2 + \tfrac{c'}{\smallballcon^2
  \smallballprob^4} \radone_{\numobs / 2}^2 + \tfrac{c'}{\smallballcon
  \smallballprob^2} \radtwo_{\numobs / 2}^2 +
\diameter^2_\omega(\funcClass \cup \{\treateff\}) \cdot e^{ - c
  \smallballprob^2 \numobs / 2},
\end{align*}
for a pair $(c, c')$ of positive universal constants.  Combining
with~\Cref{thm:upper-sample-split-general} and taking the infimum over
$\plainmu \in \funcClass$ completes the proof.


\subsubsection{Proof of Lemma~\ref{lemma:relate-emp-norm-to-weighted-norm}}
\label{subsubsec:proof-lemma-relate-emp-norm-to-weighted-norm}

The following lemma provides a lower bound on the empirical norm,
valid uniformly over a given function class $\funcClassTemp \subseteq
\big\{ h / \weightednorm{h} \: \mid \: h \in \funcClass^* \backslash
\{0\} \big\}$.
\begin{lemma}
\label{lemma:concentration-for-weighted-sqr-norm}
For a failure probability $\failprob \in (0,1)$, we have
\begin{align}
\inf_{h \in \funcClassTemp} \vecnorm{h}{\maux}^2 \geq
\frac{\smallballprob \smallballcon^2}{4} - 4 \smallballcon
\radeComplexity_\maux(\Hclass) - c \smallballcon^2 \cdot \Big\{
\sqrt{\tfrac{\log(1 / \failprob) }{\maux} } + \tfrac{\log(1 /
  \failprob)}{\maux} \Big \}
\end{align}
with probability at least $1 - \failprob$.
\end{lemma}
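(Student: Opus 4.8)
The plan is to follow the small-ball method of Mendelson. Fix $h \in \funcClassTemp$ and write $\htil \defn \tfrac{\weightfunc}{\propscore}h$, so that $\vecnorm{h}{\maux}^2 = \tfrac1\maux \sum_{i=1}^\maux \htil^2(\State_i,\Action_i)$ and $\|\htil\|_2 = \weightednorm{h} = 1$. Since every $h \in \funcClassTemp$ is a positive scaling of an element of the difference set $\DelF$, scale invariance of the small-ball condition~\ref{assume:small-ball} gives $\Prob[\,|\htil(\State,\Action)| \ge \smallballcon\,] \ge \smallballprob$ for all $h \in \funcClassTemp$. Introduce a piecewise-linear $\varphi:\real\to[0,1]$ that vanishes on $[-\smallballcon/2,\smallballcon/2]$, equals $1$ outside $[-\smallballcon,\smallballcon]$, and is $(2/\smallballcon)$-Lipschitz; then $\varphi(0)=0$ and $\mathbf{1}\{|t|\ge\smallballcon/2\} \ge \varphi(t) \ge \mathbf{1}\{|t|\ge\smallballcon\}$ for all $t$. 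Combining the left inequality with $t^2 \ge (\smallballcon/2)^2\,\mathbf{1}\{|t|\ge\smallballcon/2\}$ yields the pointwise bound $t^2 \ge (\smallballcon^2/4)\,\varphi(t)$, hence
\[
\vecnorm{h}{\maux}^2 \;\ge\; \frac{\smallballcon^2}{4}\cdot\frac1\maux\sum_{i=1}^\maux \varphi\big(\htil(\State_i,\Action_i)\big),
\]
while the right inequality gives $\Exs\big[\varphi(\htil(\State,\Action))\big] \ge \Prob[\,|\htil(\State,\Action)|\ge\smallballcon\,] \ge \smallballprob$ uniformly over $h\in\funcClassTemp$.

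Next I would control the deviation of this empirical average from its mean, uniformly in $h$. Set $\mathcal{Z} \defn \sup_{h\in\funcClassTemp}\big(\Exs[\varphi(\htil)] - \tfrac1\maux\sum_i\varphi(\htil(\State_i,\Action_i))\big)$, so that $\tfrac1\maux\sum_i\varphi(\htil(\State_i,\Action_i)) \ge \smallballprob - \mathcal{Z}$ simultaneously for all $h$. Because each summand $\varphi(\cdot)\in[0,1]$, replacing one data point changes $\mathcal{Z}$ by at most $1/\maux$; a bounded-differences argument—or, to also obtain the linear $\log(1/\failprob)/\maux$ term, Talagrand's concentration inequality for suprema of empirical processes together with the variance bound $\var(\varphi(\htil))\le 1$—gives $\mathcal{Z} \le \Exs[\mathcal{Z}] + c\,\{\sqrt{\log(1/\failprob)/\maux}+\log(1/\failprob)/\maux\}$ with probability at least $1-\failprob$. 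To bound $\Exs[\mathcal{Z}]$, apply symmetrization to get $\Exs[\mathcal{Z}] \le 2\,\Exs\big[\sup_h \tfrac1\maux\sum_i\rade_i\,\varphi(\htil(\State_i,\Action_i))\big]$, and then the Ledoux–Talagrand contraction inequality (legitimate since $\varphi$ is $(2/\smallballcon)$-Lipschitz with $\varphi(0)=0$) to remove $\varphi$ at the cost of its Lipschitz constant, yielding $\Exs[\mathcal{Z}] \le \tfrac{c}{\smallballcon}\,\Exs\big[\sup_{h\in\funcClassTemp}\tfrac1\maux\sum_i \rade_i\,\tfrac{\weightfunc(\State_i,\Action_i)}{\propscore(\State_i,\Action_i)}h(\State_i,\Action_i)\big] = \tfrac{c}{\smallballcon}\,\radeComplexity_\maux(\Hclass)$, the last equality being the definition~\eqref{eq:defn-rade-intrinsic} with $\Hclass = \funcClassTemp$.

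Chaining the three displays, on the event of probability at least $1-\failprob$ we obtain, uniformly over $h\in\funcClassTemp$,
\[
\vecnorm{h}{\maux}^2 \;\ge\; \frac{\smallballcon^2}{4}\Big(\smallballprob - \frac{c}{\smallballcon}\radeComplexity_\maux(\Hclass) - c\big\{\sqrt{\tfrac{\log(1/\failprob)}{\maux}} + \tfrac{\log(1/\failprob)}{\maux}\big\}\Big),
\]
which after absorbing numerical factors is exactly the claimed bound (with a harmless relabeling of the universal constants in front of the $\radeComplexity_\maux(\Hclass)$ and concentration terms). I expect the main obstacle to be the uniform deviation step: choosing the smoothing level so that the truncated second moment is dominated by an expectation to which the small-ball condition directly applies, and then carrying out symmetrization and contraction so that the leading term $\smallballprob\smallballcon^2/4$ survives intact while only lower-order (Rademacher and concentration) quantities are subtracted. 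The one delicate point is bookkeeping the contraction and Talagrand constants so that they collapse cleanly into the stated form.
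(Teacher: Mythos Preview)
Your proposal is correct and follows essentially the same strategy as the paper: a smoothed indicator sandwiched between $\mathbf{1}\{|\htil|\ge\smallballcon\}$ and $\mathbf{1}\{|\htil|\ge\smallballcon/2\}$, the small-ball condition to lower bound the expectation, concentration for the uniform deviation, and symmetrization plus Ledoux--Talagrand contraction to reduce to $\radeComplexity_\maux(\Hclass)$. The only cosmetic differences are that the paper parameterizes the smoothing via a fixed piecewise-linear $\phi$ applied to $|\htil|/u$ with $u=\smallballcon/2$ (which is the same function as yours after a change of variables), and invokes Adamczak's inequality rather than Talagrand's for the deviation step---both are valid here since the process is uniformly bounded by $\smallballcon^2/4$.
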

\noindent
See~\Cref{subsubsec:proof-lemma-concentration-for-weighted-sqr-norm}
for the proof of this lemma. \\

Taking it as given for now, we proceed with the proof
of~\Cref{lemma:relate-emp-norm-to-weighted-norm}. For any
deterministic radius $r > 0$, we define the set
\begin{align*}
\Hclass_r \mydefn \Big\{ h / \weightednorm{h} \: \mid \; h \in
\funcClass^*, \; \mbox{and} \; \weightednorm{h} \geq r \Big\}.
\end{align*}
By construction, the sequence $\{\Hclass_r \}_{r > 0}$ consists of
nested sets---that is, $\Hclass_r \subseteq \Hclass_s$ for $r >
s$---and all are contained within the set $\big\{ h / \weightednorm{h}
\, \mid \, h \in \funcClass \backslash \{0\} \big\}$. By convexity of
the class $\funcClass$, for any $h \in \funcClass$ such that
$\weightednorm{h} \geq r$, we have $r \; h / \weightednorm{h} \in
\funcClass \cap \ball(r)$. Consequently, we can bound the Rademacher
complexity as
\begin{align*}
    \radeComplexity_\maux(\Hclass_r) = \Exs \Big[ \sup_{h \in
        \funcClassTemp_r} \sum_{i = 1}^\maux \rade_i \frac{
        \weightfunc(\State_i, \Action_i) h(\State_i, \Action_i)
      }{\propscore(\State_i, \Action_i) } \Big] & \leq \frac{1}{r}
    \Exs \Big[ \sup_{h \in \funcClass^* \cap \ball_\omega(r)} \sum_{i
        = 1}^\maux \rade_i \frac{ \weightfunc(\State_i, \Action_i)
        h(\State_i, \Action_i) }{\propscore(\State_i, \Action_i) }
      \Big] \\
 & = \frac{1}{r} \radeComplexity_\maux(\funcClass^* \cap
    \ball_\omega(r)).
\end{align*}
By combining this inequality
with~\Cref{lemma:concentration-for-weighted-sqr-norm}, we find that
\begin{align}
\label{eq:ratio-lower-bound-in-relate-emp-norm-to-weighted-norm-proof}
\inf_{f \in \funcClass \setminus \ball_\omega(r)}
\frac{\vecnorm{f}{\maux}^2}{\weightednorm{f}^2} \geq
\frac{\smallballprob \smallballcon^2}{4} - \frac{4 \smallballcon}{r}
\radeComplexity_\maux \big( \funcClass^* \cap \ball_\omega(r) \big) -
c \smallballcon^2 \cdot \Big\{ \sqrt{\frac{\log(1 / \failprob)
  }{\maux} } + \frac{\log(1 / \failprob)}{\maux} \Big\}
\end{align}
with probability at least $1 - \failprob$.  This inequality is valid
for any deterministic radius $r > 0$.

By the definition~\eqref{eq:defn-critical-radius-plain} of the
critical radius $\radtwo_\maux$,
inequality~\eqref{eq:defn-critical-radius-plain} holds for any
$\radtwo > \radtwo_\maux$.  We now set $r = \radtwo_\maux$ in
equation~\eqref{eq:ratio-lower-bound-in-relate-emp-norm-to-weighted-norm-proof}.
Doing so allows us to conclude that given a sample size satisfying
$\maux \geq \frac{1024 c^2}{\smallballprob^2} \log(1 / \failprob)$, we
have
\begin{align*}
  \frac{4 \smallballcon}{r_\maux} \radeComplexity_\maux \big(
  \funcClass^* \cap \ball_\omega(r_\maux) \big) \leq
  \frac{\smallballprob \smallballcon^2}{16}, \quad \mbox{and} \quad c
  \smallballcon^2 \cdot \Big\{ \sqrt{\frac{\log(1 / \failprob)
    }{\maux} } + \frac{\log(1 / \failprob)}{\maux} \Big\} \leq
  \frac{\smallballprob \smallballcon^2}{16}.
\end{align*}
Combining with
equation~\eqref{eq:ratio-lower-bound-in-relate-emp-norm-to-weighted-norm-proof}
completes the proof of~\Cref{lemma:relate-emp-norm-to-weighted-norm}.

\subsubsection{Proof of Lemma~\ref{lemma:relate-empirical-process-to-rade-complexity}}\label{subsubsec:proof-lemma-relate-empirical-process-to-rade-complexity}

Recall our notation $\OutNoise_i \defn \treateff(\State_i, \Action_i)
- \outcome_i$ for the outcome noise.  Since the set $\convSet$ is
convex, on the event $\Event(\radone_\maux)$, we have
\begin{align}
\label{eq:monotonicity-of-emp-proc-based-on-cvx-in-rade-proof}  
 \tfrac{1}{\weightednorm{\Delhat_\maux}} \; \sum_{i = 1}^\maux
 \OutNoise_i \frac{\weightfunc^2(\State_i, \Action_i)}{
   \propscore^2(\State_i, \Action_i)} \Delhat_\maux(\State_i,
 \Action_i) & \leq \tfrac{1}{\radone_\maux} \; \sup_{h \in
   \funcClass^* \cap \ball_\omega(\radone_\maux)} \sum_{i = 1}^\maux
 \OutNoise_i \frac{\weightfunc^2(\State_i, \Action_i)}{
   \propscore^2(\State_i, \Action_i)} h(\State_i, \Action_i).
\end{align}
Define the empirical process supremum
\begin{align*}
Z_\maux(\radone_\maux) & \mydefn \sup_{h \in \funcClass^* \cap
  \ball_\omega(\radone_\maux)} \frac{1}{\maux} \sum_{i = 1}^\maux
\OutNoise_i \frac{\weightfunc^2(\State_i, \Action_i)}{
  \propscore^2(\State_i, \Action_i)} h(\State_i, \Action_i).
\end{align*}
Since the all-zeros function $0$ is an element of $\funcClass^* \cap
\ball_\omega(\radone_\maux)$, we have $Z_\maux (\radone_\maux) \geq
0$. Equation~\eqref{eq:monotonicity-of-emp-proc-based-on-cvx-in-rade-proof}
implies that
\begin{align}
\Exs \Big[ \bm{1}_{\Event(\radone_\maux)} \cdot \tfrac{2}{\maux}
  \sum_{i = 1}^\maux \tfrac{\weightfunc^2(\State_i, \Action_i)}{
    \propscore^2(\State_i, \Action_i)} \big(\treateff(\State_i,
  \Action_i) - \outcome_i \big) \Delhat_\maux(\State_i, \Action_i)
  \Big] & \leq \Exs \Big[
  \frac{\weightednorm{\Delhat_\maux}}{\radone_\maux}
  Z_\maux(\radone_\maux) \Big] \nonumber \\
\label{eq:cauchy-schwarz-in-emp-proce-rade-proof}
& \leq \sqrt{\Exs \big[ \weightednorm{\Delhat_\maux}^2 \big]} \cdot
\sqrt{\radone_\maux^{-2} \Exs \big[ Z_\maux^2(\radone_\maux) \big]},
\end{align}
where the last step follows by applying the Cauchy--Schwarz
inequality.

Define the symmetrized random variable
\begin{align*}
\SymVar_\maux(\radone_\maux) \mydefn \sup_{h \in \funcClass^* \cap
  \ball_\omega(\radone_\maux)} \frac{1}{\maux} \sum_{i = 1}^\maux
\rade_i \frac{\weightfunc^2(\State_i, \Action_i)}{
  \propscore^2(\State_i, \Action_i)} \big(\treateff(\State_i,
\Action_i) - \outcome_i \big) h(\State_i, \Action_i),
\end{align*}
where $\{\rade_i\}_{i = 1}^\maux$ is an $\mathrm{i.i.d.}$ sequence of
Rademacher variables, independent of the data. By a standard
symmetrization argument (e.g., \S 2.4.1 in the
book~\cite{wainwright2019high}), there are universal constants $(c,
c')$ such that
\begin{align*}
    \Prob\big[Z_\maux(\radone_\maux) > t \big] & \leq c' \Prob
    \big[\SymVar_\maux(\radone_\maux) > c t \big], \quad \mbox{for any
      $t > 0$.}
\end{align*}
Integrating over $t$ yields the bound
\begin{align*}
\Exs[Z^2_\maux(\radone_\maux)] \leq c^2 c' \Exs
    [\SymVar^2_\maux(\radone_\maux)] = c^2 c'
    \SquaredRade^2_\maux(\radone_\maux) \; \stackrel{(i)}{=} c^2 \, c'
    \radone_\maux^2,
\end{align*}
where equality (i) follows from the definition of $\radone_\maux$.
Substituting this bound back into
equation~\eqref{eq:cauchy-schwarz-in-emp-proce-rade-proof} completes
the proof of
Lemma~\ref{lemma:relate-empirical-process-to-rade-complexity}.


\subsection{Proof of~\Cref{thm:normal-approx-least-sqr}}
\label{subsec:proof-thm-normal-approx}

Define the function $\widebar{f}(\state, \action) \mydefn
\frac{\weightfunc(\state, \action)}{\propscore(\state, \action)}
\mubar(\state, \action) - \actinprod{\weightfunc(\state,
  \cdot)}{\mubar (\state, \cdot)}$, which would be optimal if $\mubar$
were the true treatment function.  It induces the estimate
\begin{align*}
\tauhat_{\numobs, \bar{f}} = \frac{1}{\numobs} \sum_{i = 1}^\numobs
\Big\{ \frac{\weightfunc(\State_i, \Action_i)}{\propscore(\State_i,
  \Action_i)} \big( \outcome_i - \mubar(\State_i, \Action_i) \big) +
\actinprod{\weightfunc(\State_i, \cdot)}{\mubar(\State_i, \cdot)}
\Big\},
\end{align*}
which has ($\numobs$-rescaled) variance $\numobs \cdot \Exs \big[
  \abss{\tauhat_{\numobs, \bar{f}} - \taustar}^2 \big] = \vstar^2 +
v^2(\mubar)$, where $\vstar^2$ is the efficient variance, and
$v^2(\mubar) \defn \var \Big( \frac{\weightfunc(\State,
  \Action)}{\propscore(\State, \Action)} \cdot \big( \treateff -
\mubar \big)(\State, \Action) \Big)$.  Let us now compare two-stage
estimator $\tauhat_{\numobs}$ with this idealized estimator.  We have
\begin{align*}
\Exs[|\tauhat_{\numobs, f} - \tauhat_{\numobs}|^2] & \leq
\frac{2}{\numobs^2} \Exs \Big[ \big| \sum_{i = 1}^{\numobs /
    2}(\widebar{f} - \fhat^{(2)}_{\numobs/2})(\State_i, \Action_i)
  \big|^2 \Big] + \frac{2}{\numobs} \Exs \Big[ \big| \sum_{i = \numobs
    / 2 + 1}^{\numobs}(\widebar{f} - \fhat^{(1)}_{\numobs /
    2})(\State_i, \Action_i) \big|^2 \Big]\\ &\leq \frac{4}{\numobs}
\Exs \big[ \weightednorm{\muhat_{\numobs / 2} - \mubar}^2 \big].
\end{align*}
Thus, we are guaranteed the Wasserstein bound
\begin{align*}
\Wass_1 \big( \sqrt{\numobs} \tauhat_\numobs, \sqrt{\numobs}
\tauhat_{\numobs, \bar{f}} \big) & \leq \Wass_2 \big( \sqrt{\numobs}
\tauhat_\numobs, \sqrt{\numobs} \tauhat_{\numobs, \bar{f}} \big) \leq
2 \sqrt{\Exs \big[ \weightednorm{\muhat_{\numobs / 2} - \mubar}^2
    \big]}.
\end{align*}
Consequently, by the triangle inequality for the Wasserstein distance,
it suffices to establish a normal approximation guarantee for the
idealized estimator $\tauhat_{\numobs, \bar{f}}$, along with control
on the error induced by approximating the function $\mubar$ using an
empirical estimator.

\paragraph{Normal approximation for $\tauhat_{\numobs, \bar{f}}$:}
We make use of the following non-asymptotic central limit theorem:
\begin{proposition}[\cite{ross2011fundamentals}, Theorem 3.2 (restated)]
\label{prop:non-asym-clt}
Given $\mathrm{i.i.d.}$ zero-mean random variables $\{\State_i
\}_{i=1}^\numobs$ with finite fourth moment, the rescaled sum
$W_\numobs \mydefn \sum_{i=1}^\numobs \State_i/\sqrt{\numobs}$
satisfies the Wasserstein bound
\begin{align*}
\Wass_1 \big( W_\numobs, Z \big) & \leq \tfrac{1}{\sqrt{\numobs}} \Big
\{ \tfrac{\Exs[|\State_1|^3]}{\Exs[\State_1^2]} + \sqrt{\tfrac{2
    \Exs[\State_1^4]}{\pi \Exs[\State_1^2]}} \Big \} \qquad
\mbox{where $Z \sim \mathcal{N}(0, \Exs[\State_1^2])$.}
\end{align*}   
\end{proposition}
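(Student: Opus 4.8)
The plan is to establish this bound by Stein's method for normal approximation in the Wasserstein-$1$ metric, in the classical leave-one-out formulation (see Chapter~3 of~\cite{ross2011fundamentals}). First I would reduce to the standardized case $\Exs[\State_1^2] = 1$: since $\Wass_1(aU, aV) = |a|\,\Wass_1(U, V)$ for any scalar $a$, writing $\State_i = \sigma \State_i'$ with $\sigma^2 \mydefn \Exs[\State_1^2]$ turns the claimed inequality into the identical inequality for the standardized summands $\State_i'$, because each of the quantities $\Exs[\State_1^2]$, $\Exs[|\State_1|^3]/\Exs[\State_1^2]$ and $\sqrt{\Exs[\State_1^4]/\Exs[\State_1^2]}$ is $1$-homogeneous in $\sigma$. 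Hence it suffices to take $\sigma = 1$, so $Z\sim\mathcal{N}(0,1)$. Now fix a $1$-Lipschitz test function $h$, and let $f = f_h$ solve the Stein equation $f'(w) - w f(w) = h(w) - \Exs[h(Z)]$; I would invoke the standard regularity bounds $\|f'\|_\infty \le \sqrt{2/\pi}$ and $\|f''\|_\infty \le 2$, valid for Lipschitz $h$. Then $\Exs[h(W_\numobs)] - \Exs[h(Z)] = \Exs\big[f'(W_\numobs) - W_\numobs f(W_\numobs)\big]$, so the whole problem reduces to bounding this last expectation.

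The core step is a leave-one-out expansion. Writing $W_\numobs^{(i)} \mydefn W_\numobs - \State_i/\sqrt{\numobs}$, which is independent of $\State_i$, and using $\Exs[\State_i] = 0$, one obtains $\Exs[W_\numobs f(W_\numobs)] = \sum_{i} \tfrac{1}{\sqrt{\numobs}}\Exs\big[\State_i\big(f(W_\numobs) - f(W_\numobs^{(i)})\big)\big] = \sum_i \Exs\big[\int \widehat{K}_i(s)\, f'(W_\numobs^{(i)} + s)\, ds\big]$, where $\widehat{K}_i(s) \ge 0$ is the random kernel supported on the segment between $0$ and $\State_i/\sqrt{\numobs}$ with $\int \widehat{K}_i(s)\, ds = \State_i^2/\numobs$. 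Since $\sum_i \Exs\big[\int \widehat{K}_i\big] = \Exs\big[\tfrac1\numobs \sum_i \State_i^2\big] = 1$, one can likewise write $\Exs[f'(W_\numobs)] = \sum_i \Exs\big[\int \widehat{K}_i(s) f'(W_\numobs)\, ds\big] + \Exs\big[f'(W_\numobs)\big(1 - \tfrac1\numobs\sum_i \State_i^2\big)\big]$, and subtracting the two identities gives
\begin{align*}
\Exs\big[f'(W_\numobs) - W_\numobs f(W_\numobs)\big] &= \sum_i \Exs\Big[\int \widehat{K}_i(s)\big(f'(W_\numobs) - f'(W_\numobs^{(i)} + s)\big)\, ds\Big] \\
&\qquad + \Exs\Big[f'(W_\numobs)\big(1 - \tfrac1\numobs\sum_i \State_i^2\big)\Big].
\end{align*}
For the first sum I would use $|f'(W_\numobs) - f'(W_\numobs^{(i)} + s)| \le \|f''\|_\infty\,|\State_i/\sqrt{\numobs} - s|$ together with the elementary identity $\int \widehat{K}_i(s)\,|\State_i/\sqrt{\numobs} - s|\, ds = |\State_i|^3/(2\numobs^{3/2})$, giving a contribution at most $\|f''\|_\infty\,\Exs[|\State_1|^3]/(2\sqrt{\numobs}) \le \Exs[|\State_1|^3]/\sqrt{\numobs}$. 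For the second term, Cauchy--Schwarz and $\var(\State_1^2) \le \Exs[\State_1^4]$ give the bound $\|f'\|_\infty\,\sqrt{\var(\State_1^2)/\numobs} \le \sqrt{2\Exs[\State_1^4]/(\pi\numobs)}$. Adding the two and taking the supremum over all $1$-Lipschitz $h$ yields the claim in the standardized case, hence in general after undoing the rescaling.

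I expect the only real obstacle to be careful constant bookkeeping: checking that the factor $\tfrac12$ in $\int \widehat{K}_i(s)|\State_i/\sqrt{\numobs} - s|\, ds = |\State_i|^3/(2\numobs^{3/2})$ exactly cancels $\|f''\|_\infty \le 2$ to leave the clean $\Exs[|\State_1|^3]/\sqrt{\numobs}$, that $\|f'\|_\infty \le \sqrt{2/\pi}$ combines with $\sqrt{\var(\State_1^2)}$ to give exactly $\sqrt{2\Exs[\State_1^4]/\pi}$, and that the three moment quantities are genuinely $1$-homogeneous in $\sigma$ in the reduction step. Everything else---existence and regularity of the Stein solution, and the independence and zero-mean manipulations---is standard.
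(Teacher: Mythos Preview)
Your proposal is a correct Stein's-method proof with clean constant tracking, and it is essentially the argument behind the cited Theorem~3.2 of~\cite{ross2011fundamentals}. Note, however, that the paper does not supply its own proof of this proposition: it is simply restated from the reference and then applied, so there is no in-paper argument to compare against beyond the citation itself.
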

Since we have $\Exs[|\State_1|^3 ] \leq \sqrt{\Exs[\State_1^2] \cdot
  \Exs[\State_1^4]}$, this bound implies that $\Wass_1 \big(W_\numobs,
Z \big) \leq \tfrac{2}{\sqrt{\numobs}} \cdot
\sqrt{\Exs[\State_1^4]/\Exs[\State_1^2]}$.  Applying this bound to the
empirical average $\tauhat_{\numobs, \bar{f}}$ yields
\begin{align*}
\Wass_1 \big(\sqrt{\numobs} \tauhat_{\numobs, \bar{f}}, \mathcal{Z}
\big) \leq \tfrac{2}{\sqrt{\numobs}} \cdot \sqrt{ \tfrac{\MomFour}{
    \vstar^2 + v^2(\mubar) }},
\end{align*}
as claimed.

\paragraph{Bounds on the estimation error $\weightednorm{\muhat_{\numobs / 2} - \mubar}$:}

From the proof of~\Cref{cor:least-sqr-split-estimator}, recall the
basic
inequality~\eqref{eq:basic-ineq-for-constrained-least-sqr-proof}---viz.
\begin{align}
\label{EqnPreBasic}
\vecnorm{\Delhat_\maux}{\maux}^2 \leq \frac{1}{\maux} \sum_{i =
  1}^\maux \OutNoise_i \tfrac{\weightfunc^2(\State_i, \Action_i)}{
  \propscore^2(\State_i, \Action_i)} \Delhat_\maux(\State_i,
\Action_i) + \inprod{\Delhat_\maux}{\Deltil}_\maux,
\end{align}
where $\OutNoise_i = \treateff(\State_i, \Action_i) - \outcome_i$ is
the outcome noise.

As before, we define the approximation error $\Deltil \mydefn
\treateff - \mubar$.  Since $\mubar = \arg \min_{h \in \funcClass}
\weightednorm{h - \treateff}$ is the projection of $\treateff$ onto
$\funcClass$, and $\muhat_\maux \in \funcClass$ is feasible for this
optimization problem, the first-order optimality condition implies
that $\weightedinprod{\Delhat_\maux}{\Deltil} \leq 0$.  By adding this
inequality to our earlier bound~\eqref{EqnPreBasic} and re-arranging
terms, we find that
\begin{align}
\vecnorm{\Delhat_\maux}{\maux}^2 \leq \frac{1}{\maux} \sum_{i =
  1}^\maux \tfrac{\weightfunc^2(\State_i,
  \Action_i)}{\propscore^2(\State_i, \Action_i)} \big(\treateff
(\State_i, \Action_i) - \outcome_i \big) \Delhat_\maux(\State_i,
\Action_i) + \big( \inprod{\Delhat_\maux}{\Deltil}_\maux -
\weightedinprod{\Delhat_\maux}{\Deltil}
\big).\label{eq:basic-ineq-with-exact-projection}
\end{align}
Now define the empirical process suprema
\begin{align*}
    Z_\maux(\radtwo) & \mydefn \sup_{h \in \funcClass^* \cap
      \ball_\omega(\radtwo)} \frac{1}{\maux} \sum_{i = 1}^\maux
    \frac{\weightfunc^2(\State_i, \Action_i)}{ \propscore^2(\State_i,
      \Action_i)} \big(\treateff(\State_i, \Action_i) - \outcome_i
    \big) h(\State_i, \Action_i), \quad \mbox{and} \\
\Zprime_\maux(\radone) & \mydefn \sup_{h \in \funcClass^* \cap
  \ball_\omega(\radone)} \frac{1}{\maux} \sum_{i = 1}^\maux \Big(
\frac{\weightfunc^2(\State_i, \Action_i)}{ \propscore^2(\State_i,
  \Action_i)} h(\State_i, \Action_i) \Deltil(\State_i, \Action_i) -
\weightedinprod{\Deltil}{h} \Big).
\end{align*}
From the proof of~\Cref{cor:least-sqr-split-estimator}, recall the
events
\begin{align*}
 \Event_\omega \mydefn \Big\{ \vecnorm{f}{\maux}^2 \geq
 \frac{\smallballprob \smallballcon^2}{16} \weightednorm{f}^2, \quad
 \mbox{for any } f \in \funcClass^* \setminus
 \ball_\omega(\radtwo_\maux) \Big\}, \quad \mbox{and} \quad \Event(r)
 & \mydefn \Big\{ \weightednorm{\Delhat_\maux} \geq r \Big\}.
\end{align*}

Introduce the shorthand $\utmp_\maux = \max \{ \radtwo_\maux,
\radone_\maux, \dradone_\maux \}$.  On the event $\Event_\omega \cap
\Event(\utmp_\maux)$, the basic
inequality~\eqref{eq:basic-ineq-with-exact-projection} implies that
\begin{align*}
\frac{\smallballprob \smallballcon^2}{16}
\weightednorm{\Delhat_\maux}^2 \leq \vecnorm{\Delhat_\maux}{\maux}^2 &
\stackrel{(i)}{\leq} Z_\maux(\weightednorm{\Delhat_\maux}) +
\Zprime_\maux(\weightednorm{\Delhat_\maux})\\
& \stackrel{(ii)}{\leq}
\frac{\weightednorm{\Delhat_\maux}}{\radtwo_\maux}
Z_\maux(\radtwo_\maux) +
\frac{\weightednorm{\Delhat_\maux}}{\radone_\maux}
\Zprime_\maux(\radone_\maux),
\end{align*}
where step (ii) follows from the non-increasing property of the
functions $r \mapsto r^{-1} Z_\maux(r)$ and $\radone \mapsto
\radone^{-1} \Zprime_\maux(\radone)$.

So there exists a universal constant $c > 0$ such that
\begin{align*}
    \Exs \Big[\weightednorm{\Delhat_\maux}^2 \bm{1}_{\Event_\omega
        \cap \Event (\utmp_\maux)}\Big] \leq \frac{c}{\smallballprob^2
      \smallballcon^4} \Big\{ \frac{1}{\radone_\maux^2} \Exs
    \big[Z_\maux^2(\radone_\maux) \big] + \frac{1}{\dradone^2_\maux}
    \Exs \big[ \big\{\Zprime_\maux(\dradone_\maux) \big \}^2 \big]
    \Big\}.
\end{align*}
Via the same symmetrization argument as used in the proof
of~\Cref{cor:least-sqr-split-estimator}, there exists a universal
constant $c > 0$ such that
\begin{align*}
\Exs[Z_\maux^2(\radone_\maux)] \leq c \SquaredRade^2_\maux \big(
\funcClass^* \cap \ball_\omega(\radone_\maux) \big), \quad \mbox{and}
\quad \Exs\Big[ \big(\Zprime_\maux(\dradone_\maux) \big)^2 \Big] \leq
c \SquaredDiffRade^2_\maux \big( \funcClass^* \cap
\ball_\omega(\dradone_\maux) \big).
\end{align*}
By the definition of the critical radius $\radone_\maux$, we have
\begin{align*}
\frac{1}{\radone_\maux} \SquaredRade_\maux \big( \funcClass^* \cap
\ball_\omega(\radone_\maux) \big) = \radone_\maux, \quad \mbox{and}
\quad \frac{1}{\dradone_\maux} \SquaredDiffRade_\maux
\big(\funcClass^* \cap \ball_\omega(\dradone_\maux) \big) =
\dradone_\maux.
\end{align*}
Combining with the moment bound above, we arrive at the conclusion:
\begin{align*}
    \Exs[\weightednorm{\Delhat_\maux}^2] & \leq \Exs
    \big[\weightednorm{\Delhat_\maux}^2 \bm{1}_{\Event_\omega \cap
        \Event (\utmp_\maux)} \big] + \Exs
    \big[\weightednorm{\Delhat_\maux}^2 \bm{1}_{\Event (\utmp_\maux)^c
      } \big] + \Exs \big[\weightednorm{\Delhat_\maux}^2
      \bm{1}_{\Event_\omega^c} \big] \\
& \leq \Big( 1 + \frac{c}{\smallballprob^4 \smallballcon^2} \Big)
    \cdot \big( \radtwo_\maux^2 + \radone_\maux^2 +\dradone_\maux^2
    \big) + \diameter^2_\omega(\funcClass \cup \{\treateff\}) \cdot
    \Prob(\Event_\omega^c) \\
& \leq \Big( 1 + \frac{c}{\smallballprob^4 \smallballcon^2} \Big)
    \cdot \big( \radtwo_\maux^2 + \radone_\maux^2 +\dradone_\maux^2
    \big) + \diameter_\omega^2(\funcClass) \cdot e^{- c
      \smallballprob^2 \maux}.
\end{align*}
Substituting into the Wasserstein distance bound completes the proof
of~\Cref{thm:normal-approx-least-sqr}.

\section{Proofs of minimax lower bounds}

In this section, we prove the two minimax lower bounds---namely,
~\Cref{thm:minimax-local} and ~\Cref{thm:worst-case-shattering-dim}.


\subsection{Proof of~\Cref{thm:minimax-local}}
\label{subsec:proof-thm-minimax-local}

It suffices to show that the minimax risk $\minimaxRisk \equiv
\minimaxRisk\big( \class_{\delta}(\probInstanceStar) \big)$ satisfies
the following three lower bounds:
\begin{subequations}
  \begin{align}
\label{eq:minimax-lower-bound-local-v}     
\minimaxRisk & \geq \frac{c}{\numobs} \var_{\probxstar} \big(
\actinprod{\weightfunc(\State, \cdot)}{\treateff(\State, \cdot)} \big)
\quad \mbox{for $\numobs \geq 4 (\ctwofour')^2$,} \\
\label{eq:minimax-lower-bound-local-sigma}
\minimaxRisk & \geq \frac{c}{\numobs} \weightednorm{\sigma}^2 \quad
\mbox{for $\numobs \geq 16$,} \\
\label{eq:minimax-lower-bound-local-delta}    
\minimaxRisk & \geq \frac{c}{\numobs} \weightednorm{\delta}^2 \quad
\mbox{for $\numobs \in \big[ \ctwofour^2 , c'|\Xspace| / c_{\max}
    \big]$.}
\end{align}
\end{subequations}
Given these three inequalities, the minimax risk $\minimaxRisk$ can be
lower bounded by the average of the right-hand side quantities,
assuming that $\numobs$ is sufficiently large.  Since $c$ is a
universal constant, these bounds lead to the conclusion
of~\Cref{thm:minimax-local}.

Throughout the proof, we use $\Prob_{\treateff, \probx}$ to denote the
law of a sample $(\State, \Action, \outcome)$ under the problem
instance defined by outcome function $\treateff$ and data distribution
$\probx$. We further use $\Prob_{\treateff, \probx}^{\otimes \numobs}$
to denote its $\numobs$-fold product, as is appropriate given our
$\mathrm{i.i.d.}$ data $(\State_i, \Action_i, \outcome_i)_{i =
  1}^\numobs$.


\subsubsection{Proof of the lower bound~\eqref{eq:minimax-lower-bound-local-v}}

The proof is based on Le Cam's two-point method: we construct a family
of probability distributions $\{ \probxtweak \: \mid \; \tweak > 0
\}$, each contained in the local neighborhood $\Nprob(\probxstar)$.
We choose the parameter $\tweak$ small enough to ensure that the
probability distributions $\Prob_{\probxtweak, \treateffzero}^{\otimes
  \numobs}$ and $\Prob_{\probxstar, \treateffzero}^{\otimes \numobs}$
are ``indistinguishable'', but large enough to ensure that the
functional values $\tau(\probxtweak, \treateffzero)$ and
$\tau(\probxstar, \treateffzero)$ are well-separated.  See \S
15.2.1--15.2.2 in the book~\cite{wainwright2019high} for more
background.

More precisely, Le Cam's two-point lemma guarantees that for any
distribution $\probxtweak \in \Nprob(\probxstar)$, the minimax risk is
lower bounded as
\begin{align}
\label{EqnLeCamTwoPoint}
\minimaxRisk \geq \frac{1}{4} \Big\{1 - \totalvariation
\Big(\Prob_{\treateffzero, \probxtweak}^{\otimes
  \numobs},\Prob_{\treateffzero, \probxstar}^{\otimes \numobs} \Big)
\Big\} \cdot \big \{ \avgtreat(\probxtweak, \treateffzero) -
\avgtreat(\probxstar, \treateffzero) \big \}^2,
\end{align}

Recall that throughout this section, we work with the sample size
lower bound
\begin{align}
    \numobs \geq 4
    (\ctwofour')^2.\label{eq:sample-size-req-in-minimax-v-lower-bound}
\end{align}
Now suppose that under the
condition~\eqref{eq:sample-size-req-in-minimax-v-lower-bound}, we can
exhibit a choice of $\tweak$ within the family $\{\probxtweak \mid
\tweak > 0 \}$ such that the functional gap satisfies the lower bound
\begin{subequations}
\begin{align}
\label{EqnPartAFuncLower}
 \avgtreat(\probxtweak, \treateffzero) - \avgtreat(\probxstar,
 \treateffzero) \geq \frac{1}{16 \sqrt{\numobs}} \sqrt{\var\big(
   \actinprod{\weightfunc(\State, \cdot)}{\treateff(\State, \cdot)}
   \big)},
\end{align}
whereas the TV distance satisfies the upper bound
\begin{align}
\label{EqnPartATVUpper}  
\totalvariation \Big(\Prob_{\treateffzero, \probxtweak}^{\otimes
  \numobs},\Prob_{\treateffzero, \probxstar}^{\otimes \numobs} \Big)
\leq \frac{1}{3}.
\end{align}
\end{subequations}
These two inequalities, in conjunction with Le Cam's two point
bound~\eqref{EqnLeCamTwoPoint}, imply the claimed lower
bound~\eqref{eq:minimax-lower-bound-local-v}. \\

With this overview in place, it remains to define the family
$\{\probxtweak \mid \tweak > 0 \}$, and prove the
bounds~\eqref{EqnPartAFuncLower} and~\eqref{EqnPartATVUpper}.

\paragraph{Family of perturbations:} Define
the real-valued function
\begin{align*}
h(\state) \mydefn \actinprod{\treateff(\state,
  \cdot)}{\weightfunc(\state, \cdot)} - \Exs_{\probxstar} \big[
  \actinprod{\treateff(\State, \cdot)}{\weightfunc(\State, \cdot)}
  \big],
\end{align*}
along with its truncated version
\begin{align*}
  \htrunc(\state) & \mydefn
\begin{dcases} h(\state) & \quad \mbox{if
      $|h(\state)| \leq 2 \ctwofour' \cdot \sqrt{\Exs_{\probxstar}
      [h^2(\State)]}$, and} \\
 \mathrm{sgn}(h(\state)) \cdot \sqrt{\Exs_{\probxstar}[h^2(\State)]} &
 \quad \mbox{otherwise.}
\end{dcases}
\end{align*}
For each $\tweak > 0$, we define the \emph{tilted probability measure}
\begin{align*}
  \probxtweak(\state) & \mydefn Z_\tweak^{-1} \probxstar(\state) \exp
  \big(\tweak \htrunc(\state) \big), \quad \mbox{where $Z_\tweak =
    \sum_{x \in \Xspace} \probxstar(\state) \exp \big(\tweak
    \htrunc(\state) \big)$.}
\end{align*}
It can be seen that the tilted measure satisfies the bounds
\begin{align*}
  \exp \big( - \tweak \vecnorm{\htrunc}{\infty} \big) \leq
  \frac{\probxtweak(\state)}{\probxstar(\state)} \leq \exp \big(
  \tweak \vecnorm{\htrunc}{\infty} \big) \qquad \mbox{for any $\state
    \in \Xspace$,}
\end{align*}
whereas the normalization constant is sandwiched as
\begin{align*}
\exp \big( - \tweak \vecnorm{\htrunc}{\infty} \big) \leq Z_\tweak \leq
\exp \big( \tweak \vecnorm{\htrunc}{\infty} \big).
\end{align*}
Throughout this section, we choose
\begin{subequations}
\begin{align}
  \label{EqnDefnTweak}
\tweak \defn ( 4 \vecnorm{\htrunc}{\Ltwospace(\probxstar)}
\sqrt{\numobs})^{-1},
\end{align}
\mbox{which ensures that}
\begin{align}
\tweak \vecnorm{\htrunc}{\infty} = \frac{1}{4 \sqrt{\numobs}} \cdot
\frac{\vecnorm{\htrunc}{\infty}}{\vecnorm{\htrunc}{\Ltwospace(\probxstar)}}
\stackrel{(i)}{\leq} \frac{1}{\sqrt{8 \numobs}} \frac{2 \ctwofour'
  \vecnorm{h}{\Ltwospace(\probxstar)}}{\vecnorm{h}{\Ltwospace(\probxstar)}}
\stackrel{(ii)}{\leq} \frac{1}{8},
\end{align}
\end{subequations}
where step (i) follows from the definition of the truncated function
$\htrunc$, and step (ii) follows from the sample size
condition~\eqref{eq:sample-size-req-in-minimax-v-lower-bound}.

\paragraph{Proof of the lower bound~\eqref{EqnPartAFuncLower}:}
First we lower bound the gap in the functional.  We have
\begin{align}
\avgtreat(\probxtweak, \treateffzero) - \avgtreat(\probxstar,
\treateffzero) &= \Exs_{\probxtweak}[\actinprod{\treateff(\State,
    \cdot)}{\weightfunc(\State, \cdot)}] -
\Exs_{\probxstar}[\actinprod{\treateff(\State,
    \cdot)}{\weightfunc(\State, \cdot)}] \nonumber \\
\label{eq:gap-expression-in-v-minimax-lower-bound}
& = \Exs_{\probxstar} \Big[ h(\State) e^{\tweak \htrunc(\State)} \Big]
~/~ \Exs_{\probxstar} \Big[ e^{\tweak \htrunc(\State)} \Big].
\end{align}
Note that $\abss{\tweak \htrunc (\State) } \leq 1/8$ almost surely by
construction. Using the elementary inequality $\abss{e^z - 1 - z} \leq
z^2$, valid for all $z \in [-1/4, 1/4]$, we obtain the lower bound
\begin{align}
  \Exs_{\probxstar} \Big[ h(\State) e^{\tweak \htrunc(\State)} \Big] &
  \geq \Exs_{\probxstar} \big[ h(\State) \big] + \tweak
  \Exs_{\probxstar} \big[ h(\State) \htrunc(\State) \big] - \tweak^2
  \Exs_{\probxstar} \big[ |h(\State)| \cdot |\htrunc(\State)|^2
    \big]\label{eq:gap-bound-elementary-in-minimax-v-proof}
\end{align}
Now we study the three terms on the right-hand-side of
equation~\eqref{eq:gap-bound-elementary-in-minimax-v-proof}. By
definition, we have $\Exs_{\probxstar} [h(\State)] = 0$. Since the
quantities $h(\State)$ and $\htrunc(\State)$ have the same sign almost
surely, the second term admits a lower bound
\begin{align*}
  \Exs_{\probxstar} \big[ h(\State) \htrunc(\State) \big] \geq \Exs
  \big[ \htrunc^2 (\State) \big] \geq \frac{1}{2} \Exs \big[ h^2
    (\State) \big],
\end{align*}
where the last step follows from
Lemma~\ref{lemma:simple-moment-lower-bound-under-trunc}.

Focusing on the third term in the
decomposition~\eqref{eq:gap-bound-elementary-in-minimax-v-proof}, we
note that Cauchy-Schwarz inequality yields
\begin{align*}
    \Exs_{\probxstar} \big[ |h(\State)| \cdot |\htrunc(\State)|^2
      \big] \leq \sqrt{\Exs \big[ h^2 (\State) \big] } \cdot
    \sqrt{\Exs \big[ h^4 (\State) \big]} \leq \sqrt{\ctwofour'} \cdot
    \Big\{ \Exs \big[ h^2 (\State) \big] \Big\}^{3/2},
\end{align*}
where the last step follows from the definition of the constant
$\ctwofour'$.

Combining these bounds with
equation~\eqref{eq:gap-bound-elementary-in-minimax-v-proof} and
substituting the choice~\eqref{EqnDefnTweak} of the parameter
$\tweak$, we obtain the following lower bound on the functional gap
\begin{align*}
\Exs_{\probxstar} \Big[ h(\State) e^{\tweak \htrunc(\State)} \Big] &
\geq \tweak \vecnorm{h}{\Ltwospace (\probxstar)}^2 - \tweak^2
\sqrt{\ctwofour'} \vecnorm{h}{\Ltwospace (\probxstar)}^3 \\
& \geq \frac{1}{8 \sqrt{\numobs}} \vecnorm{h}{\Ltwospace (\probxstar)}
- \frac{\sqrt{\ctwofour'}}{16 \numobs} \vecnorm{h}{\Ltwospace
  (\probxstar)}\\ &\geq \frac{3}{32 \sqrt{\numobs}}
\vecnorm{h}{\Ltwospace (\probxstar)},
\end{align*}
where the last step follows because $\numobs \geq 4(\ctwofour')^2$.

On the other hand, since $\abss{\tweak \htrunc (\State)} \leq 1/ 8$
almost surely, we have $ \Exs_{\probxstar} \big[ e^{\tweak
    \htrunc(\State)} \big] \leq 3/2$. Combining with the bound above
and substituting into the
expression~\eqref{eq:gap-expression-in-v-minimax-lower-bound}, we find
that we find that
\begin{align*}
 \avgtreat(\probxtweak, \treateffzero) - \avgtreat(\probxstar,
 \treateffzero) \geq \frac{3}{32 \sqrt{\numobs}}
 \vecnorm{h}{\Ltwospace(\probxstar)} ~/~ \Exs_{\probxstar} \big[
   e^{\tweak \htrunc(\State)} \big] \geq \frac{1}{16 \sqrt{\numobs}}
 \vecnorm{h}{\Ltwospace(\probxstar)},
\end{align*}
which is equivalent to the claim~\eqref{EqnPartAFuncLower}.


\paragraph{Proof of the upper bound~\eqref{EqnPartATVUpper}:}
Pinsker's inequality ensures that
\begin{align}
\label{EqnPinsker}
\totalvariation \Big(\Prob_{\treateffzero, \probxtweak}^{\otimes
  \numobs},\Prob_{\treateffzero, \probxstar}^{\otimes \numobs} \Big)
\leq \sqrt{\frac{1}{2} \chisqdiv{\Prob_{\treateffzero,
      \probxtweak}^{\otimes \numobs}}{\Prob_{\treateffzero,
      \probxstar}^{\otimes \numobs}}},
\end{align}
so that it suffices to bound the $\chi^2$-divergence.  Beginning with
the divergence between $\probxtweak$ and $\probxstar$ (i.e., without
the tensorization over $\numobs$), we have
\begin{align}
\chisqdiv{\probxtweak}{\probxstar} = \var_{\probxstar}
\Big(\probxtweak(\State) / \probxstar(\State) \Big) & =
\frac{1}{Z_\tweak^2} \: \var_{\probxstar} \big( e^{\tweak
  \htrunc(\State)} - 1 \big) \nonumber \\
& \leq \exp \big( 2 \tweak \vecnorm{\htrunc}{\infty} \big) \cdot
\Exs_{\probxstar} \big[ |e^{\tweak \htrunc(\State)} - 1|^2 \big]
\nonumber \\
\label{EqnSitar}
& \leq \exp \big( 4 \tweak \vecnorm{\htrunc}{\infty} \big) \cdot
\tweak^2 \Exs_{\probxstar} \big[ \htrunc^2(\State) \big].
\end{align}
where the last step follows from the elementary inequality $|e^x - 1|
\leq e^{|x|} \cdot |x|$, valid for any $x \in \real$. Given the choice
of tweaking parameter $\tweak$, we have $\exp \big( 4 \tweak
\vecnorm{\htrunc}{\infty} \big) \leq 2$.

The definition of the truncated function $\htrunc$ implies that
$\Exs_{\probxstar} \big[ \htrunc^2(\State) \big] \leq
\Exs_{\probxstar}[h^2(\State)]$.  Combining this bound with our
earlier inequality~\eqref{EqnSitar} yields
\begin{align*}
\chisqdiv{\probxtweak}{\probxstar} \leq 2 \tweak^2 \Exs_{\probxstar}
\big[ \htrunc^2(\State) \big] \leq \frac{1}{8 \numobs},
\end{align*}
which certifies that $\probxtweak \in \Nprob(\probxstar)$, as required
for the validity of our construction.

Finally, by the tensorization property of the $\chi^2$-divergence, we
have
\begin{align*}
  \chisqdiv{\Prob_{\treateffzero, \probxtweak}^{\otimes
      \numobs}}{\Prob_{\treateffzero, \probxstar}^{\otimes \numobs}}
  \leq \Big(1 + \frac{1}{8 \numobs} \Big)^\numobs - 1 \; \leq \;
  \tfrac{3}{20}.
\end{align*}
Combining with our earlier statement~\eqref{EqnPinsker} of Pinsker's
inequality completes the proof of the upper
bound~\eqref{EqnPartATVUpper}.


\subsubsection{Proof of equation~\eqref{eq:minimax-lower-bound-local-sigma}}

The proof is also based on Le Cam's two-point method. Complementary to
equation~\eqref{eq:minimax-lower-bound-local-v}, we take the source
distribution $\probxstar$ to be fixed, and perturb the outcome
function $\treateff$. Given a pair $\plaintreateff_{(s)},
\plaintreateff_{(-s)}$ of outcome functions in the local neighborhood
$\neighborhood_\delta^{val}$, Le Cam's two-point lemma implies
\begin{align}
\label{EqnLeCamTwoPointVal}
\minimaxRisk \geq \frac{1}{4} \Big\{1 - \totalvariation
\Big(\Prob_{\plaintreateff_{(s)}, \probxstar}^{\otimes
  \numobs},\Prob_{\plaintreateff_{(-s)}, \probxstar}^{\otimes \numobs}
\Big) \Big\} \cdot \big \{ \avgtreat(\probxstar, \plaintreateff_{(s)})
- \avgtreat(\probxstar, \plaintreateff_{(-s)}) \big \}^2,
\end{align}
With this set-up, our proof is based on constructing a pair
$(\plaintreateff_{(s)}, \plaintreateff_{(-s)})$ of outcome functions
within the neighborhood $\neighborhood_\delta^{val} (\treateff)$ such
that
\begin{subequations}
  \begin{align} 
\label{eq:gap-bound-in-minimax-sigma-proof}   
\avgtreat(\probxstar, \plaintreateff_{(s)}) - \avgtreat(\probxstar,
\plaintreateff_{(-s)}) & \geq \frac{1}{2 \sqrt{\numobs}}
\weightednorm{\sigma}, \quad \mbox{and} \\
\label{eq:tv-bound-in-minimax-sigma-proof}
\totalvariation \Big(\Prob_{\plaintreateff_{(s)}, \probxstar}^{\otimes
  \numobs},\Prob_{\plaintreateff_{(-s)}, \probxstar}^{\otimes \numobs}
\Big) & \leq \frac{1}{3}.
\end{align}
\end{subequations}


\paragraph{Construction of problem instances:}
Consider the noisy Gaussian observation model
\begin{align}
\outcome_i ~\mid~ \State_i, \Action_i \sim \Normal \Big(
\treateff(\State_i, \Action_i), \treatsig^2(\State_i, \Action_i) \Big)
\qquad \mbox{for $i = 1, 2, \ldots, \numobs$.}
\end{align}
We construct a pair of problem instances as follows: for any $\tweak >
0$, define the functions
\begin{align*}
\treateff_{(\tweak)}(\state, \action) = \treateffzero(\state, \action)
+ \tweak \frac{\weightfunc(\state, \action)}{\propscore(\state,
  \action)} \sigma^2(\state, \action), \quad \mbox{and} \quad
\treateff_{(- \tweak)}(\state, \action) = \treateffzero(\state,
\action) - \tweak \frac{\weightfunc(\state,
  \action)}{\propscore(\state, \action)} \sigma^2(\state, \action)
\end{align*}
\mbox{for any $(\state, \action) \in \Xspace
  \times \actionspace$.}

Throughout this section, we make the choice $\tweak \mydefn
\frac{1}{4\weightednorm{\sigma} \sqrt{\numobs}}$. Under such choice,
the compatibility condition~\eqref{eq:size-of-neighborhood} ensures
that
\begin{align*}
|\plaintreateff_{(z \tweak)}(\state, \action) - \treateffzero(\state,
\action)| = \tweak \frac{\weightfunc(\state,
  \action)}{\propscore(\state, \action)} \sigma^2(\state, \action)
\leq \delta(\state, \action) \quad \mbox{for any $(\state, \action)
  \in \Xspace \times \actionspace$ and $z \in \{-1, 1\}$.}
\end{align*}
This ensures that both $\plaintreateff_{(\tweak)}$ and
$\plaintreateff_{(- \tweak)}$ belong to the neighborhood
$\neighborhood_{ \delta}^{val}(\treateffzero)$. It remains to prove
the two bounds required for Le Cam's two-point arguments.

\paragraph{Proof of equation~\eqref{eq:gap-bound-in-minimax-sigma-proof}:}

For the target linear functional under our construction, we note that
\begin{align*}
  \avgtreat( \probxstar, \plaintreateff_{(\tweak)}) -
  \avgtreat( \probxstar, \plaintreateff_{(- \tweak)}) = 2 \tweak
  \Exs_{\probxstar} \Big[ \actinprod{\frac{\weightfunc(\State,
        \cdot)}{\propscore(\State, \cdot)} \sigma^2(\State,
      \cdot)}{\weightfunc(\State, \cdot)} \Big] = 2 \tweak
  \weightednorm{\sigma}^2 = \frac{1}{2 \sqrt{\numobs}}
  \weightednorm{\sigma},
\end{align*}
which establishes the
bound~\eqref{eq:gap-bound-in-minimax-sigma-proof}.

\paragraph{Proof of the bound~\eqref{eq:tv-bound-in-minimax-sigma-proof}:}

It order to bound the total variation distance, we study the KL
divergence between the product distributions
$\Prob_{\plaintreateff_{(z \tweak)}, \probxstar}^{\otimes\numobs}$ for
$z \in \{-1, 1\}$. Indeed, we have
\begin{align}
\kull{\Prob_{\plaintreateff_{(\tweak)}, \probxstar}^{\otimes\numobs}
}{\Prob_{\plaintreateff_{(- \tweak)}, \probxstar}^{\otimes\numobs}} &
\overset{(i)}{=} \numobs \kull{\Prob_{\plaintreateff_{(\tweak)},
    \probxstar} }{\Prob_{\plaintreateff_{(- \tweak)}, \probxstar}}
\nonumber \\
\label{eq:tensorization-of-kl-in-sigma-lower-bound-proof}
& \overset{(ii)}{\leq} \numobs \Exs \Big[ \kull{\law(\outcome \mid
    \State, \Action) |_{\plaintreateff_{(\tweak)}}}{\law(\outcome \mid
    \State, \Action) |_{\plaintreateff_{(- \tweak)}} } \Big],
\end{align}
where in step (i), we use the tensorization property of KL divergence,
and in step (ii), we use convexity of KL divergence. The expectation
is taken with respect to $X \sim \probxstar$ and $\Action \sim
\propscore(\State, \cdot)$.

Noting that the conditional law $\law(\outcome \mid \State, \Action)
|_{\plaintreateff_{(z \tweak)}}$ is Gaussian under both problem
instances, we have
\begin{align*}
\kull{\law(\outcome \mid \state, \action)
  |_{\plaintreateff_{(\tweak)}}}{\law(\outcome \mid \state, \action)
  |_{\plaintreateff_{(- \tweak)}} } = \frac{4 \tweak^2
  \weightfunc^2(\state, \action)}{\propscore^2(\state, \action)}
\sigma^2(\state, \action).
\end{align*}
Substituting into
equation~\eqref{eq:tensorization-of-kl-in-sigma-lower-bound-proof}, we
find that $\kull{\Prob_{\plaintreateff_{(\tweak)},
    \probxstar}^{\otimes\numobs} }{\Prob_{\plaintreateff_{(- \tweak)},
    \probxstar}^{\otimes\numobs}} \leq 4 \numobs \tweak^2
\weightednorm{\sigma}^2$.  For a sample size $\numobs \geq 16$, with
the choice of the perturbation parameter $\tweak = \frac{1}{4
  \sqrt{\numobs} \weightednorm{\sigma}}$, an application of Pinsker's
inequality leads to the bound
\begin{align}
\label{eq:KL-bound-for-sigma-in-local-lower-bound-proof}  
\totalvariation \Big(\Prob_{\plaintreateff_{(\tweak)},
  \probxstar}^{\otimes\numobs} , \Prob_{\plaintreateff_{(- \tweak)},
  \probxstar}^{\otimes\numobs} \Big) \leq\sqrt{\frac{1}{2}
  \kull{\Prob_{\plaintreateff^{\tweak}, \probxstar}^{\otimes\numobs}
  }{\Prob_{\plaintreateff^{(- \tweak)}, \probxstar}^{\otimes\numobs}}}
\leq \frac{1}{2 \sqrt{2}},
\end{align}
which completes the proof of
equation~\eqref{eq:tv-bound-in-minimax-sigma-proof}.


\subsubsection{Proof of equation~\eqref{eq:minimax-lower-bound-local-delta}}

The proof is based on Le Cam's mixture-vs-mixture method (cf. Lemma
15.9, ~\cite{wainwright2019high}). We construct a pair $(\Qprob_1,
\Qprob_{-1})$ of probability distributions supported on the
neighborhood $\Nval{\delta}(\treateffzero)$; these are used to define
two mixture distributions with the following properties:
\begin{itemize}
\item The mixture distributions have TV distance bounded as
\begin{align}
\label{eq:mixture-vs-mixture-tv-bound}    
\totalvariation \Bigg( \int \Prob_{\plainmu, \probxstar}^{\otimes
  \numobs} d \Qprob_1^* (\plainmu), \int \Prob_{\plainmu,
  \probxstar}^{\otimes \numobs} d \Qprob_{-1}^* (\plainmu) \Bigg) \leq
\frac{1}{4}.
\end{align}
See Lemma~\ref{lemma:tv-bound-in-local-minimax-delta-proof} for
details.
\item There is a large gap in the target linear functional when
  evaluated at functions in the support of $\Qprob_1^*$ and
  $\Qprob_{-1}^*$. See
  Lemma~\ref{lemma:gap-bound-in-local-minimax-delta-proof} for
  details.
\end{itemize}

For any binary function $\zeta : \Xspace \times \actionspace
\rightarrow \{-1, 1\}$, we define the perturbed outcome function
\begin{align*}
\plainmu_\zeta(\state, \action) \mydefn \treateffzero(\state, \action)
+ \zeta(\state, \action) \cdot \delta(\state, \action) \quad \mbox{for
  all $(\state, \action) \in \Xspace \times \actionspace$.}
\end{align*}
By construction, we have $\plainmu_\zeta \in \Nval{\delta}(\treateff)$
for any binary function $\zeta$.  Now consider the function
\begin{align*}
\rho(\state, \action) \mydefn \begin{dcases} \frac{\weightfunc(\state,
    \action) \delta(\state, \action)}{\weightednorm{\delta}
    \propscore(\state, \action)} & \frac{|\weightfunc(\state,
    \action)| \delta(\state, \action)}{\propscore(\state, \action)}
  \leq 2 \ctwofour \weightednorm{\delta} \\
  \mathrm{sgn} \big(g(\state, \action) \big),& \mbox{otherwise}.
    \end{dcases}
\end{align*}
It can be seen that $\Exs[\rho^2(\State, \Action)] \leq 1$ where the
expectation is taken over a pair $\State \sim \probxstar$ and $\Action
\sim \propscore(\State, \cdot)$.

For a scalar $\tweak \in \big( 0, \tfrac{1}{2\ctwofour} \big]$ and a
  sign variable $z \in \{-1, 1\}$, we define the probability
  distribution
\begin{align}
\Qprob_z^{\tweak} \mydefn \law(\plainmu_\zeta), \quad \mbox{where}
\quad \zeta \sim \prod_{x \in \Xspace, \action \in \actionspace}
\bernDistr \Big( \frac{1 + z \tweak \rho(\state, \action)}{2} \Big).
\end{align}
Having constructed the mixture distributions, we are ready to prove
the lower bound~\eqref{eq:minimax-lower-bound-local-delta}. The proof
relies on the following two lemmas on the properties of the mixture
distributions:

\begin{lemma}
\label{lemma:tv-bound-in-local-minimax-delta-proof}
The total variation distance between mixture-of-product distributions
is upper bounded as
\begin{align}
\label{eq:tv-bound-in-local-minimax-delta-proof}  
\totalvariation \Bigg( \int \Prob_{\plaintreateff,
  \probxstar}^{\otimes \numobs} d \Qprob_1^{\tweak}(\plaintreateff),
\int \Prob_{\plaintreateff, \probxstar}^{\otimes \numobs} d
\Qprob_{-1}^{\tweak}(\plaintreateff) \Bigg) \leq 2 \tweak
\sqrt{\numobs} + 4 \cdot e^{- \numobs / 4}.
\end{align}
\end{lemma}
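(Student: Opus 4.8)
The plan is to condition on the observed state--action pairs and then collapse each mixture-of-products to a \emph{finite} tensor product of biased sign variables. Write $\mathbf{D}\defn(\State_i,\Action_i)_{i=1}^\numobs$. Since the state distribution is $\probxstar$ under both $\Qprob_1^{\tweak}$ and $\Qprob_{-1}^{\tweak}$, the pairs $\mathbf D$ have the same marginal law under the two mixture distributions, so
\begin{align*}
\totalvariation\!\bigl(\textstyle\int\Prob_{\plaintreateff,\probxstar}^{\otimes\numobs}\,d\Qprob_1^{\tweak}(\plaintreateff),\,\int\Prob_{\plaintreateff,\probxstar}^{\otimes\numobs}\,d\Qprob_{-1}^{\tweak}(\plaintreateff)\bigr)\le\Exs_{\mathbf D}\bigl[\totalvariation(\mathcal{M}_1(\cdot\mid\mathbf D),\mathcal{M}_{-1}(\cdot\mid\mathbf D))\bigr],
\end{align*}
where $\mathcal M_z(\cdot\mid\mathbf D)$ denotes the conditional law of $(\outcome_1,\dots,\outcome_\numobs)$ given $\mathbf D$. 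Conditioned on $\mathbf D$, the outcomes depend on the random function $\plaintreateff_\zeta=\treateffzero+\zeta\cdot\delta$ only through the values $\zeta(\State_i,\Action_i)$, and the channel producing $(\outcome_i)$ from these values (Gaussian with variance $\sigma^2(\State_i,\Action_i)$) does not depend on the label $z$. Hence, letting $u_1,\dots,u_m$ (with $m\le\numobs$) be the \emph{distinct} pairs appearing in $\mathbf D$, the data-processing inequality gives $\totalvariation(\mathcal M_1(\cdot\mid\mathbf D),\mathcal M_{-1}(\cdot\mid\mathbf D))\le\totalvariation\!\bigl(\prod_{j=1}^m B_j^{(1)},\prod_{j=1}^m B_j^{(-1)}\bigr)$, where $B_j^{(z)}$ is the law on $\{-1,+1\}$ placing mass $\tfrac12\bigl(1+z\tweak\rho(u_j)\bigr)$ on $+1$.

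Next I would bound this Bernoulli-product distance through the Hellinger affinity, which tensorizes. A direct computation gives, for each $j$,
\begin{align*}
\mathrm{H}^2\!\bigl(B_j^{(1)},B_j^{(-1)}\bigr)=1-\sqrt{1-\tweak^2\rho(u_j)^2}\;\le\;\tweak^2\rho(u_j)^2,
\end{align*}
the inequality being valid over the whole admissible range $\tweak|\rho(u_j)|\le1$, which holds since $|\rho|\le2\ctwofour$ by construction and $\tweak\le\tfrac1{2\ctwofour}$. Tensorizing and using $1-\prod_j(1-t_j)\le\sum_j t_j$ for $t_j\in[0,1]$,
\begin{align*}
\mathrm{H}^2\!\Bigl(\textstyle\prod_{j=1}^m B_j^{(1)},\,\prod_{j=1}^m B_j^{(-1)}\Bigr)\le\tweak^2\sum_{j=1}^m\rho(u_j)^2\le\tweak^2\sum_{i=1}^\numobs\rho(\State_i,\Action_i)^2,
\end{align*}
so that $\totalvariation(\mathcal M_1(\cdot\mid\mathbf D),\mathcal M_{-1}(\cdot\mid\mathbf D))\le\sqrt2\,\mathrm{H}\le\sqrt2\,\tweak\bigl(\sum_{i}\rho(\State_i,\Action_i)^2\bigr)^{1/2}$. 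Taking expectations over $\mathbf D$, applying Jensen's inequality, and invoking the normalization $\Exs_{\probxstar\times\propscore}[\rho^2]\le1$ noted in the construction of $\rho$, one arrives at the leading term $\sqrt2\,\tweak\sqrt\numobs\le2\tweak\sqrt\numobs$. The residual $4e^{-\numobs/4}$ is then absorbed by a routine concentration step: one first restricts to a high-probability event on which the observed pairs are ``well spread'' (for a finite state space this uses the compatibility Assumption~\ref{AssCompatibility} to control the number of repeated coordinates, and more generally keeps $\sum_i\rho(\State_i,\Action_i)^2$ close to its mean), bounding total variation trivially by $1$ on the complement.

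I expect the main obstacle to be the reduction carried out in the first paragraph rather than the Hellinger estimate. One has to argue cleanly that, conditionally on $\mathbf D$, averaging over the random sign field $\zeta$ on the (possibly uncountable) index set $\Xspace\times\actionspace$ reduces exactly to the finite product over the distinct observed coordinates. This is transparent when $\probxstar$ is non-atomic, since then the pairs are almost surely distinct and $m=\numobs$; but in the finite-state regime $\numobs\le c'|\Xspace|/\cmax$ one must track repeated coordinates, which only \emph{decrease} the number of independent Bernoulli factors and hence cannot inflate the bound, yet require the compatibility condition to keep both the repeat count and $\sum_i\rho(\State_i,\Action_i)^2$ concentrated around their means --- and it is precisely this bookkeeping that gives rise to the exponentially small correction term.
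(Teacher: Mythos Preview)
Your argument is correct and takes a genuinely different route from the paper. The paper proves the lemma by \emph{Poissonization}: it replaces the fixed sample size $\numobs$ by $\nu\sim\poissonDistr(2\numobs)$, so that the counts $M(\state,\action)$ at different state--action pairs become independent Poisson variables; the two mixtures then factorize over $(\state,\action)$, and a Pinsker/KL calculation on each factor gives $\totalvariation\le 2\tweak\sqrt{\numobs}$ for the Poissonized model. The additive $4e^{-\numobs/4}$ is exactly the cost of undoing the Poissonization via the truncation Lemma~\ref{lemma:total-variation-with-truncation} and the lower tail bound $\Prob[\nu\ge\numobs]\ge 1-e^{-\numobs/4}$.

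Your approach---conditioning on the observed pairs $\mathbf D$, reducing by data processing to the product of biased signs over the \emph{distinct} observed coordinates, and tensorizing the Hellinger affinity---is more elementary and in fact yields the sharper inequality $\totalvariation\le\sqrt{2}\,\tweak\sqrt{\numobs}$ with no additive correction. Two remarks. First, the reduction you worry about in your final paragraph is not an obstacle: conditionally on $\mathbf D$, the outcome vector depends on the sign field $\zeta$ only through its values at the finitely many observed pairs, and the marginal of $\zeta$ on any finite coordinate set is simply the corresponding finite Bernoulli product; no appeal to Assumption~\ref{AssCompatibility} or to concentration of $\sum_i\rho(\State_i,\Action_i)^2$ is needed. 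Second, and relatedly, the term $4e^{-\numobs/4}$ plays no role in your argument---your Jensen step $\Exs\bigl[(\sum_i\rho^2)^{1/2}\bigr]\le(\numobs\,\Exs[\rho^2])^{1/2}\le\sqrt{\numobs}$ already closes the bound. That exponential term is an artifact of the paper's Poissonization device, not an intrinsic feature of the problem, so you should simply drop the last paragraph rather than invent a concentration step to justify it.
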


\begin{lemma}
\label{lemma:gap-bound-in-local-minimax-delta-proof}
Given a state space with cardinality lower bounded as $|\Xspace| \geq
128 c_{\max} / \tweak^2$, we have
\begin{subequations}
\label{eqs:truncation-in-mixture-vs-mixture-proof}
\begin{align}
\Prob_{\plaintreateff \sim \Qprob_1^{\tweak} } \Big\{
\tau(\probxstar, \plaintreateff) \geq \tau(\probxstar, \treateffzero)
+ \frac{\tweak}{8} \weightednorm{\delta} \Big\} & \geq 1 - 2 \cdot
e^{-4}, \quad \mbox{and} \\
\Prob_{\plaintreateff \sim \Qprob_{-1}^{\tweak} } \Big\{
\tau(\probxstar, \plaintreateff) \leq \tau(\probxstar, \treateffzero)
- \frac{\tweak}{8} \weightednorm{\delta} \Big\} & \geq 1 - 2 \cdot
e^{-4}.
\end{align}
\end{subequations}
\end{lemma}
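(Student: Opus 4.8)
The plan is to combine a lower bound on the \emph{expected} functional gap under the mixture $\Qprob_z^\tweak$ with a concentration inequality controlling the fluctuations of the random functional $\tau(\probxstar, \plainmu_\zeta)$ around that mean; by symmetry it suffices to treat $z = 1$, and the case $z = -1$ follows by flipping every sign.

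\textbf{Step 1 (expected gap).} Because $\tau(\probxstar, \cdot)$ is linear and $\plainmu_\zeta = \treateffzero + \zeta\,\delta$ with coordinatewise mean $\Exs[\zeta(\state, \action)] = z\tweak\rho(\state, \action)$, Fubini's theorem gives
\[
\Exs_{\zeta \sim \Qprob_z^\tweak}\!\big[\tau(\probxstar, \plainmu_\zeta)\big] - \tau(\probxstar, \treateffzero) = z\tweak \int_\actionspace \Exs_{\probxstar}\!\Big[ \weightfunc(\State, \action)\,\delta(\State, \action)\,\rho(\State, \action)\Big]\, d\basemsr(\action).
\]
By the choice of sign in the definition of $\rho$, the integrand is pointwise nonnegative, so it may be lower bounded by dropping the truncation set $\{|\Zone| > 2\ctwofour\weightednorm{\delta}\}$, where $\Zone = \weightfunc\delta/\propscore$ is the variable of Assumption~\ref{AssEll2Ell4} and $\Exs[\Zone^2] = \weightednorm{\delta}^2$. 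On the complement $\mathcal{G}$ one has $\weightfunc\delta\rho = \weightfunc^2\delta^2/(\weightednorm{\delta}\propscore)$, so the restricted integral equals $\tfrac{1}{\weightednorm{\delta}}\Exs[\Zone^2\bm{1}_{\mathcal{G}}]$. The moment-ratio bound $\Exs[\Zone^4] \le \ctwofour^2\weightednorm{\delta}^4$, with Markov's inequality $\Prob(\mathcal{G}^c) \le 1/(4\ctwofour^2)$ and Cauchy--Schwarz, gives $\Exs[\Zone^2\bm{1}_{\mathcal{G}^c}] \le \sqrt{\Exs[\Zone^4]\,\Prob(\mathcal{G}^c)} \le \tfrac12\weightednorm{\delta}^2$, hence $\Exs[\Zone^2\bm{1}_{\mathcal{G}}] \ge \tfrac12\weightednorm{\delta}^2$. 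Thus $\Exs_{\zeta}[\tau(\probxstar, \plainmu_\zeta)] \ge \tau(\probxstar, \treateffzero) + \tfrac{\tweak}{2}\weightednorm{\delta}$.

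\textbf{Step 2 (concentration).} Expanding the functional, $\tau(\probxstar, \plainmu_\zeta) - \tau(\probxstar, \treateffzero) = \sum_{\state\in\Xspace}\sum_{\action\in\actionspace}\probxstar(\state)\weightfunc(\state, \action)\delta(\state, \action)\,\zeta(\state, \action)$ is a sum of independent random variables, the $(\state, \action)$-th of which ranges over an interval of length $2\probxstar(\state)|\weightfunc(\state, \action)|\delta(\state, \action)$. The compatibility condition~\ref{AssCompatibility}, i.e.\ $\probxstar(\state) \le \cmax/|\Xspace|$, together with $\propscore \le 1$, yields $\sum_{\state, \action}\probxstar(\state)^2\weightfunc^2\delta^2 \le \tfrac{\cmax}{|\Xspace|}\sum_{\state, \action}\probxstar(\state)\weightfunc^2\delta^2/\propscore = \tfrac{\cmax}{|\Xspace|}\weightednorm{\delta}^2$, so Hoeffding's inequality gives
\[
\Prob\Big( \big|\tau(\probxstar, \plainmu_\zeta) - \Exs_{\zeta}[\tau(\probxstar, \plainmu_\zeta)]\big| \ge t\Big) \le 2\exp\!\Big(-\tfrac{t^2\,|\Xspace|}{2\cmax\weightednorm{\delta}^2}\Big).
\]
Taking $t = \tfrac{\tweak}{4}\weightednorm{\delta}$ and invoking the hypothesis $|\Xspace| \ge 128\cmax/\tweak^2$ bounds this probability by $2e^{-4}$.

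\textbf{Step 3 (combine) and main obstacle.} On the complement of the event in Step 2, Steps 1--2 give $\tau(\probxstar, \plainmu_\zeta) \ge \tau(\probxstar, \treateffzero) + \tfrac{\tweak}{2}\weightednorm{\delta} - \tfrac{\tweak}{4}\weightednorm{\delta} \ge \tau(\probxstar, \treateffzero) + \tfrac{\tweak}{8}\weightednorm{\delta}$, which is the first claim; the second follows by running the argument with $z = -1$, and when $\probxstar$ is non-atomic one partitions $\Xspace$ into finitely many cells of negligible mass and repeats the computation with $\cmax = 1$. The crux is the concentration bound of Step 2: this is exactly where the compatibility condition~\ref{AssCompatibility} is indispensable --- without it a single heavily-weighted state would prevent the variance proxy from decaying in $|\Xspace|$, the deviation $\tfrac{\tweak}{4}\weightednorm{\delta}$ could not be absorbed, and the adversary's perturbation would become statistically detectable. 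The truncation built into $\rho$ plays the complementary role of keeping the rare large values of $\weightfunc/\propscore$ from destroying the signal in Step 1.
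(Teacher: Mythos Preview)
Your proof is correct and follows essentially the same approach as the paper: compute the expected functional gap via the truncation/moment-ratio argument, apply Hoeffding's inequality to the sum of independent bounded perturbations using the compatibility condition~\ref{AssCompatibility} to control the variance proxy, and combine. The paper packages the Step~1 moment calculation as a separate auxiliary lemma (a second-moment lower bound for truncated random variables), whereas you spell out the Cauchy--Schwarz/Markov argument directly, and your constant in Step~1 is $\tweak/2$ rather than the paper's $\tweak/4$; both are adequate for the claimed $\tweak/8$ threshold, and the remaining differences are purely cosmetic.
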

\noindent We prove these lemmas at the end of this section.

Taking these two lemmas as given, we now proceed with the proof of
equation~\eqref{eq:minimax-lower-bound-local-delta}. Based on
Lemma~\ref{lemma:gap-bound-in-local-minimax-delta-proof}, we define
two sets of functions as follows:
\begin{align*}
\Event_1 & \mydefn \Big\{ \plainmu_\zeta \, \mid \, \zeta \in \{-1,
1\}^{\Xspace \times \actionspace}, ~\tau(\probxstar, \plaintreateff_\zeta,
) \geq \tau( \probxstar, \treateffzero) + \frac{\tweak}{8}
\weightednorm{\delta} \Big\}, \quad \mbox{and} \\
\Event_{-1} & \mydefn \Big\{ \plainmu_\zeta \: \mid \: \zeta \in \{-1,
1\}^{\Xspace \times \actionspace},~ \tau(\probxstar, \plaintreateff_\zeta) \leq \tau(\probxstar, \treateffzero) - \frac{\tweak}{8}
\weightednorm{\delta} \Big\}.
\end{align*}
When the sample size requirement in
equation~\eqref{eq:minimax-lower-bound-local-delta} is satisfied,
Lemma~\ref{lemma:gap-bound-in-local-minimax-delta-proof} implies that
$\Qprob_z^{\tweak}(\Event_z) \geq 1 - e^{-4}$ for $z \in \{-1,
1\}$. We set $\tweak = \frac{1}{16 \sqrt{\numobs}}$, and define
\begin{align}
\Qprob_z^* \mydefn \Qprob_1^{\tweak} \big| \Event_z, \quad \mbox{for
  $z \in \{-1, 1 \}$.}
\end{align}
By construction, the probability distributions $\Qprob_1^*$ and
$\Qprob_{-1}^*$ have disjoint support, and for any pair
$\plaintreateff \in \support(\Qprob_1^*) \subseteq \Event_1$ and
$\plaintreateff' \in \support(\Qprob_{-1}^*) \subseteq \Event_{-1}$,
we have:
\begin{align}
 \label{eq:mixture-vs-mixture-gap-bound}  
\avgtreat \big(  \probxstar , \plaintreateff\big) \geq \avgtreat \big(
\probxstar, \treateffzero \big) + \frac{\weightednorm{\delta}}{128
  \sqrt{\numobs}}, \quad \mbox{and} \quad \avgtreat \big(
 \probxstar , \plaintreateff'\big) \leq \avgtreat \big(\probxstar, \treateffzero \big) - \frac{\weightednorm{\delta}}{128 \sqrt{\numobs}}.
\end{align}
Furthermore, combining the conclusions in
Lemma~\ref{lemma:tv-bound-in-local-minimax-delta-proof} and
Lemma~\ref{lemma:gap-bound-in-local-minimax-delta-proof} using
Lemma~\ref{lemma:total-variation-with-truncation}, we obtain the total
variation distance upper bound:
\begin{align*}
& \totalvariation \Bigg( \int \Prob_{\plaintreateff,
    \probxstar}^{\otimes \numobs} d \Qprob_1^* (\plaintreateff), \int
  \Prob_{\plaintreateff, \probxstar}^{\otimes \numobs} d \Qprob_{-1}^*
  (\plaintreateff) \Bigg) \\
& \leq \frac{1}{1 - 2 \cdot e^{-4}} \totalvariation \Bigg( \int
  \Prob_{\plaintreateff, \probxstar}^{\otimes \numobs} d
  \Qprob_1^{\tweak}(\plaintreateff), \int \Prob_{\plaintreateff,
    \probxstar}^{\otimes \numobs} \Qprob_{-1}^{\tweak}(\plaintreateff)
  \Bigg) + 4 \cdot e^{-4}\\
& \leq \frac{1/8 + 4 \cdot e^{- \numobs/ 4}}{1 - 2 \cdot e^{-4}} + 4
  \cdot e^{-4} \leq \frac{1}{4},
\end{align*}
which completes the proof of
equation~\eqref{eq:mixture-vs-mixture-tv-bound}.

Combining equation~\eqref{eq:mixture-vs-mixture-gap-bound}
and~\eqref{eq:mixture-vs-mixture-tv-bound}, we can invoke Le Cam's
mixture-vs-mixture lemma, and conclude that
\begin{align*}
  \minimaxRisk &\geq \frac{1}{4} \Big\{1 -\totalvariation \Big( \int
  \Prob_{\plaintreateff, \probxstar}^{\otimes \numobs} d \Qprob_1^*
  (\plaintreateff), \int \Prob_{\plaintreateff, \probxstar}^{\otimes
    \numobs} d \Qprob_{-1}^* (\plaintreateff) \Big)\Big\} \cdot
  \inf_{\substack{\plaintreateff \in \support
      (\Qprob_1)\\ \plaintreateff' \in \support(\Qprob_{-1})}} \big[
    \avgtreat(\probxstar, \plaintreateff) - \avgtreat(    \probxstar, \plaintreateff') \big]_+^2 \\
& \geq \frac{c \weightednorm{\delta}^2}{\numobs},
\end{align*}
for a universal constant $c > 0$. This completes the proof of
equation~\eqref{eq:minimax-lower-bound-local-delta}.

\paragraph{Proof of Lemma~\ref{lemma:tv-bound-in-local-minimax-delta-proof}:}

Our proof exploits a Poissonization device, which makes the number of
observations random, and thereby simplifies calculations. For $z \in
\{-1, 1\}$, denote the mixture-of-product distribution:
\begin{align*}
\Qprob_z^{(\tweak, \otimes \numobs)} \mydefn \int
\Prob_{\plaintreateff, \probxstar}^{\otimes \numobs} d
\Qprob_z^{\tweak}(\plaintreateff).
\end{align*}
We construct a pair $\big( \Qprob_1^{(\tweak, \mathrm{Poi})},
\Qprob_{-1}^{(\tweak, \poissonDistr)} \big)$ of mixture distributions
as follows: randomly draw the sample size $\nu \sim \poissonDistr(2
\numobs)$ independent of $\zeta$ and random sampling of data. For each
$z \in \{-1, 1\}$, we let $ \Qprob_z^{(\tweak, \poissonDistr)}$ be the
mixture distribution:
\begin{align*}
\Qprob_z^{(\tweak, \poissonDistr)} \mydefn \sum_{k = 0}^{+ \infty}
\Qprob_z^{(\tweak, \otimes k)} \cdot \Prob(\nu = k).
\end{align*}
By a known lower tail bound for a Poisson random variable
(c.f.~\cite{boucheron2013concentration}, \S 2.2), we have
\begin{align}
\Prob \big[ \underbrace{\nu \geq \numobs}_{=:
    \widetilde{\Event}_\numobs} \big] & \geq 1 - e^{- \numobs / 4},
\end{align}
We note that on the event $\widetilde{\Event}_\numobs$, the
probability law $ \Qprob_z^{(\tweak, \otimes \numobs)}$ is actually
the projection of the law $\Qprob_z^{(\tweak, \mathrm{Poi})} \big|
\widetilde{\Event}_\numobs $ on the first $\numobs$
observations. Consequently, we can
use~\Cref{lemma:total-variation-with-truncation} to bound the total
variation distance between the original mixture distributions using
that of the Poissonized models:
\begin{align}
\totalvariation \big( \Qprob_1^{(\tweak, \otimes \numobs)},
\Qprob_{-1}^{(\tweak, \otimes \numobs)}\big) &\leq \totalvariation
\Big( \Qprob_1^{(\tweak, \poissonDistr)} \big|
\widetilde{\Event}_\numobs , \Qprob_{-1}^{(\tweak, \poissonDistr)}
\big| \widetilde{\Event}_\numobs \Big) + 4 \Prob
\big(\widetilde{\Event}_\numobs^c \big) \nonumber \\
& \leq \frac{1}{\Prob \big(\widetilde{\Event}_\numobs
  \big)}\totalvariation \Big( \Qprob_1^{(\tweak, \poissonDistr)} ,
\Qprob_{-1}^{(\tweak, \poissonDistr)} \Big) + 4 \cdot e^{- \numobs /
  4} \nonumber \\
\label{eq:poissonization-error-bound}
& \leq 2 \totalvariation \Big( \Qprob_1^{(\tweak, \poissonDistr)} ,
\Qprob_{-1}^{(\tweak, \poissonDistr)} \Big) + 4 \cdot e^{- \numobs /
  4},
\end{align}
valid for any $\numobs \geq 4$.

It remains to bound the total variation distance between the
Poissonized mixture distributions.  We start by considering the
empirical count function
\begin{align*}
  M(\state, \action) \mydefn \sum_{i = 1}^\nu \bm{1} \Big\{\State_i =
  \state, ~\Action_i = \action \Big\} \qquad \mbox{for all $(\state,
    \action) \in \statespace \times \actionspace$,}
\end{align*}
Note that conditionally on the value of $\nu$, the vector $(M(\state,
\action))_{\state \in \Xspace, \action \in \actionspace}$ follows a
multinomial distribution. Since $\nu \sim \poissonDistr(2 n)$, we have
\begin{align*}
   \forall x \in \Xspace, ~ a \in \actionspace \quad M(\state,
   \action) \sim \poissonDistr \big(2 \numobs \probxstar(x)
   \propscore(\state, \action) \big), \quad \mbox{independent of each
     other}.
\end{align*}
For each $(\state, \action) \in \Xspace \times \actionspace$ and $z
\in \{-1, 1\}$, we consider a probability distribution
$\Qnew_z(x, \action)$ defined by the following sampling
procedure:
\begin{enumerate}[label=(\alph*)]
\item Sample $M(\state, \action) \sim \poissonDistr \big(2 \numobs
  \probxstar(x) \propscore(\state, \action) \big)$.
    \item Sample $\zeta(\state, \action) \sim \bernDistr \big( \frac{1
      + \tweak z \rho(\state, \action)}{2} \big)$.
    \item Generate a (possibly empty) set of $M(\state, \action)$
      independent observations from the conditional law of $Y$ given
      $X = \state$ and $\Action = \action$.
\end{enumerate}
By independence, for any $z \in \{-1, 1\}$, it is straightforward to
see that:
\begin{align*}
    \Qprob_z^{(\tweak, \poissonDistr)} = \prod_{(\state, \action)
      \in \statespace \times \actionspace} \Qnew_z(\state, \action),
\end{align*}
Pinsker's inequality, combined with the tensorization of the KL
divergence, guarantees that
\begin{align}
\label{eq:pinsker-and-tensorization-intv-bound-proof}  
\totalvariation \big(\Qprob_1^{(\tweak, \poissonDistr)},
\Qprob_{-1}^{(\tweak, \poissonDistr)} \big) \leq \sqrt{\frac{1}{2}
  \kull{\Qprob_1^{(\tweak, \poissonDistr) }}{
    \Qprob_{-1}^{(\tweak, \poissonDistr)} }} = \sqrt{\frac{1}{2}
  \sum_{x \in \Xspace, a \in \actionspace}
  \kull{\Qnew_{1}(\state,
    \action)}{\Qnew_{-1}(\state, \action)} }.
\end{align}
Note that the difference between the probability distributions
$\Qnew(\state, \action)$ and $\Qnew_{- 1}(\state,
\action)$ lies only in the parameter of the Bernoulli random variable
$\zeta(\state, \action)$, which is observed if and only if $M(\state,
\action) > 0$. By convexity of KL divergence, we have:
\begin{align*}
\kull{\Qnew_{1}(\state, \action)}{\Qnew_{-1}(\state,
  \action)} & \leq \Prob \Big( M(\state, \action) > 0 \Big) \cdot
\kull{\bernDistr \big( \frac{1 + \tweak \rho(\state, \action)}{2}
  \big)}{\bernDistr \big( \frac{1 - \tweak \rho(\state, \action)}{2}
  \big)} \\
& \leq 4 \big(1 - e^{-2 \numobs \probxstar(x) \propscore(\state,
  \action)} \big) \cdot \tweak^2 \rho^2(\state, \action)\\
& \leq 8 \numobs \probxstar(x) \propscore(\state, \action) \tweak^2
\rho^2(x) \\
  & \leq 8 \numobs \tweak^2 \probxstar(x) \frac{\weightfunc(\state,
  \action) \delta^2(\state, \action)}{\propscore(\state, \action)
  \weightednorm{\delta}^2}
\end{align*}
Substituting back to the decomposition
result~\eqref{eq:pinsker-and-tensorization-intv-bound-proof}, we
conclude that
\begin{align*}
\totalvariation \big(\Qprob_1^{(\tweak, \poissonDistr)},
\Qprob_{-1}^{(\tweak, \poissonDistr)} \big) \leq \sqrt{\frac{1}{2}
  \sum_{x \in \Xspace, a \in \actionspace} 8 \numobs \tweak^2
  \probxstar(x) \frac{\weightfunc^2(\state, \action) \delta^2(\state,
    \action)}{\propscore(\state, \action) \weightednorm{\delta}^2} }
\leq 2 \tweak \sqrt{\numobs}.
\end{align*}
Finally, combining with equation~\eqref{eq:poissonization-error-bound}
completes the proof.


\paragraph{Proof of Lemma~\ref{lemma:gap-bound-in-local-minimax-delta-proof}:} 

Under our construction, we can compute the expectation of the target
linear functional $\tau(\probInstance)$ under both distributions. In
particular, for $z = 1$, we have
\begin{align*}
 \Exs_{\plaintreateff \sim \Qprob_1^{\tweak} } \big[
   \tau(\probxstar, \plaintreateff) \big] & = \tau( \probxstar, \treateffzero) + \frac{ \tweak}{2} \cdot \Exs_{\probxstar} \Big[
   \int_\actionspace \delta(\State, \action) \weightfunc(\State,
   \action) \rho(\State, \action) d \actbasemsr(\action) \Big] \\
& \geq \tau(\probxstar, \treateffzero) + \frac{ \tweak}{2
   \weightednorm{\delta}} \cdot \Exs \Big[ \frac{\delta^2(\State,
     \Action) \weightfunc(\State, \Action)^2}{\propscore^2(\State,
     \Action)} \bm{1}\Big\{ \frac{|\weightfunc(\State, \Action)|
     \delta(\State, \Action)}{\propscore(\State, \Action)} \leq 2
   \ctwofour \weightednorm{\delta} \Big\} \Big],
\end{align*}
where the last expectation is taken with respect to $\State \sim
\probxstar$ and $\Action \sim \propscore(\State, \cdot)$.

Applying~\Cref{lemma:simple-moment-lower-bound-under-trunc} to the
random variable $\weightfunc(\State, \Action) \delta(\State, \Action)
/ \propscore(\State, \Action)$ yields
\begin{align*}
\Exs \Big[ \frac{\delta^2(\State, \Action) \weightfunc^2(\State,
    \Action)}{\propscore^2(\State, \Action)} \bm{1}\Big\{
  \frac{|\weightfunc(\State, \Action)| \delta(\State,
    \Action)}{\propscore(\State, \Action)} \leq 2 \ctwofour
  \weightednorm{\delta} \Big\} \Big] \geq \frac{1}{2} \Exs \Big[
  \frac{\delta^2(\State, \Action) \: \weightfunc^2(\State,
    \Action)}{\propscore^2(\State, \Action)} \Big].
\end{align*}

Consequently, we have the lower bound on the expected value under
$\Qprob_1^{\tweak}$
\begin{subequations}
\label{eqs:expectation-bounds-in-mixture-vs-mixture-proof}
\begin{align}
\Exs_{\plaintreateff \sim \Qprob_1^{\tweak} } \big[
  \tau(\probxstar, \plaintreateff) \big] \geq \tau(\probxstar, \treateffzero) + \frac{\tweak}{4} \weightednorm{\delta}.
\end{align}
Similarly, under the distribution $\Qprob_{-1}^{\tweak}$, we note
that:
\begin{align}
\Exs_{\plaintreateff \sim \Qprob_{-1}^{\tweak} } \big[
  \tau( \probxstar, \plaintreateff) \big] \leq \tau(\probxstar, \treateffzero) - \frac{\tweak}{4} \weightednorm{\delta}.
\end{align}
\end{subequations}

We now consider the concentration behavior of random function
$\plaintreateff \sim \Qprob_{z}^{\tweak}$ for each choice of $z \in
\{-1, 1\}$.  Since the random signs are independent at each
state-action pair \mbox{$(\state, \action) \in \Xspace
  \times \actionspace$,} we can apply Hoeffding's inequality: more
precisely, with with probability $1 - 2 e^{- 2t}$, we have
\begin{align*}
\abss{\tau(\probxstar, \plaintreateff) - \Exs_{\plaintreateff \sim
    \Qprob_{z}^{\tweak} } \big[ \tau( \probxstar, \plaintreateff)
    \big]} & \overset{(i)}{\leq} \sqrt{t \cdot \sum_{\state
    \in \Xspace, \action \in \actionspace} \probxstar^2(x)
  \weightfunc^2(\state, \action) \delta^2(\state, \action)} \\
& \leq \sqrt{ \frac{t c_{\max}}{|\Xspace|} \cdot \sum_{\state
    \in \Xspace, a \in \actionspace} \probxstar(x)
  \weightfunc^2(\state, \action) \delta^2(\state, \action)} \leq
\sqrt{\frac{t \cdot c_{\max}}{|\Xspace|}} \weightednorm{\delta},
\end{align*}
where in step (i), we use the compatibility condition
$\probxstar(\state) \leq \frac{c_{\max}}{|\Xspace|}$ for any $\state
\in \Xspace$.

Given a state space with cardinality lower bounded as \mbox{$|\Xspace|
  \geq 128 c_{\max} / \tweak^2$,} we can combine the concentration
bound with the expectation
bounds~\eqref{eqs:expectation-bounds-in-mixture-vs-mixture-proof} so
as to obtain
\begin{align*}
\Prob_{\plaintreateff \sim \Qprob_1^{\tweak} } \Big\{
\tau(\probxstar, \plaintreateff) \geq \tau(\probxstar, \treateffzero)
+ \tweak \sqrt{ \tfrac{t}{128}} \weightednorm{\delta} \Big\} &\geq 1 -
2 \cdot e^{- 2t}, \quad \mbox{and} \\
\Prob_{\plaintreateff \sim \Qprob_{-1}^{\tweak} } \Big\{
\tau(\probxstar, \plaintreateff) \leq \tau(\probxstar, \treateffzero)
- \tweak \sqrt{ \tfrac{t}{128}} \weightednorm{\delta} \Big\} & \geq 1
- 2 \cdot e^{- 2t}.
\end{align*}
Setting $t = 2$ completes the proof
of~\Cref{lemma:gap-bound-in-local-minimax-delta-proof}.


\subsection{Proof of~\Cref{thm:worst-case-shattering-dim}}
\label{subsec:proof-thm-minimax-shattering-dim}

Let the input distribution $\probxstar$ be the uniform distribution over the sequence $\{x_j\}_{j = 1}^\fatdim$. It suffices to show that
\begin{align}
    \inf_{\tauhat_\numobs}
\sup_{ \plainmu \in \funcClass} \Exs \big[ \abss{\tauhat_\numobs -
    \avgtreat (\probxstar, \plainmu)}^2 \big] \geq \frac{c}{\numobs} \; \Big \{
  \frac{1}{\fatdim} \sum_{j = 1}^\fatdim \sum_{\action
    \in \actionspace} \frac{\weightfunc^2(\state_j,
    \action)}{\propscore(\state_j, \action)} \delta_\action^2 \Big \}.\label{eq:worst-case-shattering-dim-with-unif-data}
\end{align}
Recall that we are given a sequence $\{x_j\}_{j = 1}^\fatdim$ such
that for each $\action \in \actionspace$, the function class
$\funcClass_\action$ shatters it at scale $\delta_\action$.  Let
$\{t_{j, \action}\}_{j = 1}^\fatdim$ be the sequence of function
values in the fat-shattering
definition~\eqref{eq:defn-fat-shattering}. Note that
since the class $\funcClass$ is convex, we have
  \begin{align*}
\bigotimes_{j = 1}^\fatdim \bigotimes_{\action \in \actionspace}
          [t_{j, \action} - \delta_\action, t_{j, \action} +
            \delta_\action] \subseteq \bigotimes_{\action
            \in \actionspace} \Big\{(f_\action(x_j))_{j \in [\fatdim]}
          \; \mid \; f_\action \in \funcClass_\action \Big\}.
\end{align*}
Note that this distribution satisfies the compatibility condition with
$c_{\max} = 1$ and the hyper-contractivity condition with a constant $\ctwofour = \ltwolfour{\weightfunc (\State, \Action) \delta_\Action / \propscore (\State, \Action)}$. Invoking equation~\eqref{eq:minimax-lower-bound-local-delta} over
the local neighborhood $\Nval{\delta}(t)$ yields the claimed
bound~\eqref{eq:worst-case-shattering-dim-with-unif-data}.


\section{Discussion}
\label{SecDiscussion}

We have studied the problem of evaluating linear functionals of the
outcome function (or reward function) based on observational data.  In
the bandit literature, this problem corresponds to off-policy
evaluation for contextual bandits.  As we have discussed, the
classical notion of semi-parametric efficiency characterizes the
optimal asymptotic distribution, and the finite-sample analysis
undertaken in this paper enriches this perspective.  First, our
analysis uncovered the importance of a particular weighted
$\Ltwospace$-norm for estimating the outcome function $\treateff$.
More precisely, optimal estimation of the scalar $\taustar$ is
equivalent to optimal estimation of the outcome function $\treateff$
under such norm, in the sense of minimax risk over a local
neighborhood. Furthermore, when the outcome function is known to lie
within some function class $\funcClass$, we showed that a sample size
scaling with the complexity of $\funcClass$ is necessary and
sufficient to achieve such bounds non-asymptotically.

Our result lies at the intersection of decision-making problems and
the classical semi-parametric theories, which motivates several
promising directions of future research on both threads:
\bcar
\item Our analysis reduces the problem of obtaining finite-sample
  optimal estimates for linear functionals to the nonparametric
  problem of estimating the outcome function under a weighted
  norm. Although the re-weighted least-square
  estimator~\eqref{eq:least-square-estimator-in-stage-1} converges to
  the best approximation of the treatment effect function in the
  class, it is not clear whether it always achieves the optimal
  trade-off between the approximation and estimation errors. How to
  optimally estimate the nonparametric component under weighted norm
  (so as to optimally estimate the scalar $\taustar$ in finite sample)
  for a variety of function classes is an important direction of
  future research, especially with weight functions.
\item The analysis of the current paper was limited to
  $\mathrm{i.i.d.}$ data, but similar issues arise with richer models
  of data collection.  There are recent lines of research on how to
  estimate linear functionals with adaptively collected data
  (e.g. when the data are generated from an exploratory bandit
  algorithm~\cite{zhan2021off,shin2020conditional,khamaru2021near}),
  or with an underlying Markov chain structure (e.g. in off-policy
  evaluation problems for reinforcement
  learning~\cite{jiang2016doubly,yin2020asymptotically,kallus2022efficiently,
    ZanWaiBru_neurips21}).  Many results in this literature build upon
  the asymptotic theory of semi-parametric efficiency, so that it is
  natural to understand whether extensions of our techniques could be
  used to obtain finite-sample optimal procedures in these settings.
\item The finite-sample lens used in this paper reveals phenomena in
  semi-parametric estimation that are washed away in the asymptotic
  limit.  This paper has focused on a specific class of
  semi-parametric problems, but more broadly, we view it as
  interesting to see whether such phenomena exist for other models in
  semi-parametric estimation. In particular, if a high-complexity
  object---such as a regression or density function---needs to be
  estimated in order to optimally estimate a low-complexity
  object---such as a scalar---it is important to characterize the
  minimal sample size requirements, and the choice of nonparametric
  procedures for the nuisance component that are finite-sample
  optimal.
  \ecar

\subsection*{Acknowledgements}
The authors thank Peng Ding and Fangzhou Su for helpful discussion.
This work was partially supported by Office of Naval Research Grant
ONR-N00014-21-1-2842, NSF-CCF grant 1955450, and NSF-DMS grant 2015454
to MJW, NSF-IIS grant 1909365 and NSF grant DMS-2023505 to MJW and PLB,
and ONR MURI award N000142112431 to PLB.

\medskip

\bibliographystyle{alpha}

{\small{
\begin{singlespace}
\bibliography{references}

\newcommand{\etalchar}[1]{$^{#1}$}
\begin{thebibliography}{ABDCBH97}

\bibitem[ABDCBH97]{alon1997scale}
N.~Alon, S.~Ben-David, N.~Cesa-Bianchi, and D.~Haussler.
\newblock Scale-sensitive dimensions, uniform convergence, and learnability.
\newblock {\em Journal of the ACM (JACM)}, 44(4):615--631, 1997.

\bibitem[Ada08]{adamczak2008tail}
R.~Adamczak.
\newblock A tail inequality for suprema of unbounded empirical processes with
  applications to {M}arkov chains.
\newblock {\em Electronic Journal of Probability}, 13:1000--1034, 2008.

\bibitem[AI16]{abadie2016matching}
A.~Abadie and G.~W. Imbens.
\newblock Matching on the estimated propensity score.
\newblock {\em Econometrica}, 84(2):781--807, 2016.

\bibitem[AK21]{armstrong2021finite}
T.~B. Armstrong and M.~Koles{\'a}r.
\newblock Finite-sample optimal estimation and inference on average treatment
  effects under unconfoundedness.
\newblock {\em Econometrica}, 89(3):1141--1177, 2021.

\bibitem[Ash78]{ashenfelter1978estimating}
O.~Ashenfelter.
\newblock Estimating the effect of training programs on earnings.
\newblock {\em The Review of Economics and Statistics}, pages 47--57, 1978.

\bibitem[AW21]{athey2021policy}
S.~Athey and S.~Wager.
\newblock Policy learning with observational data.
\newblock {\em Econometrica}, 89(1):133--161, 2021.

\bibitem[BCNZ19]{bradic2019minimax}
J.~Bradic, V.~Chernozhukov, W.~K. Newey, and Y.~Zhu.
\newblock Minimax semiparametric learning with approximate sparsity.
\newblock {\em arXiv preprint arXiv:1912.12213}, 2019.

\bibitem[BKRW93]{bickel1993efficient}
P.~J. Bickel, C.~A.~J. Klaassen, Y.~Ritov, and J.~A. Wellner.
\newblock {\em Efficient and adaptive estimation for semiparametric models},
  volume~4.
\newblock Springer, 1993.

\bibitem[BLM13]{boucheron2013concentration}
S.~Boucheron, G.~Lugosi, and P.~Massart.
\newblock {\em Concentration inequalities: A nonasymptotic theory of
  independence}.
\newblock Oxford university press, 2013.

\bibitem[BLW94]{bartlett1994fat}
P.~L. Bartlett, P.~M. Long, and R.~C. Williamson.
\newblock Fat-shattering and the learnability of real-valued functions.
\newblock In {\em Proceedings of the seventh annual conference on Computational
  learning theory}, pages 299--310, 1994.

\bibitem[BWZ19]{bradic2019sparsity}
J.~Bradic, S.~Wager, and Y.~Zhu.
\newblock Sparsity double robust inference of average treatment effects.
\newblock {\em arXiv preprint arXiv:1905.00744}, 2019.

\bibitem[Cas07]{castillo2007semi}
I.~Castillo.
\newblock Semi-parametric second-order efficient estimation of the period of a
  signal.
\newblock {\em Bernoulli}, 13(4):910--932, 2007.

\bibitem[CCD{\etalchar{+}}18]{chernozhukov2018double}
V.~Chernozhukov, D.~Chetverikov, M.~Demirer, E.~Duflo, C.~Hansen, W.~Newey, and
  J.~Robins.
\newblock Double/debiased machine learning for treatment and structural
  parameters: Double/debiased machine learning.
\newblock {\em The Econometrics Journal}, 21(1), 2018.

\bibitem[CHT04]{chen2004semiparametric}
X.~Chen, H.~Hong, and A.~Tarozzi.
\newblock Semiparametric efficiency in {GMM} models of nonclassical measurement
  errors, missing data and treatment effects.
\newblock Technical report, Yale University, 2004.
\newblock Yale Economics Department Working Paper No. 42.

\bibitem[DGT06]{dalalyan2006penalized}
A.~S. Dalalyan, G.~K. Golubev, and A.~B. Tsybakov.
\newblock Penalized maximum likelihood and semiparametric second-order
  efficiency.
\newblock {\em The Annals of Statistics}, 34(1):169--201, 2006.

\bibitem[Dir15]{dirksen2015tail}
S.~Dirksen.
\newblock Tail bounds via generic chaining.
\newblock {\em Electronic Journal of Probability}, 20:1--29, 2015.

\bibitem[FS19]{foster2019orthogonal}
D.~J. Foster and V.~Syrgkanis.
\newblock Orthogonal statistical learning.
\newblock {\em arXiv preprint arXiv:1901.09036}, 2019.

\bibitem[Hah98]{hahn1998role}
J.~Hahn.
\newblock On the role of the propensity score in efficient semiparametric
  estimation of average treatment effects.
\newblock {\em Econometrica}, pages 315--331, 1998.

\bibitem[H{\'a}j72]{hajek1972local}
J.~H{\'a}jek.
\newblock Local asymptotic minimax and admissibility in estimation.
\newblock In {\em Proceedings of the sixth Berkeley symposium on mathematical
  statistics and probability}, volume~1, pages 175--194, 1972.

\bibitem[HIR03]{hirano2003efficient}
K.~Hirano, G.~W. Imbens, and G.~Ridder.
\newblock Efficient estimation of average treatment effects using the estimated
  propensity score.
\newblock {\em Econometrica}, 71(4):1161--1189, 2003.

\bibitem[HNO08]{hitomi2008puzzling}
K.~Hitomi, Y.~Nishiyama, and R.~Okui.
\newblock A puzzling phenomenon in semiparametric estimation problems with
  infinite-dimensional nuisance parameters.
\newblock {\em Econometric Theory}, 24(6):1717--1728, 2008.

\bibitem[HTW15]{HasTibWai15}
T.~Hastie, R.~Tibshirani, and M.~J. Wainwright.
\newblock {\em Statistical learning with sparsity: {T}he {L}asso and
  generalizations}.
\newblock {C}{R}{C} {P}ress, {C}hapman and {H}all, New York, 2015.

\bibitem[HW21]{hirshberg2021augmented}
D.~A. Hirshberg and S.~Wager.
\newblock Augmented minimax linear estimation.
\newblock {\em The Annals of Statistics}, 49(6):3206--3227, 2021.

\bibitem[JL16]{jiang2016doubly}
N.~Jiang and L.~Li.
\newblock Doubly robust off-policy value evaluation for reinforcement learning.
\newblock In {\em International Conference on Machine Learning}, pages
  652--661. PMLR, 2016.

\bibitem[KBW22]{kennedy2022minimax}
E.~H. Kennedy, S.~Balakrishnan, and L.~Wasserman.
\newblock Minimax rates for heterogeneous causal effect estimation.
\newblock {\em arXiv preprint arXiv:2203.00837}, 2022.

\bibitem[KDMW21]{khamaru2021near}
K.~Khamaru, Y.~Deshpande, L.~Mackey, and M.~J. Wainwright.
\newblock Near-optimal inference in adaptive linear regression.
\newblock {\em arXiv preprint arXiv:2107.02266}, 2021.

\bibitem[KM15]{koltchinskii2015bounding}
V.~Koltchinskii and S.~Mendelson.
\newblock Bounding the smallest singular value of a random matrix without
  concentration.
\newblock {\em International Mathematics Research Notices},
  2015(23):12991--13008, 2015.

\bibitem[KS94]{KEARNS1994464}
M.~J. Kearns and R.~E. Schapire.
\newblock Efficient distribution-free learning of probabilistic concepts.
\newblock {\em Journal of Computer and System Sciences}, 48(3):464--497, 1994.

\bibitem[KT59]{kolmogorov1959varepsilon}
A.~N. {Kolmogorov} and V.~M. {Tikhomirov}.
\newblock {\(\varepsilon\)-entropy and \(\varepsilon\)-capacity of sets in
  function spaces}.
\newblock {\em {Usp. Mat. Nauk}}, 14(2(86)):3--86, 1959.

\bibitem[KU22]{kallus2022efficiently}
N.~Kallus and M.~Uehara.
\newblock Efficiently breaking the curse of horizon in off-policy evaluation
  with double reinforcement learning.
\newblock {\em Operations Research}, 2022.

\bibitem[LC60]{lecam1960locally}
L.~Le~Cam.
\newblock Locally asymptotically normal families of distributions.
\newblock {\em Univ. California Publ. Statist.}, 3:37--98, 1960.

\bibitem[Lev75]{levit1975conditional}
B.~Ya. Levit.
\newblock Conditional estimation of linear functionals.
\newblock {\em Problemy Peredachi Informatsii}, 11(4):39--54, 1975.

\bibitem[Lev78]{levit1978infinite}
B.~Ya. Levit.
\newblock Infinite-dimensional informational lower bounds.
\newblock {\em Theor. Prob. Appl}, 23:388--394, 1978.

\bibitem[LMS15]{li2015toward}
L.~Li, R.~Munos, and Cs. Szepesv{\'a}ri.
\newblock Toward minimax off-policy value estimation.
\newblock In {\em Artificial Intelligence and Statistics}, pages 608--616.
  PMLR, 2015.

\bibitem[Men15]{mendelson2015learning}
S.~Mendelson.
\newblock Learning without concentration.
\newblock {\em Journal of the ACM (JACM)}, 62(3):1--25, 2015.

\bibitem[MSZ18]{mackey2018orthogonal}
L.~Mackey, V.~Syrgkanis, and I.~Zadik.
\newblock Orthogonal machine learning: Power and limitations.
\newblock In {\em International Conference on Machine Learning}, pages
  3375--3383. PMLR, 2018.

\bibitem[MV02]{mendelson2002entropy}
S.~Mendelson and R.~Vershynin.
\newblock Entropy, combinatorial dimensions and random averages.
\newblock In {\em International Conference on Computational Learning Theory},
  pages 14--28. Springer, 2002.

\bibitem[MZJW22]{ma2022minimax}
C.~Ma, B.~Zhu, J.~Jiao, and M.~J. Wainwright.
\newblock Minimax off-policy evaluation for multi-armed bandits.
\newblock {\em IEEE Transactions on Information Theory}, 2022.

\bibitem[Ros11]{ross2011fundamentals}
N.~Ross.
\newblock Fundamentals of {S}tein’s method.
\newblock {\em Probability Surveys}, 8:210--293, 2011.

\bibitem[RR83]{rosenbaum1983central}
P.~R. Rosenbaum and D.~B. Rubin.
\newblock The central role of the propensity score in observational studies for
  causal effects.
\newblock {\em Biometrika}, 70(1):41--55, 1983.

\bibitem[RR95]{robins1995semiparametric}
J.~M. Robins and A.~Rotnitzky.
\newblock Semiparametric efficiency in multivariate regression models with
  missing data.
\newblock {\em Journal of the American Statistical Association},
  90(429):122--129, 1995.

\bibitem[RR97]{robins1997toward}
J.~M. Robins and Y.~Ritov.
\newblock Toward a curse of dimensionality appropriate (coda) asymptotic theory
  for semi-parametric models.
\newblock {\em Statistics in medicine}, 16(3):285--319, 1997.

\bibitem[RRZ95]{robins1995analysis}
J.~M. Robins, A.~Rotnitzky, and L.~P. Zhao.
\newblock Analysis of semiparametric regression models for repeated outcomes in
  the presence of missing data.
\newblock {\em Journal of the American Statistical Association},
  90(429):106--121, 1995.

\bibitem[RT92]{rubin1992characterizing}
D.~B. Rubin and N.~Thomas.
\newblock Characterizing the effect of matching using linear propensity score
  methods with normal distributions.
\newblock {\em Biometrika}, 79(4):797--809, 1992.

\bibitem[RTLvdV09]{robins2009semiparametric}
J.~M. Robins, E.~T. Tchetgen, L.~Li, and A.~van~der Vaart.
\newblock Semiparametric minimax rates.
\newblock {\em Electronic journal of statistics}, 3:1305, 2009.

\bibitem[SRR20]{shin2020conditional}
J.~Shin, A.~Ramdas, and A.~Rinaldo.
\newblock On conditional versus marginal bias in multi-armed bandits.
\newblock In {\em International Conference on Machine Learning}, pages
  8852--8861. PMLR, 2020.

\bibitem[Ste56]{stein1956efficient}
C.~Stein.
\newblock Efficient nonparametric testing and estimation.
\newblock In {\em Proceedings of the third Berkeley symposium on mathematical
  statistics and probability}, volume~1, pages 187--195, 1956.

\bibitem[Tal06]{talagrand2006generic}
M.~Talagrand.
\newblock {\em The Generic Chaining: Upper and Lower Bounds of Stochastic
  Processes}.
\newblock Springer Science and Business Media, 2006.

\bibitem[Tsy08]{tsybakov2008introduction}
A.~B. Tsybakov.
\newblock {\em Introduction to Nonparametric Estimation}.
\newblock Springer Science \& Business Media, 2008.

\bibitem[vdVW96]{vander1996}
A.~W. van~der Vaart and J.~Wellner.
\newblock {\em Weak Convergence and Empirical Processes}.
\newblock Springer-Verlag New York, 1996.

\bibitem[WAD17]{wang2017optimal}
Y.-X. Wang, A.~Agarwal, and M.~Dud{\i}k.
\newblock Optimal and adaptive off-policy evaluation in contextual bandits.
\newblock In {\em International Conference on Machine Learning}, pages
  3589--3597. PMLR, 2017.

\bibitem[Wai19]{wainwright2019high}
M.~J. Wainwright.
\newblock {\em High-dimensional Statistics: A Non-asymptotic Viewpoint},
  volume~48.
\newblock Cambridge University Press, 2019.

\bibitem[WS20]{wang2020debiased}
Y.~Wang and R.~D. Shah.
\newblock Debiased inverse propensity score weighting for estimation of average
  treatment effects with high-dimensional confounders.
\newblock {\em arXiv preprint arXiv:2011.08661}, 2020.

\bibitem[YW20]{yin2020asymptotically}
M.~Yin and Y.-X. Wang.
\newblock Asymptotically efficient off-policy evaluation for tabular
  reinforcement learning.
\newblock In {\em International Conference on Artificial Intelligence and
  Statistics}, pages 3948--3958. PMLR, 2020.

\bibitem[ZAW22]{zhou2022offline}
Z.~Zhou, S.~Athey, and S.~Wager.
\newblock Offline multi-action policy learning: Generalization and
  optimization.
\newblock {\em Operations Research}, 2022.

\bibitem[ZHHA21]{zhan2021off}
R.~Zhan, V.~Hadad, D.~A. Hirshberg, and S.~Athey.
\newblock Off-policy evaluation via adaptive weighting with data from
  contextual bandits.
\newblock In {\em Proceedings of the 27th ACM SIGKDD Conference on Knowledge
  Discovery \& Data Mining}, pages 2125--2135, 2021.

\bibitem[ZRAZ21]{zhan2021policy}
R.~Zhan, Z.~Ren, S.~Athey, and Z.~Zhou.
\newblock Policy learning with adaptively collected data.
\newblock {\em arXiv preprint arXiv:2105.02344}, 2021.

\bibitem[ZWB21]{ZanWaiBru_neurips21}
A.~Zanette, M.~J. Wainwright, and E.~Brunskill.
\newblock Provable benefits of actor-critic methods in offline reinforcement
  learning.
\newblock In {\em {N}eural {I}nformation {P}rocessing {S}ystems}, December
  2021.

\end{thebibliography}
\end{singlespace}
}}

\appendix


\section{Proofs of auxiliary results in the upper bounds}

In this section, we state and prove some auxiliary results used in the
proofs of our non-asymptotic upper bounds.

\subsection{Some properties of the estimator $\tauhatgen{\numobs}{f}$}
\label{app:sec:proof-basic-exp-and-var-of-general-form}

In this appendix, we collect some properties of the estimator
$\tauhatgen{\numobs}{f}$ defined in
equation~\eqref{eq:general-form-of-unbiased-estimator}.

\begin{proposition}
  \label{prop:expectation-and-var-of-general-form-estimator}
Given any deterministic function $f \in \Ltwospace(\probx \times
\propscore)$ for any $\action \in \actionspace$, we have
$\Exs[\tauhatgen{\numobs}{f}] = \avgtreat_\weightfunc
(\probInstance)$. Furthermore, if $\actinprod{f(\state,
  \cdot)}{\propscore(\state, \cdot)} = 0$ for any $\state
\in \Xspace$, we have
\begin{multline}
\label{eq:exact-var-general-form-estimator}  
\numobs \cdot \Exs \Big[ \abss{\tauhatgen{\numobs}{f} -
    \avgtreat_\weightfunc(\probInstance)}^2 \Big] = \var_\probx \Big(
\actinprod{\weightfunc(\State, \cdot)}{\treateff(\State, \cdot)} \Big)
+ \int_{\actionspace} \Exs \Big[ \frac{\treatsig^2(\State, \action)
    \weightfunc^2(\State, \action)}{\propscore(\State, \action)} \Big]
d \basemsr(\action) \\
+ \int_\actionspace \Exs \Big[ \propscore(\State, \action) \abss{
    f(\State, \action) - \tfrac{\weightfunc(\State, \action)
      \treateff(\State, \action)}{\propscore(\State, \action)} +
    \actinprod{\weightfunc(\State, \cdot)}{\treateff(\State, \cdot)}
  }^2 \Big] d \basemsr(\action).
    \end{multline}
\end{proposition}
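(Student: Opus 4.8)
The plan is to reduce both assertions to a statement about a single generic summand $T \defn \tfrac{\weightfunc(\State, \Action)}{\propscore(\State, \Action)} \outcome - f(\State, \Action) + \actinprod{f(\State, \cdot)}{\propscore(\State, \cdot)}$, since the $\numobs$ terms defining $\tauhatgen{\numobs}{f}$ are i.i.d.\ copies of $T$. For the unbiasedness claim I would compute $\Exs[T]$ by iterated conditioning: conditioning first on $(\State, \Action)$ replaces $\outcome$ by $\treateff(\State, \Action)$; conditioning next on $\State$ and integrating $\Action \sim \propscore(\State, \cdot)$ against $\basemsr$ turns $\Exs\big[\tfrac{\weightfunc(\State, \Action)}{\propscore(\State, \Action)} \treateff(\State, \Action) \mid \State\big]$ into $\actinprod{\weightfunc(\State, \cdot)}{\treateff(\State, \cdot)}$ and $\Exs[f(\State, \Action) \mid \State]$ into $\actinprod{f(\State, \cdot)}{\propscore(\State, \cdot)}$. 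The latter cancels the correction term exactly, so taking the outer expectation over $\State \sim \probxstar$ and invoking Fubini (valid because $f \in \Ltwospace(\probx \times \propscore)$) leaves $\Exs[T] = \int_\actionspace \Exs_{\probxstar}[\weightfunc(\State, \action)\treateff(\State, \action)]\, d\basemsr(\action) = \tau(\probInstance)$. Note that this part needs no constraint on $f$.

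For the variance identity, unbiasedness together with independence gives $\numobs \cdot \Exs\big[\abss{\tauhatgen{\numobs}{f} - \tau(\probInstance)}^2\big] = \var(T)$, so it suffices to evaluate $\var(T)$ via the law of total variance applied twice. Writing $\outcome = \treateff(\State, \Action) + W$ with $\Exs[W \mid \State, \Action] = 0$ and $\var(W \mid \State, \Action) = \sigma^2(\State, \Action)$, conditioning on $(\State, \Action)$ isolates the contribution $\Exs[\var(T \mid \State, \Action)] = \Exs\big[\tfrac{\weightfunc^2(\State, \Action)}{\propscore^2(\State, \Action)}\sigma^2(\State, \Action)\big]$, which equals the second term of \eqref{eq:exact-var-general-form-estimator} after integrating $\Action$ out against $\propscore(\State, \cdot)$. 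It then remains to analyze $\var\big(\Exs[T \mid \State, \Action]\big)$, where --- using the hypothesis $\actinprod{f(\state, \cdot)}{\propscore(\state, \cdot)} \equiv 0$, so that the correction term is a $\State$-measurable constant --- we have $\Exs[T \mid \State, \Action] = \tfrac{\weightfunc(\State, \Action)}{\propscore(\State, \Action)}\treateff(\State, \Action) - f(\State, \Action)$.

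I would finish by a second application of the law of total variance, now conditioning on $\State$. The two key observations are: (i) $\Exs\big[\Exs[T \mid \State, \Action]\mid \State\big] = \actinprod{\weightfunc(\State, \cdot)}{\treateff(\State, \cdot)}$, again because the orthogonality constraint kills $\actinprod{f(\State, \cdot)}{\propscore(\State, \cdot)}$, so the outer variance contributes exactly $\var_{\probxstar}\big(\actinprod{\weightfunc(\State, \cdot)}{\treateff(\State, \cdot)}\big)$, the first term of \eqref{eq:exact-var-general-form-estimator}; and (ii) the centered quantity $\Exs[T\mid \State, \Action] - \actinprod{\weightfunc(\State, \cdot)}{\treateff(\State, \cdot)}$ is precisely $\fstar(\State, \Action) - f(\State, \Action)$ with $\fstar$ the function of \eqref{eq:defn-fstar-function}, so integrating its square against $\propscore(\State, \cdot)$ and then over $\State \sim \probxstar$ produces the third term. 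Summing the three contributions and dividing by $\numobs$ gives the claim. The computation is pure bookkeeping rather than an obstacle; the only points demanding care are the consistent use of the reweighting identities $\Exs[h(\State, \Action)\mid \State] = \actinprod{h(\State, \cdot)}{\propscore(\State, \cdot)}$ and $\Exs\big[\tfrac{\weightfunc(\State, \Action)}{\propscore(\State, \Action)}h(\State, \Action)\mid \State\big] = \actinprod{\weightfunc(\State, \cdot)}{h(\State, \cdot)}$, the order of the two conditionings, and recognizing $\fstar$ inside the centered conditional mean.
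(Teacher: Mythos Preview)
Your proposal is correct and follows essentially the same route as the paper: both arguments reduce to a single summand, handle unbiasedness by conditioning on $(\State,\Action)$ then on $\State$, and decompose the variance by successively conditioning on $(\State,\Action)$ and then on $\State$. The only cosmetic difference is that the paper computes the second moment of the summand directly and subtracts $\tau(\probInstance)^2$ at the end, whereas you phrase the same conditioning steps as two applications of the law of total variance; the resulting three-term split is identical.
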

This decomposition immediately implies the claims given in the text.
The only portion of the MSE
decomposition~\eqref{eq:exact-var-general-form-estimator} that depends
on $f$ is the third term, and by inspection, this third term is equal
to zero if and only if
\begin{align*}
f(\state, \action) = \tfrac{\weightfunc(\state, \action)
  \treateff(\state, \action)}{\propscore(\state, \action)} -
\actinprod{\weightfunc(\state, \cdot)}{\treateff(\state, \cdot)}
\qquad \mbox{for all $(\state, \action) \in \Xspace
  \times \ActionSpace$.}
\end{align*}
    
\begin{proof}
Since the action $\Action_i$ follows the probability distribution
$\propscore(\State_i, \cdot)$ conditionally on $\State_i$, we have
$\Exs \big[ f(\State_i, \Action_i) \mid \State_i \big] =
\actinprod{\propscore(\State_i, \cdot)}{f(\State_i, \cdot)}$, and the
estimator $\tauhatgen{\numobs}{f}$ is always unbiased. Since the
function $f$ is square-integrable with respect to the measure $\probx
\times \propscore$, the second moment can be decomposed as follows:
\begin{align*}
& \Exs \Big[ \abss{ \frac{\weightfunc(\State_i,
        \Action_i)}{\propscore(\State_i, \Action_i)} \outcome_i -
      f(\State_i, \Action_i) + \int_\actionspace \propscore(\State_i,
      \action) f(\State_i, \action) d \actbasemsr(\action)}^2 \Big] \\
& = \int_\actionspace \Exs \Big[ \propscore(\State_i, \action) \cdot
    \abss{ \frac{\weightfunc(\State_i, \action)}{\propscore(\State_i,
        \action)} \outcome_i - f(\State_i, \action) }^2 \Big] d
  \actbasemsr(\action) \\
  & = \int_\actionspace \Exs \Big[ \frac{\treatsig^2(\State, \action)
      \weightfunc^2(\State, \action)}{\propscore(\State, \action)}
    \Big] d \actbasemsr(\action) + \int_\actionspace \Exs \Big[
    \propscore(\State, \action) \cdot \abss{ \frac{\weightfunc(\State,
        \action) \treateff(\State, \action)}{\propscore(\State,
        \action)} - f(\State, \action) }^2 \Big] d
  \actbasemsr(\action)
\end{align*}
Conditionally on the value of $\State$, we have the bias-variance
decomposition
\begin{multline*}
\int_{\actionspace} \propscore(\State, \action) \cdot \abss{
  \frac{\weightfunc(\State, \action) \treateff(\State,
    \action)}{\propscore(\State, \action)} - f(\State, \action) }^2
d\actbasemsr(\action) \\
= \actinprod{\weightfunc(\State, \cdot)}{\treateff(\State, \cdot)}^2 +
\int_\actionspace \propscore(\State, \action) \cdot \abss{ f(\State,
  \action) - \frac{\weightfunc(\State, \action) \treateff(\State,
    \action)}{\propscore(\State, \action)} +
  \actinprod{\weightfunc(\State, \cdot)}{\treateff(\State, \cdot)} }^2
d \actbasemsr(\action).
\end{multline*}
Finally, we note that
\begin{align*}
 \Exs \Big[ \actinprod{\weightfunc(\State, \cdot)}{\treateff(\State,
     \cdot)}^2 \Big] - \avgtreat^2 (\probInstance) =
 \Big( \Exs \Big[ \actinprod{\weightfunc(\State,
     \cdot)}{\treateff(\State, \cdot)} \Big] \Big)^2 = \var_\probx
 \Big( \actinprod{\weightfunc(\State, \cdot)}{\treateff(\State,
   \cdot)} \Big).
\end{align*}
Putting together the pieces completes the proof.
\end{proof}

\subsection{Existence of critical radii}
\label{app:subsec-proof-existence-critical-radii}

In this section, we establish the existence of critical radii
$\radone_\maux(\plainmu)$ and $\radtwo_\maux(\plainmu)$ defined in
equations~\eqref{eq:defn-critical-radius-squared}
and~\eqref{eq:defn-critical-radius-plain}, respectively.
\begin{proposition}
\label{prop:existence-critical-radii}
Suppose that the compatibility condition~\ref{assume:convset} holds,
and that the Rademacher complexities
$\SquaredRade_\maux\big((\funcClass - \plainmu) \cap \ball_\omega(r_0)
\big)$ and $\radeComplexity_\maux \big((\funcClass - \plainmu) \cap
\ball_\omega(r_0) \big)$ are finite for some $r_0 > 0$.  Then:
\begin{itemize}
\item[(a)] There exists a unique scalar $\radone_\maux =
  \radone_\maux(\plainmu) > 0$ such that
  inequality~\eqref{eq:defn-critical-radius-squared} holds for any
  $\radone \geq \radone_\maux$, with equality when $\radone =
  \radone_\maux$, and is false when $\radone \in [0, \radone_\maux)$.
\item[(b)] There exists a scalar $\radtwo_\maux =
  \radtwo_\maux(\plainmu) > 0$ such that
  inequality~\eqref{eq:defn-critical-radius-plain} holds for any
  $\radtwo \geq \radtwo_\maux$.
\end{itemize}
\end{proposition}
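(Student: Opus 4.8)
\textbf{Proof proposal for Proposition~\ref{prop:existence-critical-radii}.} The plan is to derive both parts from two elementary facts about localized complexity functionals over a star-shaped set: monotonicity of the raw complexity under set inclusion, and the fact that the \emph{normalized} complexity (raw complexity divided by the localization radius) is non-increasing and continuous in the radius. The only structural input is that $\funcClass - \plainmu$ is star-shaped about the origin: since $\plainmu \in \funcClass$ and $\funcClass$ is convex (Assumption~\ref{assume:convset}), for $g = f - \plainmu$ with $f \in \funcClass$ and $\lambda \in [0,1]$ we have $\lambda g = \big(\lambda f + (1-\lambda)\plainmu\big) - \plainmu \in \funcClass - \plainmu$ and $\weightednorm{\lambda g} = \lambda\weightednorm{g}$; hence $\lambda\big[(\funcClass - \plainmu)\cap\ball_\omega(r)\big] \subseteq (\funcClass - \plainmu)\cap\ball_\omega(\lambda r)$ for every $\lambda \in [0,1]$.

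\emph{Part (a).} Write $h(r) := \SquaredRade_\maux\big((\funcClass - \plainmu)\cap\ball_\omega(r)\big)$. Directly from the definition~\eqref{eq:defn-rade-effective-noise}, $\SquaredRade_\maux$ is monotone under set inclusion and positively homogeneous of degree one, i.e. $\SquaredRade_\maux(\lambda\Hclass) = \lambda\,\SquaredRade_\maux(\Hclass)$ for $\lambda \ge 0$ (the square inside scales quadratically, which is undone by the square root of the expectation). Combining with the star-shaped inclusion yields the sandwich $\tfrac{r}{r'}\,h(r') \le h(r) \le h(r')$ for all $0 < r \le r'$. From this: $r \mapsto h(r)/r$ is non-increasing; $h$ is non-decreasing and continuous on $(0,\infty)$; $h$ is finite everywhere (the finiteness hypothesis at $r_0$ and monotonicity give $h(r) \le \tfrac{r}{r_0}h(r_0) < \infty$ for $r \ge r_0$, and $h$ is non-decreasing); and $h$ is eventually constant, since compactness of $\funcClass$ forces $(\funcClass - \plainmu)\cap\ball_\omega(r) = \funcClass - \plainmu$ once $r \ge \diameter_\omega(\funcClass)$, so $h \le H := h\big(\diameter_\omega(\funcClass)\big) < \infty$. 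Now $\psi(r) := h(r)/r - r$ is continuous and strictly decreasing on $(0,\infty)$ (non-increasing minus strictly increasing), with $\psi(r) \to -\infty$ as $r \to \infty$ (since $h(r) = H$ for $r$ large). If $h$ is not identically zero, then $\lim_{r\to 0^+}h(r)/r \in (0,+\infty]$, so $\psi(0^+) > 0$; the intermediate value theorem and strict monotonicity then produce a unique $\radone_\maux > 0$ with $\psi(\radone_\maux) = 0$, and strict monotonicity gives $\psi(r) \le 0 \iff r \ge \radone_\maux$, which is exactly inequality~\eqref{eq:defn-critical-radius-squared} holding for $\radone \ge \radone_\maux$, with equality at $\radone_\maux$ and failure on $[0,\radone_\maux)$. (If instead $h \equiv 0$, then $\radone_\maux = 0$ is the smallest solution.)

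\emph{Part (b).} Write $\Phi(r) := \tfrac{1}{r}\,\radeComplexity_\maux\big((\funcClass - \plainmu)\cap\ball_\omega(r)\big)$. As in part (a), $\radeComplexity_\maux$ (being a supremum over the set of a functional linear in $f$, and nonnegative since $0$ belongs to the set) is monotone under inclusion and degree-one homogeneous, so the star-shaped inclusion again gives the sandwich, whence $\Phi$ is non-increasing and continuous. Compactness of $\funcClass$ makes $\radeComplexity_\maux\big((\funcClass - \plainmu)\cap\ball_\omega(r)\big)$ constant for large $r$, so $\Phi(r) \to 0$ as $r \to \infty$; in particular $\Phi(r) \le \tfrac{\smallballcon\smallballprob}{32}$ for all sufficiently large $r$. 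Setting $\radtwo_\maux := \inf\{r > 0 : \Phi(r) \le \tfrac{\smallballcon\smallballprob}{32}\}$ (and replacing it by any fixed positive number should this infimum equal $0$), monotonicity of $\Phi$ shows inequality~\eqref{eq:defn-critical-radius-plain} holds for all $\radtwo \ge \radtwo_\maux$.

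\emph{Main obstacle.} The crux is the sandwich bound $\tfrac{r}{r'}h(r') \le h(r) \le h(r')$, which simultaneously delivers continuity (needed for the equality-at-$\radone_\maux$ claim in part (a)), the non-increasing property of the normalized complexity (needed for the ``holds for all larger radii'' claims), and finiteness; all of this rests on star-shapedness of $\funcClass - \plainmu$, i.e. on convexity in Assumption~\ref{assume:convset}. The only other wrinkle is the harmless degenerate case in which the localized complexity vanishes and the corresponding critical radius collapses to zero.
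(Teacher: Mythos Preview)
Your proposal is correct and follows essentially the same approach as the paper: both arguments use convexity of $\funcClass$ to show that $r \mapsto r^{-1}\SquaredRade_\maux\big((\funcClass-\plainmu)\cap\ball_\omega(r)\big)$ and $r \mapsto r^{-1}\radeComplexity_\maux\big((\funcClass-\plainmu)\cap\ball_\omega(r)\big)$ are non-increasing, then combine this with the limiting behavior as $r\to\infty$ (via compactness/finite diameter) to locate the critical radii. Your version is in fact slightly more careful than the paper's, since you extract continuity of $h$ from the sandwich $\tfrac{r}{r'}h(r')\le h(r)\le h(r')$ before invoking the intermediate value theorem for the equality claim in part (a); the paper asserts the fixed point directly from monotonicity without making continuity explicit. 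You also correctly flag the degenerate case $h\equiv 0$, in which $\radone_\maux=0$ rather than strictly positive---the paper's own proof writes $\radone_\maux\ge 0$ for the same reason, so this is a harmless discrepancy with the statement rather than a gap in your argument.
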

\begin{proof}
Denote the shifted function class $\funcClass^* \mydefn \funcClass -
\plainmu$. Since the class $\funcClass$ is convex by assumption, for
positive scalars $r_1 < r_2$ and any function $f \in \funcClass^* \cap
\ball_\omega(r_2)$, we have $\frac{r_1}{r_2} f \in \funcClass^* \cap
\ball_\omega(r_1)$.
\begin{align*}
\frac{1}{r_2} \radeComplexity_\maux(\funcClass^* \cap
\ball_\omega(r_2)) \leq \frac{1}{r_2} \radeComplexity_\maux \Big(
\frac{r_2}{r_1} \cdot \big( \funcClass^* \cap \ball_\omega(r_1) \big)
\Big) = \frac{1}{r_1} \radeComplexity_\maux(\funcClass^* \cap
\ball_\omega(r_1)).
\end{align*}
So the function $r \mapsto r^{-1} \radeComplexity_\maux(\funcClass^*
\cap \ball_\omega(r))$ is non-increasing in $r$. A similar argument
ensures that the function $\radone \mapsto \radone^{-1}
\SquaredRade(\funcClass^* \cap \ball_\omega(\radone))$ is also
non-increasing in $\radone$.

Since the function class $\funcClass$ is compact in
$\Ltwospace_\omega$, we have $D \mydefn \diameter_\omega(\funcClass
\cup \{\treateff\}) < + \infty$, and hence
\begin{align*}
\radeComplexity_\maux(\funcClass^*) = \radeComplexity_\maux \big(
\funcClass^* \cap \ball_\omega(D) \big) \leq \frac{D}{r_0}
\radeComplexity_\maux \big( \funcClass^* \cap \ball_\omega(r_0) \big)
< + \infty,
\end{align*}
which implies that $\radeComplexity_\maux \big(\funcClass^* \cap
\ball_\omega(r) \big) < + \infty$ for any $r > 0$.  Similarly, the
Rademacher complexity $\SquaredRade\big(\funcClass^* \cap
\ball_\omega(\radone) \big)$ is also finite.

For the inequality~\eqref{eq:defn-critical-radius-squared}, the left
hand side is a non-increasing function of $\radone$, while the right
hand side is strictly increasing and diverging to infinity as $\radone
\rightarrow + \infty$. Furthermore, the right-hand-side is equal to
zero at $\radone = 0$, while the left-hand side is always finite and
non-negative for $\radone > 0$. Consequently, a unique fixed point
$\radone_\maux \geq 0$ exists, and we have
\begin{align*}
\radone^{-1} \SquaredRade\big(\funcClass^* \cap \ball_\omega(\radone)
\big) \begin{cases} < \radone, & \mbox{for $\radone > \radone_\maux$,
    and} \\
 > \radone, & \mbox{for $\radone \in (0, \radone_\maux)$.}
 \end{cases}
\end{align*}

As for inequality~\eqref{eq:defn-critical-radius-plain}, the
left-hand-side is non-increasing, and we have
\begin{align*}
\lim_{\radtwo \rightarrow + \infty} \radtwo^{-1}
\radeComplexity(\funcClass^* \cap \ball_\omega(\radtwo)) \leq
\lim_{\radtwo \rightarrow + \infty} \radtwo^{-1}
\radeComplexity(\funcClass^* ) = 0.
\end{align*}
So there exists $\radtwo_\maux \geq 0$ such that
inequality~\eqref{eq:defn-critical-radius-plain} holds for any
$\radtwo \geq \radtwo_\maux$.

\end{proof}

\subsection{Proof of~\Cref{lemma:concentration-for-weighted-sqr-norm}}
\label{subsubsec:proof-lemma-concentration-for-weighted-sqr-norm}

We define the auxiliary function
\begin{align*}
\phi(t) \mydefn \begin{cases} 0 & t \leq 1,\\ t - 1 & 1 \leq t \leq
  2,\\ 1 & t > 2.
    \end{cases}
\end{align*}
First, observe that for any scalar $u > 0$, we have
\begin{align*}
\frac{1}{\maux} \sum_{i = 1}^\maux \frac{\weightfunc^2(\State_i,
  \Action_i)}{\propscore^2(\State_i, \Action_i)} h^2(\State_i,
\Action_i) & \geq \frac{1}{\maux} \sum_{i = 1}^\maux u^2 \cdot
\indicator \Big[ \frac{|\weightfunc(\State_i, \Action_i) h(\State_i,
    \Action_i)|}{\propscore(\State_i, \Action_i)} \geq u \Big]\\ &\geq
\frac{1}{\maux} \sum_{i = 1}^\maux u^2 \cdot \phi \Big( \frac{
  |\weightfunc(\State_i, \Action_i) h(\State_i, \Action_i)|
}{\propscore(\State_i, \Action_i) u} \Big) =: Z_\maux^{(\phi)}(h).
\end{align*}
Second, for any function $h \in \funcClassTemp$, we have
\begin{align*}
   \Exs \Big[ Z_\maux^{(\phi)}(h) \Big] &= u^2 \cdot \sum_{a
     \in \actionspace} \Exs_{\probx} \Big[ \propscore(\State, \action)
     \phi \Big( \frac{ |\weightfunc(\State, \action) h(\State,
       \action)| }{\propscore(\State, \action) u} \Big) \Big]\\ &\geq
   u^2 \sum_{a \in \actionspace} \Exs_{\probx} \Big[
     \propscore(\State, \action) \cdot \indicator \big[ \frac{
         |\weightfunc(\State, \action) h(\State, \action)|
       }{\propscore(\State, \action) u} \geq 2 \big] \Big] \\
   & = u^2 \cdot \Prob_{X \sim \probx, \Action \sim \propscore(\State,
     \cdot)} \Big( \frac{ |\weightfunc(\State, \Action) h(\State,
     \Action)| }{\propscore(\State, \Action) }\geq 2 u \Big).
\end{align*}
Recall that the constant $\smallballcon$ is the constant factor in the
small-ball probability condition~\ref{assume:small-ball}. Choosing the
threshold $u \mydefn \frac{c_1}{2}$ and using the equality
$\weightednorm{h} = 1$, we see that the small-ball condition implies
that
\begin{align*}
    \Prob_{\State \sim \probx, \Action \sim \propscore(\State, \cdot)}
    \Big( \frac{ |\weightfunc(\State, \Action) h(\State, \Action)|
    }{\propscore(\State, \Action) }\geq 2 u \Big) \geq \smallballprob.
\end{align*}

Now we turn to study the deviation bound for
$Z_\maux^{(\phi)}(h)$. Using known concentration inequalities for
empirical processes~\cite{adamczak2008tail}---see
Proposition~\ref{prop:concentration-adamczak} in
Appendix~\ref{app:recall-emp-proc} for more detail---we are guaranteed
to have
\begin{multline*}
\sup_{h \in \funcClassTemp} \Big( Z_\maux^{(\phi)}(h) - \Exs \big[
  Z_\maux^{(\phi)}(h) \big] \Big) \leq 2 \Exs \sup_{h \in
  \funcClassTemp} \Big( Z_\maux^{(\phi)}(h) - \Exs \big[
  Z_\maux^{(\phi)}(h) \big] \Big) + c \smallballcon^2 \cdot \Big\{
\sqrt{\frac{\log(1 / \failprob) }{\maux} } + \frac{\log(1 /
  \failprob)}{\maux} \Big\}.
\end{multline*}
with probability at least $1 - \failprob$.

For the expected supremum term, standard symmetrization arguments lead
to the bound
\begin{align*}
\Exs \sup_{h \in \funcClassTemp} \Big( Z_\maux^{(\phi)}(h) - \Exs
\big[Z_\maux^{(\phi)}(h) \big] \Big) \leq
\frac{\smallballcon^2}{\maux} \cdot \Exs \Big[ \sup_{h \in
    \funcClassTemp} \sum_{i = 1}^\maux \rade_i \phi \Big( \frac{2 |h
    (\State_i, \Action_i) \weightfunc(\State_i,
    \Action_i)|}{\smallballcon \propscore(\State_i, \Action_i)} \Big)
  \Big].
\end{align*}
Note that since $\phi$ is a $1$-Lipschitz function, we may apply the
Ledoux-Talagrand contraction (e.g., equation (5.6.1) in the
book~\cite{wainwright2019high}) so as to obtain
\begin{align*}
\Exs \Big[ \sup_{h \in \funcClassTemp} \sum_{i = 1}^\maux \rade_i \phi
  \Big( \frac{2 |h(\State_i, \Action_i) \weightfunc(\State_i,
    \Action_i)|}{\smallballcon \propscore(\State_i, \Action_i)} \Big)
  \Big] \leq \frac{4}{\smallballcon } \Exs \Big[ \sup_{h \in
    \funcClassTemp} \sum_{i = 1}^\maux \frac{\rade_i \weightfunc
    (\State_i, \Action_i)}{\propscore(\State_i, \Action_i) } h
  (\State_i, \Action_i) \Big].
\end{align*}
Combining the pieces yields the lower bound
\begin{multline}
\frac{1}{\maux} \sum_{i = 1}^\maux \tfrac{\weightfunc^2(\State_i,
  \Action_i)}{\propscore^2(\State_i, \Action_i)} h^2(\State_i,
\Action_i) \\
\geq \tfrac{\smallballprob \smallballcon^2}{4} - \frac{4
  \smallballcon}{\maux} \Exs \Big[ \sup_{h \in \funcClassTemp} \sum_{i
    = 1}^\maux \frac{\rade_i \weightfunc(\State_i,
    \Action_i)}{\propscore(\State_i, \Action_i) } h(\State_i,
  \Action_i) \Big] - c \smallballcon^2 \cdot \Big\{ \sqrt{\tfrac{\log
    (1 / \failprob) }{\maux} } + \tfrac{\log(1 / \failprob)}{\maux}
\Big\},
\end{multline}
uniformly holding true over $h \in \funcClassTemp$, with probability
$1 - \failprob$, which completes the proof of the lemma.


\section{Proofs of the corollaries}

This section is devoted to the proofs of
Corollaries~\ref{cor:example-linear}---\ref{cor:example-monotonic},
as stated in~\Cref{SecExamples}.


\subsection{Proof of Corollary~\ref{cor:example-linear}}
\label{app:subsec-proof-cor-example-linear}

Let us introduce the shorthand \mbox{$f_\theta(\state, \action) \defn
  \inprod{\theta}{\phi(\state, \action)}$} for functions that are
linear in the feature map.  Moreover, for a vector $\thetabar \in
\real^\usedim$ and radius $r > 0$, we define the recentering function
$\mubar(\state, \action) \defn \inprod{\thetabar}{\phi(\state,
  \action)}$.

Our proof strategy is to bound the pair of critical radii
$(\radone_\maux, \radtwo_\maux)$, and we do so by controlling the
associated Rademacher complexities.  By a direct calculation, we find
that
\begin{align*}
(\funcClass - \mubar) \cap \ball_\omega(r) \subseteq \Big\{ f_\theta
  \; \mid \; \theta^\top \Sigma \theta \leq r^2 \Big\}, \quad
  \mbox{where } \Sigma \mydefn \Exs \Big[ \frac{\weightfunc^2 (\State,
      \Action)}{\propscore^2 (\State, \Action)} \phi (\State, \Action)
    \phi (\State, \Action)^\top \Big].
\end{align*}
We can therefore bound the Rademacher complexities as
\begin{align*}
\SquaredRade_\maux^2 \Big( (\funcClass - \mubar) \cap \ball_\omega (r)
\Big) & \leq \Exs \Big[ \sup_{\vecnorm{\theta}{\Sigma} \leq r}
  \Big\{\frac{1}{\maux} \inprod{\theta}{\sum_{i = 1}^\maux
    \frac{\varepsilon_i \weightfunc^2 (\State_i,
      \Action_i)}{\propscore^2 (\State_i, \Action_i)} (\outcome_i -
    \treateff (\State_i, \Action_i)) \phi (\State_i, \Action_i)}
  \Big\}^2 \Big] \\
& = \frac{r^2}{\maux} \trace \Big( \Sigma^{-1} \Gamma_\sigma \Big),
\end{align*}
and
\begin{align*}
\radeComplexity_\maux \Big( (\funcClass - \mubar) \cap \ball_\omega(r)
\Big) \leq \Exs \Big[ \sup_{\vecnorm{\theta}{\Sigma} \leq r}
  \frac{1}{\maux} \inprod{\theta}{\sum_{i = 1}^\maux
    \frac{\varepsilon_i \weightfunc (\State_i, \Action_i)}{\propscore
      (\State_i, \Action_i)} \phi(\State_i, \Action_i)} \Big] \leq r
\sqrt{\frac{\usedim}{\maux}}.
\end{align*}
By definition of the fixed point equations, the critical radii can be
upper bounded as
\begin{align*}
    \radone_\maux \leq \sqrt{ \maux^{-1} \trace \Big( \Sigma^{-1}
      \Gamma_\sigma \Big) }, \quad \mbox{and} \quad \radtwo_\maux
    \leq \begin{cases} + \infty, & \maux \leq
      \frac{1024}{\smallballcon^2 \smallballprob^2} \usedim \\ 0, &
      \maux > \frac{1024}{\smallballcon^2 \smallballprob^2}
      \usedim \end{cases}.
\end{align*}
Combining with Theorem~\ref{cor:least-sqr-split-estimator} completes
the proof of this corollary.


\subsection{Proof of~\Cref{cor:example-sparse}}
\label{app:subsec-proof-cor-example-sparse}

We introduce the shorthand $f_\theta(\state, \action) =
\inprod{\theta}{\phi(\state, \action)}$ for functions that are linear
in the feature map.  Given any vector $\thetabar \in \real^\usedim$
such that $\vecnorm{\thetabar}{1} = \RadOne$, define the set $S =
\support (\thetabar) \subseteq [\usedim]$ along with the function
$\mubar(\state, \action) = \inprod{\thetabar}{\phi(\state,
  \action)}$. For any radius $r > 0$ and vector $\theta \in
(\funcClass - \mubar) \cap \ball_\omega(r)$, we note that
\begin{align*}
    \vecnorm{\theta_{S^c}}{1} = \vecnorm{\theta_{S^c} +
      \thetabar_{S^c}}{1} = \vecnorm{\theta + \thetabar}{1} -
    \vecnorm{\theta_S + \thetabar_S}{1} \leq \RadOne -
    \vecnorm{\thetabar_S}{1} + \vecnorm{\theta_{S}}{1} \leq
    \vecnorm{\theta_S}{1}.
\end{align*}
Recalling that $\Sigma = \Exs \big[ \tfrac{\weightfunc^2 (\State,
    \Action)}{\propscore^2 (\State, \Action)} \phi (\State, \Action)
  \phi (\State, \Action)^\top \big]$, we have the inclusions
\begin{align}
(\funcClass - \mubar) \cap \ball_\omega(r) & \subseteq r \cdot \Big\{
  f_\theta \mid \vecnorm{\theta_{S^c}}{1} \leq \vecnorm{\theta_S}{1},
  ~ \vecnorm{\theta}{\Sigma} \leq 1 \Big\} \nonumber\\
  & \subseteq r \cdot \Big\{f_\theta ~\mid~ \vecnorm{\theta}{1} \leq 2
  \sqrt{|S| / \lammin (\Sigma)} \Big\} \nonumber\\
\label{eq:sparse-example-local-set-characterize}  
  & \subseteq 2 r \sqrt{|S| / \lammin (\Sigma)} \cdot \conv \Big(
\big\{ \pm \phi_j \big\}_{j=1}^\usedim \Big).
\end{align}
where the second step follows from the bound
\mbox{$\vecnorm{\theta_S}{1} \leq \vecnorm{\theta_S}{2} \sqrt{|S|}
  \leq \vecnorm{\theta}{\Sigma} \sqrt{|S| / \lammin (\Sigma)}$,} valid
for any $\theta \in \real^d$.

For each coordinate $j = 1, \ldots, \usedim$, we can apply the
Hoeffding inequality along with the sub-Gaussian tail
assumption~\eqref{eq:subgaussian-in-sparse-linear-regression} so as to
obtain
\begin{align*}
\Prob \Big[ \Big| \frac{1}{\maux} \sum_{i = 1}^\maux \frac{\rade_i
    \weightfunc (\State_i, \Action_i)}{\propscore (\State_i,
    \Action_i)} \phi (\State_i, \Action_i)^\top \coordinate_j \Big|
  \geq t \Big] \leq 2 e^{- \frac{2 \maux t^2}{\subgaussian^2}} \quad
\mbox{for any $t > 0$/}
\end{align*}
Taking the union bound over $j = 1,2, \ldots, \usedim$ and then
integrating the resulting tail bound, we find that
\begin{align*}
  \Exs \Big[ \max_{j = 1, \ldots, \usedim } \Big|\frac{1}{\maux}
    \sum_{i=1}^\maux \frac{\weightfunc (\State_i,
      \Action_i)}{\propscore (\State_i, \Action_i)} \rade_i
    \phi_j(\State_i, \Action_i) \Big| \Big] \leq \subgaussian
  \sqrt{\frac{\log \usedim}{\maux}}.
\end{align*}
Combining with
equation~\eqref{eq:sparse-example-local-set-characterize}, we conclude
that
\begin{align*}
\radeComplexity_\maux \big( (\funcClass - \mubar) \cap \ball_\omega(r)
\big) \leq 2 r \subgaussian \sqrt{\frac{|S| \cdot \log
    (\usedim)}{\maux \lammin(\Sigma)}} \qquad \mbox{for any
  $\mubar(\state, \action) = \inprod{\thetabar}{\phi(\state,
    \action)}$ with $\thetabar$ supported on $S$.}
\end{align*}
Consequently, defining the constant $c_0 = \frac{4096}{\smallballcon^2
  \smallballprob^2}$, when the sample size satisfies $\maux \geq c_0
|S|\frac{\subgaussian^2 \log (\usedim) }{\lammin (\Sigma)}$, the
critical radius $\radtwo_\maux$ is $0$.

Now we turn to bound the critical radius $\radone_\maux$. By the
sub-Gaussian
condition~\eqref{eq:subgaussian-in-sparse-linear-regression}, we have
the Orlicz norm bound
\begin{multline*}
\Big \|\frac{\weightfunc^2 (\State_i, \Action_i)}{\propscore^2
  (\State_i, \Action_i)} \rade_i \phi_j (\State_i, \Action_i)
(\outcome_i - \treateff (\State_i, \Action_i)) \Big \|_{\psi_1} \\
\leq \Big \| \frac{\weightfunc (\State_i, \Action_i)}{\propscore
  (\State_i, \Action_i)} \phi_j(\State_i, \Action_i) \Big\|_{\psi_1}
\cdot \vecnorm{\frac{\weightfunc (\State_i, \Action_i)}{\propscore
    (\State_i, \Action_i)} (\outcome_i - \treateff (\State_i,
  \Action_i)) }{\psi_1} \leq \subgaussian \varbound.
\end{multline*}
Invoking a known concentration inequality (see
Proposition~\ref{prop:concentration-adamczak} in
Appendix~\ref{app:recall-emp-proc}), we conclude that there exists a
universal constant $c_1 > 0$ such that
\begin{align*}
  \Prob \left( \abss{ \frac{1}{\maux} \sum_{i = 1}^\maux
    \frac{\weightfunc^2 (\State_i, \Action_i)}{\propscore^2
      (\State_i, \Action_i)} \rade_i \phi_j (\State_i, \Action_i)
    (\outcome_i - \treateff (\State_i, \Action_i))} \geq t \right)
  \leq 2 \exp \left( \frac{- c_1 \maux t^2}{\subgaussian^2
    \varbound^2 + t \subgaussian \varbound \log (\maux) } \right),
\end{align*}
for any scalar $t > 0$.

Taking the union bound over $j = 1,2, \ldots, \usedim$ and integrating
out the tail yields
\begin{align*}
  \Exs \left[ \max_{j \in [\usedim] } \abss{\frac{1}{\maux} \sum_{i
        = 1}^\maux \frac{\weightfunc (\State_i,
        \Action_i)}{\propscore (\State_i, \Action_i)} \rade_i \phi_j
      (\State_i, \Action_i)}^2 \right] \leq c_2 \subgaussian^2
  \varbound^2 \Big\{ \sqrt{\frac{\log \usedim}{\maux}} + \frac{\log
    \usedim \cdot \log \maux}{\maux} \Big\}^2,
\end{align*}
Given a sample size lower bounded as $\maux \geq \log^2 \usedim$, the
derivation above guarantees that the Rademacher complexity is upper
bounded as
\begin{align*}
    \SquaredRade \big( (\funcClass - \mubar) \cap \ball_\omega (r)
    \big) \leq c r \subgaussian \varbound \sqrt{\frac{|S| \cdot \log
        (\usedim)}{\maux \lammin (\Sigma)}},
\end{align*}
and consequently, the associated critical radius satisfies an upper
bound of the form \mbox{$\radone_\maux \leq c \subgaussian \varbound
  \sqrt{\frac{|S| \log (\usedim)}{\maux \lammin (\Sigma)}}$.}
Combining with Theorem~\ref{cor:least-sqr-split-estimator} completes
the proof of Corollary~\ref{cor:example-sparse}.


\subsection{Proof of Corollary~\ref{cor:example-holder}}
\label{app:subsec-proof-cor-example-holder}

Clearly, the function class $\funcClass_k$ is symmetric and
convex. Consequently, for any $\mubar \in \funcClass_k$, we have
\begin{align*}
    (\funcClass_k - \mubar) \cap \ball_\omega (r) \subseteq (2
  \funcClass_k) \cap \ball_\omega (r).
\end{align*}
For any pair $\plainmu_1, \plainmu_2 \in (2 \funcClass) \cap
\ball_\omega (r)$, by the sub-Gaussian assumption in
equations~\eqref{eq:subgaussian-in-nonparametric}, we have that
\begin{align*}
    \Exs \Big[ \big( \frac{\weightfunc (\State_i,
        \Action_i)}{\propscore (\State_i, \Action_i)} \rade_i
      (\plainmu_1 - \plainmu_2) (\State_i, \Action_i) \big)^2 \Big] &=
    \weightednorm{\plainmu_1 - \plainmu_2}^2, \quad
    \mbox{and}\\ \vecnorm{\frac{\weightfunc (\State_i,
        \Action_i)}{\propscore (\State_i, \Action_i)} \rade_i
      (\plainmu_1 - \plainmu_2) (\State_i, \Action_i)}{\psi_1} &\leq
    \subgaussian \vecnorm{\plainmu_1 - \plainmu_2}{\infty}.
\end{align*}
By a known concentration inequality (see
Proposition~\ref{prop:concentration-adamczak} in
Appendix~\ref{app:recall-emp-proc}), for any $t > 0$, we have
\begin{align*}
  \Prob \left( \abss{\frac{1}{\maux} \sum_{i = 1}^\maux
    \frac{\weightfunc (\State_i, \Action_i)}{\propscore (\State_i,
      \Action_i)} \rade_i (\plainmu_1 - \plainmu_2) (\State_i,
    \Action_i)} \geq t \right) \leq 2 \exp \left( \frac{- c_1 \maux
    t^2}{\weightednorm{\plainmu_1 - \plainmu_2}^2 + t \subgaussian
    \vecnorm{\plainmu_1 - \plainmu_2}{\infty} \log (\maux)} \right),
\end{align*}
We also note that the Cauchy--Schwarz inequality implies that
\begin{align*}
    \Exs \Big[ \sup_{\weightednorm{\plainmu_1 - \plainmu_2} \leq
        \delta} \frac{1}{\maux} \sum_{i = 1}^\maux \frac{\weightfunc
        (\State_i, \Action_i)}{\propscore (\State_i, \Action_i)}
      \rade_i (\plainmu_1 - \plainmu_2) (\State_i, \Action_i) \Big]
    \leq \delta.
\end{align*}

By a known mixed-tail chaining bound (see
Proposition~\ref{prop:dirksen} and
equation~\eqref{eq:mixed-tail-chaining-without-donsker} in
Appendix~\ref{app:recall-emp-proc}), we find that
\begin{multline}
\label{eq:bounding-rade-complexity-using-chaining}  
\radeComplexity_\maux \big( (\funcClass_k - \mubar) \cap
\ball_\omega(r) \big) \leq \frac{c}{\sqrt{\maux}} \dudley_2 \big(
(2 \funcClass_k) \cap \ball_\omega(r), \weightednorm{\cdot};
     [\delta, r] \big) \\ + \frac{c \subgaussian \log
       \maux}{\maux} \dudley_1 \big( (2 \funcClass_k) \cap
     \ball_\omega (r), \vecnorm{\cdot}{\infty}; [\delta, 2] \big)
     + 2 \delta,
\end{multline}
for any scalar $\delta \in [0, 2]$. Observing the norm domination
relation $\weightednorm{f} \leq \subgaussian \vecnorm{f}{\infty}$ for
any function $f$, we have $\dudley_2 \big( (2 \funcClass_k) \cap
\ball_\omega(r), \weightednorm{\cdot}; [\delta, r] \big) \leq
\dudley_2 \big( 2 \subgaussian \funcClass_k, \vecnorm{\cdot}{\infty};
[\delta, r] \big)$. As a result, in order to control the
       right-hand-side of
       equation~\eqref{eq:bounding-rade-complexity-using-chaining}, it
       suffices to bound the covering number of the class
       $\funcClass_k$ under the $\vecnorm{\cdot}{\infty}$-norm.

In order to estimate the Dudley chaining integral for the localized
class, we begin with the classical
bound~\cite{kolmogorov1959varepsilon}
\begin{align*}
    \log N \big(\funcClass_k, \vecnorm{\cdot}{\infty}; \varepsilon
    \big) \leq \Big( \frac{c}{\varepsilon} \Big)^{\pdim / k},
\end{align*}
where $c > 0$ is a universal constant. Using this bound, we can
control the Dudley entropy integrals for any $\alpha \in \{1, 2\}$,
$q > 0$, and interval $[\delta, u]$ with $u \in \{r, 2\}$. In particular, for any interval $[\delta, u]$ of the
non-negative real line, we have
\begin{align}
\label{eq:dudley-bound-for-holder-class}  
  \dudley_\alpha \Big( q \funcClass_k, \vecnorm{\cdot}{\infty};
         [\delta, u] \Big) \leq \int_\delta^u \Big( \frac{c
           q}{\varepsilon} \Big)^{\frac{\pdim}{\alpha k}} d
         \varepsilon \leq c q^{\frac{\pdim}{\alpha k}}
         \cdot \begin{cases} \frac{\alpha k}{\alpha k - \pdim} u^{1 -
             \frac{\pdim}{\alpha k}} & \mbox{if $\pdim < \alpha k$,}
           \\
             \log \big(u / \delta \big) & \mbox{if $\pdim = \alpha k$,} \\
             \frac{\alpha k}{\pdim - \alpha k} \big( \frac{c}{\delta}
             \big)^{\frac{\pdim}{\alpha k} - 1} & \mbox{if $\pdim > \alpha
               k$.}
     \end{cases}
\end{align}
We set $\delta = \big( \frac{\subgaussian}{\maux} \big)^{k / \pdim}$,
and use the resulting upper bound on the Dudley integral to control
the Rademacher complexity; doing so yields
\begin{align*}
    \radeComplexity_\maux \big( (\funcClass_k - \mubar) \cap
    \ball_\omega(r) \big) \leq c_{\subgaussian, \pdim/k}
    \cdot \begin{cases} r^{1 - \frac{\pdim}{2 k}} / \sqrt{\maux} +
      \log \maux \cdot \maux^{- k / \pdim} & \mbox{if $\pdim < 2k$,}
      \\
      \log (\maux) / \sqrt{\maux} & \mbox{if $\pdim = 2k$,} \\
      \maux^{- k / \pdim} & \mbox{if $\pdim > 2k$.}
    \end{cases}
\end{align*}
Solving the fixed point equation~\eqref{eq:defn-critical-radius-plain}
yields
\begin{align*}
\radtwo_\maux \leq c'_{\subgaussian, \pdim/k} \maux^{- k / \pdim}
\cdot \log \maux,
\end{align*}
where the constant $c_{\subgaussian, \pdim/k}$ and $c_{\subgaussian,
  \pdim/k}'$ depend on the parameters $(\subgaussian, \pdim/k)$, along
with the small ball constants $(\smallballcon, \smallballprob)$.

Turning to the critical radius $\radone_\maux$, we note that each term
in the empirical process associated with the observation noise
satisfies
\begin{multline*}
    \Exs \Big[ \Big\{ \frac{\weightfunc^2 (\State_i,
        \Action_i)}{\propscore^2 (\State_i, \Action_i)} (\outcome_i -
      \treateff (\State_i, \Action_i)) \rade_i (\plainmu_1 -
      \plainmu_2) (\State_i, \Action_i) \Big\}^2 \Big] \\ \leq
    \sqrt{\Exs \Big[ \Big\{ \frac{\weightfunc (\State_i,
          \Action_i)}{\propscore (\State_i, \Action_i)} (\outcome_i -
        \treateff (\State_i, \Action_i)) \Big\}^4 \Big]} \cdot
    \sqrt{\Exs \Big[ \Big\{ \frac{\weightfunc (\State_i,
          \Action_i)}{\propscore (\State_i, \Action_i)} (\plainmu_1 -
        \plainmu_2) (\State_i, \Action_i) \Big\}^4 \Big]} \\
    \leq \varbound^2 \ctwofour \weightednorm{\plainmu_1 -
      \plainmu_2}^2,
\end{multline*}
and
\begin{multline*}
    \vecnorm{\frac{\weightfunc^2 (\State_i, \Action_i)}{\propscore^2
        (\State_i, \Action_i)} (\outcome_i - \treateff (\State_i,
      \Action_i)) \rade_i (\plainmu_1 - \plainmu_2) (\State_i,
      \Action_i)}{\psi_1}\\ \leq \vecnorm{\frac{\weightfunc (\State_i,
        \Action_i)}{\propscore (\State_i, \Action_i)} (\outcome_i -
      \treateff (\State_i, \Action_i)) }{\psi_2} \cdot
    \vecnorm{\frac{\weightfunc (\State_i, \Action_i)}{\propscore
        (\State_i, \Action_i)} (\plainmu_1 - \plainmu_2) (\State_i,
      \Action_i)}{\psi_2} \\
\leq \varbound \subgaussian \vecnorm{\plainmu_1 - \plainmu_2}{\infty}.
\end{multline*}
Following the same line of derivation in the bound for the Rademacher
complexity $\radeComplexity_\maux$, we use the mixed-tail chaining
bound to find that
\begin{multline*}
    \SquaredRade_\maux \big( (\funcClass_k - \mubar) \cap \ball_\omega
    (r) \big) \leq \frac{c \varbound \sqrt{\ctwofour}}{\sqrt{\maux}}
    \dudley_2 \big( (2 \funcClass_k) \cap \ball_\omega (r),
    \weightednorm{\cdot}; [\delta, r] \big) \\ + \frac{c
      \varbound\subgaussian \log \maux}{\maux} \dudley_1 \big( (2
    \funcClass_k) \cap \ball_\omega (r), \vecnorm{\cdot}{\infty};
              [\delta, 2] \big) + 2 \delta,
\end{multline*}
valid for all $\delta \in [0, 2]$.  The Dudley integral
bound~\eqref{eq:dudley-bound-for-holder-class} then implies
\begin{align*}
    \SquaredRade_\maux \big( (\funcClass_k - \mubar) \cap \ball_\omega
    (r) \big) \leq c_{\subgaussian, \pdim/k} \varbound \cdot
 \begin{cases}
      r^{1 - \frac{\pdim}{2 k}} / \sqrt{\maux} + \log \maux \cdot
      \maux^{- k / \pdim} & \mbox{if $\pdim < 2k$,} \\
      \log (\maux) / \sqrt{\maux} & \mbox{if $\pdim = 2k$,} \\
\maux^{- k/\pdim} & \mbox{if $\pdim > 2k$,}
    \end{cases}
\end{align*}
where the constant $c_{\subgaussian, \pdim/k}$ depends on the
parameters $(\subgaussian, \pdim/k)$ and the constant $\ctwofour$.
Solving the fixed point equation yields
\begin{align*}
\radone_\maux \leq c_{\subgaussian, \pdim/k} \varbound
\cdot \begin{cases} \maux^{- \frac{k}{2 k + \pdim}} & \mbox{if $\pdim
    < 2k$,} \\
  \maux^{-1/4} \sqrt{\log \maux} & \mbox{if $\pdim = 2k$,} \\
  \maux^{- \frac{k}{2\pdim}} & \mbox{if $\pdim > 2k$.}
    \end{cases}
\end{align*}
Combining with Theorem~\ref{cor:least-sqr-split-estimator} completes
the proof of Corollary~\ref{cor:example-holder}.


\subsection{Proof of~\Cref{cor:example-monotonic}}
\label{app:subsec-proof-cor-example-monotonic}

For any $\mubar \in \funcClass$, define the function class
\begin{align*}
    \funcClassTemp \mydefn \Big\{ (\state, \action) \rightarrow
    \frac{\weightfunc (\state, \action)}{\propscore (\state, \action)}
    f (\state, \action) \; \mid \; f \in \ball_\omega (r) \cap
    (\funcClass - \mubar) \Big\}.
\end{align*}
Clearly, the class $\funcClassTemp$ is uniformly bounded by $b$, and
for any $f \in \funcClass$, we have the upper bound \mbox{$\Exs \Big[
    \abss{\frac{\weightfunc (\State, \Action)}{\propscore (\State,
        \Action)} f(\State, \Action)}^2 \Big] = \weightednorm{f}^2
  \leq r^2$.}

Invoking a known bracketing bound on empirical processes (cf.
Prop.~\ref{prop:vandervart-wellner} in
Appendix~\ref{app:recall-emp-proc}), we have
\begin{align}
\Exs \Big[ \sup_{h \in \funcClassTemp} \frac{1}{\maux} \sum_{i =
    1}^\maux \rade_i h (\State_i, \Action_i) \Big] \leq
\frac{c}{\sqrt{\maux}} \DudleyBracket \big( \funcClassTemp,
\vecnorm{\cdot}{\Ltwospace}; [0, r] \big) \Big\{1 + \frac{b
  \DudleyBracket \big( \funcClassTemp, \vecnorm{\cdot}{\Ltwospace} ;
         [0,r] \big)}{r^2 \sqrt{\maux}}
\Big\}\label{eq:bracketing-bound-monotone-example}
\end{align}

For functions $\ell, f, u : [0,1] \rightarrow \real$, such that $f$ is
contained in the bracket $[\ell, u]$, we let:
\begin{align*}
  \widetilde{\ell} (\state, \action) \mydefn \frac{\weightfunc(\state,
    \action)}{\propscore (\state, \action)} \Big\{ \ell (\phi (\state,
  \action)) \bm{1}_{\weightfunc (\state, \action) > 0} + u (\phi
  (\state, \action)) \bm{1}_{\weightfunc (\state, \action) < 0} -
  \mubar (\state, \action) \Big\},\\ \widetilde{u} (\state, \action)
  \mydefn \frac{\weightfunc (\state, \action)}{\propscore (\state,
    \action)} \Big\{ u (\phi (\state, \action)) \bm{1}_{\weightfunc
    (\state, \action) > 0} + \ell (\phi (\state, \action))
  \bm{1}_{\weightfunc (\state, \action) < 0} - \mubar (\state,
  \action) \Big\}.
\end{align*}
It is easily observed that the function $(\state, \action) \mapsto
\tfrac{\weightfunc (\state, \action)}{\propscore (\state, \action)} (f
- \mubar) (\state, \action)$ lies in the bracket $[\widetilde{\ell},
  \widetilde{u}]$, and for any probability law $\mathbb{Q}$ on
$\Xspace \times \actionspace$, we have $\vecnorm{\widetilde{u} -
  \widetilde{\ell}}{\Ltwospace (\mathbb{Q})} \leq b \cdot \vecnorm{u -
  \ell}{\Ltwospace (\mathbb{Q}_\phi)}$, where $\mathbb{Q}_\phi$ is the
probability law of $\phi (\State, \Action)$ for $(\State, \Action)
\sim \mathbb{Q}$.

It is known (cf. Thm 2.7.5 in the book~\cite{vander1996}) that the
space of monotonic functions from $[0, 1]$ to $[0, 1]$ has
$\varepsilon$-bracketing number under any $\Ltwospace$-norm bounded by
$\exp \big( c / \varepsilon \big)$ for any $\varepsilon > 0$.
Substituting back into the bracketing entropy
bound~\eqref{eq:bracketing-bound-monotone-example} yields
\begin{align*}
  \radeComplexity_\maux \big( \ball_\omega (r) \cap (\funcClass -
  \mubar) \big) \leq c \Big\{ \sqrt{\frac{br}{\maux}} + \frac{b^2}{r
    \maux}\Big\}.
\end{align*}
From the definition of the fixed point equation, we can bound the
critical radius $\radtwo$ as
\begin{align*}
\radtwo_\maux \leq \frac{c b}{\maux} + \frac{c b}{\sqrt{\maux}},
\end{align*}
where $c > 0$ is a universal constant.

Turning to the squared Rademacher process associated with the outcome
noise, we construct the function class
\begin{align*}
\funcClassTemp' \mydefn \Big\{ (\state, \action, y) \rightarrow y
\cdot \frac{\weightfunc^2 (\state, \action)}{\propscore^2 (\state,
  \action)} f (\state, \action) \; \mid \; f \in \ball_\omega (r) \cap
(\funcClass - \mubar) \Big\}.
\end{align*}
For functions $\ell, f, u : [0,1] \rightarrow \real$, such that $f$ is
contained in the bracket $[\ell, u]$, we can similarly construct
\begin{align*}
 \widetilde{\ell}(\state, \action, y) \mydefn y \cdot
 \frac{\weightfunc^2 (\state, \action)}{\propscore^2 (\state,
   \action)} \Big\{ \ell (\phi (\state, \action)) \bm{1}_{y > 0} + u
 (\phi (\state, \action)) \bm{1}_{y < 0} - \mubar (\state, \action)
 \Big\},\\ \widetilde{u} (\state, \action, y) \mydefn y \cdot
 \frac{\weightfunc^2 (\state, \action)}{\propscore^2 (\state,
   \action)} \Big\{ u (\phi (\state, \action)) \bm{1}_{y > 0} + \ell
 (\phi (\state, \action)) \bm{1}_{y < 0} - \mubar (\state, \action)
 \Big\}.
\end{align*}
It is easily observed that the function $(\state, \action y) \mapsto y
\cdot \tfrac{\weightfunc (\state, \action)}{\propscore (\state,
  \action)} (f - \mubar) (\state, \action)$ lies in the bracket
$[\widetilde{\ell}, \widetilde{u}]$, and for any probability law
$\mathbb{Q}$ on $\Xspace \times \actionspace \times \real$, we have
$\vecnorm{\widetilde{u} - \widetilde{\ell}}{\Ltwospace (\mathbb{Q})}
\leq b^2 \cdot \vecnorm{u - \ell}{\Ltwospace (\mathbb{Q}_\phi)}$,
where $\mathbb{Q}_\phi$ is the probability law of $\phi(\State,
\Action)$ for $(\State, \Action, Y) \sim \mathbb{Q}$. Applying the
bracketing bound yields
\begin{align*}
\Exs \Big[ \sup_{h \in \funcClassTemp'} \frac{1}{\maux} \sum_{i =
    1}^\maux \rade_i h (\State_i, \Action_i, \outcome_i - \treateff
  (\State_i, \Action_i)) \Big] & \leq \frac{c}{\sqrt{\maux}}
\DudleyBracket \big( \funcClassTemp', \vecnorm{\cdot}{\Ltwospace} ; [0,
  br] \big) \Big\{1 + \frac{b \DudleyBracket \big( \funcClassTemp',
  \vecnorm{\cdot}{\Ltwospace} ; [0, br] \big)}{(br)^2 \sqrt{\maux}}
\Big\} \\
& \leq c b \Big( \sqrt{\frac{r}{\maux}} + \frac{1}{r \maux} \Big).
\end{align*}
Denote $Z_\maux \mydefn \sup_{h \in \funcClassTemp'} \frac{1}{\maux}
\sum_{i = 1}^\maux \rade_i h (\State_i, \Action_i, \outcome_i -
\treateff (\State_i, \Action_i))$. By a standard functional Bernstein
bound (e.g., Thm. 3.8 in the book~\cite{wainwright2019high}), we have
the tail bound
\begin{align*}
\Prob \Big[ Z_\maux \geq 2 \Exs [Z_\maux] + t \Big] & \leq 2 \exp
\left( \frac{- \maux t^2}{56 (br)^2 + 4 b^2 t} \right) \quad \mbox{for
  any $t > 0$}.
\end{align*}
Combining with the expectation bound, we conclude that
$\SquaredRade_\maux = \sqrt{\Exs [Z_\maux^2]} \leq 2 c b \big(
\sqrt{\frac{r}{\maux}} + \frac{1}{r \maux} \big)$.  By definition of
fixed point equation, the critical radius can be upper bounded
$\radone_\maux \leq c \big(\frac{b^2}{\maux} \big)^{1/3}$, and
substituting this bound into~\Cref{cor:least-sqr-split-estimator}
completes the proof of this corollary.

\subsection{Strong shattering for sparse linear models}
\label{subsec:strong-shatter-sparse}

In this section, we state and prove the claim from
Example~\ref{ExaSparseLinear} about the size of the fat shattering
dimension for the class of sparse linear models.

\begin{proposition}
\label{prop:sparse-hypercube-packing}
There is a universal constant $c > 0$ such that the function class
$\funcClass_s$ of $s$-sparse linear models over $\real^\pdim$
satisfies the strong shattering condition~\eqref{EqnStrongShatter}
with fat shattering dimension $\fatdim = c s \log(e \,\pdim / s)$ at
scale $\delta = 1$.
\end{proposition}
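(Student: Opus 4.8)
The plan is to exhibit an explicit strongly shattered set, and the first step is to unwind condition~\eqref{EqnStrongShatter}: with scale $\delta=1$ and the fixed offset $t\equiv 0$, strong shattering of $x_1,\dots,x_N\in[-1,1]^{\pdim}$ by $\smallsuper{\funcClass}{sparse}_s$ asks precisely that every sign pattern $\sigma\in\{-1,1\}^N$ be realizable as $\sigma_i=\langle\beta,x_i\rangle$ for some $\beta$ with $\|\beta\|_0\le s$ and $\|\beta\|_\infty\le 1$ (then $f_\beta(x_i)=t_i+\sigma_i\delta$ as required). So the task reduces to designing such a configuration of size $N\gtrsim s\log(e\pdim/s)$, and I would do this via a ``product of small shattered blocks''.

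The construction, in the main regime $s\le\pdim/2$: set $k=\lfloor\log_2(\pdim/s)\rfloor\ge 1$ and $N=sk$; partition the $\pdim$ coordinates into $s$ disjoint groups $G_1,\dots,G_s$ with $|G_\ell|\ge 2^k$ (possible since $s\cdot 2^k\le\pdim$), and fix inside each $G_\ell$ a bijective labelling $u\mapsto j_{\ell,u}$ of $2^k$ of its coordinates by the binary cube $\{0,1\}^k$. Split the index set $\{1,\dots,N\}$ into $s$ blocks of size $k$, and for the $r$-th index $i$ of block $\ell$ define $x_i$ by $(x_i)_{j_{\ell,u}}=(-1)^{u_r}$ for $u\in\{0,1\}^k$ and $(x_i)_j=0$ otherwise; since its entries lie in $\{-1,0,1\}$, $x_i\in[-1,1]^{\pdim}$. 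The point is that putting weight $1$ on a single coordinate $j_{\ell,u}$ of group $G_\ell$ reads off, across the $k$ points of block $\ell$, exactly the sign pattern $((-1)^{u_1},\dots,(-1)^{u_k})$. Thus, given $\sigma$, choose for each block the string $u^\star(\ell)\in\{0,1\}^k$ with $u^\star(\ell)_r=0$ if $\sigma_i=+1$ and $=1$ if $\sigma_i=-1$ (for $i$ the $r$-th index of block $\ell$), and take $\beta=\sum_{\ell=1}^s e_{j_{\ell,u^\star(\ell)}}$. Disjointness of the $G_\ell$ makes $\beta$ have exactly $s$ unit entries, so $\beta$ is admissible; and for each $i$ in block $\ell$ the supports of $x_i$ and $\beta$ overlap only at $j_{\ell,u^\star(\ell)}$, whence $\langle\beta,x_i\rangle=(-1)^{u^\star(\ell)_{r(i)}}=\sigma_i$. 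This shows $\{x_1,\dots,x_N\}$ is strongly shattered at scale $1$, and since $\log(e\pdim/s)=1+\log(\pdim/s)\le 2\log_2(\pdim/s)\le 4k$ in this regime, $N=sk\ge\tfrac14\,s\log(e\pdim/s)$.

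The remaining step is the boundary regime $\pdim/2<s\le\pdim$, where $s\log(e\pdim/s)\le s\log(2e)\le 2s$: here the standard basis points $x_i=e_i$, $i=1,\dots,s$, are strongly shattered via $\beta=\sum_{i=1}^s\sigma_i e_i$ (which is $s$-sparse with sup-norm $1$), and $s\ge\tfrac12\,s\log(e\pdim/s)$, so combining the two regimes yields a single universal constant (e.g.\ $c=1/4$). I do not expect a genuine obstacle: the argument is entirely constructive. The one thing requiring care is the bookkeeping that keeps the selected $\beta$ within the $\ell_0$-budget $s$ while simultaneously realizing all $2^N$ sign patterns — this is exactly what the disjoint-group/disjoint-block product device buys — together with the elementary estimate matching the combinatorial count $s\lfloor\log_2(\pdim/s)\rfloor$ to $s\log(e\pdim/s)$ up to an absolute constant.
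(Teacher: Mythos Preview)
Your proposal is correct and follows essentially the same block-wise construction as the paper: both partition the $\pdim$ coordinates into $s$ groups of size $\gtrsim 2^k$ with $k\asymp\log_2(\pdim/s)$, build $k$ data vectors per group whose entries encode the $k$ bit positions, and realize any sign pattern by selecting one coordinate per group (so that $\beta$ is exactly $s$-sparse). Your version is arguably slightly cleaner in that the $\pm 1$ entries yield scale $\delta=1$ directly with offset $t\equiv 0$ (the paper's $\{0,1\}$-valued construction literally gives $\delta=1/2$), and you also treat the boundary regime $\pdim/2<s\le\pdim$ explicitly, but the core idea is identical.
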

\begin{proof}
We assume without loss of generality (adjusting constants as needed)
that $\pdim/s = 2^k$ is an integer power of two.  Our argument
involves constructing a set of vectors by dividing the $\pdim$
coordinates into $s$ blocks. Let the matrix $A \in \{0, 1\}^{k \times
  2^k}$ be such that by sequentially writing down the elements in
$j$-th column, we get the binary representation of the integer $(j -
1)$, for $j = 1,2, \ldots, 2^k$. Let $(a_i^\top)_{1 \leq i \leq k}$ be
the row vectors of the matrix $A$. For $i \in [k]$ and $j \in s$, we
construct the $\pdim$-dimensional data vector as $x_{i, j} = a_i
\otimes e_j$, where the $e_j \in \real^s$ is the indicator vector of
$j$-th coordinate. The cardinality of this set is given by
\begin{align*}
\abss{\big\{ x_{i, j}: ~i \in [k], j \in s \big\}} = k s =
\frac{1}{\log 2} \cdot s \log(\pdim / s).
\end{align*}
It suffices to construct a hypercube packing for this set.  Given a
binary vector $v \in \{0, 1\}^{k}$, we let $J(v) \in \{1, 2, \ldots,
2^k \}$ such that the $J(v)$-th column of the matrix $A$ is equal to
$v$. (Note that our construction ensures that such a column always
exists and is unique.)

Given any binary vector $\zeta \in \{0, 1\}^{k \times s}$, we
construct the following vector:
\begin{align*}
\beta_\zeta \mydefn \sum_{i = 1}^s e \big( J(\zeta_{i, 1}, \zeta_{i,
  2}, \ldots, \zeta_{i, k}) \big) \otimes e_i
\end{align*}
where the function $e:[2^k] \rightarrow \real^{2^k}$ maps the integer
$j$ to the indicator vector of $j$-th coordinate.

We note that the vector $\beta$ is supported on $s$-coordinates, with
absolute value of each coordinate bounded by $1$.  Moreover, our
construction ensures that $i \in [k]$ and $j \in [s]$,
\begin{align*}
\beta_\zeta^\top x_{i, j} = a_i^\top e \big( J(\zeta_{i, 1}, \zeta_{i,
  2}, \ldots, \zeta_{i, k}) \big) = \zeta_{i, j}.
\end{align*}
Therefore, we have planted a hypercube $\prod_{i \in [k], j \in s}
\big(x_{i, j}, \{0, 1\} \big)$ in the graph of the function class
$\smallsuper{\funcClass}{sparse}_s$, which completes the proof of the
claim.
\end{proof}


\section{Some elementary inequalities and their proofs}

In this section, we collect some elementary results used throughout
the paper, as well as their proofs.

\subsection{Bounds on conditional total variation distance}

The following lemma is required for the truncation arguments used in
the proofs of our minimax lower bounds. In particular, it allows us to
make small modifications on a pair of probability laws by conditioning
on good events, without inducing an overly large change in the total
variation distance.
\begin{lemma}
\label{lemma:total-variation-with-truncation}
Let $(\mu, \nu)$ be a pair of probability distributions over the same
Polish space $\mathcal{S}$, and consider a subset $\Event \subseteq
\mathcal{S}$ such that $\min \big \{ \mu(\Event), \nu(\Event) \big \}
\geq 1 - \varepsilon$ for some $\varepsilon \in [0, 1/4]$.  Then the
conditional distributions $(\mu \mid \Event)$ and $(\nu \mid \Event)$
satisfy the bound
\begin{align}
    \totalvariation(\mu, \nu) - 4 \varepsilon \stackrel{(i)}{\leq}
    \totalvariation \big[ (\mu \mid \Event), (\nu \mid \Event) \big]
    \stackrel{(ii)}{\leq} \tfrac{1}{1 - \varepsilon}
    \totalvariation(\mu, \nu) + 2 \varepsilon.
  \end{align}
\end{lemma}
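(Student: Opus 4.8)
The plan is to work directly from the definition of total variation distance in terms of the supremum over measurable events, handling the two inequalities separately. Write $\totalvariation(\mu,\nu) = \sup_{A} |\mu(A) - \nu(A)|$, and recall that for the conditional law $(\mu \mid \Event)$ we have $(\mu\mid\Event)(A) = \mu(A\cap\Event)/\mu(\Event)$.

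\textbf{Upper bound (ii).} For any measurable set $A$, I would estimate
\begin{align*}
\big| (\mu\mid\Event)(A) - (\nu\mid\Event)(A) \big|
&= \Big| \frac{\mu(A\cap\Event)}{\mu(\Event)} - \frac{\nu(A\cap\Event)}{\nu(\Event)} \Big|.
\end{align*}
The trick is to add and subtract $\nu(A\cap\Event)/\mu(\Event)$: the first difference becomes $|\mu(A\cap\Event) - \nu(A\cap\Event)|/\mu(\Event) \le \totalvariation(\mu,\nu)/(1-\varepsilon)$, and the second is $\nu(A\cap\Event)\,|1/\mu(\Event) - 1/\nu(\Event)|$. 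For the latter, bound $\nu(A\cap\Event) \le \nu(\Event) \le 1$ and note $|1/\mu(\Event) - 1/\nu(\Event)| = |\nu(\Event)-\mu(\Event)|/(\mu(\Event)\nu(\Event)) \le \totalvariation(\mu,\nu)/(1-\varepsilon)^2$; alternatively, and more cleanly, bound $|\mu(\Event)-\nu(\Event)|\le\varepsilon$ directly (since both masses lie in $[1-\varepsilon,1]$) and $1/(\mu(\Event)\nu(\Event))\le 1/(1-\varepsilon)^2$, then use $\varepsilon/(1-\varepsilon)^2 \le 2\varepsilon$ for $\varepsilon\in[0,1/4]$. Taking the supremum over $A$ gives (ii). I should double-check the constants close out: $1/(1-\varepsilon) \le 1 + 2\varepsilon$-type slack is not needed since the statement keeps the $\frac{1}{1-\varepsilon}$ factor explicitly, so only the second term needs the numeric bound $\varepsilon/(1-\varepsilon)^2\le 2\varepsilon$.

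\textbf{Lower bound (i).} Here I would go the other direction: for any set $A$,
\begin{align*}
|\mu(A) - \nu(A)| &\le |\mu(A\cap\Event) - \nu(A\cap\Event)| + \mu(A\cap\Event^c) + \nu(A\cap\Event^c)\\
&\le |\mu(A\cap\Event) - \nu(A\cap\Event)| + 2\varepsilon.
\end{align*}
Then $|\mu(A\cap\Event) - \nu(A\cap\Event)| = |\mu(\Event)(\mu\mid\Event)(A) - \nu(\Event)(\nu\mid\Event)(A)|$; adding and subtracting $\mu(\Event)(\nu\mid\Event)(A)$ bounds this by $\mu(\Event)\,\totalvariation[(\mu\mid\Event),(\nu\mid\Event)] + |\mu(\Event)-\nu(\Event)|\,(\nu\mid\Event)(A) \le \totalvariation[(\mu\mid\Event),(\nu\mid\Event)] + \varepsilon$. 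Combining yields $|\mu(A)-\nu(A)| \le \totalvariation[(\mu\mid\Event),(\nu\mid\Event)] + 3\varepsilon$, and taking the supremum over $A$ and rearranging gives (i) with room to spare (the stated constant is $4\varepsilon$).

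I do not anticipate a serious obstacle here — this is a routine bookkeeping exercise with elementary manipulations of ratios. The only mild care needed is (a) ensuring $\mu(\Event),\nu(\Event)>0$ so the conditional laws are well-defined, which follows from $\varepsilon \le 1/4 < 1$, and (b) being slightly generous with constants so that the $\varepsilon$-coefficients land at the stated values ($2\varepsilon$ on the right, $4\varepsilon$ on the left) rather than something sharper. I would simply present the two chains of inequalities and conclude.
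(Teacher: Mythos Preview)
Your proposal is correct and essentially matches the paper's proof: both split each inequality into a ``main'' term bounded by the other total variation distance and a ``correction'' term controlled by $|\mu(\Event)-\nu(\Event)|\le\varepsilon$. The only cosmetic difference is that the paper uses the functional form $\totalvariation(\mu,\nu)=\sup_{\|f\|_\infty\le 1}|\Exs_\mu f-\Exs_\nu f|$ while you use the equivalent set form $\sup_A|\mu(A)-\nu(A)|$; your argument even yields $3\varepsilon$ rather than $4\varepsilon$ in (i), so you finish with slack to spare.
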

\begin{proof}
Recall the variational definition of the TV distance as the supremum
over functions $f: \statespace \rightarrow \real$ such that
$\vecnorm{f}{\infty} \leq 1$.  For any such function $f$, we have
\begin{align*}
\abss{\Exs_\mu[f(X)] - \Exs_\nu[f(X)]} & \leq \abss{\Exs_\mu[f(X)
    \bm{1}_{X \in \Event}] - \Exs_\nu[f(X) \bm{1}_{X \in \Event}]} +
\Exs_\mu[|f(X)| \; \bm{1}_{\Event^c}] + \Exs_\nu[|f(X)| \; \bm{1}_{X
    \in \Event^c}] \\
& \leq \abss{\frac{\Exs_\mu[f(X) \bm{1}_{X \in \Event}]}{\mu(\Event)}
  - \frac{\Exs_\nu[f(X) \bm{1}_{X \in \Event}]}{\nu(\Event)} } +
\abss{\frac{1}{\mu(\Event)} - \frac{1}{\nu(\Event)}} \Exs_\nu[|f(X)|]
+ 2 \varepsilon \\
& \leq \totalvariation \big( (\mu \mid \Event), ( \nu \mid \Event)
\big) + 4 \varepsilon,
\end{align*}
and re-arranging yields the lower bound (i).

On the other hand, in order to prove the upper bound (ii), we note
that
\begin{align*}
 \abss{\Exs_{\mu| \Event}[f(X)] - \Exs_{\nu | \Event}[f(X)]}  &=
 \frac{1}{\mu(\Event)} \abss{\Exs_\mu[f(X) \bm{1}_{X \in \Event}] -
   \Exs_\nu[f(X) \bm{1}_{X \in \Event}]
   \frac{\mu(\Event)}{\nu(\Event)} } \\ &\leq \frac{1}{\mu(\Event)}
 \abss{\Exs_\mu[f(X) \bm{1}_{X \in \Event}] - \Exs_\nu[f(X)
     \bm{1}_{X \in \Event}] } + \Exs_\nu[|f(X)|] \cdot \abss{
   \frac{\mu(\Event)}{\nu(\Event)} - 1}\\ &\leq \frac{1}{1 -
   \varepsilon} \totalvariation(\mu, \nu) + 2 \varepsilon,
\end{align*}
which completes the proof.
\end{proof}


\subsection{A second moment lower bound for truncated random variable}

The following lemma is frequently used in our lower bound
constructions.
\begin{lemma}
\label{lemma:simple-moment-lower-bound-under-trunc}
Let $X$ be a real-valued random variable with finite fourth moment,
and define the $(2$--$4)$-moment constant $\ctwofour \mydefn
\sqrt{\Exs[X]^4} / \Exs[X^2]$.  Then we have the lower bound
\begin{align*}
 \Exs \Big[X^2 \cdot \bm{1} \big\{ |X| \leq 2 \ctwofour
   \sqrt{\Exs[X^2]} \big\} \Big] \geq \frac{1}{2} \Exs[X^2].
  \end{align*}
\end{lemma}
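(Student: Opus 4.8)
The statement is a standard truncation estimate, and the plan is to bound the complementary tail contribution $\Exs\big[X^2 \cdot \bm{1}\{|X| > \tau\}\big]$, where I abbreviate $\tau \defn 2 \ctwofour \sqrt{\Exs[X^2]}$, and then subtract from $\Exs[X^2]$. If $\Exs[X^2] = 0$ the claim is trivial (both sides vanish), so I would assume $\Exs[X^2] > 0$, which also makes $\ctwofour$ well-defined and positive by Jensen ($\Exs[X^4] \geq \Exs[X^2]^2 > 0$).

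First I would apply the Cauchy--Schwarz inequality to the tail term, writing
\begin{align*}
\Exs\big[X^2 \cdot \bm{1}\{|X| > \tau\}\big] \leq \sqrt{\Exs[X^4]} \cdot \sqrt{\Prob\big(|X| > \tau\big)}.
\end{align*}
Next I would control the probability factor by Markov's inequality applied to $X^2$: since $\tau^2 = 4 \ctwofour^2 \Exs[X^2]$, we get
\begin{align*}
\Prob\big(|X| > \tau\big) = \Prob\big(X^2 > \tau^2\big) \leq \frac{\Exs[X^2]}{\tau^2} = \frac{1}{4 \ctwofour^2}.
\end{align*}
Combining the two displays gives $\Exs\big[X^2 \cdot \bm{1}\{|X| > \tau\}\big] \leq \frac{1}{2 \ctwofour} \sqrt{\Exs[X^4]}$, and substituting the defining identity $\sqrt{\Exs[X^4]} = \ctwofour \, \Exs[X^2]$ collapses the right-hand side to exactly $\tfrac12 \Exs[X^2]$.

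Finally I would conclude by writing $\Exs[X^2] = \Exs\big[X^2 \bm{1}\{|X| \leq \tau\}\big] + \Exs\big[X^2 \bm{1}\{|X| > \tau\}\big]$ and rearranging, which yields $\Exs\big[X^2 \bm{1}\{|X| \leq \tau\}\big] \geq \Exs[X^2] - \tfrac12 \Exs[X^2] = \tfrac12 \Exs[X^2]$, as claimed. There is no real obstacle here; the only points requiring (minor) care are the degenerate case $\Exs[X^2]=0$ and making sure the constant bookkeeping in $\tau^2 = 4\ctwofour^2\Exs[X^2]$ is exact so that the final cancellation produces the factor $\tfrac12$ rather than something weaker.
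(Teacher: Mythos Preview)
Your proof is correct and follows essentially the same approach as the paper: Cauchy--Schwarz on the tail contribution, Markov's inequality on $X^2$ to bound the tail probability, and then subtract. The only cosmetic difference is that the paper normalizes to $\Exs[X^2]=1$ at the outset, whereas you carry the general scaling through explicitly (and also handle the degenerate case $\Exs[X^2]=0$, which the paper omits).
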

\begin{proof}
Without loss of generality, we can assume that $\Exs[X^2] = 1$.
Applying Cauchy--Schwarz inequality implies that
\begin{align*}
\Exs \Big[ X^2 \bm{1} \big\{ |X| \geq 2 \ctwofour \big\} \Big] \leq
\sqrt{\Exs \big[ X^4 \big]} \cdot \sqrt{\Prob \Big( |X| \geq 2
  \ctwofour \Big) } \leq \ctwofour \cdot \sqrt{\Prob \Big( |X| \geq 2
  \ctwofour \Big)}.
\end{align*}
By Markov's inequality, we have
\begin{align*}
\Prob \Big( |X| \geq 2 \ctwofour \Big) \leq \frac{\Exs[X^2]}{4
  \ctwofour^2} = \frac{1}{4 \ctwofour^2}.
\end{align*}
Substituting back to above bounds, we conclude that $\Exs \big[X^2
  \bm{1} \big\{ |X| \geq 2 \ctwofour \big\} \big] \leq \frac{1}{2}$,
and consequently,
\begin{align*}
\Exs \Big[X^2 \bm{1} \big\{ |X| \leq 2 \ctwofour \big\} \Big] =
\Exs[X^2] - \Exs \Big[X^2 \bm{1} \big\{ |X| \geq 2 \ctwofour \big\}
  \Big] \geq \frac{1}{2},
\end{align*}
which completes the proof.
\end{proof}


\section{Empirical process results from existing literature}
\label{app:recall-emp-proc}

In this appendix, we collect some known bounds on the suprema of
empirical processes.

\subsection{Concentration for unbounded empirical processes}

We use a concentration inequality for unbounded empirical processes.
It applies to a countable class $\funcClass$ of measurable functions,
and a supremum of the form
\begin{align*}
  Z & \mydefn \sup_{f \in \funcClass} \abss{\sum_{i = 1}^n f(X_i)}
\end{align*}
where $\{X_i \}_{i=1}^\numobs$ is a sequence of independent random
variables such that $\Exs[f(X_i)] = 0$ for any $f \in \funcClass$.

\begin{proposition}[Theorem 4 of~\cite{adamczak2008tail}, simplified]
  \label{prop:concentration-adamczak}
 There exists a universal constant $c > 0$ such that for any $t > 0$
 and $\alpha \geq 1$, we have
\begin{align*}
\Prob \left[ Z > 2 \Exs (Z) + t \right] & \leq \exp \left( \frac{-
  t^2}{4 v^2} \right) + 3 \exp \Big( - \Big( \frac{t}{c \vecnorm{\max
    \limits_{i = 1, \ldots, \numobs} \sup \limits_{f \in \funcClass}
    |f(X_i)| }{\psi_{1 /\alpha}}} \Big)^{1 / \alpha} \Big),
\end{align*}
where $v^2 \mydefn \sup_{f \in \funcClass} \sum_{i = 1}^\numobs \Exs
[f^2(X_i)]$ is the maximal variance.
\end{proposition}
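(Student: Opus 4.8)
The plan is to obtain the Proposition as a direct specialization of Adamczak's tail inequality for suprema of unbounded empirical processes~\cite{adamczak2008tail}; essentially no new probabilistic argument is needed beyond fixing the free parameters in that theorem.

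First I would recall the precise form of Theorem~4 in~\cite{adamczak2008tail}: for a countable class $\funcClass$ with $\Exs[f(X_i)] = 0$, writing $M \defn \max_{1 \le i \le \numobs}\sup_{f \in \funcClass}|f(X_i)|$ for the envelope, it asserts that for every $\eta \in (0,1)$ and every $\delta > 0$ there is a constant $K = K(\eta,\delta,\alpha)$ such that
\begin{align*}
\Prob\big[Z \ge (1+\eta)\,\Exs[Z] + \tailboundscalar\big] \le \exp\Big(-\tfrac{\tailboundscalar^2}{2(1+\delta)\,v^2}\Big) + 3\exp\Big(-\Big(\tfrac{\tailboundscalar}{K\,\vecnorm{M}{\psi_{1/\alpha}}}\Big)^{1/\alpha}\Big),
\end{align*}
where $v^2 = \sup_{f \in \funcClass}\sum_{i=1}^\numobs \Exs[f^2(X_i)]$ is the maximal variance.

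Second, I would fix $\delta = 1$, which converts the Gaussian term into $\exp(-\tailboundscalar^2/(4v^2))$, matching the claimed form exactly, and fix any numerical $\eta \in (0,1)$, say $\eta = 1/2$. Since $(1+\eta) < 2$, we have $2\,\Exs[Z] + \tailboundscalar > (1+\eta)\,\Exs[Z] + \tailboundscalar$, hence $\{Z \ge 2\,\Exs[Z] + \tailboundscalar\} \subseteq \{Z \ge (1+\eta)\,\Exs[Z] + \tailboundscalar\}$, so the displayed bound applies verbatim to $\Prob[Z \ge 2\,\Exs(Z) + \tailboundscalar]$. With $\eta$ and $\delta$ now numerical, $K(\eta,\delta,\alpha)$ no longer depends on $\eta,\delta$, and renaming it $c$ completes the estimate.

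The only point that needs a word of care — and the closest thing to an obstacle — is the dependence of the constant on $\alpha$: to assert a single universal $c$ valid for all $\alpha \ge 1$ as stated, one either observes that the applications only ever use $\alpha \in \{1,2\}$ (sub-exponential and sub-Gaussian envelopes, via the $\psi_1$ and $\psi_2$ Orlicz norms) and takes the maximum of the two constants, or checks that the constant produced by the truncation/chaining argument of~\cite{adamczak2008tail} can be bounded uniformly over $\alpha \in [1,\infty)$. Either way this is routine bookkeeping, and the substance of the statement is entirely contained in the cited theorem.
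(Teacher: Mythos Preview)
Your proposal is correct and matches the paper's treatment: the paper does not give an independent proof but simply cites Theorem~4 of~\cite{adamczak2008tail} (labeled ``simplified''), so the content is exactly the parameter specialization you describe. Your remark about the $\alpha$-dependence of the constant is a fair caveat, though in the paper's applications only $\alpha = 1$ is ever used, so the issue does not arise.
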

The countability assumption can be easily relaxed for separable
spaces.  A useful special case of
Proposition~\ref{prop:concentration-adamczak} is by taking the class
$\funcClass$ to be a singleton and letting $\alpha = 1$, in which case
the bound becomes
\begin{align*}
    \abss{\frac{1}{\numobs} \sum_{i = 1}^\numobs f (X_i) - \Exs [f
        (X)] } \leq c \sqrt{\var \big(f (X) \big) \frac{\log (1 /
        \delta)}{\numobs}} + \frac{c \log \numobs}{\numobs} \vecnorm{f
      (X)}{\psi_1} \cdot \log (1 / \delta),
\end{align*}
with probability $1 - \delta$.

\subsection{Some generic chaining bounds}

We also use a known generic chaining tail bound.  It involves a
separable stochastic process $(Y_t)_{t \in T}$ and a pair $(d_1, d_2)$
of metrics over the index set $T$.  We assume that there exists some
$t_0 \in T$ such that $Y_{t_0} \equiv 0$.
\begin{subequations}
\begin{proposition}[Theorem 3.5 of Dirksen~\cite{dirksen2015tail}]\label{prop:dirksen}
Suppose that for any pair $s, t \in T$, the difference $Y_s - Y_t$
satisfies the mixed tail bound
\begin{align}
\label{eq:mixed-tail-condition}  
 \Prob \Big( \abss{Y_{s} - Y_{t}} \geq \sqrt{u} d_1(s, t) + u d_2(s,
 t) \Big) \leq 2 e^{- u} \quad \mbox{for any $u > 0$.}
    \end{align}
Then for any $\ell \geq 1$, we have the moment bound
\begin{align}
  \left\{ \Exs \Big[ \sup_{t \in T} \abss{Y_t}^\ell \Big]
  \right\}^{1/\ell} \leq c \Big( \talchain{2} (T, d_1) + \talchain{1}
  (T, d_2) \Big) + 2 \sup_{t \in T} \big( \Exs \abss{Y_t}^\ell
  \big)^{1/\ell},
\end{align}
where $\talchain{\alpha} (T, d)$ is the generic chaining functional of
order $\alpha$ for the metric space $(T, d)$.
\end{proposition}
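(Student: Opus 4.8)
The plan is to establish this via a generic (Talagrand) chaining argument adapted to the two metrics $d_1$ and $d_2$ that govern the sub-Gaussian and sub-exponential parts of the increment condition~\eqref{eq:mixed-tail-condition}.

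First I would fix a small $\eta > 0$ and pick admissible sequences $(A_n^{(1)})_{n \ge 0}$ and $(A_n^{(2)})_{n \ge 0}$ of subsets of $T$ (that is, $|A_0^{(i)}| = 1$ and $|A_n^{(i)}| \le 2^{2^n}$) that are $(1+\eta)$-optimal in the definitions of $\talchain{2}(T, d_1)$ and $\talchain{1}(T, d_2)$, respectively. Interleaving them via $T_0 \defn \{t_0\}$ and $T_n \defn A_{n-1}^{(1)} \cup A_{n-1}^{(2)}$ for $n \ge 1$ yields a single admissible sequence (since $|T_n| \le 2 \cdot 2^{2^{n-1}} \le 2^{2^n}$) with the property that every $t \in T$ satisfies, simultaneously, $\sum_{n \ge 0} 2^{n/2} d_1(t, T_n) \lesssim \talchain{2}(T,d_1)$ and $\sum_{n \ge 0} 2^n d_2(t, T_n) \lesssim \talchain{1}(T, d_2)$. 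Choosing nearest-point maps $\pi_n(t) \in T_n$ and invoking separability of the process $(Y_t)$, I would write the telescoping decomposition $Y_t = Y_t - Y_{t_0} = \sum_{n \ge 1} (Y_{\pi_n(t)} - Y_{\pi_{n-1}(t)})$.

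Next comes the level-by-level control. For a truncation level $n_0$ chosen with $2^{n_0} \asymp \ell$, I would split the chain into a tail part ($n > n_0$) and a head part ($\pi_{n_0}(t)$). For the tail, at level $n$ there are at most $|T_n|^2 \le 2^{2^{n+1}}$ distinct links $(\pi_n(t), \pi_{n-1}(t))$; applying~\eqref{eq:mixed-tail-condition} at each link with deviation parameter $u_n \asymp 2^n + x$ and taking a union bound shows that, off an event of probability at most $L e^{-x}$, the quantity $\sum_{n > n_0} (Y_{\pi_n(t)} - Y_{\pi_{n-1}(t)})$ is at most a constant times $\sum_{n > n_0} (\sqrt{u_n}\, d_1(\pi_n(t),\pi_{n-1}(t)) + u_n\, d_2(\pi_n(t),\pi_{n-1}(t)))$ uniformly in $t$; by the triangle inequality $d_i(\pi_n(t),\pi_{n-1}(t)) \le d_i(t,T_n) + d_i(t,T_{n-1})$ and the simultaneous bound above, the $2^{n/2}$- and $2^n$-weighted parts collapse to $\lesssim \talchain{2}(T,d_1) + \talchain{1}(T,d_2)$, while the $x$-dependent remainders are $\sqrt{x}$ and $x$ times comparable unweighted chain sums of the same order. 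For the head, $\sup_{t} |Y_{\pi_{n_0}(t)}|$ is a maximum over at most $2^{2^{n_0}}$ of the variables $Y_{t}$, and a maximal inequality built from~\eqref{eq:mixed-tail-condition} applied to the pairs $(t, t_0)$ bounds it in $L^\ell$ by $\talchain{2}(T,d_1) + \talchain{1}(T,d_2)$ together with the additive term $2\sup_{t \in T} (\Exs |Y_t|^\ell)^{1/\ell}$ -- here it is essential that $Y_{t_0} \equiv 0$, so that $Y_t = Y_t - Y_{t_0}$ and $\sup_t |Y_t| = \sup_t |Y_t - Y_{t_0}|$. Integrating the tail estimate against $\ell x^{\ell-1}\,dx$ and combining with the head estimate yields the stated moment bound.

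I expect the main obstacle to be the bookkeeping in the two-metric chaining: assembling one admissible sequence that is simultaneously near-optimal for $\talchain{2}(T,d_1)$ and $\talchain{1}(T,d_2)$, and -- more delicately -- tuning the truncation level $n_0 \asymp \log_2 \ell$ so that on the tail the union-bound overhead ($\sqrt{\log |T_n|^2} \asymp 2^{n/2}$ and $\log |T_n|^2 \asymp 2^n$) is absorbed by the chaining weights, while the head is handled purely through the single-index moments $\|Y_t\|_{\psi}$-type quantities $(\Exs|Y_t|^\ell)^{1/\ell}$ and the two $\gamma$-functionals. The remaining ingredients -- the telescoping, the nearest-point maps, and the passage from tail to moment bounds -- are routine.
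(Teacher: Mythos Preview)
The paper does not prove this proposition: it is stated in the appendix under ``Empirical process results from existing literature'' and explicitly attributed to Theorem~3.5 of Dirksen~\cite{dirksen2015tail}, with no proof given. There is therefore nothing in the paper to compare your proposal against.

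That said, your sketch is a faithful outline of the standard two-metric generic chaining argument that underlies Dirksen's result: build a single admissible sequence that is near-optimal for both $\gamma_2(T,d_1)$ and $\gamma_1(T,d_2)$, telescope along it, control the tail of the chain by a union bound with level parameter $u_n \asymp 2^n + x$ so that the sub-Gaussian and sub-exponential contributions match the $2^{n/2}$ and $2^n$ chaining weights, and handle the head via a maximal inequality over $|T_{n_0}|$ points with $2^{n_0} \asymp \ell$. The bookkeeping you flag (interleaving the two admissible sequences, choosing the truncation level, and integrating the tail to get moments) is indeed where the care is required, but the strategy is correct and is essentially how the cited result is proved.
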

\end{subequations}
For a set $T$ with diameter bounded by $r$ under the metric $d$, the
generic chaining functional can be upper bounded in terms of the
Dudley entropy integral as
\begin{align*}
  \talchain{\alpha}(T, d) & \leq c \dudley_\alpha \big(T, d; [0, r]
  \big) \quad \mbox{for each $\alpha \in \{1, 2\}$}
\end{align*}
(e.g., cf. Talagrand~\cite{talagrand2006generic}).  Furthermore,
suppose that the norm domination relation $d_1 (s, t) \leq a_0 d_2 (s,
t)$ holds true for any pair $s, t \in T$. Let $r_1, r_2$ be the
diameter of the set $T$ under the metrics $d_1, d_2$, respectively. If
we apply Proposition~\ref{prop:dirksen} to a maximal $\delta$-packing
for the set $T$ under metric $d_1$, we immediately have
\begin{multline}
\label{eq:mixed-tail-chaining-without-donsker}  
  \left\{ \Exs \Big[ \sup_{t \in T} \abss{Y_t}^p \Big] \right\}^{1/p}
  \leq c \Big\{ \dudley_2 \big(T, d_1; [\delta, r_1]\big) + \dudley_1
  (T, d_2; [\delta / a_0, r_2]) \Big\}\\ + \Big\{ \Exs
  \sup_{\substack{s, t\in T \\ d_1 (s, t) \leq \delta}} \abss{Y_s -
    Y_t}^p \Big\}^{1/p} + 2 \sup_{t \in T} \big( \Exs \abss{Y_t}^p
  \big)^{1/p}.
\end{multline}

\subsection{Bracketing entropy bounds}

Finally, we use the following bracketing integral bound for empirical
processes:
\begin{proposition}[Lemma 3.4.2 of~\cite{vander1996}]
\label{prop:vandervart-wellner}
Let $\funcClass$ be a class of measurable functions, such that $\Exs
[f^2(X)] \leq r^2$ and $|f (X)| \leq M$ almost surely for any $f \in
\funcClass$. Given $\numobs$ $\mathrm{i.i.d.}$ samples $\{X_i \}_{i =
  1}^\numobs$, we have
\begin{align*}
\Exs \Big[ \sup_{f \in \funcClass} \frac{1}{\numobs} \sum_{i =
    1}^\numobs f(X_i) - \Exs [f (X)] \Big] \leq
\frac{c}{\sqrt{\numobs}} \DudleyBracket \big( \funcClass,
\vecnorm{\cdot}{\Ltwospace}; [0, r] \big) \Big\{1 + \frac{M
  \DudleyBracket \big( \funcClass, \vecnorm{\cdot}{\Ltwospace} ; [0,
    r] \big)}{r^2 \sqrt{\numobs}} \Big\}.
\end{align*}
\end{proposition}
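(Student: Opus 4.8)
This is the classical bracketing maximal inequality for empirical processes, stated in the excerpt as Lemma~3.4.2 of~\cite{vander1996}; the plan is to reproduce its chaining‑with‑brackets proof. Write $\mathbb{G}_\numobs f \mydefn \numobs^{-1/2}\sum_{i=1}^\numobs\big(f(X_i) - \Exs f(X)\big)$ and let $J \mydefn \DudleyBracket(\funcClass,\vecnorm{\cdot}{\Ltwospace};[0,r])$ be the bracketed chaining integral appearing in the statement; the claim is equivalent to $\Exs\sup_{f\in\funcClass}\mathbb{G}_\numobs f \le c\,J\big(1 + MJ/(r^2\sqrt\numobs)\big)$. Introduce the dyadic scales $a_q \mydefn 2^{-q}r$ for $q\ge 0$. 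For each $q$ fix a minimal $a_q$‑bracketing of $\funcClass$ with $N_q \le \CoverBracket(\funcClass,\vecnorm{\cdot}{\Ltwospace};a_q)$ brackets, each truncated to take values in $[-M,M]$, and for $f\in\funcClass$ let $\pi_q f$ be the lower endpoint of a chosen bracket containing $f$ and $\Delta_q f\ge 0$ the corresponding width, so that $0\le f-\pi_q f\le\Delta_q f$, $\vecnorm{\Delta_q f}{\Ltwospace}\le a_q$, and $\vecnorm{\Delta_q f}{\infty}\le 2M$. Both $\pi_q f$ and $\Delta_q f$ take at most $N_q$ values.

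The engine of the argument is a per‑level bound on $\Exs\sup_f\big|\mathbb{G}_\numobs(\pi_q f - \pi_{q-1}f)\big|$. The increment takes at most $N_q^2$ distinct values, has $\Ltwospace$‑norm at most $2a_{q-1}$, and is dominated pointwise by $2\Delta_{q-1}f$. I would split it at the threshold $\theta_q \mydefn \sqrt\numobs\,a_{q-1}\big/\sqrt{1+\log N_q}$: on $\{\Delta_{q-1}f\le\theta_q\}$ the increment is bounded pointwise by $2\theta_q$, and Bernstein's inequality together with the maximal inequality $\Exs\max_{j\le N}|\mathbb{G}_\numobs g_j|\lesssim \sigma\sqrt{\log N}+B\log N/\sqrt\numobs$ (for $\vecnorm{g_j}{\Ltwospace}\le\sigma$ and $\vecnorm{g_j}{\infty}\le B$) gives a contribution $\lesssim a_{q-1}\sqrt{\log N_q}$, the definition of $\theta_q$ being exactly what forces the second‑order term $\theta_q\log N_q/\sqrt\numobs$ not to exceed the first; on the complement one dominates the increment by $2\Delta_{q-1}f\,\indicator\{\Delta_{q-1}f>\theta_q\}$, whose expectation is at most $\Exs[\Delta_{q-1}^2 f]/\theta_q \le a_{q-1}\sqrt{1+\log N_q}/\sqrt\numobs$ (so its centering term again contributes $\lesssim a_{q-1}\sqrt{\log N_q}$) and whose empirical average over the $\le N_q$ envelope functions is handled by the same maximal inequality with $\sigma=a_{q-1}$, $B=2M$, producing $a_{q-1}\sqrt{\log N_q}+M\log N_q/\sqrt\numobs$.

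Telescoping $f=\pi_0 f+\sum_{q\ge 1}(\pi_q f-\pi_{q-1}f)$ and summing, the sub‑Gaussian pieces $a_{q-1}\sqrt{\log N_q}$ form, by disjointness of the intervals $[a_{q+1},a_q]$ and monotonicity of $\epsilon\mapsto\CoverBracket(\funcClass,\vecnorm{\cdot}{\Ltwospace};\epsilon)$, a series bounded by $\lesssim J$, together with the base term $r\sqrt{\log N_0}\lesssim J$. The remaining work is to organize the sub‑exponential terms $M\log N_q/\sqrt\numobs$ into the quadratic correction $MJ^2/(r^2\sqrt\numobs)$: on the coarse scales where $\theta_q>2M$ the truncation is vacuous and $M\log N_q/\sqrt\numobs\le \tfrac12 a_{q-1}\sqrt{\log N_q}$ is already absorbed by the sub‑Gaussian term, while on the fine scales one terminates the chain at a level $q^\ast$ and handles the tail through a single remainder $\sup_f|\mathbb{G}_\numobs(f-\pi_{q^\ast}f)|\le \sup_f|\mathbb{G}_\numobs\Delta_{q^\ast}f|+\sqrt\numobs\,\sup_f\Exs|\Delta_{q^\ast}f|$, bounding $\Exs|\Delta_{q^\ast}f|\le\vecnorm{\Delta_{q^\ast}f}{\Ltwospace}\le a_{q^\ast}$ and using $\log N_{q^\ast}\lesssim J^2/a_{q^\ast}^2$ (from $J\ge\tfrac12 a_{q^\ast}\sqrt{\log N_{q^\ast}}$); choosing $q^\ast$ so as to couple $a_{q^\ast}$, $N_{q^\ast}$ and $M$ to the entropy integral, and splitting into the cases where the correction term does or does not dominate, makes both pieces $\lesssim J+MJ^2/(r^2\sqrt\numobs)$.

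I expect this last step---the coupled choice of truncation level and chain length needed to prevent the second‑order Bernstein contributions from accumulating---to be the main obstacle; it is precisely the technical core of van der Vaart and Wellner's proof, whereas everything else reduces to a routine application of Bernstein's inequality and the summation of geometric series. If desired, the one‑sided supremum in the statement can be upgraded to the two‑sided one by applying the same estimate to the symmetrized class $\{f,-f\}$.
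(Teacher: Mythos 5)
The paper does not prove this result at all; it imports it wholesale as Lemma~3.4.2 of van der Vaart and Wellner~\cite{vander1996}, and the citation is the ``proof.'' Your sketch faithfully reconstructs the bracketing--chaining argument from that reference---dyadic brackets, the truncation threshold $\theta_q = \sqrt{\numobs}\,a_{q-1}/\sqrt{1+\log N_q}$, per-level Bernstein plus finite-class maximal inequalities, and termination of the chain at a level coupled to $M$ and the entropy integral so that the sub-exponential corrections aggregate into the quadratic $M\,\DudleyBracket^2/(r^2\sqrt{\numobs})$ term---so your approach is correct and is exactly the one the paper relies upon by citation.
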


\end{document}